\tikzset{
>=stealth',
help lines/.style={dashed, thick},
axis/.style={<->},
important line/.style={thick},
connection/.style={thick, dotted},
}
\newtheorem{theorem}{Theorem}[section]
\newtheorem{proposition}{Proposition}[section]
\newtheorem{lemma}{Lemma}[section]
\newtheorem{definition}{Definition}[section]
\newtheorem{remark}{Remark}[section]
\newcommand{\al}{\alpha}
\newcommand{\eps}{\epsilon}
\newcommand{\la}{\lambda}
\newcommand{\h}{\mathfrak{h}}
\newcommand{\mbv}{\mathbf{v}}
\newcommand{\mcA}{\mathcal{A}}
\newcommand{\mfb}{\mathfrak{b}}
\newcommand{\mfD}{\mathfrak{D}}
\newcommand{\mfg}{\mathfrak{g}}
\newcommand{\mfgl}{\mathfrak{g}\mathfrak{l}}
\newcommand{\mfh}{\mathfrak{h}}
\newcommand{\mfn}{\mathfrak{n}}
\newcommand{\mfsl}{\mathfrak{s}\mathfrak{l}}
\newcommand{\mfU}{\mathfrak{U}}
\newcommand{\g}{\mathfrak{g}}
\newcommand{\msD}{\mathsf{D}}
\newcommand{\msH}{\mathsf{H}}
\newcommand{\msP}{\mathsf{P}}
\newcommand{\msX}{\mathsf{X}}
\newcommand{\End}{\mathrm{End}}
\newcommand{\iso}{\stackrel{\sim}{\longrightarrow}}
\newcommand{\wh}{\widehat}
\newcommand{\wt}{\widetilde}
\newcommand{\lra}{\longrightarrow}
\newcommand{\C}{\mathbb{C}}
\newcommand{\Z}{\mathbb{Z}}
\newcommand{\ot}{\otimes}
\newcommand{\ol}{\overline}
\DeclareMathOperator{\KZB}{KZB}
\newcommand {\Omit}[1]{}
\newcommand{\N}{\mathbb{N}}
\newenvironment{romenum}
{

\begin{enumerate}}{\end{enumerate}}
\DeclareMathOperator{\ad}{ad}
\author[N.~Guay]{Nicolas~Guay}
\address{Department of Mathematics,
University of Alberta,
CAB 632, Edmonton, AB T6G 2G1, Canada}
\email{nguay@ualberta.ca}
\author[Y.~Yang]{Yaping~Yang}
\address{Department of Mathematics and Statistics,
University of Massachusetts Amherst,
Amherst, MA 01003-9305, USA}
\email{yaping@math.umass.edu}
\begin{document} 

\setlength{\pdfpagewidth}{8.5in}
\setlength{\pdfpageheight}{11in}

\begin{center}
\Large{\textbf{On deformed double current algebras for simple Lie algebras}}

\bigskip

Nicolas Guay, Yaping Yang
\end{center}

\bigskip

\bigskip
Abstract: We prove the equivalence of two presentations of deformed double current algebras associated to a complex simple Lie algebra $\mfg$, the first one obtained via a degeneration of affine Yangians while the other one naturally appeared in the construction of the  elliptic Casimir connection. We also construct a specific central element of these algebras and, in type A, show that they contain a very large center for certain values of their parameters.

\bigskip

\tableofcontents

\section{Introduction}

Deformed double current algebras were introduced by the first named author in \cite{G1,G2,G3}. They are deformations of the enveloping algebra of the universal central extension of the double current algebra $\g \ot_{\C} \C[u, v]$ where $\mfg$ is a finite dimensional, simple, complex Lie algebra. The deformed double current algebra $\mfD(\g)$ is the quantum algebra analog of the rational Cherednik algebra introduced in \cite{EG}, at least for two reasons: when $\mfg= \mfsl_n$, there is a Schur-Weyl type duality between the rational Cherednik algebra of type $A$ and $\mfD(\mathfrak{sl}_n)$ (\cite{G1}, Theorem 8.4 and \cite{G2}, Theorem 13.1); moreover, $\mfD(\g)$ can be obtained by degenerating twice the quantum toroidal algebra of $\mfg$ \cite{G2,G3}, in a way similar to how rational Cherednik algebras can be viewed as two-steps degenerations of elliptic Cherednik algebras (i.e. double affine Hecke algebras). 

The first presentation of $\mfD(\g)$ (which is similar to the Kac-Moody presentation of an affine Lie algebra) is obtained from the defining relations of the affine Yangian $Y(\widehat{\g})$ via a certain degeneration: see \cite{G2}, Theorem 12.1 and \cite{G3}, Theorem 5.5. A second presentation of $\mfD(\mathfrak{sl}_n)$, for $n\ge 4$, is given in \cite{G2} and is useful to establish the Schur-Weyl functor with rational Cherednik algebras; it involves two subalgebras which are enveloping algebras of current algebras in one variable and which play a symmetric role. In this paper, we extend this second presentation to the deformed double current algebra $\mfD(\g)$ for an arbitrary simple Lie algebra $\g$ of rank $\ge 3$: see Definition \ref{ddca double loop} and Theorem~\ref{thm:main theorem}.  (Deformed double current algebras for $\mfsl_2$ and $\mfsl_3$ were not defined in \cite{G2} due to some complications when the rank of $\mfg$ is small: we will propose a reasonable definition in \cite{GuYa}. We also do not know what is the correct definition in type $B_2$ and $G_2$.)

The presentation of $\mfD(\g)$ in terms of two current subalgebras came up naturally in the joint work of the second named author with Valerio Toledano Laredo (see \cite{TLY1,TLY2}). In \cite{CEE}, Calaque-Enriquez-Etingof constructed the universal Knizhnik-Zamolodchikov-Bernard (KZB) connection, which is a flat connection on the configuration space of $n$ points on an elliptic curve. Their work, which is for type ${\mathsf A}_n$, was generalized by V. Toledano Laredo and the second named author in \cite{TLY1} to any finite root system $\Phi$. The universal KZB connection $\nabla_{\KZB}$ obtained in \cite{TLY1} is a flat connection on the regular locus of the elliptic configuration space associated to $\Phi$ with values in a holonomy algebra. Two concrete incarnations of the KZB connection were obtained in \cite{TLY1,TLY2} by mapping the holonomy algebra to a rational Cherednik algebra and to a deformed double current algebra. This second incarnation is called the elliptic Casimir connection and is an elliptic analog of the rational Casimir connection \cite{FMTV, MTL, TL1} and of the trigonometric Casimir connection \cite{TL2}.  The construction of the elliptic Casimir connection relies crucially on the second presentation and on some of the properties of the deformed double current algebra obtained in the current paper.

Our first theorem (which is Theorem \ref{thm:main theorem}) states that both presentations of $\mfD(\g)$ alluded to above are equivalent. The other results of this paper concern central elements in $\mfD(\g)$. In sections \ref{sec: central element} and  \ref{Sec: sl_n two par}, we construct a certain central element of $\mfD(\mfg)$ which is essential for the construction in \cite{TLY2} of a homomorphism from the holonomy Lie algebra to the deformed double current algebra: see Theorem~\ref{central ele any g} and Theorem~\ref{central ele sln}. When $\mfg=\mfsl_n$, the definition of $\mfD(\g)$ involves two deformation parameters, so section \ref{Sec: sl_n two par} is specially devoted to the construction of that central element in this case. In the last section, for certain specific values of the two deformation parameters, we prove that the center of $\mfD(\mfsl_n)$ contains two subrings isomorphic to the ring of polynomials in infinitely many variables and we identify precisely two infinite sets of central elements which are algebraically independent. This result was inspired by an analogous one about rational Cherednik algebras, namely Proposition 3.6 in \cite{G} which states that the center of the rational Cherednik algebra when the parameter $t=0$ contains the subalgebra $\C[\h]^W\otimes \C[\h^*]^W$, $\h$ being here the reflection representation of the Weyl group $W$. Theorem \ref{PYang} is also inspired by a similar result for the rational Cherednik algebra $\msH_{t,c}(S_l)$ associated to the symmetric group, namely Proposition 4.3 in \cite{EG} which states that $\msH_{t,c}(S_l)$ contains a subalgebra isomorphic to the degenerate affine Hecke algebra.

\textbf{Acknowledgement}
The first named author acknowledges the financial support received from the Natural Sciences and Engineering Research Council of Canada via its Discovery Grant program. We are grateful to Pavel Etingof for useful suggestions and comments and to Valerio Toledano Laredo for numerous helpful discussions. We warmly thank the referee for a careful reading of our manuscript.  Part of the work was done when the second named author visited the Mathematical Sciences Research Institute and the Max Planck Institute for Mathematics. She wishes to acknowledge the hospitality of MSRI and MPIM.

\section{The deformed double current algebra of \texorpdfstring{$\g$}{g}}\label{rankge3}

Let $\g$ be a finite-dimensional, simple Lie algebra over $\mathbb{C}$. In this paper, we will assume that the rank of $\mfg$ is $\ge 3$, the reason being that we do not know what is the correct definition of deformed double current algebra when $\mfg$ is of Dynkin type $A_1,A_2,B_2$ or $G_2$: we expect that it will involve more complicated relations. Most of the results of this paper quite likely hold also when the rank of $\mfg$ is 1 or 2, but the required computations would probably be more daunting.

Let $(\cdot,\cdot)$ be the Killing form on $\mfg$ and let $X_i^{\pm},H_i, 1\le i \le N$, be the Chevalley generators of $\mfg$ normalized so that $(X_i^+,X_i^-)=1$ and $[X_i^+,X_i^-]=H_i$. Let $\Delta$ be the set of  roots for $\mfg$ and $\Delta^+$ be a set of positive roots. For each positive root $\alpha$, we choose generators $X_{\alpha}^{\pm}$ of $\mfg_{\pm\alpha}$ such that $(X_{\alpha}^{+},X_{\alpha}^{-})=1$ and $X_{\alpha_i}^{\pm}=X_i^{\pm}$. (As usual, $\al_i, 1\le i\le N$, denote the simple roots of $\mfg$.) If $\al>0$, set $X_{\al} = X_{\al}^+$ and $H_{\al} = [X_{\al}^+, X_{\al}^-]$; if $\al<0$, set $X_{\al} = X_{-\al}^-$ and $H_{\al} = -H_{-\al}$. Let $C=(c_{ij})_{i,j=0}^N$ be the affine Cartan matrix of the affine Lie algebra $\wh{\mfg}$ and the scalars $d_0,d_1,\ldots, d_N$ be such that $(d_ic_{ij})_{i,j=0}^N$ is a symmetric matrix. The index $0$ corresponds to the extending vertex in the Dynkin diagram of $\wh{\mfg}$. 
  
Set $\g[u] = \g \otimes_{\C} \C[u]$ and $\g[u, v]=\g\otimes_{\C}\C[u, v]$, which we call a double current Lie algebra.  For general results about the universal central extension of $\mfg[u,v]$ (e.g. that it is isomorphic to $\mfg[u,v] \oplus \frac{\Omega^1(\C[u,v])}{d\C[u,v]}$, the center being isomorphic to the quotient vector space $\frac{\Omega^1(\C[u,v])}{d\C[u,v]}$ of all 1-forms in the plane modulo the exact forms), see \cite{Ka} and \cite{KaLo}.

\begin{proposition}[\cite{G3}, Lemmas 4.1, 4.2]\label{prop:centr ext}
The universal central extension $\wh{\g[u, v]}$ of $\g[u, v]$ is isomorphic to the Lie algebra generated by elements $X_{i, r}^{\pm}$, $H_{i, r}$ for $1\le i\le N$, $r=0, 1$ and $X_{0, 0}^+$, $X_{0, 1}^+$ subject to the following relations:
\begin{gather}
[H_{i_1, r_1}, H_{i_2, r_2}]=0, \ [H_{i_1, 0}, X_{i_3, r_3}^{\pm}]=\pm d_{i_1}c_{i_1, i_3}X_{i_3, r_3}^{\pm}, \mbox{ for } 1\leq i_1, i_2 \leq N,\, 0\leq i_3 \leq N,\, r_1, r_2, r_3= \hbox{$0$ or $1$} \label{rel:current rel0}\\[1.2ex]
[H_{i_1, 1}, X_{i_2, 0}^{\pm}]=[H_{i_1, 0}, X_{i_2, 1}^{\pm}], \, 1\leq i_1 \leq N, \, 0\leq i_2 \leq N, \ [X_{i_1, 1}^{\pm}, X_{i_2, 0}^{\pm}]=[X_{i_1, 0}^{\pm}, X_{i_2, 1}^{\pm}], \, 0\leq i_1, i_2 \leq N \label{rel:current rel1}\\[1.2ex]
 [X_{i_1, r_1}^+, X_{i_2, r_2}^-]=\delta_{i_1 i_2}H_{i_1, r_1+r_2}, \, 0\leq i_1 \leq N, \, 1\leq i_2 \leq N, r_1+r_2=0, 1\\[1.2ex]
\Big[X_{i_1, 0}^\pm, \big[X_{i_1, 0}^\pm, \dots, [X_{i_1, 0}^\pm, X_{i_2, 0}^\pm]\dots\big]\Big]=0, \,\ 0\leq i_1, i_2 \leq N, 
\text{where $X_{i_1, 0}^\pm$ appears $1-c_{i_1, i_2}$ times}. \label{rel:current rel2}
\end{gather}
In  \eqref{rel:current rel1} and \eqref{rel:current rel2}, when $i_1=0$ and $i_2=0$, there is a relation only when $\pm = +$. The same applies to \eqref{rel:current rel0} when $i_3=0$.
\end{proposition}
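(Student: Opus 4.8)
The plan is to identify the abstractly presented Lie algebra with the concrete model of the universal central extension. Write $\mfE$ for the Lie algebra generated by the listed elements subject to \eqref{rel:current rel0}--\eqref{rel:current rel2}, and realize the target as in \cite{Ka,KaLo}:
\[
\wh{\g[u,v]}=\g[u,v]\oplus\tfrac{\Omega^1(\C[u,v])}{d\C[u,v]},\qquad [X\ot P,\,Y\ot Q]=[X,Y]\ot PQ+(X,Y)\,\ol{P\,dQ},
\]
where $\ol{\,\cdot\,}$ denotes the class of a one-form. I would construct a homomorphism $\Phi\colon\mfE\to\wh{\g[u,v]}$, show that the induced surjection $\bar\pi\colon\mfE\to\g[u,v]$ makes $\mfE$ a central extension, and then invoke the universal property of $\wh{\g[u,v]}$ together with perfectness to conclude that $\Phi$ is an isomorphism.

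First I would define $\Phi$ on generators by $X_{i,r}^{\pm}\mapsto X_i^{\pm}\ot v^{r}$, $H_{i,r}\mapsto H_i\ot v^{r}$ for $1\le i\le N$, and $X_{0,r}^{+}\mapsto X_{\theta}^{-}\ot uv^{r}$ with $\theta$ the highest root, and verify \eqref{rel:current rel0}--\eqref{rel:current rel2} in the model above. The Serre relations \eqref{rel:current rel2} involve only the $r=0$ generators, whose images lie in $\g\ot\C[u]$; since $\ol{P\,dQ}=0$ for $P,Q\in\C[u]$ (every form $f(u)\,du$ is already exact in $\C[u,v]$), these images carry no central component, so \eqref{rel:current rel2} reduces to the defining relations of $\g[u]\subset\g[u,v]$, valid because $X_\theta^-\ot u$ generates $\g\ot u$ as a $\g$-module. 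For the remaining relations the central contributions are classes such as $\ol{v^{r_1}\,d(v^{r_2})}$, all vanishing since $\ol{dv}=0$ and $\ol{v\,dv}=\tfrac12\ol{d(v^2)}=0$, while the $\g[u,v]$-parts reduce to identities in $\g[u,v]$, using $[X_\theta^-,X_i^-]=0$ and $(X_\theta^-,X_i^{\pm})=0$ for the relations carrying the index $0$ (here the rank hypothesis ensures $\theta\neq\alpha_i$). Surjectivity of $\bar\pi$ is the usual climbing argument: iterated brackets of $\g\ot 1$ against $X_\theta^-\ot u$ produce all of $\g\ot u$ (the adjoint representation being irreducible with lowest weight vector $X_\theta^-$), and together with the $r=1$ generators one reaches every $\g\ot u^{a}v^{b}$.

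Next, for injectivity I would grade $\mfE$ by $(u,v)$-bidegree in $\Z_{\ge0}^2$; all relations are bihomogeneous, so $\bar\pi$ is a graded surjection onto the perfect Lie algebra $\g[u,v]$ and $\mfk:=\ker\bar\pi$ is a graded ideal. The reduction is to the single claim that $\mfk$ is central in $\mfE$. Granting it, $\bar\pi$ presents $\mfE$ as a central extension, so the universal property furnishes a unique $\Psi\colon\wh{\g[u,v]}\to\mfE$ over $\g[u,v]$. A short computation shows $\mfE$ is perfect (e.g.\ $H_{i,r}=[X_{i,r}^+,X_{i,0}^-]$, $X_{i,r}^{\pm}=\pm(2d_i)^{-1}[H_{i,0},X_{i,r}^{\pm}]$, and similarly $X_{0,r}^+\in[\mfE,\mfE]$), and two homomorphisms of central extensions covering the same map of perfect bases coincide; hence $\Phi\circ\Psi=\id$ and $\Psi\circ\Phi=\id$, so $\Phi$ is an isomorphism.

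The hardest step will be the centrality of $\mfk$. Since $\mfk$ is a graded ideal and $\mfE$ is generated in bidegrees $(0,0),(1,0),(0,1),(1,1)$, the Jacobi identity reduces the claim to showing that each generator brackets to zero against $\mfk$, and bihomogeneity confines each such verification to finitely many low bidegrees. The essential input is relation \eqref{rel:current rel1}: the symmetries $[X_{i_1,1}^{\pm},X_{i_2,0}^{\pm}]=[X_{i_1,0}^{\pm},X_{i_2,1}^{\pm}]$ and $[H_{i_1,1},X_{i_2,0}^{\pm}]=[H_{i_1,0},X_{i_2,1}^{\pm}]$ are precisely what force the degree-raising differences to be central, after which the Serre relations \eqref{rel:current rel2} propagate centrality to higher bidegree. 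This is the two-variable analogue of Garland's and Kassel--Loday's computation of $H_2(\g\ot A)$, and controlling these central corrections so that $\mfk$ matches $\tfrac{\Omega^1(\C[u,v])}{d\C[u,v]}$ exactly is where the genuine work lies.
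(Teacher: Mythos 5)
First, note that the paper you were given does not actually prove Proposition \ref{prop:centr ext}: it is imported wholesale from \cite{G3} (Lemmas 4.1 and 4.2), so your attempt has to be measured against that proof, or at least against what such a proof must contain. Your overall architecture is sound and is indeed the standard skeleton for results of this kind: map the presented algebra $\mfE$ into Kassel's model $\g[u,v]\oplus\Omega^1(\C[u,v])/d\C[u,v]$, check the defining relations there (your verifications are correct -- the would-be central terms vanish because $(X_\theta^-,X_i^\pm)=0$, $(X_\theta^-,X_\theta^-)=0$, and $\ol{f(u)\,du}=0$), prove surjectivity of $\bar\pi$ by the climbing argument, establish perfectness of $\mfE$, and then conclude by the universal property together with the rigidity of homomorphisms out of a perfect central extension. (One small slip: rigidity requires the \emph{source} of the two homomorphisms to be perfect, not the base; since you prove $\mfE$ is perfect and $\wh{\g[u,v]}$ is automatically perfect, both applications you need do go through.)

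The genuine gap is the step you explicitly defer: centrality of $\mfk=\ker\bar\pi$, which is the entire mathematical content of the proposition once the formal framework is in place. Your reduction via the Jacobi identity (it suffices that each generator centralizes $\mfk$, since the centralizer of an ideal is a subalgebra) is correct, but the claim that ``bihomogeneity confines each such verification to finitely many low bidegrees'' is not: $\mfk$ has components in \emph{every} bidegree $(a,b)$, so for each generator $g$ one must kill $[g,\mfk_{(a,b)}]$ for all $(a,b)$, and nothing in relations \eqref{rel:current rel0}--\eqref{rel:current rel2} does this for free. The known proofs of such presentation theorems (Garland's affine case, Moody--Rao--Yokonuma's toroidal case, and the Lemmas 4.1--4.2 of \cite{G3} themselves, with the type-$A$ analogue in \cite{G2}) hinge on an induction on degree: one builds spanning sets of $\mfE$ bidegree by bidegree, shows that each nonzero weight space $\mfE_\alpha$, $\alpha\in\Delta$, is at most one-dimensional in each bidegree, and that the weight-zero part beyond the image of $\mfh\ot\C[u,v]$ commutes with everything. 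That induction is absent from your proposal, and it is where all the work lies. A final, minor confusion: within your own universal-property framework you never need to identify $\mfk$ with $\Omega^1(\C[u,v])/d\C[u,v]$ -- that identification is a corollary of the isomorphism once centrality is known -- so your closing sentence aims at the wrong target; what is missing is precisely, and only, a proof that $\mfk$ is central.
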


We assume for the remainder of this section that $\mfg$ is not of Dynkin type $A$. (When $\mfg=\mfsl_n$, the results of this section are contained in \cite{G2}; moreover, in this case, Definition \ref{ddca} has to be modified because, in the Dynkin diagram of affine type $\wh{A}$, the extending vertex is connected to two other vertices, not just to one.) Then there is a unique $k\in \{1, \dots, N\}$ such that $c_{0k}\neq 0$. In other words, $k$ is the label of the unique vertex in the Dynkin diagram of $\wh{\mfg}$ to which the zero node is connected. Let $\theta$ be the highest root of $\g$. If $a,b$ are any two elements in an algebra $\mcA$, we set $S(a,b) = ab + ba$. Let
\begin{equation}
\omega_i^{\pm} = \pm\frac{1}{4}\sum_{\alpha\in\Delta^+} S\big(
[X_i^{\pm},X_{\alpha}^{\pm}],X_{\alpha}^{\mp}\big)-\frac{1}{4}S(X_i^{\pm},H_i) \in \mfU(\g) \text{ and } \nu_i = [\omega_i^+,X_i^-] = \frac{1}{4} \sum_{\alpha\in\Delta^+} (\alpha_i, \alpha)S(X_{\alpha}^+, X_{\alpha}^-)-\frac{H_i^2}{2} \in \mfU(\mfg).\label{w1}
\end{equation}
We will write also $\omega_i^{\pm}$ and $\nu_i$ to denote the corresponding elements in Definition \ref{ddca} below via the homomorphism $\mfU(\mfg) \lra \msD(\mfg)$ given by $X_i^{\pm},H_i \mapsto \msX_{i,0}^{\pm},\msH_{i,0}$ for $1\le i \le N$.

\begin{definition}[\cite{G3}, Definition 5.3 with $\lambda=1$]\label{ddca}
Let $\la\in\C$. The deformed double current algebra $\msD(\mfg)$ is the $\C$-algebra
 generated
by $\msX_{i,r}^{\pm}, \msH_{i,r}$ and $\msX_{0,r}^+$ for $1\le i\le N,\, r=0,1$
subjected to the same relations as those in Proposition \ref{prop:centr ext}, except
 that the following relations involving $\msX_{0,r}^+$ must be modified:
\begin{equation} [\msX_{k,1}^+, \msX_{0,0}^{+}] - [\msX_{k,0}^+,
\msX_{0,1}^{+}] = \frac{d_0 c_{0k} \la}{2} S(\msX_{k,0}^+,\msX_{\theta}^-)
+ \la[\omega_k^+,\msX_{\theta}^-] + \la[\msX_{k,0}^+,\omega_0^+]
\label{ddca1} \end{equation}
\begin{equation} [\msH_{k,1}, \msX_{0,0}^{+}] - [\msH_{k,0},
\msX_{0,1}^{+}] =  \frac{d_0 c_{0k} \la}{2}S(\msH_{k,0},\msX_{\theta}^-)
+ d_0 c_{0k} \la\omega_0^+ + \la[\nu_k,\msX_{\theta}^-]\label{ddca2}
\end{equation} 
\begin{equation} [\msX_{0,0}^+,\msX_{k,0}^-] = 0 , \;\; [\msX_{0,1}^+,\msX_{k,0}^-] =
\lambda[\msX_{k,0}^-,\omega_0^+], \;\; [\msX_{0,1}^{+}, \msX_{0,0}^{+}] =
2d_0 \la \msX_{0,0}^+ \msX_{\theta}^-  \label{ddca3} \end{equation}
\begin{equation} [\msX_{0,0}^+,\msX_{i,1}^{\pm}] =
\la [ \msX_{\theta}^-, \omega_i^{\pm}], \;\;
[\msX_{0,1}^+,\msX_{i,0}^{\pm}] = - \la [\omega_0^+,
\msX_{i,0}^{\pm}] \,\ \mbox{ for } i\neq 0,k. \label{ddca4}
\end{equation} The elements $\msX_{\theta}^-$ and $\omega_0^+$ are defined in the following way.  We write $X_{\theta}^-$ as $X_{\theta}^- = [X_{k,0}^-,X_{\theta-\al_k}^-]$ (it may be necessary to rescale $X_{\theta-\al_k}^{\pm}$ to achieve this) and set $\msX_{\theta}^- = [\msX_{k,0}^-,\msX_{\theta-\al_k}^-] \in \msD(\mfg)$. (Here, $ \msX_{\theta-\al_k}^- \in \mfg \subset \msD(\mfg)$.) We set $\omega_0^+ = -[\omega_k^-, X_{\theta-\al_k}^-]$.
\end{definition}
Applying $[\cdot, X_{k,0}^-]$ to \eqref{ddca2} and using \eqref{ddca3} lead to an expression for $[X_{0,0}^+,X_{k,1}^-]$ as an element of $\mfU(\mfg)$. The relations \eqref{ddca1} - \eqref{ddca4} were arrived at in \cite{G3} after considering a certain degeneration of the affine Yangian of $\mfg$, taking Proposition \ref{prop:centr ext} into account.

The next definition appeared naturally in the work \cite{TLY2} and was first given in \cite{G2}  in the case of $\mathfrak{sl}_n$ for $n\ge 4$.

\begin{definition}\label{ddca double loop}
Let $\la\in\C$. The $\C$-algebra $\mfD(\mathfrak{g})$ is generated by elements $X,K(X), Q(X),P(X)$ for all  $X\in \mfg$ such that
\begin{itemize}
\item The assignment $X \mapsto X, \, X\otimes v \mapsto K(X)$ (resp. $X \mapsto X, \,X\otimes u \mapsto Q(X)$) extends to an algebra homomorphism $\mfU(\g[u]) \lra \mfD(\mathfrak{g})$ (resp. $\mfU(\g[v]) \lra \mfD(\mathfrak{g})$);
\item $P(X)$ is linear in $X$, and for any $X, X'\in \g$, $[P(X), X']=P([X, X'])$,
\end{itemize}
and the following relation holds for all root vectors $X_{\beta_1},X_{\beta_2}\in\g$ with $\beta_1\neq -\beta_2$:
\begin{equation}
[K(X_{\beta_1}),Q(X_{\beta_2})]  = P([X_{\beta_1},X_{\beta_2}]) - \frac{(\beta_1,\beta_2)\lambda}{4}  S(X_{\beta_1},X_{\beta_2}) + \frac{ \lambda}{4} \sum_{\alpha\in\Delta} S([X_{\beta_1},X_{\alpha}],[X_{-\alpha},X_{\beta_2}]).\label{2rel}
\end{equation}
\end{definition}

The name deformed double current algebra seems a priori more appropriate for  $\mfD(\mathfrak{g})$ since this algebra is clearly built from two current algebras $\mfg[u]$ and $\mfg[v]$: Theorem \ref{thm:main theorem} below states that $\mfD(\mathfrak{g})$ and $\msD(\mfg)$ are isomorphic, hence we can also call $\mfD(\mathfrak{g})$ a deformed double current algebra. The algebra $\msD(\mfg)$ was obtained in \cite{G3} as a degenerate version of the affine Yangian of $\mfg$, so one consequence of our main theorem just below is that $\mfD(\mathfrak{g})$ is also such a degeneration. Moreover, affine Yangians can be obtained from quantum toroidal algebras via a similar type of degeneration \cite{GuMa,GTL}, so a deformed double current algebra can be viewed as a two-step degeneration of a quantum toroidal algebra.

The following proposition will be useful to avoid repeating parts of certain proofs.
\begin{proposition}\cite[Proposition 12.1]{G2}\label{Prop: auto}
There is an automorphism of $\mfD(\mathfrak{g})$ which is given by 
\[
X\mapsto X, \,\ K(X)\mapsto -Q(X), 
\,\ Q(X)\mapsto K(X), \,\ P(X)\mapsto -P(X)
\]for any $X\in \g$.
\end{proposition}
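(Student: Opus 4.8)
The plan is to exploit the presentation of $\mfD(\g)$ from Definition~\ref{ddca double loop}: since $\mfD(\g)$ is given by generators and relations, it suffices to check that the proposed images $X\mapsto X$, $K(X)\mapsto -Q(X)$, $Q(X)\mapsto K(X)$, $P(X)\mapsto -P(X)$ satisfy all the defining relations, which produces a well-defined algebra endomorphism $\Phi$; a short computation of its order then upgrades it to an automorphism. First I would dispose of the structural relations. The conditions on $P$ are immediate: $-P$ is linear and $[-P(X),X']=-P([X,X'])$, which is exactly $\Phi$ applied to $[P(X),X']=P([X,X'])$. For the two current-algebra conditions I would use that a one-variable current algebra $\mfU(\g[t])$ carries the rescaling automorphism induced by $t\mapsto -t$, which fixes $\g$ and negates the degree-one part. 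Since $X\mapsto X$, $X\otimes u\mapsto Q(X)$ already extends to a homomorphism $\mfU(\g[v])\to\mfD(\g)$, precomposing with this rescaling shows that $X\mapsto X$ together with $K(X)\mapsto -Q(X)$ respects the relations defining the $K$-subalgebra; symmetrically, $K$ itself satisfies the current-algebra relations, so $X\mapsto X$, $Q(X)\mapsto K(X)$ respects the relations defining the $Q$-subalgebra.

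The heart of the argument is relation~\eqref{2rel}. Applying $\Phi$ to its left-hand side gives $[-Q(X_{\beta_1}),K(X_{\beta_2})]=[K(X_{\beta_2}),Q(X_{\beta_1})]$, while applying $\Phi$ to the right-hand side—noting that $\Phi$ restricts to the identity on the subalgebra $\mfU(\g)$, hence fixes every product of elements of $\g$—yields
\[
P([X_{\beta_2},X_{\beta_1}]) - \frac{(\beta_1,\beta_2)\la}{4}S(X_{\beta_1},X_{\beta_2}) + \frac{\la}{4}\sum_{\alpha\in\Delta} S([X_{\beta_1},X_{\alpha}],[X_{-\alpha},X_{\beta_2}]).
\]
On the other hand, relation~\eqref{2rel} for the pair $(\beta_2,\beta_1)$ (legitimate since the hypothesis $\beta_1\neq-\beta_2$ is symmetric in the two roots) expresses $[K(X_{\beta_2}),Q(X_{\beta_1})]$ as the same expression, but with the final sum replaced by $\sum_{\alpha} S([X_{\beta_2},X_{\alpha}],[X_{-\alpha},X_{\beta_1}])$; the first two terms coincide because $(\beta_1,\beta_2)=(\beta_2,\beta_1)$ and $S$ is symmetric. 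Hence $\Phi$ preserves \eqref{2rel} precisely when these two sums agree. I would prove this by the substitution $\alpha\mapsto-\alpha$, which permutes $\Delta$: after reindexing and using the symmetry $S(a,b)=S(b,a)$ together with $S(-a,-b)=S(a,b)$ and the antisymmetry of the bracket, the sum $\sum_{\alpha} S([X_{\beta_2},X_{\alpha}],[X_{-\alpha},X_{\beta_1}])$ is carried exactly onto $\sum_{\alpha} S([X_{\beta_1},X_{\alpha}],[X_{-\alpha},X_{\beta_2}])$. This invariance of the double-commutator sum under interchange of $\beta_1$ and $\beta_2$ is the one genuine point to verify, and it is where I expect the only real (though mild) obstacle to lie.

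Finally, to see that the endomorphism $\Phi$ is invertible, I would compute its powers on generators: $\Phi^2$ fixes $X$ and $P(X)$ and sends $K(X)\mapsto -K(X)$, $Q(X)\mapsto -Q(X)$, so $\Phi^4=\id$. Therefore $\Phi$ admits $\Phi^3$ as a two-sided inverse and is an automorphism, as claimed. Alternatively, one can write down the inverse directly as $X\mapsto X$, $K(X)\mapsto Q(X)$, $Q(X)\mapsto -K(X)$, $P(X)\mapsto -P(X)$ and verify it is an endomorphism by the same three steps.
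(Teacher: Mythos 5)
Your proof is correct. Note that this paper does not actually prove the proposition: it is quoted from \cite{G2} (Proposition 12.1), so there is no in-paper argument to compare against; your check-the-relations argument against Definition \ref{ddca double loop} is the natural one. The three structural verifications (the sign twist $v\mapsto -v$ on the current algebra, the behaviour of $P$, and the order-$4$ computation giving invertibility) are all sound, and you correctly isolated the one substantive point, namely the symmetry
\[
\sum_{\alpha\in\Delta} S([X_{\beta_1},X_{\alpha}],[X_{-\alpha},X_{\beta_2}])
=\sum_{\alpha\in\Delta} S([X_{\beta_2},X_{\alpha}],[X_{-\alpha},X_{\beta_1}]),
\]
which indeed follows from reindexing $\alpha\mapsto -\alpha$ together with $S(a,b)=S(b,a)$ and bilinearity of $S$; combined with the symmetry of the constraint $\beta_1\neq -\beta_2$, this shows $\Phi$ preserves \eqref{2rel}.
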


\begin{theorem} \label{thm:main theorem}
There exists an algebra isomorphism
$\varphi: \msD(\mfg) \to \mfD(\mfg)$
 given by \[ \varphi(\msX_{i,0}^{\pm}) = X_{i}^{\pm}, \; \varphi(\msH_{i,0}) = H_{i},  \; \varphi(\msX_{i,1}^{\pm}) = Q(X_{i}^{\pm}), \;  \varphi(\msH_{i,1}) = Q(H_{i}) \hbox{ for } 1\le i\le N, \]  \[  \varphi(\msX_{0,0}^+) = K(X_{\theta}^-),  \;  \varphi(\msX_{0,1}^{+}) = P(X_{\theta}^-) - \lambda\omega_0^+.  \]
\end{theorem}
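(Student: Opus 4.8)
The plan is to show that $\varphi$ is a surjective algebra homomorphism and then to produce a homomorphism $\psi\colon\mfD(\mfg)\to\msD(\mfg)$ with $\psi\circ\varphi=\id$ on the generators of $\msD(\mfg)$; since $\psi\circ\varphi=\id$ forces $\varphi$ to be injective and we will have shown it is surjective, $\varphi$ is an isomorphism with inverse $\psi$. The organizing principle throughout is $\mfg$-equivariance: the four families $X,K(X),Q(X),P(X)$ are all equivariant in their argument (by $[X',K(X)]=K([X',X])$ and its analogues), so both sides of the master relation \eqref{2rel} are values of a $\mfg$-module map $\mfg\otimes\mfg\to\mfD(\mfg)$. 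It therefore suffices to test each relation on one representative of every $\ad\mfg$-orbit of root-vector pairs and then propagate it by acting with $\mfg$. I would also use Proposition \ref{Prop: auto} to shorten the verification by exploiting the $u\leftrightarrow v$ symmetry.

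To see that $\varphi$ is well defined I would check that it respects the relations of $\msD(\mfg)$ (Proposition \ref{prop:centr ext} and Definition \ref{ddca}). Those that do not involve the zero node---relations \eqref{rel:current rel0}--\eqref{rel:current rel2} with all indices in $\{1,\dots,N\}$---involve only $X_i^\pm,H_i,Q(X_i^\pm),Q(H_i)$, and they hold automatically because $X\mapsto X,\ X\otimes u\mapsto Q(X)$ extends to a homomorphism $\mfU(\mfg[u])\to\mfD(\mfg)$ by Definition \ref{ddca double loop} and the relations in question are exactly the relations of $\mfg[u]$ in degrees at most one (the $u$-only part of $\wh{\mfg[u,v]}$ carries no central terms). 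The real content is the zero-node relations \eqref{ddca1}--\eqref{ddca4}. Applying $\varphi$ turns each into an identity in $\mfD(\mfg)$ involving either a bracket $[Q(\cdot),K(X_\theta^-)]$, which I would expand using \eqref{2rel}, or a bracket with $P(X_\theta^-)-\la\omega_0^{+}$, which I would expand using the $\mfg$-equivariance of $P$; one then matches the resulting $S$-terms and the sum $\frac{\la}{4}\sum_{\alpha}S([X_{\beta_1},X_\alpha],[X_{-\alpha},X_{\beta_2}])$ against the corrections built from $\omega_k^\pm,\nu_k,\omega_0^{+}$ of \eqref{w1} and Definition \ref{ddca}.

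Surjectivity is straightforward: the image contains $\mfg$ and $Q(\mfg)$ (the latter because $\msX_{i,1}^\pm,\msH_{i,1}$ and $\mfg$ generate the $u$-current), it contains $K(X_\theta^-)$, and since $\mfg$ is simple its adjoint module is irreducible, so the $\ad\mfg$-orbit of $X_\theta^-$ spans $\mfg$; applying $\ad\mfg$ to $K(X_\theta^-)$ via $[X',K(X)]=K([X',X])$ yields $K(X)$ for every $X$, and the same move applied to $P(X_\theta^-)=\varphi(\msX_{0,1}^{+})+\la\omega_0^{+}$ (with $\omega_0^{+}\in\mfg$ already in the image) yields all $P(X)$. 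For the inverse I would set $\psi(X)=X$, take $\psi\circ Q$ to be the $\mfg[u]$-current homomorphism $\mfU(\mfg[u])\to\msD(\mfg)$ sending $Q(X_i^\pm)\mapsto\msX_{i,1}^\pm$, and define $\psi(K(X))$ and $\psi(P(X))$ by transporting $\msX_{0,0}^{+}$ and $\msX_{0,1}^{+}+\la\omega_0^{+}$ along the $\ad\mfg$-action; then $\psi\circ\varphi=\id$ holds on generators by construction.

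The crux is the well-definedness of $\psi$, i.e. that its values obey every relation of $\mfD(\mfg)$. First, for $\psi(K)$ and $\psi(P)$ to be consistent $\mfg$-equivariant maps out of $\mfg$ one must verify inside $\msD(\mfg)$ that $\mathrm{Ann}_{\mfg}(X_\theta^-)$ annihilates $\msX_{0,0}^{+}$ and $\msX_{0,1}^{+}+\la\omega_0^{+}$, and that $\msX_{0,0}^{+}$ together with $\mfg$ generates an honest $\mfg[v]$-current (so that $X\otimes v\mapsto\psi(K(X))$ extends to $\mfU(\mfg[v])$); some of this structure is already available from the construction of $\msD(\mfg)$ in \cite{G3}. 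The hardest point is the master relation \eqref{2rel} for the candidate elements: by the equivariance reduction it comes down to a single representative pair, e.g. $\beta_1=-\theta,\ \beta_2=\alpha_k$, where \eqref{2rel} unwinds exactly into the zero-node relations \eqref{ddca1}--\eqref{ddca3}. I expect the genuine labor to lie not in any structural step but in confirming that the $\mfU(\mfg)$-valued quantum correction produced by the sum over $\Delta$ in \eqref{2rel} coincides term by term with the one prescribed through $\omega_k^\pm,\nu_k,\omega_0^{+}$; these are finite but intricate identities in the enveloping algebra.
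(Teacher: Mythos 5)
Your overall architecture (check that $\varphi$ respects the modified relations \eqref{ddca1}--\eqref{ddca4}, then invert it by assembling $\psi$ from the two current-algebra homomorphisms together with an $\ad(\mfg)$-equivariant $P$) is the same as the paper's, but the step you dispose of by ``equivariance reduction'' is precisely where the proof lives, and your reduction fails. The defining relations of $\msD(\mfg)$ yield \eqref{2rel} only for the pairs $(\beta_1,\beta_2)=(-\theta,\pm\alpha_i)$; every other pair must be deduced, and this cannot be done by testing one representative per orbit. First, pairs of roots form many Weyl-group orbits (distinguished by lengths and relative angle), not one. Second, the family of relations \eqref{2rel} is not stable under the adjoint action: the correction terms $\frac{(\beta_1,\beta_2)\la}{4}S(X_{\beta_1},X_{\beta_2})$ and $\frac{\la}{4}\sum_{\al\in\Delta}S([X_{\beta_1},X_{\al}],[X_{-\al},X_{\beta_2}])$ are separately non-equivariant (identity \eqref{m} records exactly their defect), applying $\ad(X_\gamma)$ to one instance produces a \emph{sum} of instances, some of which have Cartan arguments for which \eqref{2rel} is not among the defining relations, and the excluded diagonal $\beta_1=-\beta_2$ is excluded for a reason: there the difference of the two sides is the element $B(\beta)$ of Section \ref{sec: central element}, the central element the paper is at pains to construct, so no equivariant extension of the relation to all of $\mfg\ot\mfg$ can be expected to vanish. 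Consequently a new instance of \eqref{2rel} can be isolated only when every other term produced by the $\ad$-action is already known or vanishes, which forces the ordered induction occupying the second half of the paper's proof: from $(-\theta,\pm\alpha_k)$ to $(-\theta,\beta)$ for all roots $\beta\neq\theta$ via lowering operators and \eqref{m} (equations \eqref{r1}--\eqref{r5}); then to all long roots $\beta_1$ via $w_0$ and the operators $s_i$; then the Cartan relations \eqref{KHQX} and \eqref{r8}; and finally short $\beta_1$ by choosing a long root $\gamma$ with $(\gamma,\eta)\neq 0$ and $\gamma\neq\pm(\beta+\eta)$ (equations \eqref{r9}--\eqref{r11}). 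Your sketch compresses all of this, which is the actual content of the theorem, into an orbit argument that is false as stated.

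There is a second gap in the construction of $\psi$. You define $\psi(P(X))$ by ``transporting $\msX_{0,1}^{+}+\la\omega_0^{+}$ along the $\ad\mfg$-action,'' but for this to be well defined you must know that the $\ad(\mfg)$-submodule of $\msD(\mfg)$ generated by this vector is a copy of the adjoint representation. Checking that it is a weight vector of weight $-\theta$ annihilated by the appropriate operators --- which is what \eqref{ddca3}, \eqref{ddca4} and the weight computation give --- only produces a $\mfg$-module map out of the Verma module $M(-\theta)$, whose image could a priori be an infinite-dimensional quotient. The paper closes exactly this gap in Lemma \ref{msPlem}: $\msD(\mfg)$ carries a $\Z_{\geq 0}\times\Z_{\geq 0}$-grading with $\mfg$ in degree $(0,0)$, each graded piece is a finitely generated $\mfU(\mfg)$-module and hence locally finite under $\ad(\mfg)$, so the Verma-module map factors through a finite-dimensional quotient, which must be the adjoint representation. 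Your appeal to ``structure already available from \cite{G3}'' does not supply this argument, and without it the transport defining $\psi(P(X))$ (and hence $\psi$ itself) is not known to exist.
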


The proof of this theorem will be given in the following two subsections.

\subsection{The map $\varphi$ is a homomorphism of algebras}
The first step is to prove that the assignment $\varphi$ given in Theorem \ref{thm:main theorem} extends to a homomorphism of algebras.

\begin{lemma}\label{omegai} The element $\omega_i^{\pm}$ of $\mfU(\g)$ defined in \eqref{w1} can be rewritten in the following way: \begin{equation}
\omega_i^{\pm} = \mp\frac{1}{4}\sum_{\alpha\in \Delta^+}
S\big([X_i^{\pm},X_{\alpha}^{\mp}],X_{\alpha}^{\pm}\big). \label{w2}
\end{equation}
\end{lemma}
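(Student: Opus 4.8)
The plan is to prove the identity for the superscript $+$; the case of $-$ then follows by the evident symmetry interchanging $X_i^+\leftrightarrow X_i^-$, $X_\alpha^+\leftrightarrow X_\alpha^-$ and $H_i\leftrightarrow -H_i$. Comparing the two expressions \eqref{w1} and \eqref{w2} for $\omega_i^+$ and clearing the common factor $\tfrac14$, what must be shown is the equality in $\mfU(\g)$
\[
\sum_{\alpha\in\Delta^+} S\big([X_i^+,X_\alpha^+],X_\alpha^-\big) + \sum_{\alpha\in\Delta^+} S\big([X_i^+,X_\alpha^-],X_\alpha^+\big) = S(X_i^+,H_i).
\]

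First I would exploit that $\ad X_i^+$ acts as a derivation of the associative product, so that for each $\alpha\in\Delta^+$
\[
[X_i^+, S(X_\alpha^+,X_\alpha^-)] = S\big([X_i^+,X_\alpha^+],X_\alpha^-\big) + S\big([X_i^+,X_\alpha^-],X_\alpha^+\big).
\]
Summing over $\Delta^+$, the left-hand side above becomes $\big[X_i^+,\ \sum_{\alpha\in\Delta^+} S(X_\alpha^+,X_\alpha^-)\big]$. The key observation is that $\sum_{\alpha\in\Delta^+} S(X_\alpha^+,X_\alpha^-)$ is precisely the root-space contribution to the quadratic Casimir element of $\mfU(\g)$: one may write $\Omega = \sum_{\alpha\in\Delta^+} S(X_\alpha^+,X_\alpha^-) + \Omega_{\mfh}$, where $\Omega_{\mfh} = \sum_a h_a h^a$ for dual bases $\{h_a\},\{h^a\}$ of the Cartan subalgebra $\mfh$, and this decomposition is exactly what the normalization $(X_\alpha^+,X_\alpha^-)=1$ guarantees, since it makes $X_\alpha^-$ the Killing dual of $X_\alpha^+$.

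Since $\Omega$ is central, $[X_i^+,\Omega]=0$, and therefore
\[
\Big[X_i^+,\ \sum_{\alpha\in\Delta^+} S(X_\alpha^+,X_\alpha^-)\Big] = -[X_i^+,\Omega_{\mfh}].
\]
It then remains only to compute $[X_i^+,\Omega_{\mfh}]$. Using $[X_i^+,h] = -\alpha_i(h)X_i^+$ together with the dual-basis identities $\sum_a \alpha_i(h_a)\,h^a = \sum_a \alpha_i(h^a)\,h_a = t_{\alpha_i}$, where $t_{\alpha_i}\in\mfh$ is characterized by $(t_{\alpha_i},h)=\alpha_i(h)$, one obtains $[X_i^+,\Omega_{\mfh}] = -S(X_i^+,t_{\alpha_i})$. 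Finally, the normalization $(X_i^+,X_i^-)=1$ forces $H_i = [X_i^+,X_i^-] = t_{\alpha_i}$, whence $[X_i^+,\Omega_{\mfh}] = -S(X_i^+,H_i)$ and the displayed identity follows.

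The manipulation is entirely formal once the Casimir is brought in, so no genuine difficulty arises in the reduction via the derivation property and the centrality of $\Omega$. The one place I would proceed carefully — and the step most likely to hide a sign or constant error — is the final bookkeeping: the dual-basis evaluation of $[X_i^+,\Omega_{\mfh}]$ and the verification that $H_i$ coincides with $t_{\alpha_i}$ under the chosen Killing-form normalization, since an incorrect scalar there would invalidate the identity.
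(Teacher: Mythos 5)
Your proof is correct and takes essentially the same approach as the paper: both arguments bracket $X_i^{\pm}$ against the Casimir element $\Omega = \sum_{\alpha\in\Delta^+}S(X_{\alpha}^+,X_{\alpha}^-)+\sum_a h_a h^a$, use its centrality, and evaluate the Cartan contribution via the identification $H_i = t_{\alpha_i}$ coming from the normalization $(X_i^+,X_i^-)=1$ (in the paper this appears as $\sum_j (H_i,\wt{h}^j)\wt{h}_j = H_i$). The only cosmetic difference is that the paper handles both signs simultaneously, whereas you prove the $+$ case and appeal to the Chevalley symmetry for the $-$ case.
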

\begin{proof}
The Casimir element of $\mfg$ is \[ \Omega = \sum_{\al\in\Delta^+} S(X_{\al}^+, X_{\al}^-) + \sum_{i=1}^N \wt{h}^i \wt{h}_i   \] where $\{ \wt{h}_1,\ldots,\wt{h}_N \}$ is a  basis of the Cartan subalgebra $\mfh$ and $\{ \wt{h}^1,\ldots,\wt{h}^N \}$ is the dual basis with respect to the Killing form. $\Omega$ is in the center of $\mfU(\mfg)$, so \[ 0 = \pm\frac{1}{4} [X_{i}^{\pm},\Omega] = \omega_i^{\pm} \pm  \frac{1}{4}\sum_{\al\in\Delta^+} S(X_{\al}^{\pm}, [X_{i}^{\pm},X_{\al}^{\mp}]) \] because $\sum_{j=1}^N [X_{i}^{\pm},\wt{h}^j] \wt{h}_j = \mp \sum_{j=1}^N (H_i,\wt{h}^j) X_{i}^{\pm} \wt{h}_j = \mp X_{i}^{\pm} H_i$. 
\end{proof}

We have to verify that $\varphi$ respects the defining relations of $\msD(\mfg)$. Let us start with \eqref{ddca1}. We have to check that
\begin{equation*}
[Q(X_k^+), K(X_{\theta}^-)] -[X_k^+, P(X_{\theta}^-)] = \frac{d_0 c_{0k}\lambda}{2} S(X_k^+, X_{\theta}^-) - \frac{\lambda}{4} \sum_{\al\in\Delta^+} S([X_k^+, X_{\al}^-], [X_{\al}^+, X_{\theta}^-]) - \frac{\lambda}{4} S\big(\big[[X_k^+, X_{k}^-] , X_{\theta}^-\big] , X_{k}^+\big) . 
\end{equation*}
(Observe that $[X_{k}^-, X_{\theta}^-]=0$ and that if $\big[ [X_k^+, X_{\al}^-], X_{\theta}^-\big]$ is a root vector (where $\al$ is positive), then $\al_k-\al-\theta\ge -\theta$, so $\al\le \al_k$: since $\al_k$ is simple and both are positive, this forces $\al$ to be equal to $\al_k$.)

 The previous equality is equivalent to \[ [K(X_{\theta}^-),Q(X_k^+)] = P([X_{\theta}^-,X_k^+]) - \frac{d_0 c_{0k} \lambda}{4} S(X_k^+, X_{\theta}^-) + \frac{\lambda}{4} \sum_{\al\in\Delta^+} S([X_k^+, X_{\al}^-], [X_{\al}^+, X_{\theta}^-])  \] because $\big[ [X_k^+, X_{k}^-], X_{\theta}^-\big] = d_k c_{k0} X_{\theta}^- = d_0 c_{0k}  X_{\theta}^-$.
 This is true in $\mfD(\mfg)$ because $d_0 c_{0k} = (-\theta,\alpha_k)$. Thus, $\varphi$ preserves relation \eqref{ddca1}.

\eqref{ddca2} follows from \eqref{ddca1} because $[X_{\theta}^-, X_{k,0}^{-}] =0$. For the same reason, the first and second relation in \eqref{ddca3} hold.

For the third relation in \eqref{ddca3}, we need to compute $[P(X_{\theta}^-) - \lambda\omega_0^+, K(X_{\theta}^-)]$. Notice that if $[X_{\al}^-, X_{\theta-\al_k}^-]\neq 0$ for a positive root $\al$, then $[X_{\al}^-, X_{\theta-\al_k}^-]$ is a root vector in $\mfg$ for the root $-\al-\theta+\al_k$, so $-\al-\theta+\al_k\ge -\theta$ and thus $\al\le \al_k$: this forces $\al=\al_k$ because $\al_k$ is a simple root. Similarly, if $\Big[ \big[ [X_k^-, X_{\al}^+], X_{\theta}^-\big] ,X_{\theta-\al_k}^-\Big] \neq 0$ then it is a root vector in $\mfg$ for the root $-\al_k+\al-\theta-\theta+\al_k$, so $-\al_k+\al-\theta-\theta+\al_k\ge -\theta$ hence $\al\ge \theta$: this implies that $\al=\theta$. These two observations serve to obtain the third equality below. Starting with Lemma \ref{omegai}, we obtain: \begin{align}
[\omega_0^+, K(X_{\theta}^-)] & =   - \big[ [\omega_k^-,X_{\theta-\al_k}^-],K(X_{\theta}^-)\big] = -\frac{1}{4} \sum_{\al\in\Delta^+} \Big[ S\big(\big[ [X_k^-, X_{\al}^+], K(X_{\theta}^-)\big], X_{\al}^-\big),X_{\theta-\al_k}^-\Big]    \notag \\
& =  -\frac{1}{4}  S\Big(\Big[\big[ [X_k^-, X_{\theta}^+], K(X_{\theta}^-)\big],X_{\theta-\al_k}^-\Big] , X_{\theta}^-\Big) - \frac{1}{4}S\Big(\big[ [X_k^-, X_{\al_k}^+], K(X_{\theta}^-)\big] , [X_{\al_k}^-,X_{\theta-\al_k}^-]\Big)    \notag \\
& =  -\frac{(\al_k,\theta)}{4}   S([K(X_k^-),X_{\theta-\al_k}^-] , X_{\theta}^-) -\frac{(\al_k,\theta)}{4}  S(K(X_{\theta}^-) , X_{\theta}^-)  =  -(\theta, \alpha_k) K(X_{\theta}^-) X_{\theta}^- \label{b}
\end{align}

We show that  $[P(X_{\theta}^-), K(X_{\theta}^-)] = \lambda( (\theta, \theta)-(\alpha_k, \theta)) K(X_{\theta}^-) X_{-\theta}$ as follows.
Start with relation \eqref{2rel} with $\beta_1 = -\al_k$ and $\beta_2 = -\theta+\al_k$:
\begin{equation}
[K(X_{\al_k}^-),Q(X_{\theta-\al_k}^-)] = P(X_{\theta}^-) -  \frac{(\al_k,\theta-\al_k)\lambda}{4} S(X_{\al_k}^-,X_{\theta-\al_k}^-) + \frac{\lambda }{4} \sum_{\alpha\in\Delta} S\big( [X_{\al_k}^-,X_{\alpha}],[X_{-\alpha},X_{\theta-\al_k}^-]\big) \label{a}
\end{equation}
Observe that if $[X_{\al_k}^-,X_{\al}]\neq 0$ and $\big[ [X_{-\al},X_{\theta-\al_k}^-],X_{\theta}^-\big]\neq 0$, then both are roots vectors for the roots $-\al_k+\al$ and $-\al-\theta+\al_k-\theta$, so $-\al_k+\al \ge -\theta$ and $-\al-\theta+\al_k-\theta \ge -\theta$, hence $\al \ge -\theta + \al_k$, $-\theta+\al_k \ge \al$ and thus $\al = -\theta+\al_k$. Similarly, if $\big[ [X_{\al_k}^-,X_{\alpha}],X_{\theta}^-\big] \neq 0$ and $[X_{-\alpha},X_{\theta-\al_k}^-] \neq 0$, then $\al=\al_k$ by a similar argument.

Using these two observations, we now apply $[\cdot,K(X_{\theta}^-)]$ to \eqref{a} to get: \begin{align}
\big[ K(X_{\al_k}^-),[Q(X_{\theta-\al_k}^-),K(X_{\theta}^-)]\big]  = {} &  [P(X_{\theta}^-),K(X_{\theta}^-)]  + \frac{\lambda }{4}  S\Big( [X_{\al_k}^-,X_{-\theta + \al_k}],\big[ [X_{\theta - \al_k},X_{\theta-\al_k}^-],K(X_{\theta}^-)\big]\Big) \notag \\
& {} + \frac{\lambda }{4} S\Big(\big[ [X_{\al_k}^-,X_{\alpha_k}],K(X_{\theta}^-)\big],[X_{-\alpha_k},X_{\theta-\al_k}^-]\Big)  \notag \\
 = {} &  [P(X_{\theta}^-),K(X_{\theta}^-)]  + \frac{\lambda }{4}  S\big( X_{-\theta},[H_{\theta - \al_k},K(X_{\theta}^-)]\big) +  \frac{(\alpha_k, \theta)\lambda}{4}  S(K(X_{\theta}^-),X_{\theta}^-)  \notag \\
 = {} &  [P(X_{\theta}^-),K(X_{\theta}^-)] + \lambda\frac{2(\alpha_k, \theta)-(\theta, \theta)}{4}  S(K(X_{\theta}^-),X_{\theta}^-). \label{c}
\end{align}

We compute the left-hand side of the above equality \eqref{c}.
 \begin{align*}
[Q(X_{\theta-\al_k}^-),K(X_{\theta}^-)] & =  \frac{(\theta-\al_k,\theta)\lambda}{4}S(X_{\theta-\al_k}^-,X_{\theta}^-) - \frac{\lambda}{4} S([X_{\theta}^-,X_{\alpha_k}],[X_{-\alpha_k},X_{\theta-\al_k}^-]) \\
& =  \frac{(\theta-\al_k,\theta)\lambda}{4}S(X_{\theta-\al_k}^-,X_{\theta}^-) - \frac{\lambda}{4} S([X_{\theta}^-,X_{\alpha_k}],X_{\theta}^-)
\end{align*}
so
 \begin{align}
 \big[ K(X_{\al_k}^-),[Q(X_{\theta-\al_k}^-),K(X_{\theta}^-)]\big] & =   \Big[ K(X_{\al_k}^-), \frac{(\theta-\al_k,\theta)\lambda}{4}S(X_{\theta-\al_k}^-,X_{\theta}^-) - \frac{\lambda}{4} S([X_{\theta}^-,X_{\alpha_k}],X_{\theta}^-) \Big] \notag \\
 & =   \frac{(\theta-\al_k,\theta)\lambda}{4}S(K(X_{\theta}^-),X_{\theta}^-)  + \frac{(\alpha_k, \theta) \lambda}{4} S(K(X_{\theta}^-), X_{\theta}^-) = \frac{(\theta,\theta)\lambda}{4}S(K(X_{\theta}^-),X_{\theta}^-) . \label{d}
 \end{align}

Substituting \eqref{d} into \eqref{c} yields $[P(X_{\theta}^-), K(X_{\theta}^-)] = \lambda( (\theta, \theta)-(\alpha_k, \theta)) K(X_{\theta}^-) X_{-\theta}$ and it follows from \eqref{b} that $[P(X_{\theta}^-)-\lambda \omega_0^+, K(X_{\theta}^-)]= 
\lambda (\theta, \theta) K(X_{\theta}^-) X_{-\theta}$ as desired.

Finally, we check that $\varphi$ respects the first relation in \eqref{ddca4}, so with $i\neq 0,k$ (hence $(\theta,\al_i)=0$):
\[
[K(X_{\theta}^-), Q(X_i^{-})] = \frac{\lambda}{4} \sum_{\al\in\Delta^+} S\big( [X_i^-, X_{\al}^-], [X_{\al}^+, X_{\theta}^-]\big) = -\lambda[\omega_i^-,X_{\theta}^-] 
\]
using \eqref{w1} and
\[
[K(X_{\theta}^-), Q(X_i^{+})] = \frac{\lambda}{4} \sum_{\al\in\Delta^+} S\big( [X_i^+, X_{\al}^-], [X_{\al}^+, X_{\theta}^-]\big) = -\lambda[\omega_i^+,X_{\theta}^-] 
\]
this time using \eqref{w2}.   

The second relation  in \eqref{ddca4} is also satisfied since $[P(X_{\theta}^-), X_{i,0}^{\pm}] = P([X_{\theta}^-, X_{i,0}^{\pm}])= 0$ when $i\neq k$.

\subsection{The map $\varphi$ is an isomorphism of algebras.}

In this section, we construct the inverse  map $\psi: \mathfrak{D}(\mathfrak{g}) \rightarrow \mathsf{D}(\mathfrak{g})$ of $\varphi$.

Let us first describe the images $\psi(X)$, $\psi(K(X))$, $\psi(Q(X))$ and $\psi(P(X))$ of the generators of $\mathfrak{D}(\mathfrak{g})$ under the map $\psi$, where $X$ is any element in $\mathfrak{g}$. The following lemma is a consequence of \cite{GK} and the standard theorem of Serre regarding presentations of semisimple Lie algebras. 
\begin{lemma}\label{lem:current algebra}
The current Lie algebra $\mfg[u]$ is isomorphic to the Lie algebra generated by elements $e_i,f_i,h_i$, $1\leq i\leq N$, and $e_0$ subject
to the following relations for $1 \leq i\leq N$:
\begin{eqnarray*}
&&\hbox{$e_i,f_i,h_i$ satisfy the usual relations of Serre's Theorem for $\mathfrak{g}$;}\\[1.2ex]
&&\hbox{$[h_i,e_0] = d_i c_{i0} e_0$,\, \ $[e_0,f_i]=0$};\\[1.2ex]
&&\hbox{$ad(e_i)^{1-c_{i0}}(e_0)=0$ and $ad(e_0)^{1-c_{0i}}(e_i)=0$.}
\end{eqnarray*}
\end{lemma}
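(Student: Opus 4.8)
The plan is to realize $\mfg[u]$ as the nonnegative part of the untwisted affine Kac--Moody algebra $\wh{\mfg}$ and to derive the presentation by combining Serre's theorem for $\mfg$ with the completeness of the affine Serre relations, which is the content of \cite{GK}. Write $\mfL$ for the Lie algebra defined by the generators and relations in the statement. First I would build a homomorphism $\Phi\colon \mfL \to \mfg[u]$ by setting $e_i \mapsto X_i^+$, $f_i \mapsto X_i^-$, $h_i \mapsto H_i$ for $1\le i\le N$, and $e_0 \mapsto X_\theta^-\otimes u$. Verifying that $\Phi$ respects the relations is a short computation: the finite Serre relations hold because $X_i^\pm,H_i$ generate $\mfg$; since $X_\theta^-\otimes u$ has $\mfh$-weight $-\theta$, a direct weight computation (using the chosen normalization of the Killing form) gives $[H_i, X_\theta^-\otimes u] = d_ic_{i0}(X_\theta^-\otimes u)$; since $\theta$ is the highest root, $[X_\theta^-, X_i^-]=0$ yields $[e_0,f_i]=0$; and the two relations involving $e_0$ are exactly the affine Serre relations at the node pair $(0,i)$, with the correct exponents $1-c_{i0}$ and $1-c_{0i}$.

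Next I would prove surjectivity. Realize $\wh{\mfg}$ with its standard Chevalley generators, under which $e_0$ corresponds to $X_\theta^-\otimes t$ and the omitted generator $f_0$ to $X_\theta^+\otimes t^{-1}$. Grading $\wh{\mfg}$ by $t$-degree, with $\deg e_0 = 1$, $\deg f_0 = -1$, and all other generators in degree $0$, the subalgebra generated by $\{e_i,f_i,h_i\}_{i=1}^N\cup\{e_0\}$ is precisely the nonnegative part $\bigoplus_{k\ge 0}\mfg\otimes t^k \cong \mfg[u]$; no central term arises since every bracket of these generators stays in nonnegative degree. As $X_\theta^-$ is a lowest-weight vector of the adjoint representation, $\ad\big(U(\mfg)\big)$ applied to $X_\theta^-\otimes t$ already produces all of $\mfg\otimes t$, and iterated brackets then yield every $\mfg\otimes t^k$; hence $\Phi$ is onto.

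For injectivity I would use the $\N$-grading of $\mfL$ by the number of $e_0$-factors, which $\Phi$ respects, and argue degree by degree. Serre's theorem identifies the degree-zero part $\mfL^{(0)}$ with $\mfg$, so $\Phi^{(0)}$ is an isomorphism. The degree-one part $\mfL^{(1)}$ is the cyclic $\mfg$-module generated by $e_0$; the relations $[f_i,e_0]=0$, $[h_i,e_0]=d_ic_{i0}e_0$, and $\ad(e_i)^{1-c_{i0}}(e_0)=0$ present it as the integrable lowest-weight $\mfg$-module of lowest weight $-\theta$, i.e.\ the adjoint representation, so $\dim\mfL^{(1)}=\dim\mfg$ and $\Phi^{(1)}$ is an isomorphism as well.

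The remaining, and principal, difficulty is the bound $\dim\mfL^{(k)}\le\dim\mfg$ for $k\ge 2$: this is exactly the assertion that the listed relations are complete, with no hidden relations forced in higher $u$-degree. I would settle it by comparing $\mfL$ with the Gabber--Kac presentation of $\wh{\mfg}$ supplied by \cite{GK}. The defining relations of $\mfL$ are precisely the local relations and Serre relations of $\wh{\mfg}$ that do not involve $f_0$, and restricting the Gabber--Kac description of the relation ideal to the nonnegative $t$-degree part shows that these already generate every relation holding in $\mfg[u]=\wh{\mfg}_{\ge 0}$; in particular the Serre relations containing $f_0$ contribute nothing in nonnegative degree. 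Consequently $\ker\Phi=0$, so $\Phi$ is an isomorphism and $\mfL\cong\mfg[u]$.
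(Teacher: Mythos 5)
Your proposal is correct and takes essentially the same approach as the paper: the paper's entire proof of this lemma is a one-line citation of \cite{GK} together with Serre's theorem, which is precisely the combination you flesh out (Serre for the degree-zero part, integrable lowest-weight theory for degree one, and Gabber--Kac for completeness of the relations in higher degree). The only step that carries real weight is your final one — that the nonnegative-degree part of the relation ideal is generated, as an ideal of the subalgebra omitting $f_0$, by the Serre elements not involving $f_0$ — and this is true but needs the finer structure in \cite{GK} (the decomposition $r=r^+\oplus r^-$ with $r^\pm\subseteq\tilde{\mfn}^\pm$ spanned by $\ad\, U(\tilde{\mfn}^\pm)$ applied to the Serre elements, which are killed by all opposite Chevalley generators), not merely the bare statement that the Serre elements generate the relation ideal, since in general the intersection of an ideal with a subalgebra need not be generated by the generators lying in that subalgebra.
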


In the defining relations of $\mathfrak{D}(\mathfrak{g})$, $\{K(X), X\mid X\in\mfg\}$ generate a subalgebra which is a quotient of $\mfU(\mathfrak{g}[v])$ (conjecturally, they are isomorphic). By Lemma \ref{lem:current algebra}, we have a homomorphism $\mfU(\mathfrak{g}[v]) \rightarrow \mathsf{D}(\mathfrak{g})$, given by $e_i \mapsto \msX_{i,0}^+, \, f_i \mapsto \msX_{i,0}^-, \, h_i \mapsto \msH_{i,0}$ for $1\le i \le N$, and $e_0 \mapsto \msX_{0, 0}^+$.
Likewise, we have a homomorphism $\mfU(\mathfrak{g}[u]) \rightarrow \mathsf{D}(\mathfrak{g})$, given by
$e_i \mapsto \msX_{i,0}^+, \, f_i \mapsto \msX_{i,0}^-, \, h_i \mapsto \msH_{i,0}$ for $1\le i \le N$, and $X_i^{\pm} \ot u \mapsto \msX_{i, 1}^{\pm}$, for $1\leq i\leq N$.
These two homomorphisms tell us how to define $\psi(K(X))$ and $\psi(Q(X))$ in $\mathsf{D}(\mathfrak{g})$, for any $X\in \g$;  in particular, $\psi(K(X_{\theta}^-)) = \msX_{0, 0}^+$ and $\psi(Q(X_i^{\pm}))= \msX_{i, 1}^{\pm}$.
\begin{lemma}\label{msPlem}
There is an $\ad(\mfg)$-module morphism $\msP: \mfg \to \msD(\g)$, such that $X_{-\theta} \mapsto \msP(X_{-\theta}):= \msX_{0, 1}^++\lambda \omega_0^+.$
That it is an $\ad(\mfg)$-module morphism means that:
\begin{enumerate}
 \item $\msP(X)$ is linear in $X$;
 \item $[X, \msP(X')]=\msP([X, X'])$, for any $X, X'\in \g$.
\end{enumerate}
\end{lemma}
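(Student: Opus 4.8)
The plan is to use the $\ad(\mfg)$-module structure on $\msD(\mfg)$. The degree-$0$ generators $\msX_{i,0}^{\pm},\msH_{i,0}$ satisfy the relations of $\mfg$ (the $r=0$ part of Proposition~\ref{prop:centr ext}, including the Serre relations \eqref{rel:current rel2}), so they span a genuine copy of $\mfg$ inside $\msD(\mfg)$ and $\msD(\mfg)$ becomes a module under $\ad(\mfg)$. As an $\ad(\mfg)$-module the adjoint representation $\mfg$ is the irreducible lowest weight module $L(-\theta)$, whose lowest weight space is $\C X_{-\theta}$ of weight $-\theta$; it is presented by a single generator $v$ of weight $-\theta$ subject to $[\msX_{i,0}^-,v]=0$, $[\msH_{i,0},v]=d_ic_{i0}v$ and the Serre-type relations $(\ad\msX_{i,0}^+)^{1-c_{i0}}(v)=0$ for $1\le i\le N$ (these are exactly the relations imposed on the generator $e_0$ in Lemma~\ref{lem:current algebra}). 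Consequently, an $\ad(\mfg)$-module morphism $\msP\colon\mfg\to\msD(\mfg)$ with $\msP(X_{-\theta})=w:=\msX_{0,1}^++\lambda\omega_0^+$ exists, and is then unique, precisely when $w$ satisfies these same three families of conditions in $\msD(\mfg)$. I would verify them in turn.

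First the weight: by \eqref{rel:current rel0} the element $\msX_{0,1}^+$ has $\ad(\mfh)$-weight $-\theta$, and $\omega_0^+=-[\omega_k^-,X_{\theta-\al_k}^-]$ has weight $(-\al_k)+(-\theta+\al_k)=-\theta$ since $\omega_k^-$ has weight $-\al_k$ by \eqref{w1}; hence $w$ has weight $-\theta$. Next, the lowest weight condition $[\msX_{i,0}^-,w]=0$: it suffices to treat the simple generators, the general case following by the Jacobi identity since $\mfn^-$ is generated by the $\msX_{i,0}^-$. For $i=k$ the second relation of \eqref{ddca3} gives $[\msX_{0,1}^+,\msX_{k,0}^-]=\lambda[\msX_{k,0}^-,\omega_0^+]$, so that $[\msX_{k,0}^-,w]=[\msX_{k,0}^-,\msX_{0,1}^+]+\lambda[\msX_{k,0}^-,\omega_0^+]=0$; for $i\neq k$ the first relation of \eqref{ddca4} produces the same cancellation.

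It then remains to check the Serre relations $(\ad\msX_{i,0}^+)^{1-c_{i0}}(w)=0$. When $i\neq k$ one has $c_{i0}=0$, so the relation reads $[\msX_{i,0}^+,w]=0$, which follows from the first relation of \eqref{ddca4} (upper sign) exactly as above. The remaining case $i=k$ is the crux. Here I would first use \eqref{ddca1} to rewrite $[\msX_{k,0}^+,\msX_{0,1}^+]$; the two $[\msX_{k,0}^+,\omega_0^+]$ contributions cancel and one obtains
\[
[\msX_{k,0}^+,w]=[\msX_{k,1}^+,\msX_{0,0}^+]-\frac{d_0c_{0k}\lambda}{2}S(\msX_{k,0}^+,\msX_{\theta}^-)-\lambda[\omega_k^+,\msX_{\theta}^-].
\]
One then applies $\ad(\msX_{k,0}^+)$ the remaining $-c_{k0}$ times and shows the outcome vanishes, controlling the repeated action of $\ad(\msX_{k,0}^+)$ on the finite-dimensional factors $X_{\theta-\al_k}^-$, $\msX_{\theta}^-$, $\omega_k^+$ (via the Serre relations of $\mfg$) and on the current generators $\msX_{k,1}^+,\msX_{0,0}^+$. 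This multi-bracket computation is the main obstacle, and is essentially the only place where the precise form of \eqref{ddca1} is needed.

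Once the weight, lowest weight and Serre conditions are in hand, the universal property of the presentation above yields the desired $\ad(\mfg)$-module morphism $\msP\colon\mfg\to\msD(\mfg)$ with $\msP(X_{-\theta})=w$; its linearity and the identity $[X,\msP(X')]=\msP([X,X'])$ then hold by construction, which is what is claimed.
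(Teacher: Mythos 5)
Your reduction of the lemma to a presentation of the adjoint representation is legitimate in principle: $\mfg\cong L(-\theta)$ is indeed presented, as a lowest weight $\ad(\mfg)$-module, by a single vector $v$ of weight $-\theta$ subject to $[\msX_{i,0}^-,v]=0$, the weight relations, and the integrability relations $(\ad \msX_{i,0}^+)^{1-c_{i0}}(v)=0$; and your verification of the weight and lowest-weight conditions for $w=\msX_{0,1}^+ + \la\omega_0^+$ (via \eqref{rel:current rel0}, \eqref{ddca3} and \eqref{ddca4}) is the same as the corresponding step in the paper. The gap is that you never carry out the one verification on which your whole strategy rests: the Serre relation $(\ad \msX_{k,0}^+)^{1-c_{k0}}(w)=0$ at the node $k$ attached to the affine vertex. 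You rewrite $[\msX_{k,0}^+,w]$ using \eqref{ddca1} and then say that one applies $\ad(\msX_{k,0}^+)$ the remaining $-c_{k0}$ times and ``shows the outcome vanishes,'' calling this the main obstacle. It is the main obstacle, and leaving it as a description rather than a computation means the proposal is not a proof. The expression to be killed involves $[\msX_{k,1}^+,\msX_{0,0}^+]$, $S(\msX_{k,0}^+,\msX_{\theta}^-)$ and $[\omega_k^+,\msX_{\theta}^-]$; iterating $\ad(\msX_{k,0}^+)$ on it mixes the degree-one generators with quadratic elements of $\mfU(\mfg)$, and nothing in your sketch indicates how the cancellation closes up.

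The paper avoids this computation altogether, and the contrast is instructive. It maps the Verma module $M(-\theta)$ to $\msD(\mfg)$ by sending the lowest weight vector to $w$ --- which requires only the conditions you did check --- and then argues that this $\mfg$-equivariant map must factor through a finite-dimensional quotient of $M(-\theta)$, necessarily the adjoint representation, because $\msD(\mfg)$ is locally finite as an $\ad(\mfg)$-module. Local finiteness is obtained structurally: $\msD(\mfg)$ carries a $\Z_{\ge 0}\times\Z_{\ge 0}$-grading in which $\mfg$ sits in degree $(0,0)$, each graded piece is a finitely generated module over the degree-$(0,0)$ part $\mfU(\mfg)$, and $\mfU(\mfg)$ is locally finite under $\ad$. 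This soft representation-theoretic step replaces exactly the hard multi-bracket identity you postponed: the integrability of the $\ad(\mfg)$-action on the image comes for free, so no Serre relation ever has to be checked. To repair your argument you must either supply the $i=k$ computation in full, or import the paper's local-finiteness argument --- at which point your generators-and-relations presentation of $L(-\theta)$ becomes unnecessary.
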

\begin{proof}
\footnote{This proof was suggested to us by Valerio Toledano Laredo.}
Let $\g=\mfn^-\oplus \mfh\oplus \mfn^+$ be a triangular decomposition of $\g$, and $\mfb^-:=\mfn^-\oplus \mfh$.
Let $M(-\theta):=\mfU(\g)\otimes_{\mfU(\mfb^-)} \C_{-\theta}$ be the Verma module with lowest weight $-\theta$,
where $\C_{-\theta}$ is the 1--dimensional $\mfb^-$--module with trivial
$n^-$-action.
Thanks to the PBW Theorem, $M(-\theta) \cong \mfU(\mfn^+)\otimes \C_{-\theta}$. Denote the lowest weight vector of $M(-\theta)$
by $v^-$, so $v^-=1\otimes 1$. Then, $M(-\theta)$ as a $\mfU(\g)$ is generated by $v^-$, such that
\begin{equation}\label{lowest weight}
\hbox{$\mfn^-\cdot v^-=0$ and $h\cdot v^-= -\theta(h) v^-$, for all $h\in \mfh$.}
\end{equation}
We first construct a morphism $\widetilde{\msP}: M(-\theta)\to \msD(\g)$ given by $v^-\mapsto \msP(X_{-\theta}):= \msX_{0, 1}^++\lambda \omega_0^+$ and then show that $\widetilde{\msP}$ factors through the adjoint representation of $\g$, which gives us the commutative diagram  
$\xymatrix@C=0.8em@R=0.8em{
M(-\theta) \ar[rr]^{\widetilde{\msP}} \ar[dr] & &\msD(\g)\\
&\g \ar@{-->}[ru]_{\msP}&
}$.

To show the morphism $\widetilde{\msP}$ is well-defined, it suffices to verify that $\msP(X_{-\theta})$ satisfies \eqref{lowest weight}. Rewriting the defining relation \eqref{ddca3} of $\msD(\g)$, we have:
$[\msP(X_{-\theta}), \msX_{k, 0}^{-}]=0$, and relation \eqref{ddca4} gives $[\msP(X_{-\theta}), \msX_{i, 0}^{\pm}]=0$ for $i\neq k$. 

We check now that $[\msH_{i, 0}, \msP(X_{-\theta})]=(\alpha_i, -\theta)\msP(X_{-\theta})$: it follows from the relation in Proposition \ref{prop:centr ext} that $[\msH_{i,0}, \msX_{0, 1}^{+}]=d_ic_{i, 0}\msX_{0, 1}^{+}=(\alpha_i, -\theta)\msX_{0, 1}^{+}$; using that $\omega_0^+ = -[\omega_k^-, X_{\theta-\alpha_k}^-]$, we obtain $[H_{i, 0}, \omega_0^+] = -(\alpha_i, \theta)\omega_0^+$.

Thus, we have a (non-zero) $\g$-equivariant map from the Verma module $M(-\theta)$ with
lowest weight $-\theta$ to $\msD(\g)$ mapping the lowest weight vector $v^-$ to $\msP(X_{-\theta})$.

The next step is to show that $\msD(\g)$ is locally finite as an $\ad(\g)$-module. Note that the algebra $\msD(\g)$ is bigraded with the following $\Z_{\ge 0} \times \Z_{\ge 0}$-grading:
\begin{itemize}
  \item $\deg(\msX_{i, 0}^{\pm})=(0, 0)$, for $i=1, \dots, N$.
  \item $\deg(\msX_{i, 1}^{\pm})=(1, 0)$, for $i=1, \dots, N$ and $\deg(\msX_{0, 0}^+)=(0, 1)$.
  \item $\deg(\lambda)=(1, 1)$ and $\deg(\msX_{0, 1}^+)=(1, 1).$ 
\end{itemize} 
Now each graded piece is invariant under the adjoint action of $\g$ since the degree of the elements in $\g$ is $(0, 0)$.
The degree $(0, 0)$ piece of $\msD(\g)$ coincides with the enveloping algebra $\mfU(\g)$ and each graded piece as a $\mfU(\g)$-module is finitely generated. Since the enveloping algebra $\mfU(\g)$ is locally finite as $\ad(\g)$-module, each graded piece of $\msD(\g)$ is also locally finite.

Therefore, the $\g$-equivariant map $\widetilde{P}$ must factor through
a finite dimensional quotient of the Verma module $M(-\theta)$, and
there is only one such quotient, namely the adjoint representation of $\g$.
\end{proof}

The remainder of the proof consists in checking that, for any two roots $\beta_1\neq -\beta_2$,
\begin{equation*}
[\psi(K(X_{\beta_1})), \psi(Q(X_{\beta_2}))] = \psi(P([X_{\beta_1},X_{\beta_2}]) - \frac{(\beta_1,\beta_2)\lambda}{4}S(\psi(X_{\beta_1}),\psi(X_{\beta_2})) + \frac{ \lambda}{4} \sum_{\alpha\in\Delta} S([\psi(X_{\beta_1}),\psi(X_{\alpha})],[\psi(X_{-\alpha}),\psi(X_{\beta_2})]) .
\end{equation*}
From the defining relations of $\mathsf{D}(\mathfrak{g})$, this is known to be true in the cases when $\beta_1=-\theta$ and $\beta_2 = \pm \alpha_1, \ldots, \pm \alpha_N$. In order to see that it's true in general, we can use the standard operators $s_i$, whose definition is recalled below, which have the property that $s_i(X_{\alpha})$ is a root vector for the root $s_i(\alpha)$.

In order to simplify the notation, we denote $\psi(K(X_{\beta}))$ simply by $K(X_{\beta})$, and similarly for $\psi(Q(X_{\beta}))$ and $\psi(P(X_{\beta}))$. As observed in the previous paragraph, we know that  \eqref{2rel} holds in $\msD(\mfg)$ when $\beta_1 = -\theta$ and $\beta_2=\pm\al_1,\ldots, \pm\al_N$. The goal is to show, using this assumption along with $[K(X_1),X_2]=K([X_1,X_2])$, $[Q(X_1),X_2]=Q([X_1,X_2])$ and $[P(X_1),X_2]=P([X_1,X_2])$ (see Lemma \ref{msPlem}), that \eqref{2rel} must hold in full generality in $\msD(\mfg)$ for any two roots $\beta_1,\beta_2$ with $\beta_1\neq -\beta_2$.

Let $m:\mfU(\mfg) \ot_{\C} \mfU(\mfg) \lra \mfU(\mfg)$ be the multiplication map. View $\Omega$ as an element of $\mfg \ot_{\C} \mfg$. The following observation will be useful below:
\begin{align}
\sum_{\alpha\in\Delta} S([X_{\beta_1},X_{\alpha}],[X_{-\alpha},X_{\beta_2}])  = {} &  m \left( \sum_{\alpha\in\Delta} \big[ [X_{\beta_1} \ot 1, X_{\alpha} \ot X_{-\alpha}], 1 \ot X_{\beta_2} \big]\right) + m \left( \sum_{\alpha\in\Delta} \big[ [1 \ot X_{\beta_1} , X_{-\alpha} \ot X_{\alpha}],  X_{\beta_2} \ot 1 \big]\right) \notag \\
 = {} &  m \left( \big[ [X_{\beta_1} \ot 1, \Omega], 1 \ot X_{\beta_2} \big] - \sum_{i=1}^N \big[ [X_{\beta_1} \ot 1, \wt{h}^i \ot \wt{h}_i], 1 \ot X_{\beta_2} \big] \right) \notag \\
& {} + m \left( \big[ [ 1 \ot X_{\beta_1} , \Omega], X_{\beta_2} \ot 1 \big] - \sum_{i=1}^N \big[ [ 1 \ot X_{\beta_1}, \wt{h}^i \ot \wt{h}_i], X_{\beta_2} \ot 1 \big] \right). \label{SXbXa}
\end{align}
It is known that $[\Omega, X \ot 1 + 1 \ot X] =0$ for any $X\in\mfg$ and, consequently,  we have:
\begin{align}
\sum_{\alpha\in\Delta} \big[ S([X_{\beta_1},X_{\alpha}],[X_{-\alpha}, X_{\beta_2}]), X_\gamma\big]
 & -  \sum_{\alpha\in\Delta} S\big(\big[ [X_{\beta_1},X_\gamma],X_{\alpha}\big] ,[X_{-\alpha},X_{\beta_2}]\big)
 - \sum_{\alpha\in\Delta} S\big( [X_{\beta_1},X_{\alpha}],\big[ X_{-\alpha},[X_{\beta_2},X_\gamma]\big]\big)  \notag \\
& = -(\gamma, \beta_2)S([X_{\beta_1}, X_\gamma], X_{\beta_2})-(\gamma, \beta_1)S(X_{\beta_1}, [ X_{\beta_2}, X_\gamma]). \label{m}
\end{align}
Here, either $X_\gamma \in \mfg_\gamma$ for $\gamma\in \Delta$ or $\gamma=0$ and $X_{\gamma}$ is to be interpreted as an element in $\mfh$.  
\begin{remark}
Equality \eqref{m} is also valid if $X_{\beta_1}$ (resp. $X_{\beta_2}$) is replaced by an element of $\mfh$ and if the factor $(\gamma, \beta_1)$ (resp. $(\gamma, \beta_2)$) is replaced by $0$. When referring to \eqref{m}, we may also be considering these two cases.
\end{remark}
Equality \eqref{m} follows from \eqref{SXbXa} via the following computation:
\begin{align*}
\sum_{\alpha\in\Delta} & \big[S([X_{\beta_1},X_{\alpha}],[X_{-\alpha}, X_{\beta_2}]), X_\gamma\big]
  -  \sum_{\alpha\in\Delta} S\big(\big[ [X_{\beta_1},X_\gamma],X_{\alpha}\big] ,[X_{-\alpha},X_{\beta_2}]\big)
 - \sum_{\alpha\in\Delta} S\big( [X_{\beta_1},X_{\alpha}],\big[ X_{-\alpha},[X_{\beta_2},X_\gamma]\big]\big)  \notag \\
& = - \sum_{i=1}^N m\left(\Big[ \big[ X_{\beta_1} \ot 1, [\wt{h}^i \ot \wt{h}_i, X_\gamma \ot 1 + 1 \ot X_\gamma ]\big], 1 \ot X_{\beta_2} \Big] \right) \notag
  - \sum_{i=1}^N m \left( \Big[ \big[ 1 \ot X_{\beta_1}, [\wt{h}^i \ot \wt{h}_i, X_\gamma \ot 1 + 1 \ot X_\gamma ]\big] , X_{\beta_2} \ot 1 \Big] \right)\notag \\
 & = - m\left(\Big[ \big[ X_{\beta_1} \ot 1, X_\gamma \ot H_\gamma+H_\gamma \ot X_\gamma \big], 1 \ot X_{\beta_2} \Big] \right) \notag
  - m \left( \Big[ \big[ 1 \ot X_{\beta_1}, X_\gamma \ot H_\gamma+H_\gamma \ot X_\gamma \big] , X_{\beta_2} \ot 1 \Big] \right)\\
&   = - m\left(\big[ [X_{\beta_1}, X_\gamma] \ot H_\gamma+ [X_{\beta_1}, H_\gamma] \ot X_\gamma, 1 \ot X_{\beta_2} \big] \right) - m \left( \big[ X_\gamma \ot  [X_{\beta_1}, H_\gamma]+H_\gamma \ot  [X_{\beta_1}, X_\gamma] , X_{\beta_2} \ot 1 \big] \right)\\
& = - m\left((\gamma, \beta_2)[X_{\beta_1}, X_\gamma] \ot X_{\beta_2}+(\gamma, \beta_1) X_{\beta_1} \ot [X_{\beta_2}, X_\gamma] \right) \notag
  - m \left( (\gamma, \beta_1)[ X_{\beta_2}, X_\gamma] \ot  X_{\beta_1}+(\gamma, \beta_2)X_{\beta_2} \ot  [X_{\beta_1}, X_\gamma] \right)\\
& = -(\gamma, \beta_2)S([X_{\beta_1}, X_\gamma], X_{\beta_2})-(\gamma, \beta_1)S(X_{\beta_1}, [ X_{\beta_2}, X_\gamma]).
\end{align*}

We are supposing that \eqref{2rel} holds when $\beta_1 = -\theta$ and $\beta_2 = \pm\al_k$. For convenience, let's write down these two relations here:
\begin{equation} [K(X_{\theta}^-),Q(X_k^+)] = P([X_{\theta}^-,X_k^+]) + \frac{(\theta,\al_k)\lambda}{4} S(X_{\theta}^-,X_k^+) + \frac{\lambda}{4} \sum_{\al\in\Delta} S([X_{\theta}^-,X_{\al}],[X_{-\al},X_k^+]) \label{r1}
\end{equation} and
\begin{equation} [K(X_{\theta}^-),Q(X_k^-)] = - \frac{(\theta,\al_k)\lambda}{4} S(X_{\theta}^-,X_k^-) + \frac{\lambda}{4} \sum_{\al\in\Delta} S([X_{\theta}^-,X_{\al}],[X_{-\al},X_k^-]). \label{rr1} \end{equation}
We will need the following observation: $([X_{\theta}^+,X_k^-],X_{-\theta+\al_k}) = (X_{\theta}^+,[X_k^-,X_{-\theta+\al_k}]) = (X_{\theta}^+,X_{\theta}^-)=1 \Longrightarrow [X_{\theta}^+,X_k^-] = X_{\theta-\al_k}$. Let's apply $[X_{\theta}^+,\cdot]$ to relation \eqref{rr1} to obtain, using \eqref{m}:
\begin{align}
[K(H_{\theta})  ,Q(X_k^-)] & + [K(X_{\theta}^-),Q(X_{\theta-\al_k})] \notag \\ 
& = \frac{\lambda}{4} \sum_{\al\in\Delta} S([H_{\theta},X_{\al}],[X_{-\al},X_k^-]) + \frac{\lambda}{4} \sum_{\al\in\Delta} S([X_{\theta}^-,X_{\al}],[X_{-\al},X_{\theta-\al_k}]) +
 \frac{((\theta, \theta)-(\theta, \alpha_k))\lambda}{4} S(X_{\theta}^-,X_{\theta-\al_k}^+) \label{r2}.
\end{align}

Set $\wt{e}_i = \sqrt{\frac{2}{(\alpha_i,\alpha_i)}} X_{i}^+$ and $\wt{f}_i = \sqrt{\frac{2}{(\alpha_i,\alpha_i)}} X_{i}^-$ for $i\neq 0$. Consider the operators in the adjoint representation of $\mfg$ given by $s_{i} = \exp(\mathrm{ad}(\wt{f}_i)) \exp(-\mathrm{ad}(\wt{e}_i)) \exp(\mathrm{ad}(\wt{f}_i))$. It is known that $X\in\mfg_{\al} \Longrightarrow s_i(X_{\al}) \in \mfg_{s_i(\al)}$ (where $\mfg_{\al}$ is the root subspace of $\mfg$ for the root $\al$) and $s_i(H_j) = H_j - \frac{2(\al_i,\al_j)}{(\al_i,\al_i)}H_i $. We will use $s_i$ to denote either a simple reflection in the Weyl group $W$ of $\mfg$ or the corresponding operator in its adjoint representation. (This is an abuse of notation since those operators $s_i$ do not provide an action of $W$ on $\mfg$.)

Let $w_0$ be the longest element of the Weyl group. Then $-w_0$ is a permutation of the simple roots. Let us express $w_0$ as a product of simple reflections $w_0 = s_{i_1} \cdots s_{i_{\ell}}$ and let us denote also by $w_0$ the corresponding operator in the adjoint representation of $\mfg$ for this choice of decomposition.  Applying $w_0$ to \eqref{r1} shows that the following relation also holds (after rescaling if needed):
\begin{equation}
[K(X_{\theta}^+),Q(X_k^-)] = P([X_{\theta}^+,X_k^-]) + \frac{(\theta,\al_k)\lambda}{4} S(X_{\theta}^+,X_k^-) + \frac{\lambda}{4} \sum_{\al\in\Delta} S([X_{\theta}^+,X_{\al}],[X_{-\al},X_k^-]). \label{tkm}
\end{equation}
(For any $w\in W$, one can choose a decomposition into simple transpositions and obtain a corresponding operator in the adjoint representation with the property that $w(X_{\al})$ is a scalar multiple of $X_{w(\al)}$; moreover, since the Killing form is invariant under the adjoint action, $(w(X_{\al}),w(X_{-\al})) = (X_{\al},X_{-\al})=1$, so $w(X_{\al}) = aX_{w(\al)},w(X_{-\al}) = a^{-1} X_{-w(\al)} $ for some non-zero scalar $a$ and $S([X_{\theta}^+,X_{w(\al)}],[X_{-w(\al)},X_k^-]) = S([X_{\theta}^+,w(X_{\al})],[w(X_{-\al}),X_k^-])$.)

Applying $[\cdot,X_{\theta}^-]$ to \eqref{tkm} and using again \eqref{m} gives \begin{equation} [K(H_{\theta}),Q(X_k^-)] = P([H_{\theta},X_k^-]) + \frac{\lambda}{4} \sum_{\al\in\Delta} S([H_{\theta},X_{\al}],[X_{-\al},X_k^-]). \label{r3}
\end{equation}

Combining \eqref{r3} with \eqref{r2} yields
\begin{equation}
[K(X_{\theta}^-),Q(X_{\theta-\al_k})] = P([X_{\theta}^-,X_{\theta-\al_k}]) -  \frac{(\theta,\al_k-\theta)\lambda}{4} S(X_{\theta}^-,X_{\theta-\al_k}) + \frac{\lambda}{4} \sum_{\al\in\Delta} S([X_{\theta}^-,X_{\al}],[X_{-\al},X_{\theta-\al_k}]).
\label{r4}
\end{equation}
To obtain this relation, it was necessary to determine that $[X_{\theta}^-,X_{\theta-\al_k}] =  (\theta, \alpha_k) X_{k}^-$. $[X_{\theta}^-,X_{\theta-\al_k}]$ is a scalar multiple of $X_{k}^-$ with scalar given by $([X_{\theta}^-,X_{\theta-\al_k}],X_{k}^+)$ and 
\[ ([X_{\theta}^-,X_{\theta-\al_k}],X_{k}^+)  = - (X_{\theta-\al_k},[X_{\theta}^-,X_{k}^+])  =  (\theta, \alpha_k) (X_{\theta-\al_k},X_{\theta-\al_k}^-) = (\theta, \alpha_k) = -d_0c_{0k}, \]
where the second equality follows from $[X_{\theta}^-,X_{k}^+]=-(\theta, \alpha_k)X_{\theta-\al_k}^-$, which can be checked as follows:
\[
([X_\theta^-, X_{k}^+],X_{\theta-\al_k})  = - (X_{k}^+,[X_{\theta}^-,X_{\theta-\al_k}]) = - (X_{k}^+,[X_{\theta}^-,[X_\theta^+, X_k^-]]) =  - (X_{k}^+,[[X_{\theta}^-,X_\theta^+],X_k^-]) = -(\theta, \alpha_k)(X_{k}^+, X_k^-) = -(\theta, \alpha_k).
\]
Let $\beta$ be any positive root different from $\theta-\al_k$. There exist simple roots $\beta_1,\ldots, \beta_{\ell}$ such that $\Big[ X_{\beta_{\ell}}^-,\big[ \cdots [X_{\beta_1}^-,X_{\theta-\al_k}] \cdots \big]\Big]$ is a root vector in $\mfg_{\beta}$; let's denote it by $\wt{X}_{\beta}$.
(This is true for $X_{\theta}$ instead of $X_{\theta-\al_k}$ because $\theta$ is the highest root of $\mfg$, but the only simple root $\beta_i$ such that $[X_{\beta_i}^-,X_{\theta}^+] \neq 0$ is $\beta_i=\al_k$, so we might as well start with $X_{\theta-\al_k}$.) Applying $[X_{\beta_i}^-,\cdot]$ to \eqref{r4} repeatedly, we obtain, after perhaps rescaling:  \begin{equation}
[K(X_{\theta}^-),Q(X_{\beta})] = P([X_{\theta}^-,X_{\beta}]) + \frac{(\theta,\beta)\lambda}{4} S(X_{\theta}^-,X_{\beta})  + \frac{\lambda}{4} \sum_{\al\in\Delta} S([X_{\theta}^-,X_{\al}],[X_{-\al},X_{\beta}]). \label{r5}
\end{equation}
Let's prove this by induction on $\ell$. We already know that this is true when $\ell=0$ by \eqref{r4}, so let's assume it's true for $\ell-1$. Set  $\wt{X}_{\wt{\beta}} = \Big[ X_{\beta_{\ell-1}}^-,\big[ \cdots [X_{\beta_1}^-,X_{\theta-\al_k}] \cdots \big]\Big]$, so $\wt{X}_{\beta} =[X_{\beta_{\ell}}^-,\wt{X}_{\wt{\beta}}]$ and, by the inductive assumption, \begin{equation*}
[K(X_{\theta}^-),Q(\wt{X}_{\wt{\beta}})] = P([X_{\theta}^-,\wt{X}_{\wt{\beta}}]) + \frac{(\theta,\wt{\beta})\lambda}{4} S(X_{\theta}^-,\wt{X}_{\wt{\beta}})  + \frac{\lambda}{4} \sum_{\al\in\Delta} S([X_{\theta}^-,X_{\al}],[X_{-\al},\wt{X}_{\wt{\beta}}]).
\end{equation*} We apply $[X_{\beta_{\ell}}^-,\cdot]$ to both sides and use the fact that $\beta = \wt{\beta} - \beta_{\ell}$ along with the following consequence of \eqref{m}:  \[ \sum_{\al\in\Delta} \big[ X_{\beta_{\ell}}^-, S([X_{\theta}^-,X_{\al}],[X_{-\al},\wt{X}_{\wt{\beta}}])\big] - \sum_{\al\in\Delta} S([X_{\theta}^-,X_{\al}],[X_{-\al},\wt{X}_{\beta}]) = - (\beta_{\ell},\theta)S(X_{\theta}^-, [X_{\beta_{\ell}}^- , \wt{X}_{\wt{\beta}}]) = - (\beta_{\ell},\theta)  S(X_{\theta}^-,\wt{X}_{\beta}). \] The relation \eqref{r5} now follows by induction, using that $\beta = \wt{\beta} - \beta_{\ell}$.

We still have to prove \eqref{r5} when $\beta$ is a negative root. This can be done by writing $X_{\beta}$ as $X_{\beta} = \Big[ X_{\beta_{\ell}}^-,\big[ \cdots [X_{\beta_1}^-,X_{i}^-] \cdots \big]\Big]$ for some simple roots $\al_i,\beta_1, \ldots, \beta_{\ell}$,  starting with relation \eqref{2rel} with $\beta_1=-\theta$, $\beta_2=-\al_i$ and applying successively $[X_{\beta_j}^-,\cdot], j=1\ldots, \ell$ to both sides. We have now proved that \eqref{r5} holds for any root $\beta$ of $\mfg$ different from $\theta$. 

Let $\beta_1$ be any long root for $\mfg$. Then there exists $w\in W$ such that $\beta_1 = w(-\theta)$.  Let $w=s_{i_1} s_{i_2} \cdots s_{i_{\ell}}$ be a decomposition of $w$ into a product of simple reflections and let's denote also by $w$ the corresponding operator in the adjoint representation of $\mfg$ for this choice of decomposition. Relation \eqref{2rel} when $\beta_1$ is a long root now follows by applying $w$ to \eqref{r5} with $\beta = w^{-1}(\beta_2)$.
Therefore, \eqref{2rel} is true when $\beta_1$ is a long root. 

It remains to deal with the case when $\beta_1$ is a short root and $\beta_1\neq -\beta_2$. Let's apply $[X_{\theta}^+, \cdot]$ to relation \eqref{r5} in the case that $\beta$ is any positive root different from $\theta$. Then, using \eqref{m}, 
\begin{equation}
[K(H_{\theta})  ,Q(X_\beta)]  =  P([H_\theta, X_\beta]) + \frac{\lambda}{4} \sum_{\al\in\Delta} S([H_{\theta},X_{\al}],[X_{-\al},X_\beta]). \label{KHQX}
\end{equation}

In the case that $\beta$ is a negative root with $\beta\neq -\theta$, we can apply $[\cdot, X_{-\theta}]$ to relation \eqref{2rel} with $\beta_1=\theta$, and $\beta_2=\beta$; the same argument as above shows that \eqref{KHQX} holds for any root $\beta$ such that $\beta\neq \pm \theta$.

We are in a situation similar to one we had before, so we can use the action of the Tits extension $\widetilde{W}$ of $W$ on $\g$ to obtain that, for any long root $\gamma$ and for any root $\beta\neq \pm\gamma$,
\begin{equation}\label{r8}
[K(H_{\gamma})  ,Q(X_\beta)]=P([H_\gamma, X_\beta])
 + \frac{\lambda}{4} \sum_{\al\in\Delta} S([H_{\gamma},X_{\al}],[X_{-\al},X_\beta]).
\end{equation}

For any root $\eta\in \Delta$ of $\mfg$, there exists a long root $\gamma\in \Delta$, such that $(\gamma, \eta)\neq 0$. For $\eta\neq -\beta$, we apply $[\cdot, X_{\eta}]$ to relation \eqref{r8} with such a $\gamma$ and let $a \in \C$ be such that $[X_\beta, X_\eta]=aX_{\beta+\eta}$. (If $\beta+\eta$ is not a root, then $X_{\beta+\eta}=0$ and we set $a=0$ also.) Then, using \eqref{m},
\begin{align}
 \big[ [K(H_{\gamma})  ,Q(X_\beta)], X_{\eta}\big] = {} & (\gamma, \eta)P([X_\eta, X_\beta])+aP([H_\gamma, X_{\beta+\eta}])
 + \frac{(\gamma, \eta)\lambda}{4} \sum_{\al\in\Delta}S([X_\eta,X_{\al}],[X_{-\al},X_\beta])\notag\\
 & {} +\frac{a\lambda}{4} \sum_{\al\in\Delta} S([H_{\gamma},X_{\al}],[X_{-\al},X_{\beta+\eta}])
  -\frac{(\gamma, \eta)(\beta, \eta)\lambda}{4} S(X_\eta, X_\beta)\label{r9} \;\; \text{ by \eqref{m}}.
\end{align}

On the other hand, if $\beta+\eta\neq \pm \gamma$, using \eqref{r8} with $\beta$ replaced by $\beta+\eta$, we get:
\begin{align}
\big[ [K(H_{\gamma})  ,Q(X_\beta)], X_{\eta}\big] & =(\gamma, \eta)[K(X_{\eta})  ,Q(X_\beta)]+a[K(H_{\gamma}), Q(X_{\beta+\eta})] \notag\\
& =(\gamma, \eta)[K(X_{\eta})  ,Q(X_\beta)]+aP([H_{\gamma}, X_{\beta+\eta}])+ \frac{a\lambda}{4} \sum_{\al\in\Delta} S([H_{\gamma},X_{\al}],[X_{-\al},X_{\beta+\eta}]). \label{r10}
\end{align}

Combining \eqref{r9} with \eqref{r10}, we see that, if $\beta+\eta\neq \pm \gamma$ and $\beta+\eta\neq 0$:
\begin{equation}\label{r11}
[K(X_{\eta})  ,Q(X_\beta)]=P([X_{\eta}, X_\beta]) - \frac{(\beta, \eta)\lambda}{4}S(X_\eta, X_\beta)+\frac{\lambda}{4}\sum_{\al\in\Delta}S([X_\eta,X_{\al}],[X_{-\al},X_\beta]).
\end{equation}
In order to obtain \eqref{r11} for any two roots $\eta$ and $\beta$ with $\eta\neq -\beta$, it was only necessary to choose a long root $\gamma$ such that $(\eta, \gamma)\neq 0$ and $\gamma \neq \pm (\beta+\eta)$. 

In conclusion, the relation \eqref{2rel} holds in $\msD(\g)$.

\section{Other useful relations}

In this short section, we establish some new relations among the generators of $\mfD(\g)$ which will be useful for our computations later.

\begin{lemma}\label{lem:K(h)Q(x)}
For any roots $\beta_1, \beta_2$, the following relations hold in the deformed double current algebra $\mfD(\g):$
\begin{equation}\label{rel:K(h)Q(x)}
[K(H_{\beta_1}), Q(X_{\beta_2})]=P([H_{\beta_1}, X_{\beta_2}])+ \frac{\la}{4} \sum_{\alpha\in\Delta} S([H_{\beta_1},X_{\alpha}],[X_{-\alpha},X_{\beta_2}]),
\end{equation}
and
\begin{equation}\label{rel:K(X)Q(h)}
[K(X_{\beta_1}), Q(H_{\beta_2})]=P([X_{\beta_1}, H_{\beta_2}])+ \frac{\la}{4} \sum_{\alpha\in\Delta} S([X_{\beta_1},X_{\alpha}],[X_{-\alpha},H_{\beta_2}]).
\end{equation}
\end{lemma}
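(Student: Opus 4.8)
The plan is to derive both identities from material already established in the previous section rather than from scratch: equation \eqref{rel:K(h)Q(x)} will come from \eqref{r8} by a linearity argument in the Cartan variable, and \eqref{rel:K(X)Q(h)} will then follow from \eqref{rel:K(h)Q(x)} by feeding it through the automorphism of Proposition \ref{Prop: auto}.

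First I would fix the root $\beta_2$ and introduce the map $F\colon \mfh \to \mfD(\g)$ given by
\[
F(h) = [K(h), Q(X_{\beta_2})] - P([h, X_{\beta_2}]) - \frac{\la}{4}\sum_{\alpha\in\Delta} S([h, X_{\alpha}],[X_{-\alpha}, X_{\beta_2}]).
\]
Because $K$ and $P$ are linear and $S$ is bilinear, $F$ is linear in $h$. Relation \eqref{r8} says exactly that $F(H_{\gamma})=0$ for every long root $\gamma$ with $\gamma\neq\pm\beta_2$, so it suffices to check that the elements $H_{\gamma}$, as $\gamma$ ranges over the long roots different from $\pm\beta_2$, span $\mfh$. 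Granting this, $F\equiv 0$, and $F(H_{\beta_1})=0$ is precisely \eqref{rel:K(h)Q(x)} for arbitrary roots $\beta_1,\beta_2$; note in particular that no restriction $\beta_1\neq-\beta_2$ arises, since the constraint $\gamma\neq\pm\beta_2$ only refers to $\beta_2$. For the spanning claim I would use that the long roots of a simple Lie algebra form an irreducible root system of full rank $N$: were the long roots other than $\pm\beta_2$ confined to a hyperplane of $\mfh^{*}$, the Weyl reflections fixing that hyperplane would force $\beta_2$ to be orthogonal to it, contradicting irreducibility; this is comfortably compatible with the standing hypothesis $\mathrm{rank}(\mfg)\ge 3$.

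For \eqref{rel:K(X)Q(h)} I would apply the automorphism $\sigma$ of Proposition \ref{Prop: auto} to \eqref{rel:K(h)Q(x)}. Since $\sigma$ fixes $\g\subset\mfD(\g)$ pointwise, it leaves the entire $S$-sum unchanged and sends $P$ to $-P$, while
\[
\sigma\big([K(H_{\beta_1}),Q(X_{\beta_2})]\big)=[-Q(H_{\beta_1}),K(X_{\beta_2})]=[K(X_{\beta_2}),Q(H_{\beta_1})].
\]
After relabelling the two roots this yields
\[
[K(X_{\beta_1}),Q(H_{\beta_2})] = P([X_{\beta_1},H_{\beta_2}]) + \frac{\la}{4}\sum_{\alpha\in\Delta} S([H_{\beta_2},X_{\alpha}],[X_{-\alpha},X_{\beta_1}]),
\]
where I have already used $-P([H_{\beta_2},X_{\beta_1}])=P([X_{\beta_1},H_{\beta_2}])$. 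The last thing to reconcile is that the $S$-sum appearing here is the one written in \eqref{rel:K(X)Q(h)}.

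The main obstacle is thus the single genuinely computational point, namely the symmetry
\[
\sum_{\alpha\in\Delta} S([A,X_{\alpha}],[X_{-\alpha},B]) = \sum_{\alpha\in\Delta} S([B,X_{\alpha}],[X_{-\alpha},A]), \qquad A,B\in\g,
\]
which I would establish by reindexing the sum via the bijection $\alpha\mapsto-\alpha$ of $\Delta$ and then using that $S$ is symmetric together with $S(-a,-b)=S(a,b)$. Applying this with $A=H_{\beta_2}$ and $B=X_{\beta_1}$ converts the $S$-sum above into $\sum_{\alpha}S([X_{\beta_1},X_{\alpha}],[X_{-\alpha},H_{\beta_2}])$, which finishes \eqref{rel:K(X)Q(h)}. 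I expect both the spanning statement and this reindexing identity to be routine; the only place demanding care is the sign bookkeeping in the automorphism step, where the flip $P\mapsto-P$ must be tracked consistently against the bracket $[-Q(H_{\beta_1}),K(X_{\beta_2})]$.
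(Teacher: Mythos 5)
Your treatment of \eqref{rel:K(X)Q(h)} is correct and coincides with the paper's: apply the automorphism of Proposition \ref{Prop: auto} to \eqref{rel:K(h)Q(x)}, and the reindexing $\alpha\mapsto-\alpha$ (with the two sign changes cancelling inside $S$) indeed identifies the resulting $S$-sum with the one in \eqref{rel:K(X)Q(h)}. The problem is your proof of \eqref{rel:K(h)Q(x)} itself. The spanning claim it rests on is false in type $C$: for $\mfg=\mathfrak{sp}_{2N}$ (type $C_N$, which is in scope since the paper only assumes rank $\ge 3$) the long roots are $\pm 2\epsilon_i$, and they form a root system of type $A_1^{\times N}$, which has full rank but is \emph{not} irreducible. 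Consequently, if $\beta_2$ is itself long, say $\beta_2=2\epsilon_1$, the long roots $\gamma\neq\pm\beta_2$ are $\{\pm 2\epsilon_i,\ i\ge 2\}$, whose coroots span only the hyperplane $\mathrm{span}(H_{2\epsilon_2},\dots,H_{2\epsilon_N})\subsetneq\mfh$. So \eqref{r8} forces $F$ to vanish only on that hyperplane, and you cannot conclude $F(H_{\beta_1})=0$ for any $\beta_1$ whose coroot has a component along $H_{\beta_2}$ — in particular the cases $\beta_1=\pm\beta_2$ are left unproved. Your parenthetical remark that ``no restriction $\beta_1\neq-\beta_2$ arises'' is exactly where this is hidden, and these are not marginal cases: Proposition \ref{prop:central} applies \eqref{rel:K(h)Q(x)} with $\beta_2$ replaced by $-\beta_2$ and $\beta_1=\beta_2$ to build the central element $C(\beta)$, so the lemma is needed precisely there. (A secondary point: \eqref{r8} was established in $\msD(\g)$ for the $\psi$-images of the generators, so quoting it inside $\mfD(\g)$ requires invoking the isomorphism of Theorem \ref{thm:main theorem}; this is legitimate but should be said explicitly.)

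The paper's own proof avoids all root-system case analysis and works uniformly in every type: for $\beta_1\neq-\beta_2$ it applies $[\cdot,X_{-\beta_1}]$ to the defining relation \eqref{2rel} for the pair $(\beta_1,\beta_2)$, uses identity \eqref{m} to control the $S$-sum, and then subtracts the instance of \eqref{2rel} for the pair $(\beta_1,\beta_2-\beta_1)$; the one excluded case $\beta_1=-\beta_2$ is recovered by linearity, writing $\beta_1=\beta+\wt{\beta}$ as a sum of two roots different from $\beta_1$, so that $H_{\beta_1}=H_{\beta}+H_{\wt{\beta}}$ with both summands covered by the previous case. If you want to salvage your argument, you would have to supplement it in type $C$ (when $\beta_2$ is long) by a computation of exactly this kind — for instance applying $[\cdot,X_{-\eta}]$ to \eqref{r11} — at which point you have essentially reproduced the paper's direct computation, so the detour through \eqref{r8} buys nothing in the problematic case.
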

\begin{proof}
In the case $\beta_1\neq -\beta_2$,  we can write
$[X_{\beta_2}, X_{-\beta_1}]=aX_{\beta_2-\beta_1}$ for some $a\in \mathbb{C}$. (If $\beta_2-\beta_1$ is not a root, $a=0$). Now applying $[\cdot, X_{-\beta_1} ]$ to relation \eqref{2rel} and using \eqref{m}, we then obtain:
\begin{multline*}
[K(H_{\beta_1})  ,Q(X_{\beta_2})] + [K(X_{\beta_1})  ,aQ(X_{\beta_2-\beta_1})] \\
=P([H_{\beta_1}, X_{\beta_2}])+P([X_{\beta_1}, aX_{\beta_2-\beta_1}])
- \frac{a(\beta_1,\beta_2-\beta_1)\la}{4}  S(X_{\beta_1}, X_{\beta_2-\beta_1})\\
+ \frac{\la}{4} \sum_{\alpha\in\Delta} S([H_{\beta_1},X_{\alpha}],[X_{-\alpha},X_{\beta_2}])
+ \frac{a\la}{4} \sum_{\alpha\in\Delta} 
S([X_{\beta_1},X_{\alpha}],[X_{-\alpha},X_{\beta_2-\beta_1}]).
\end{multline*}
Relation \eqref{2rel} gives the following:
\[
[K(X_{\beta_1})  ,Q(X_{\beta_2-\beta_1})]=P([X_{\beta_1}, X_{\beta_2-\beta_1}])- \frac{(\beta_1,\beta_2-\beta_1)\la}{4}  S(X_{\beta_1},X_{\beta_2-\beta_1}) + \frac{\la}{4} \sum_{\alpha\in\Delta} S([X_{\beta_1},X_{\alpha}],[X_{-\alpha},X_{\beta_2-\beta_1}]).
\]
Combining the above calculations, we obtain \eqref{rel:K(h)Q(x)} in the case $\beta_1\neq -\beta_2$. The general case for \eqref{rel:K(h)Q(x)} follows from linearity of the factor $H_{\beta_1}$. (If $\beta_1 = \beta_2$, we can write $\beta_1 = \beta + \wt{\beta}$ with $\beta\neq \beta_1$ and $\wt{\beta}\neq \beta_1$.) The second relation of this lemma follows from the first one using the automorphism in Proposition \ref{Prop: auto}.
\end{proof}

\begin{lemma}\label{lem:HH}
For any roots $\beta_1, \beta_2$ such that $(\beta_1, \beta_2)=0$, the following relation holds in $\mfD(\g):$
\begin{equation*}
[K(H_{\beta_1}), Q(H_{\beta_2})] =\frac{\la}{2} \sum_{\alpha\in\Delta^+}(\beta_1, \alpha)(\beta_2, \alpha) S(X_{\alpha}, X_{-\alpha}).
\end{equation*}
\end{lemma}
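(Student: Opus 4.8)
The plan is to reduce the Cartan--Cartan bracket to the Cartan--root bracket already established in Lemma~\ref{lem:K(h)Q(x)}, exploiting that both $K$ and $Q$ are restrictions of Lie algebra homomorphisms from the current algebras. Since $Q$ comes from a homomorphism $\mfU(\g[u]) \to \mfD(\g)$ and $[X_{\beta_2}, X_{-\beta_2}] = H_{\beta_2}$ in $\g$, I would first write $Q(H_{\beta_2}) = [Q(X_{\beta_2}), X_{-\beta_2}]$. Expanding by the Jacobi identity,
\[
[K(H_{\beta_1}), Q(H_{\beta_2})] = \big[[K(H_{\beta_1}), Q(X_{\beta_2})], X_{-\beta_2}\big] + \big[Q(X_{\beta_2}), [K(H_{\beta_1}), X_{-\beta_2}]\big].
\]
The second term vanishes: $K$ is a homomorphism on $\g$, so $[K(H_{\beta_1}), X_{-\beta_2}] = K([H_{\beta_1}, X_{-\beta_2}]) = -(\beta_1, \beta_2) K(X_{-\beta_2})$, which is $0$ by the hypothesis $(\beta_1, \beta_2) = 0$. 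This is the first place the orthogonality assumption enters.

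For the surviving term, I substitute \eqref{rel:K(h)Q(x)}. Because $[H_{\beta_1}, X_{\beta_2}] = (\beta_1, \beta_2) X_{\beta_2} = 0$, the $P$-term disappears and
\[
[K(H_{\beta_1}), Q(X_{\beta_2})] = \frac{\la}{4} \sum_{\alpha\in\Delta} S([H_{\beta_1}, X_\alpha], [X_{-\alpha}, X_{\beta_2}]).
\]
Applying $[\,\cdot\,, X_{-\beta_2}]$ to this sum is exactly the situation handled by identity \eqref{m}, in the form of the Remark following it where $X_{\beta_1}$ is replaced by the Cartan element $H_{\beta_1}$ (and the corresponding factor $(\gamma, \beta_1)$ is set to $0$), with $\gamma = -\beta_2$. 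In that identity the term $S([[H_{\beta_1}, X_{-\beta_2}], X_\alpha], \cdots)$ and the entire right-hand side both involve $[H_{\beta_1}, X_{-\beta_2}] = -(\beta_1, \beta_2) X_{-\beta_2} = 0$, so they vanish by orthogonality; what remains is
\[
\sum_{\alpha\in\Delta} \big[ S([H_{\beta_1}, X_\alpha], [X_{-\alpha}, X_{\beta_2}]), X_{-\beta_2}\big] = \sum_{\alpha\in\Delta} S\big([H_{\beta_1}, X_\alpha], [X_{-\alpha}, [X_{\beta_2}, X_{-\beta_2}]]\big).
\]

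Finally I would recognize $[X_{\beta_2}, X_{-\beta_2}] = H_{\beta_2}$ and evaluate the right-hand side explicitly using $[H_{\beta_1}, X_\alpha] = (\beta_1, \alpha) X_\alpha$ and $[X_{-\alpha}, H_{\beta_2}] = (\beta_2, \alpha) X_{-\alpha}$, giving $\sum_{\alpha\in\Delta} (\beta_1, \alpha)(\beta_2, \alpha) S(X_\alpha, X_{-\alpha})$. Since the summand is invariant under $\alpha \mapsto -\alpha$, the sum over $\Delta$ equals twice the sum over $\Delta^+$, and multiplying by $\la/4$ yields the claimed formula. The computation is short once \eqref{m} is invoked; the only real obstacle is the bookkeeping of which terms are killed by $(\beta_1, \beta_2) = 0$ and correctly matching the Cartan version of \eqref{m} with the substitution $\gamma = -\beta_2$.
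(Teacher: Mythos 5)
Your proof is correct and is essentially the paper's own argument: the paper likewise applies $[\,\cdot\,, X_{\beta_2}^-]$ to \eqref{rel:K(h)Q(x)}, uses $(\beta_1,\beta_2)=0$ to kill both the cross term $[K(H_{\beta_1}),X_{\beta_2}^-]$ and the unwanted terms in \eqref{m}, and reduces to $\frac{\la}{4}\sum_{\alpha\in\Delta}S([H_{\beta_1},X_\alpha],[X_{-\alpha},H_{\beta_2}])$. Your write-up only makes explicit the Jacobi bookkeeping and the final evaluation that the paper leaves implicit.
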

\begin{proof}
The assumption $(\beta_1, \beta_2)=0$ implies that $[H_{\beta_1},X_{\beta_2}^-]=0$, so applying $[\cdot, X_{\beta_2}^-]$ to \eqref{rel:K(h)Q(x)} and using \eqref{m} yields the desired relation:
\begin{equation*}
[K(H_{\beta_1}) , Q(H_{\beta_2})]  =  P([H_{\beta_1},H_{\beta_2}]) + \frac{\la}{4} \sum_{\alpha\in\Delta} \Big[ S([H_{\beta_1},X_{\alpha}],[X_{-\alpha},X_{\beta_2}]), X_{\beta_2}^- \Big] = \frac{\la}{4} \sum_{\alpha\in\Delta} S([H_{\beta_1},X_{\alpha}],[X_{-\alpha},H_{\beta_2}])  \text{ by \eqref{m}}.
\end{equation*}
\end{proof}

\section{A central element of \texorpdfstring{$\mfD(\mfg)$}{D(g)} }\label{sec: central element}
In \cite{TLY1}, V. Toledano Laredo and the second author generalized the universal Knizhnik--Zamolodchikov--Bernard (KZB) connection $\nabla_{\KZB}$ in \cite{CEE} to any finite root system $\Phi$. This connection is valued in a holonomy Lie algebra. The elliptic Casimir connection \cite{TLY2} is a flat connection on the regular locus of the elliptic configuration space associated to $\Phi$ with values in a deformed double current algebra. It is obtained from $\nabla_{\KZB}$ via a homomorphism from the holonomy Lie algebra to $\mfD(\g)$. This construction requires a certain central element in $\mfD(\g)$. We introduce it in this section and show that it is indeed central (see Theorem \ref{central ele any g}). Actually, what we do is introduce various elements in $\mfD(\g)$, show that they are all scalar multiples of one another and then prove that they are central.

Set
\[
C(\beta_1, \beta_2)=[K(H_{\beta_1}), Q(H_{\beta_2})]-\frac{\la}{4} \sum_{\alpha\in\Delta} S([H_{\beta_1},X_{\alpha}],[X_{-\alpha},H_{\beta_2}]),\; C(\beta)=C(\beta, \beta),
\]
and set
\[
B(\beta)= [K(X_{\beta})  ,Q(X_{-\beta})]-P(H_\beta)- \frac{(\beta,\beta) \la}{4}  S(X_{\beta},X_{-\beta}) - \frac{\la}{4} \sum_{\alpha\in\Delta} S([X_{\beta},X_{\alpha}],[X_{-\alpha},X_{-\beta}]).
\]
\begin{proposition}\label{prop:central}
\begin{romenum}
  \item \label{prop: CB ele 1} The following equalities hold:
  \[
  C(\beta_1, \beta_2) = (\beta_1,\beta_2) B(\beta_2)=C(\beta_2, \beta_1),  \,\ \hbox{for any roots $\beta_1,\beta_2 \in \Delta$.}
\]
  In particular, when $\beta_1 = \beta_2 = \beta$, we obtain $\frac{C(\beta)}{(\beta, \beta)}=B(\beta)$.
  \item \label{prop: CB ele 2} For any two roots $\alpha, \beta$ in $\Delta$, we have
  \[
  \frac{C(\alpha)}{(\alpha, \alpha)}=\frac{C(\beta)}{(\beta, \beta)}.
  \]
\end{romenum}
\end{proposition}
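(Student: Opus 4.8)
The plan is to establish both parts by exploiting the relations already proved in Lemmas \ref{lem:K(h)Q(x)} and \ref{lem:HH}, together with the defining relation \eqref{2rel} and the key commutator identity \eqref{m}. For part \eqref{prop: CB ele 1}, I would first observe that $H_{\beta_1} = [X_{\beta_1}, X_{-\beta_1}]$, so $K(H_{\beta_1}) = [K(X_{\beta_1}), X_{-\beta_1}] = [X_{\beta_1}, K(X_{-\beta_1})]$, using that $X \mapsto K(X)$ is a Lie algebra homomorphism on $\mfg$. The strategy is then to expand $C(\beta_1,\beta_2) = [K(H_{\beta_1}), Q(H_{\beta_2})] - \frac{\la}{4}\sum_{\alpha} S([H_{\beta_1},X_{\alpha}],[X_{-\alpha},H_{\beta_2}])$ by writing $H_{\beta_2} = [X_{\beta_2}, X_{-\beta_2}]$ and using the Jacobi identity to reduce the double bracket $[K(H_{\beta_1}), Q([X_{\beta_2},X_{-\beta_2}])]$ to commutators of the form $[K(H_{\beta_1}), Q(X_{\pm\beta_2})]$, which are controlled by \eqref{rel:K(h)Q(x)}. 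After applying \eqref{m} to collect the correction terms, I expect the $P(\cdots)$ contributions to telescope to a single $P(H_{\beta_2})$ multiplied by $(\beta_1,\beta_2)$, and the remaining $S$-terms to assemble into exactly the combination defining $(\beta_1,\beta_2)B(\beta_2)$. The symmetry $C(\beta_1,\beta_2) = C(\beta_2,\beta_1)$ should follow either by repeating the computation with the roles of $\beta_1,\beta_2$ interchanged, or more efficiently by invoking the automorphism of Proposition \ref{Prop: auto}, which swaps $K$ and $Q$ (up to sign) and sends $P \mapsto -P$; one must check that this automorphism carries $C(\beta_1,\beta_2)$ to $\pm C(\beta_2,\beta_1)$ and that the signs work out.

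For part \eqref{prop: CB ele 2}, the goal is to show that $B(\beta)$, equivalently $\frac{C(\beta)}{(\beta,\beta)}$, is independent of the root $\beta$. The natural approach is to show that $B(\beta)$ is invariant under the Weyl group action on roots. Since $\mfg$ is simple, all roots of a given length are $W$-conjugate, and roots of different lengths are linked through part \eqref{prop: CB ele 1}: the mixed quantity $C(\beta_1,\beta_2) = (\beta_1,\beta_2)B(\beta_2) = (\beta_1,\beta_2)B(\beta_1)$ (using symmetry) forces $B(\beta_1) = B(\beta_2)$ whenever $(\beta_1,\beta_2) \neq 0$. Thus I would argue that for any two roots $\beta_1,\beta_2$ one can find a chain of roots, consecutive members of which are non-orthogonal, so that $B$ is constant along the chain; connectedness of the root system (guaranteed by simplicity of $\mfg$, rank $\ge 3$) ensures such a chain always exists. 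This directly yields $\frac{C(\alpha)}{(\alpha,\alpha)} = B(\alpha) = B(\beta) = \frac{C(\beta)}{(\beta,\beta)}$ for all $\alpha,\beta$.

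The main obstacle I anticipate is the bookkeeping in part \eqref{prop: CB ele 1}: expanding $Q(H_{\beta_2})$ via $H_{\beta_2} = [X_{\beta_2},X_{-\beta_2}]$ and pushing the resulting brackets through \eqref{rel:K(h)Q(x)} produces several families of $S$-terms, and one must verify that the quadratic correction terms $\frac{\la}{4}\sum_\alpha S([\cdots],[\cdots])$ recombine precisely into the single sum appearing in the definition of $B(\beta_2)$, with the scalar $(\beta_1,\beta_2)$ factoring out cleanly. This requires careful and repeated use of \eqref{m} to handle the cross terms, and attention to the special cases flagged in the statements of Lemmas \ref{lem:K(h)Q(x)} and \ref{lem:HH} (for instance when $\beta_1 = \pm\beta_2$, where one decomposes $H_{\beta_1}$ as a sum as in the proof of Lemma \ref{lem:K(h)Q(x)}). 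The conceptual content is light, but ensuring the scalar coefficients match exactly—especially the interplay between the $(\beta,\beta)S(X_\beta,X_{-\beta})$ term in $B(\beta)$ and the $P(H_\beta)$ term—is where the computation must be executed with care.
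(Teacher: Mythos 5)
Your proposal follows essentially the same route as the paper: the identity $C(\beta_1,\beta_2)=(\beta_1,\beta_2)B(\beta_2)$ is obtained exactly as in the paper by bracketing a root vector $X_{\pm\beta_2}$ against the relations of Lemma \ref{lem:K(h)Q(x)} and collecting corrections with \eqref{m}, the symmetry is obtained by the mirror computation (the paper's ``similar'' step, using \eqref{rel:K(X)Q(h)}), and part \eqref{prop: CB ele 2} is the same non-orthogonality chain argument, with your chain of pairwise non-orthogonal roots playing the role of the paper's passage through simple roots and the connected Dynkin diagram.

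One caution about your ``more efficient'' alternative for the symmetry: checking that the automorphism $\sigma$ of Proposition \ref{Prop: auto} carries $C(\beta_1,\beta_2)$ to $C(\beta_2,\beta_1)$ (which it does, since the correction sum equals $\sum_{\alpha}(\beta_1,\alpha)(\beta_2,\alpha)S(X_\alpha,X_{-\alpha})$ and is $\sigma$-fixed and symmetric) proves nothing by itself --- an automorphism matching one element to another does not make them equal. To make that route work you must apply $\sigma$ to the already-proven identity $C(\beta_1,\beta_2)=(\beta_1,\beta_2)B(\beta_2)$, compute $\sigma(B(\beta_2))=B(-\beta_2)$, and then use linearity of $C$ in its second argument together with the first identity at the pair $(\beta_1,-\beta_2)$ to see that $(\beta_1,\beta_2)B(-\beta_2)=C(\beta_1,\beta_2)$; only then does $C(\beta_2,\beta_1)=C(\beta_1,\beta_2)$ follow. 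Your primary option (redoing the computation with the roles exchanged) avoids this subtlety and is what the paper does.
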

\begin{proof}
To show \eqref{prop: CB ele 1},
apply $[X_{\beta_2}, \cdot]$ to the relation
\[
[K(H_{\beta_1}), Q(X_{-\beta_2})]=-(\beta_1, \beta_2)P(X_{-\beta_2})+\frac{\la}{4}\sum_{\alpha\in\Delta}S([H_{\beta_1}, X_{\alpha}], [X_{-\alpha}, X_{-\beta_2}]) \text{ (see \eqref{rel:K(h)Q(x)}).}
\] We then get, using \eqref{m}:
\begin{align*}
[K(H_{\beta_1}), Q(H_{\beta_2})]-(\beta_1, \beta_2)[K(X_{\beta_2}), Q(X_{-\beta_2})] = {} & -(\beta_1, \beta_2)P(H_{\beta_2})
+\frac{\la}{4}\sum_{\alpha\in\Delta}S([H_{\beta_1}, X_{\alpha}], [X_{-\alpha}, H_{\beta_2}]) \\
& -\frac{(\beta_1,\beta_2)\la}{4} \left(\sum_{\alpha\in\Delta} S([X_{\beta_2}, X_{\alpha}], [X_{-\alpha}, X_{-\beta_2}]) -
(\beta_2,\beta_2)S(X_{\beta_2}, X_{-\beta_2}) \right). \\
\end{align*}
Rewriting the above equality, we obtain  $C(\beta_1, \beta_2)= (\beta_1,\beta_2) B(\beta_2)$ for any roots $\beta_1,\beta_2\in \mfh^*$.
The proof of the second equality in claim \eqref{prop: CB ele 1} is similar.

Let us now turn to the proof of claim \eqref{prop: CB ele 2}. For any roots $\alpha, \beta$ such that $(\alpha, \beta)\neq 0$, by \eqref{prop: CB ele 1}, we have
\[
C(\alpha, \beta)=(\alpha, \beta) B(\beta) \,\ \hbox{and $C(\alpha, \beta)=(\beta, \alpha) B(\alpha)$}.
\]
Thus, $B(\alpha)=B(\beta)$ and $ \frac{C(\alpha)}{(\alpha, \alpha)}=\frac{C(\beta)}{(\beta, \beta)}$  if $(\alpha, \beta)\neq 0$.

Since the Dynkin diagram of the simple Lie algebra $\mathfrak g$ is connected, it follows that $ \frac{C(\alpha_i)}{(\alpha_i, \alpha_i)}$ is the same constant when $\alpha_i$ is a simple root. Now for any root $\alpha$, there exists a simple root $\alpha_i$, such that $(\alpha, \alpha_i)\neq 0$. This implies that $ \frac{C(\alpha)}{(\alpha, \alpha)}=\frac{C(\beta)}{(\beta, \beta)}$ for any two roots $\alpha, \beta$. 
\end{proof}

\begin{theorem}\label{central ele any g}
For any root $\beta$ of $\g$,  $C(\beta)$ is a central element of the algebra $\mfD(\mfg)$.
\end{theorem}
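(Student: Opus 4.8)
The plan is to reduce the statement to a single fixed element and then check it commutes with each type of generator of $\mfD(\mfg)$. By Proposition \ref{prop:central}, all the elements $C(\beta)/(\beta,\beta)$ coincide with one element $Z := B(\beta)$, and $C(\beta_1,\beta_2) = (\beta_1,\beta_2)Z$ depends bilinearly on $(\beta_1,\beta_2)$; moreover the symmetry $C(\beta_1,\beta_2)=C(\beta_2,\beta_1)$ together with the defining formula shows that $Z$ is fixed by the automorphism $\sigma$ of Proposition \ref{Prop: auto} (indeed $\sigma$ sends $[K(H_{\beta_1}),Q(H_{\beta_2})]$ to $[K(H_{\beta_2}),Q(H_{\beta_1})]$ and fixes the $\mfU(\mfg)$-valued term). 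Since $\mfD(\mfg)$ is generated by $\mfg$, $K(\mfg)$, $Q(\mfg)$ and $P(\mfg)$, and since relation \eqref{2rel} expresses every $P(X)$ (using $[\mfg,\mfg]=\mfg$) in terms of $\mfg$, $K(\mfg)$ and $Q(\mfg)$, it suffices to prove that $Z$ commutes with $\mfg$, with $Q(\mfg)$ and with $K(\mfg)$.

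First I would show $[Z,\mfg]=0$. For $H\in\mfh$ this is immediate, since $[H,K(H_{\beta_1})]=[H,Q(H_{\beta_2})]=0$ and $[H,S(X_\alpha,X_{-\alpha})]=0$, whence $[H,C(\beta_1,\beta_2)]=0$. For a root vector $X_\gamma$, applying $\ad(X_\gamma)$ to $C(\beta_1,\beta_2)$ and using the $\ad(\mfg)$-equivariance of $K,Q,P$ reduces the bracket with the term $[K(H_{\beta_1}),Q(H_{\beta_2})]$, via Lemma \ref{lem:K(h)Q(x)}, to a combination of $P(X_\gamma)$-terms (which cancel) and two $\mfU(\mfg)$-valued sums; the bracket with the second term of $C(\beta_1,\beta_2)$ is computed by \eqref{m} in the case where both outer vectors are Cartan elements, so that both coefficients $(\gamma,\beta_i)$ drop out. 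The two contributions cancel exactly, giving $[X_\gamma,C(\beta_1,\beta_2)]=0$; choosing $\beta_1,\beta_2$ with $(\beta_1,\beta_2)\neq 0$ then yields $[X_\gamma,Z]=0$.

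The key reduction for the remaining generators is that, once $[Z,\mfg]=0$ is known, the map $\Phi\colon X\mapsto[Z,Q(X)]$ is an $\ad(\mfg)$-module morphism $\mfg\to\mfD(\mfg)$: indeed $[Y,[Z,Q(X)]]=[Z,Q([Y,X])]$ by the Jacobi identity and the equivariance of $Q$. Since $\mfg$ is irreducible as an $\ad(\mfg)$-module, $\Phi$ is either injective or identically zero; hence it is enough to verify $[Z,Q(X_0)]=0$ for a single nonzero $X_0$. Applying $\sigma$, which fixes $Z$ and interchanges $Q$ and $K$ up to sign, then gives $[Z,K(\mfg)]=0$ for free, and $P(\mfg)$ is handled automatically as above. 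Thus the whole theorem comes down to one explicit bracket computation.

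That single computation is where I expect the real work to lie. Evaluating $[Z,Q(X_0)]$ directly forces one to expand $Z=B(\beta)$ (or $C(\beta_1,\beta_2)$) and commute $Q(X_0)$ past the mixed $K$--$Q$ product and the $\mfU(\mfg)$-valued terms; this produces brackets of the shape $[Q(X),Q(X')]$, which land in the ``degree two in $u$'' part and are \emph{not} simply $Q([X,X'])$, as well as $P$--$Q$ cross terms, none of which is governed by a single clean relation. The $\Z_{\ge0}\times\Z_{\ge0}$-grading recorded in the proof of Lemma \ref{msPlem} (under which $Z$ has bidegree $(1,1)$, $K$ is $(0,1)$, $Q$ is $(1,0)$ and $P$ is $(1,1)$) confines $\Phi$ to bidegree $(2,1)$ and helps organize the cancellations, but the verification itself is a careful bookkeeping argument built from \eqref{2rel}, \eqref{rel:K(h)Q(x)}, the homomorphism property of $Q$ on $\mfU(\mfg[u])$, and repeated use of the master identity \eqref{m}. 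In practice I would choose $X_0$ together with auxiliary weights $\beta_1,\beta_2$ satisfying orthogonality relations that annihilate as many intermediate brackets as possible, so as to keep this final computation tractable.
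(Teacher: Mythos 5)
Your structural reductions are correct and essentially mirror the paper's own strategy: $[Z,\mfg]=0$ via Lemma \ref{lem:K(h)Q(x)} and \eqref{m}, reduction to a single bracket by irreducibility of the adjoint representation (the paper phrases this as obtaining every $K(X)$ from $K(H_{\wt\gamma})$ by iterated $\mathrm{ad}$'s), and your use of the automorphism of Proposition \ref{Prop: auto} — which indeed fixes $Z$, by the symmetry $C(\beta_1,\beta_2)=C(\beta_2,\beta_1)$ of Proposition \ref{prop:central} — is a legitimate streamlining of the paper's "the proof is analogous" for the $Q$ side. But there is a genuine gap exactly where you locate "the real work": the one computation to which everything has been reduced is never carried out, and the route you sketch for it would founder. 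Expanding $Z=B(\beta)$ against a general $Q(X_0)$ produces $[Q(X_{-\beta}),Q(X_0)]$, which is the image of $[X_{-\beta},X_0]\ot u^2$ and is not controlled by any defining relation, together with the cross term $[P(H_\beta),Q(X_0)]$, for which no relation is available at this stage of the paper (such $P$--$Q$ formulas are only derived much later, and only in type $A$). No amount of bookkeeping with \eqref{2rel}, \eqref{rel:K(h)Q(x)} and \eqref{m} resolves these terms, so the proof cannot be completed as described.

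The missing mechanism — the content of the paper's Lemma \ref{lem:C and K(H)} — is to test against $K(H_\gamma)$ (not a general $Q(X_0)$) for a root $\gamma$ with $(\beta,\gamma)=0$; such a $\gamma$ exists because $\mathrm{rank}(\mfg)\ge 3$, and this is the only place the rank hypothesis enters, yet your proposal never invokes it. One works with $C(\beta)=[K(H_\beta),Q(H_\beta)]-\frac{\la}{4}\sum_{\al\in\Delta}S([H_\beta,X_\al],[X_{-\al},H_\beta])$, which contains no $P$-term at all, and writes
\begin{equation*}
\big[K(H_\gamma),[K(H_\beta),Q(H_\beta)]\big]=\big[K(H_\beta),[K(H_\gamma),Q(H_\beta)]\big],
\end{equation*}
the re-association being allowed because $[K(H_\beta),K(H_\gamma)]=0$ (both lie in the image of $\mfU(\mfg[v])$). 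Orthogonality makes Lemma \ref{lem:HH} applicable, so $[K(H_\gamma),Q(H_\beta)]$ is a $\mfU(\mfg)$-valued sum $\frac{\la}{2}\sum_{\al\in\Delta^+}(\gamma,\al)(\beta,\al)S(X_\al,X_{-\al})$; bracketing this with $K(H_\beta)$, and separately bracketing $K(H_\gamma)$ with the $\mfU(\mfg)$-valued part of $C(\beta)$, both give $\frac{\la}{2}\sum_{\al\in\Delta}(\gamma,\al)(\beta,\al)^2S(K(X_\al),X_{-\al})$, and they cancel. No $[Q,Q]$ or $P$--$Q$ terms ever appear. Your closing remark about choosing "orthogonality relations that annihilate as many intermediate brackets as possible" gestures toward this, but without identifying the specific device — Cartan test element, $\gamma\perp\beta$, the vanishing $[K(H_\beta),K(H_\gamma)]$, and Lemma \ref{lem:HH} — the argument is incomplete at its decisive step.
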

It suffices to show that $C(\beta)$ commutes with the generators of $\mfD(\g)$. This will follow from the next two lemmas.
\begin{lemma}\label{lem:C commu X}
For any $X\in \mfg$, we have $[C(\beta), X]=0$.
\end{lemma}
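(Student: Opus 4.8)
The plan is to show that $C(\beta)$ commutes with every element $X$ of the copy of $\mfg$ sitting inside $\mfD(\g)$, which by the first defining property of $\mfD(\g)$ generates the image of $\mfU(\mfg)$ and is where the adjoint action lives. Since $C(\beta)$ was shown in Proposition \ref{prop:central} to be $(\beta,\beta)$ times the element $B(\beta)$, and all $C(\gamma)/(\gamma,\gamma)$ coincide, I have freedom to choose the most convenient representative. The key structural observation is that $C(\beta_1,\beta_2)$, expanded via its definition together with Lemma \ref{lem:HH}, is manifestly built out of $\ad(\mfg)$-equivariant pieces: $K$, $Q$ and $P$ are all $\ad(\mfg)$-module morphisms (by the defining relations $[K(X_1),X_2]=K([X_1,X_2])$, $[Q(X_1),X_2]=Q([X_1,X_2])$, $[P(X_1),X_2]=P([X_1,X_2])$), and the correction sum $\sum_{\alpha\in\Delta} S([H_{\beta_1},X_\alpha],[X_{-\alpha},H_{\beta_2}])$ transforms covariantly as well.

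First I would compute $[C(\beta_1,\beta_2),X_\gamma]$ directly. Applying $\ad(X_\gamma)$ to the bracket $[K(H_{\beta_1}),Q(H_{\beta_2})]$ and using the module-morphism property produces $[K([X_\gamma,H_{\beta_1}]),Q(H_{\beta_2})]+[K(H_{\beta_1}),Q([X_\gamma,H_{\beta_2}])]$, i.e. $-(\gamma,\beta_1)[K(X_\gamma),Q(H_{\beta_2})]-(\gamma,\beta_2)[K(H_{\beta_1}),Q(X_\gamma)]$ when $\gamma$ is a root (with the Cartan case handled by linearity). For the correction term, I would apply the master identity \eqref{m}, which controls exactly how $\sum_\alpha S([H_{\beta_1},X_\alpha],[X_{-\alpha},H_{\beta_2}])$ behaves under $\ad(X_\gamma)$, splitting it into the analogous sums with $H_{\beta_1}$ or $H_{\beta_2}$ replaced by $[X_\gamma,H_{\beta_i}]$ plus the two boundary terms $-(\gamma,\beta_2)S(\cdots)-(\gamma,\beta_1)S(\cdots)$. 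The whole point is that after assembling these pieces, Lemma \ref{lem:K(h)Q(x)} identifies the surviving combinations: the expressions $[K(X_\gamma),Q(H_{\beta_2})]-P([X_\gamma,H_{\beta_2}])-\frac{\la}{4}\sum_\alpha S([X_\gamma,X_\alpha],[X_{-\alpha},H_{\beta_2}])$ and its mirror both vanish by \eqref{rel:K(X)Q(h)} and \eqref{rel:K(h)Q(x)}. So the bracket $[C(\beta_1,\beta_2),X_\gamma]$ collapses to a sum of $P([[X_\gamma,H_{\beta_1}],H_{\beta_2}]+[H_{\beta_1},[X_\gamma,H_{\beta_2}]])$-type terms coming from the $P$-parts, and these cancel because the Cartan elements $H_{\beta_1},H_{\beta_2}$ commute, forcing $[C(\beta_1,\beta_2),X_\gamma]=0$.

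I expect the main obstacle to be bookkeeping: tracking the signs and the factors $(\gamma,\beta_i)$ through the repeated use of \eqref{m}, and making sure that the boundary terms produced by \eqref{m} exactly match the $S$-corrections built into the definition of $C(\beta_1,\beta_2)$ so that everything telescopes. A clean way to organize this is to phrase the computation intrinsically: define the auxiliary bilinear expression $\Phi(Y_1,Y_2)=[K(Y_1),Q(Y_2)]-P([Y_1,Y_2])-\frac{\la}{4}\sum_\alpha S([Y_1,X_\alpha],[X_{-\alpha},Y_2])$ for $Y_1,Y_2\in\mfg$, note that Lemmas \ref{lem:K(h)Q(x)} and \ref{lem:HH} together with \eqref{2rel} say precisely that $\Phi(Y_1,Y_2)$ vanishes whenever at least one argument is semisimple or the two are a generic root pair, and observe that $C(\beta_1,\beta_2)=\Phi(H_{\beta_1},H_{\beta_2})+\frac{\la}{4}\sum_\alpha(\ldots)$ reduces the centrality claim to checking that $\ad(X_\gamma)$ preserves this vanishing locus. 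Once I verify that $\ad(X_\gamma)\Phi(H_{\beta_1},H_{\beta_2})$ is a sum of $\Phi$-terms (plus already-cancelling $P$-terms), Lemma \ref{lem:C commu X} follows, and combined with the forthcoming lemma handling commutativity with $K(X)$, $Q(X)$ and $P(X)$, this establishes Theorem \ref{central ele any g}.
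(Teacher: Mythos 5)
Your proposal is correct and follows essentially the same route as the paper's own proof: expand $\big[[K(H_{\beta_1}),Q(H_{\beta_2})],X_\gamma\big]$ using the $\ad(\mfg)$-equivariance of $K$ and $Q$, rewrite the two resulting brackets via Lemma \ref{lem:K(h)Q(x)} (relations \eqref{rel:K(h)Q(x)} and \eqref{rel:K(X)Q(h)}), observe that the $P$-terms cancel since $[H_{\beta_1},H_{\beta_2}]=0$, and use identity \eqref{m} to recombine the surviving sums into the commutator of the correction term with $X_\gamma$. One minor caution about your closing reformulation: $\Phi(Y_1,Y_2)$ does \emph{not} vanish whenever both arguments are Cartan (indeed $\Phi(H_{\beta_1},H_{\beta_2})=C(\beta_1,\beta_2)=(\beta_1,\beta_2)B(\beta_2)$, which is nonzero unless $(\beta_1,\beta_2)=0$), but your main computation only invokes the mixed cases (one Cartan entry, one root vector), so the argument is unaffected.
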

\begin{proof}
It suffices to take $X$ to be a root vector $X_\gamma$ for a root $\gamma$. We then have:
\begin{align*}
\big[ [K(H_{\beta}), Q(H_{\beta})], X_\gamma\big]
= {} & (\beta, \gamma)[K(X_{\gamma}), Q(H_{\beta})]+(\beta, \gamma)[K(H_{\beta}), Q(X_{\gamma})]\\
= {} & (\beta, \gamma)P([X_\gamma, H_\beta])+\frac{(\beta, \gamma)\la}{4}\sum_{\alpha\in \Delta}S([X_\gamma, X_\alpha], [X_{-\alpha}, H_\beta]) \\
&+(\beta, \gamma)P([H_\beta, X_\gamma])+\frac{(\beta, \gamma)\la}{4}\sum_{\alpha\in \Delta}S([H_\beta, X_\alpha], [X_{-\alpha}, X_\gamma]) \;\;   \text{ by \eqref{rel:K(h)Q(x)} and \eqref{rel:K(X)Q(h)}}\\
= {} & \frac{\la}{4}\sum_{\alpha\in \Delta}S\Big(\big[ [H_\beta, X_\gamma], X_\alpha\big] , [X_{-\alpha}, H_\beta]\Big)+\frac{\la}{4}\sum_{\alpha\in \Delta}S\Big( [H_\beta, X_\alpha], \big[ X_{-\alpha}, [H_\beta, X_\gamma]\big]\Big)\\
= {} &\frac{\la}{4}\sum_{\alpha\in \Delta} \Big[ S([H_\beta, X_\alpha], [X_{-\alpha}, H_\beta]), X_\gamma\Big] \;\;   \text{ by \eqref{m}}.
\end{align*}
Hence, the conclusion follows.
\end{proof}
\begin{lemma}\label{lem:C and K(H)}
If the two roots $\beta, \gamma$ are such that $(\beta, \gamma)=0$, then $[C(\beta), K(H_\gamma)]=0 = [C(\beta), Q(H_\gamma)]$.
\end{lemma}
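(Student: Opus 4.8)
The plan is to prove that $C(\beta)$ commutes with the four families of generators $X$, $K(X)$, $Q(X)$, $P(X)$ of $\mfD(\g)$. Lemma \ref{lem:C commu X} already handles all $X\in\mfg$, and since $P(X)$ is built from $[\cdot,X']$-brackets of the $X$'s together with $P(X_{-\theta})=\msP(X_{-\theta})$, commutation with $\mfg$ plus commutation with a single $P$-generator will propagate. The crux is therefore to establish commutation with $K(X)$ and $Q(X)$, and the present Lemma \ref{lem:C and K(H)} is the first, technically cleanest instance: it isolates the case of $K(H_\gamma)$ and $Q(H_\gamma)$ under the orthogonality hypothesis $(\beta,\gamma)=0$, which is exactly what makes the computation tractable because many commutators involving $H_\beta$ and $X_{\gamma}^{\pm}$ vanish.

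First I would use the freedom in Proposition \ref{prop:central}\eqref{prop: CB ele 1} to write $C(\beta)$ in the most convenient form. Since $(\beta,\gamma)=0$, I would like to express $C(\beta)$ using a root $H_{\beta}$ and then compute $[C(\beta),K(H_\gamma)]$ by expanding $C(\beta)=[K(H_\beta),Q(H_\beta)]-\frac{\la}{4}\sum_{\alpha\in\Delta}S([H_\beta,X_\alpha],[X_{-\alpha},H_\beta])$ and bracketing against $K(H_\gamma)$. The strategy mirrors Lemma \ref{lem:C commu X}: apply the Jacobi identity to move $K(H_\gamma)$ inside, use relation \eqref{rel:K(h)Q(x)} to evaluate the resulting mixed brackets $[K(H_\beta),Q(\,\cdot\,)]$ and $[K(H_\gamma),Q(H_\beta)]$, and use the key combinatorial identity \eqref{m} to recognize that the $P$-terms cancel and that the two $S$-sums reorganize into $\big[S([H_\beta,X_\alpha],[X_{-\alpha},H_\beta]),\,\text{(something)}\big]$, matching the derivative of the correction term in $C(\beta)$. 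The orthogonality $(\beta,\gamma)=0$ ensures $[H_\gamma,H_\beta]=0$ and kills the coefficient $(\gamma,\beta)$ that would otherwise appear via \eqref{m}, so the potentially obstructing terms drop out.

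For the $Q(H_\gamma)$ statement I would invoke the automorphism of Proposition \ref{Prop: auto}, which swaps $K$ and $Q$ (up to sign) and fixes $\mfg$ while sending $P(X)\mapsto -P(X)$. Under this automorphism $C(\beta)$ should map to a scalar multiple of itself (the $K\leftrightarrow Q$ swap together with the sign on $P$ preserves the defining combination), so that $[C(\beta),K(H_\gamma)]=0$ transports directly to $[C(\beta),Q(H_\gamma)]=0$ without a separate computation. I would verify at the outset that $C(\beta)$ is indeed (anti)invariant under this automorphism, which is a short check comparing the definition of $C(\beta)$ with its image.

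The main obstacle I anticipate is bookkeeping in the double commutator: when bracketing the $Q(H_\beta)$ factor against $K(H_\gamma)$ one does not immediately get a root vector, so I would first reduce $Q(H_\beta)$ to $Q$ of root vectors (writing $H_\beta$ via a commutator $[X_\beta^+,X_\beta^-]$ or using the relation \eqref{rel:K(X)Q(h)}), apply \eqref{m} at each stage, and then confirm that every residual $P$-term carries a coefficient $(\beta,\gamma)=0$ and hence vanishes. The delicate point is ensuring the two separate applications of \eqref{m}—one for the $[K(H_\beta),Q(H_\beta)]$ piece and one for the correction sum—produce exactly matching $S$-expressions so that the total bracket collapses to zero rather than to a nonzero multiple of some central-looking combination.
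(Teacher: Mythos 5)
Your proposal is correct and follows essentially the same route as the paper's proof. The core computation --- expanding $C(\beta)=[K(H_\beta),Q(H_\beta)]-\frac{\la}{4}\sum_{\alpha\in\Delta}S([H_\beta,X_\alpha],[X_{-\alpha},H_\beta])$, using $[K(H_\beta),K(H_\gamma)]=0$ and the Jacobi identity to reduce to $[K(H_\beta),[K(H_\gamma),Q(H_\beta)]]$, evaluating $[K(H_\gamma),Q(H_\beta)]$ as $\frac{\la}{2}\sum_{\alpha\in\Delta^+}(\gamma,\alpha)(\beta,\alpha)S(X_\alpha,X_{-\alpha})$, and matching this against the bracket of $K(H_\gamma)$ with the correction sum --- is exactly what the paper does; the only cosmetic difference is that the paper cites Lemma \ref{lem:HH} for the middle step, whereas you re-derive it from \eqref{rel:K(h)Q(x)} and \eqref{m} by writing $Q(H_\beta)=[Q(X_\beta^+),X_\beta^-]$, which is precisely how Lemma \ref{lem:HH} is proved. (One small imprecision: the $P$-terms disappear because they carry the factor $(\beta,\gamma)=0$, or are $P([H_\gamma,H_\beta])=P(0)$, not because of \eqref{m}; you state this correctly in your final paragraph.) Where you genuinely depart from the paper is the second identity: the paper dismisses $[C(\beta),Q(H_\gamma)]=0$ as ``analogous,'' while you transport the first identity through the automorphism of Proposition \ref{Prop: auto}. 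This shortcut is valid: that automorphism fixes $\mfg\subset\mfD(\mfg)$, hence fixes the correction sum, and sends $[K(H_\beta),Q(H_\beta)]\mapsto[-Q(H_\beta),K(H_\beta)]=[K(H_\beta),Q(H_\beta)]$, so $C(\beta)$ is invariant; applying it to $[C(\beta),K(H_\gamma)]=0$ then yields $[C(\beta),-Q(H_\gamma)]=0$ at no extra cost, in the same spirit as the paper's own use of this automorphism in Lemma \ref{lem:K(h)Q(x)}.
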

\begin{proof}
Since $[K(H_\beta), K(H_\gamma)]=0$,
\begin{align*}
\big[ K(H_\gamma), [K(H_\beta), Q(H_\beta)]\big] & = \big[ K(H_\beta), [K(H_\gamma), Q(H_\beta)]\big] = \left[ K(H_\beta), \frac{\la}{4}\sum_{\alpha\in \Delta}(\gamma, \alpha)(\beta, \alpha)S(X_\alpha, X_{-\alpha})\right] \text{ by Lemma \ref{lem:HH}}\\
& =\frac{\la}{4}\sum_{\alpha\in \Delta}(\gamma, \alpha)(\beta, \alpha)^2 \Big( S(K(X_\alpha), X_{-\alpha})
-S( X_\alpha, K(X_{-\alpha}))\Big) = \frac{\la}{2}\sum_{\alpha\in \Delta}(\gamma, \alpha)(\beta, \alpha)^2S(K(X_\alpha), X_{-\alpha}).
\end{align*}
On the other hand,
\begin{align*}
\Bigg[ K(H_\gamma), \frac{\la}{4}\sum_{\alpha\in \Delta} & S([H_\beta, X_\alpha], [X_{-\alpha}, H_\beta]) \Bigg] = \left[ K(H_\gamma), \frac{\la}{4}\sum_{\alpha\in \Delta}(\beta, \alpha)^2S(X_\alpha, X_{-\alpha}) \right] \text{ by Lemma \ref{lem:HH}}\\
& = \frac{\la}{4}\sum_{\alpha\in \Delta}(\gamma, \alpha)(\beta, \alpha)^2 \Big( S(K(X_\alpha), X_{-\alpha})
-S(X_\alpha, K(X_{-\alpha})) \Big) = \frac{\la}{2}\sum_{\alpha\in \Delta}(\gamma, \alpha)(\beta, \alpha)^2S(K(X_\alpha), X_{-\alpha}).
\end{align*}
The assertion follows from these computations.  The proof that $[C(\beta), Q(H_\gamma)]=0$ is analogous.
\end{proof}
\begin{proof}[Proof of Theorem \ref{central ele any g}]
By Lemma \ref{lem:C and K(H)} and Proposition \ref{prop:central}, $C(\beta)$ commutes with $K(H_{\wt{\gamma}})$ for at least one root $\wt{\gamma}$ (since $\mfg$ has rank at least $\ge 3$ by assumption). Moreover, for any $X$ in $\mfg$,  $K(X)$ can be obtained by applying $\mathrm{ad}(X_1) \circ \mathrm{ad}(X_{2}) \circ \cdots \circ \mathrm{ad}(X_{l})$ to $K(H_{\wt{\gamma}})$ for certain elements $X_1,X_2,\ldots, X_l \in \mfg$. Therefore, $C(\beta)$ commutes with $K(X) \, \forall \, X\in\mfg$. 

Similar arguments show that $[C(\beta), Q(X)]=0$ for any $X\in \mfg$. $\mfD(\g)$ is generated by $X, K(X), Q(X)$ for all $X\in \mfg$: therefore, $C(\beta)$ is central in $\mfD(\g)$.
\end{proof}

\section{The deformed double current algebra of type A with two parameters}
\label{Sec: sl_n two par}
The deformed double current algebra of type $A$ with two parameters was introduced in \cite{G2} Definition 12.1. In this section, we obtain in this case results similar to those in the previous two sections - see Lemma \ref{lem:KEab QHcd} and Theorem \ref{central ele sln}. We also establish one connection with the Yangian of $\mfsl_n$ in Theorem \ref{PYang}. 

As usual in type $A$, $\mfh$ denotes the subspace of diagonal matrices of trace zero. Let  $\{\epsilon_1, \dots, \epsilon_n\}$ be the standard orthonormal basis of $\mathbb{C}^n$. The set $\Delta$ of roots of $\mathfrak{sl}_n$ can be identified with $\{\epsilon_i-\epsilon_j \mid 1\leq i\neq j\leq n\}$, with a choice of positive roots $\Delta^+$ given by $\{\epsilon_i-\epsilon_j \mid 1\leq i < j\leq n\}$ as usual. For $i\neq j$, set $\eps_{ij} = \eps_i - \eps_j$. The longest positive root $\theta$ equals $\epsilon_{1n}$. The elementary matrices will be written as $E_{ij}\in \mathfrak{sl}_n$, so $X_i^+=E_{i, i+1}$, $X_i^-=E_{i+1, i}$ and $H_i = E_{ii}-E_{i+1,i+1}$ for $1\leq i \leq n-1.$ We will assume in this section and the next that $n\ge 4$.
\begin{definition}
Let $\lambda, \beta\in \C$. We define $\mfD_{\lambda, \beta}(\mathfrak{sl}_n)$ to be the $\C$-algebra generated by elements
$X, K(X), Q(X), P(X)$ for $X\in \mathfrak{sl}_n$ subject to the following relations:
\begin{itemize}
\item The assignment $X \mapsto X, \, X\otimes v \mapsto K(X)$ (resp. $X \mapsto X, \,X\otimes u \mapsto Q(X)$) extends to an algebra homomorphism $\mfU(\mfsl_n[v]) \lra \mfD_{\lambda, \beta}(\mathfrak{sl}_n)$ (resp. $\mfU(\mfsl_n[u]) \lra \mfD_{\lambda, \beta}(\mathfrak{sl}_n)$);
\item $P(X)$ is linear in $X$, and for any $X, X'\in \mfsl_n$, $[P(X), X']=P([X, X'])$.
\end{itemize}
Moreover, for $a\neq b$, $c\neq d$, and $(a, b) \neq (d, c)$, the following relation holds:
\begin{align}\label{rel:two param}
[K(E_{ab}), Q(E_{cd})]=P([E_{ab}, E_{cd}])+\left(\beta-\frac{\lambda}{2}\right)(\delta_{bc}E_{ad}+&\delta_{ad}E_{cb})-
\frac{\lambda}{4}(\epsilon_{ab}, \epsilon_{cd})S(E_{ab}, E_{cd})\\
&+\frac{\lambda}{4}\sum_{1\leq i \neq j\leq n}S([E_{ab}, E_{ij}], [E_{ji}, E_{cd}]).\notag
\end{align}
\end{definition}

When $\beta=\frac{\lambda}{2}$, the relation \eqref{rel:two param} coincides with relation \eqref{2rel} of the deformed double current algebra $\mfD(\mathfrak{sl}_n)$ in Definition \ref{ddca double loop}.

We first list some relations of $\mfD_{\lambda, \beta}(\mathfrak{sl}_n)$ which are parallel to those found in Lemma \ref{lem:K(h)Q(x)}. The proof for $\mfD_{\lambda, \beta}(\mathfrak{sl}_n)$ is similar, so we omit it. In particular, the second relation follows from the first one using the automorphism of Proposition \ref{Prop: auto}.
\begin{lemma}\label{lem:KEab QHcd}
Let $H_{ab}=E_{aa}-E_{bb}$, for $a, b\in \mathbb{N}$. 
For any $a\neq b$, and $c\neq d$, the following relations hold in the algebra $\mfD_{\lambda, \beta}(\mathfrak{sl}_n)$:
\[[K(E_{ab}), Q(H_{cd})]=P([E_{ab}, H_{cd}])+\frac{\lambda}{4}\sum_{1\leq i \neq j\leq n}S([E_{ab}, E_{ij}], [E_{ji}, H_{cd}])+\left(\beta-\frac{\lambda}{2}\right)(\epsilon_a+\epsilon_b, \epsilon_{cd})E_{ab}.\]
and
\[[K(H_{ab}), Q(E_{cd})]=P([H_{ab}, E_{cd}])+\frac{\lambda}{4}\sum_{1\leq i \neq j\leq n}S([H_{ab}, E_{ij}], [E_{ji}, E_{cd}])+\left(\beta-\frac{\lambda}{2}\right)(\epsilon_{ab}, \epsilon_c+\epsilon_d)E_{cd}.\]
In particular,
$[K(E_{ab}), Q(H_{ab})]=-2P(E_{ab})+\frac{\lambda}{4}\sum_{1\leq i \neq j\leq n}S([E_{ab}, E_{ij}], [E_{ji}, H_{ab}]).$
\end{lemma}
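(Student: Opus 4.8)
The plan is to mirror the proof of Lemma~\ref{lem:K(h)Q(x)}, the only genuinely new feature being the bookkeeping of the terms carrying the factor $\beta-\frac{\lambda}{2}$, which are absent when $\beta=\frac{\lambda}{2}$. I would prove the first relation directly and then deduce the second from it by applying the automorphism of Proposition~\ref{Prop: auto}. One first checks that the same assignment is an automorphism of $\mfD_{\lambda,\beta}(\mathfrak{sl}_n)$ as well, i.e.\ that it preserves \eqref{rel:two param}: the factor $\delta_{bc}E_{ad}+\delta_{ad}E_{cb}$ is symmetric under $(a,b)\leftrightarrow(c,d)$, and the Casimir-type sum is unchanged after relabelling $i\leftrightarrow j$ and using $S(A,B)=S(B,A)$. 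The ``in particular'' assertion is then the special case $c=a$, $d=b$ of the first relation: there $[E_{ab},H_{ab}]=-2E_{ab}$, so $P([E_{ab},H_{ab}])=-2P(E_{ab})$, while $(\epsilon_a+\epsilon_b,\epsilon_{ab})=(\epsilon_a,\epsilon_a)-(\epsilon_b,\epsilon_b)=0$ kills the last summand.

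To prove the first relation, I would write $H_{cd}=[E_{cd},E_{dc}]$, so that $Q(H_{cd})=[Q(E_{cd}),E_{dc}]$, and apply $[\,\cdot\,,E_{dc}]$ to \eqref{rel:two param}. By the Jacobi identity together with $[K(X),X']=K([X,X'])$ this yields
\[
[K(E_{ab}),Q(H_{cd})]=\big[[K(E_{ab}),Q(E_{cd})],E_{dc}\big]-\big[K([E_{ab},E_{dc}]),Q(E_{cd})\big].
\]
Here \eqref{rel:two param} applies to $[K(E_{ab}),Q(E_{cd})]$ as long as $(a,b)\neq(d,c)$, and $[E_{ab},E_{dc}]=\delta_{bd}E_{ac}-\delta_{ca}E_{db}$ is either $0$ or a single root vector unless $b=d$ and $a=c$. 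Thus, provided $(c,d)\notin\{(a,b),(b,a)\}$, the right-hand side can be evaluated using \eqref{rel:two param} (possibly a second time) and the commutator identity \eqref{m}. The terms proportional to $\lambda$ (together with the $P$-terms) recombine exactly as in Lemma~\ref{lem:K(h)Q(x)} --- indeed their total is precisely the $\lambda$-part of \eqref{rel:K(X)Q(h)} --- producing $P([E_{ab},H_{cd}])+\frac{\lambda}{4}\sum_{i\neq j}S([E_{ab},E_{ij}],[E_{ji},H_{cd}])$.

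The main obstacle is to verify that the remaining contributions, all carrying the factor $\beta-\frac{\lambda}{2}$, assemble into $(\beta-\frac{\lambda}{2})(\epsilon_a+\epsilon_b,\epsilon_{cd})E_{ab}$. These arise from the explicit term $(\beta-\frac{\lambda}{2})(\delta_{bc}E_{ad}+\delta_{ad}E_{cb})$ of \eqref{rel:two param} after bracketing with $E_{dc}$, and, when $[E_{ab},E_{dc}]$ is a nonzero root vector, from the corresponding $(\beta-\frac{\lambda}{2})$-term of the second application of \eqref{rel:two param}. I expect this to require a short case analysis according to whether $\delta_{bd},\delta_{ca},\delta_{bc},\delta_{ad}$ vanish; using $(\epsilon_a+\epsilon_b,\epsilon_{cd})=\delta_{ac}+\delta_{bc}-\delta_{ad}-\delta_{bd}$ one matches the coefficient in each case. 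For instance, when $b=d$ and $a\neq c$ the two sources contribute $(\beta-\frac{\lambda}{2})\delta_{bc}E_{ac}$ and $-(\beta-\frac{\lambda}{2})E_{ab}$, whose sum is $(\beta-\frac{\lambda}{2})(\delta_{bc}-1)E_{ab}$, in agreement with $(\epsilon_a+\epsilon_b,\epsilon_{cd})=\delta_{bc}-1$ in this case.

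Finally, the two excluded cases $(c,d)=(a,b)$ and $(c,d)=(b,a)$ both concern $Q(H_{ab})$ (up to sign), for which the computation above is circular (the bracket $[E_{ab},E_{dc}]$ becomes Cartan, or \eqref{rel:two param} fails to apply). As in Lemma~\ref{lem:K(h)Q(x)}, I would dispose of them by linearity in the Cartan argument: since $n\geq 4$ there is an index $e\neq a,b$, and writing $H_{ab}=H_{ae}+H_{eb}$ expresses $Q(H_{ab})$ as a sum of two terms whose index pairs $(a,e)$ and $(e,b)$ both avoid $\{(a,b),(b,a)\}$, so the relation already established applies to each. Because both sides of the first relation are linear in $H_{cd}$ and the $H_{cd}$ span $\mfh$, this settles all cases and completes the proof of the first relation, and hence of the lemma.
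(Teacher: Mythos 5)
Your proposal is correct and takes essentially the same approach as the paper: the paper omits this proof, saying only that it is similar to that of Lemma \ref{lem:K(h)Q(x)} with the second relation deduced from the first via the automorphism of Proposition \ref{Prop: auto}, and your argument is exactly that adaptation --- applying $[\,\cdot\,,E_{dc}]$ to \eqref{rel:two param}, evaluating the extra commutator by a second use of \eqref{rel:two param}, treating the excluded pairs $(c,d)\in\{(a,b),(b,a)\}$ by linearity of the Cartan factor, and additionally bookkeeping the $\left(\beta-\frac{\lambda}{2}\right)$-terms. Your sample case ($b=d$, $a\neq c$) and the automorphism verification both check out, so the proposal fills in precisely the details the paper leaves out.
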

In \cite{G2}, in the definition of $\mfD_{\lambda, \beta}(\mathfrak{sl}_n)$, the generators $P(X)$ were also imposed the condition that they had to satisfy the defining relations of the Yangian of $\mfsl_n$ as given in \cite{Dr1} in terms of elements $X, J(X) ,\, X\in\mfsl_n$.  It turns out that this is not necessary as explained in Theorem \ref{PYang} below. For the proof of that theorem, we will need certain elements $W_{ab}$, so we will now introduce these and a related central element (see Theorem \ref{central ele sln} below) similar to the one obtained in the previous section in the one-parameter case.

\subsection{Some central elements in $\mfD_{\lambda, \beta}(\mathfrak{sl}_n)$}

Set
\begin{equation}
Z_{ab, cd}=[K(H_{ab}), Q(H_{cd})]-\frac{\lambda}{4}\sum_{1\leq i \neq j\leq n}S([H_{ab}, E_{ij}], [E_{ji}, H_{cd}]). \label{defz}
\end{equation}
and denote $Z_{ab, ab}$ by $Z_{ab}$.
Set
\begin{equation}
W_{ab}=[K(E_{ab}), Q(E_{ba})]-P(H_{ab})-\frac{\lambda}{4}\sum_{1\leq i \neq j\leq n}S([E_{ab}, E_{ij}], [E_{ji}, E_{ba}])
-\frac{\lambda}{2}S(E_{ab}, E_{ba}). \label{defw}
\end{equation}
The following proposition is parallel to Proposition \ref{prop:central} and its proof is also similar, so we omit it.
\begin{proposition}\label{prop:two param} The following relations hold in $\mfD_{\la,\beta}(\mathfrak{sl}_n)$, $n\ge 4$:
\begin{romenum}
  \item \label{prop:two param item 1}
  For any $1\leq a \neq b \leq n$ and $1\leq c\neq b\leq n$, 
\[
  Z_{ab, cd}=(\epsilon_{ab}, \epsilon_{cd})W_{ab}+ \left(\beta-\frac{\lambda}{2}\right)(\epsilon_a+\epsilon_b, \epsilon_{cd})H_{ab}
  =(\epsilon_{ab}, \epsilon_{cd})W_{cd}+ \left(\beta-\frac{\lambda}{2}\right)(\epsilon_c+\epsilon_d, \epsilon_{ab})H_{cd}.
\]
In particular, we have $Z_{ab}=2 W_{ab}$, and when $a, b, c, d$ are distinct, $Z_{ab, cd}=0$.
\item \label{prop:two param item 2}
For $1\leq a\neq b \leq n$ and $1\leq c \neq d \leq n$,
\[
W_{ab}-W_{cd}=\left(\beta - \frac{\la}{2}\right)(H_{ac}+H_{bd}). \;\;\; (\text{If $a=c$, then $H_{ac}=0$.})
\]
\item \label{prop:two param item 3}
For $1\leq a\neq b \leq n$ and $1\leq c \neq d \leq n$,
\[Z_{ab}-Z_{cd}=2\left(\beta -\frac{\la}{2}\right) (H_{ac}+H_{bd}).\]
\end{romenum}
\end{proposition}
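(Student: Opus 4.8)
The plan is to prove Proposition \ref{prop:two param} by following the strategy of Proposition \ref{prop:central}, deforming each step by the extra $(\beta-\frac{\la}{2})$-terms that appear in the two-parameter relations of Lemma \ref{lem:KEab QHcd}. The three items are logically linked: item \eqref{prop:two param item 1} is the computational heart, and items \eqref{prop:two param item 2} and \eqref{prop:two param item 3} follow from it by elementary manipulation. So first I would establish \eqref{prop:two param item 1}, then deduce the other two.

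To prove \eqref{prop:two param item 1}, I would imitate the opening move in the proof of Proposition \ref{prop:central}: take the relation from Lemma \ref{lem:KEab QHcd} for $[K(H_{ab}), Q(E_{dc})]$, namely
\[
[K(H_{ab}), Q(E_{dc})]=P([H_{ab}, E_{dc}])+\tfrac{\lambda}{4}\sum_{1\leq i \neq j\leq n}S([H_{ab}, E_{ij}], [E_{ji}, E_{dc}])+\left(\beta-\tfrac{\lambda}{2}\right)(\epsilon_{ab}, \epsilon_d+\epsilon_c)E_{dc},
\]
and apply $[E_{cd},\cdot]$ to it, using the fundamental identity \eqref{m} to simplify the double commutator of the $S$-sum. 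Since $[E_{cd},E_{dc}]=H_{cd}$ and $[E_{cd},P(E_{dc})]=P(H_{cd})$, the left side produces $[K(H_{ab}),Q(H_{cd})]$ together with a term proportional to $[K(E_{cd}),Q(E_{dc})]$, exactly as the analogous computation produced $C(\beta_1,\beta_2)$ and $(\beta_1,\beta_2)B(\beta_2)$ before. Collecting terms and substituting the definitions \eqref{defz} and \eqref{defw} should yield $Z_{ab,cd}=(\epsilon_{ab},\epsilon_{cd})W_{cd}+(\beta-\frac{\lambda}{2})(\epsilon_c+\epsilon_d,\epsilon_{ab})H_{cd}$; the symmetric expression in $W_{ab}$ follows by running the argument with the roles of $(ab)$ and $(cd)$ interchanged. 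Setting $c=a,d=b$ gives $Z_{ab}=2W_{ab}$ (since $(\epsilon_{ab},\epsilon_{ab})=2$ and the correction term vanishes), and taking $a,b,c,d$ distinct forces $Z_{ab,cd}=0$ because both $(\epsilon_{ab},\epsilon_{cd})=0$ and $(\epsilon_c+\epsilon_d,\epsilon_{ab})=0$.

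For \eqref{prop:two param item 2}, I would compare the two expressions for a common $Z_{ab,cd}$ when $(\epsilon_{ab},\epsilon_{cd})\neq0$, giving $(\epsilon_{ab},\epsilon_{cd})(W_{ab}-W_{cd})$ as a difference of the two $H$-correction terms, and solve for $W_{ab}-W_{cd}$; the case $(\epsilon_{ab},\epsilon_{cd})=0$ is handled by inserting an intermediate root pair, exactly as connectedness of the Dynkin diagram was used in Proposition \ref{prop:central}\eqref{prop: CB ele 2}. Item \eqref{prop:two param item 3} is then immediate from $Z_{ab}=2W_{ab}$ and item \eqref{prop:two param item 2}. The main obstacle I anticipate is purely bookkeeping: correctly tracking the inner products $(\epsilon_a+\epsilon_b,\epsilon_{cd})$ versus $(\epsilon_{ab},\epsilon_{cd})$ and the signs and index-coincidences (the $\delta_{bc},\delta_{ad}$ data implicit in $[E_{ab},E_{dc}]$) so that the $(\beta-\frac{\lambda}{2})$-corrections assemble into precisely the stated $H_{ab}$-linear terms. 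Because these corrections are the only new feature relative to the one-parameter case, and the $S$-sum manipulations via \eqref{m} are formally identical, I expect the conceptual content to be light once the $\epsilon$-arithmetic is pinned down — which is presumably why the authors chose to omit the detailed proof.
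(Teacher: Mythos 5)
Your overall strategy is the right one and matches what the paper intends: the paper omits the proof of Proposition \ref{prop:two param}, saying only that it is parallel to Proposition \ref{prop:central}, and your plan (bracket the relations of Lemma \ref{lem:KEab QHcd} with root vectors, track the extra $\left(\beta-\frac{\la}{2}\right)$-terms via \eqref{m}, then deduce items (ii) and (iii) formally from item (i)) is exactly that parallel argument. In particular, your derivation of $Z_{ab,cd}=(\epsilon_{ab},\epsilon_{cd})W_{cd}+\left(\beta-\frac{\la}{2}\right)(\epsilon_c+\epsilon_d,\epsilon_{ab})H_{cd}$ by applying $[E_{cd},\cdot]$ to the relation for $[K(H_{ab}),Q(E_{dc})]$ is correct, as are the two "in particular" statements and the reductions in (ii) and (iii).

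The one step that does not work as literally described is: "the symmetric expression in $W_{ab}$ follows by running the argument with the roles of $(ab)$ and $(cd)$ interchanged." Interchanging the two index pairs in your computation produces an identity for $Z_{cd,ab}=[K(H_{cd}),Q(H_{ab})]-\cdots$, in which $K$ and $Q$ sit on the opposite sides from those in $Z_{ab,cd}$; since the defining relations are not symmetric under exchanging $K$ and $Q$, the equality $Z_{ab,cd}=Z_{cd,ab}$ is not available a priori (given the rest of item (i) it is equivalent to item (ii), so assuming it would be circular). The correct "mirror" step --- and the analogue of how the second equality in Proposition \ref{prop:central}(i) is proved --- is to bracket on the $K$-side: apply $[\cdot,E_{ba}]$ to the \emph{first} relation of Lemma \ref{lem:KEab QHcd}, $[K(E_{ab}),Q(H_{cd})]=P([E_{ab},H_{cd}])+\cdots$, whose left-hand side becomes $[K(H_{ab}),Q(H_{cd})]-(\epsilon_{ab},\epsilon_{cd})[K(E_{ab}),Q(E_{ba})]$, yielding directly the expression of $Z_{ab,cd}$ in terms of $W_{ab}$. (Alternatively, your interchanged computation can be salvaged: the automorphism of Proposition \ref{Prop: auto} sends $Z_{cd,ab}$ to $Z_{ab,cd}$ and $W_{ab}$ to $W_{ba}$, and $W_{ab}=W_{ba}$ follows from your proven equality by taking $(c,d)=(a,b)$ and $(c,d)=(b,a)$ and using linearity of $Z_{ab,cd}$ in $H_{cd}$.) With this one repair the proof is complete.
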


\begin{theorem}\label{central ele sln}
Set
\[
Z=\sum_{a=1}^{n}Z_{a, a+1}=\sum_{a=1}^{n}\left([K(H_{a}), Q(H_{a})]-\frac{\lambda}{4}\sum_{1\leq i \neq j\leq n}S([H_{a}, E_{ij}], [E_{ji}, H_{a}]) \right),
\]
where $H_a=E_{aa}-E_{a+1, a+1}$ when $1\leq a\leq n-1$, and $H_n=E_{nn}-E_{11}$.  
 The element $Z$ is central in $\mfD_{\lambda,\beta}(\mathfrak{sl}_n)$.
\end{theorem}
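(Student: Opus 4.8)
The plan is to verify that $Z$ commutes with each generator $X,K(X),Q(X)$ of $\mfD_{\la,\beta}(\mfsl_n)$; the generators $P(X)$ need not be treated separately, since they are recovered from commutators $[K(\cdot),Q(\cdot)]$ through \eqref{rel:two param} and the $\mfsl_n$-equivariance of $P$. As the root vectors generate $\mfsl_n$, I would first compute $[Z,E_{cd}]$. Writing $Z_{a,a+1}=[K(H_a),Q(H_a)]-\frac{\la}{4}\sum_{i\ne j}S([H_a,E_{ij}],[E_{ji},H_a])$ and using $[H_a,E_{cd}]=(\epsilon_{a,a+1},\epsilon_{cd})E_{cd}$ together with the Jacobi identity, one reduces $[[K(H_a),Q(H_a)],E_{cd}]$ to $(\epsilon_{a,a+1},\epsilon_{cd})\big([K(H_a),Q(E_{cd})]+[K(E_{cd}),Q(H_a)]\big)$. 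Expanding the two brackets by Lemma \ref{lem:KEab QHcd}, the two $P$-terms cancel (their arguments are $\pm[H_a,E_{cd}]$), and by \eqref{m} the two $S$-sums combine into precisely the commutator of the correction $S$-term of $Z_{a,a+1}$ with $E_{cd}$, so they cancel as well. Only the two-parameter contributions survive, yielding
\[
[Z_{a,a+1},E_{cd}]=2\Big(\beta-\tfrac{\la}{2}\Big)(\epsilon_{a,a+1},\epsilon_{cd})(\epsilon_{a,a+1},\epsilon_c+\epsilon_d)\,E_{cd}.
\]
Summing over $a=1,\dots,n$ (indices mod $n$), the scalar becomes the value $p^{\mathsf T}Lq$ of the quadratic form of the Laplacian $L=\sum_{a}(\epsilon_a-\epsilon_{a+1})(\epsilon_a-\epsilon_{a+1})^{\mathsf T}$ of the $n$-cycle, with $p=\epsilon_c-\epsilon_d$ and $q=\epsilon_c+\epsilon_d$; a direct check shows $p^{\mathsf T}Lq=0$ for all $c\ne d$. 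Hence $[Z,E_{cd}]=0$, and therefore $[Z,X]=0$ for every $X\in\mfsl_n$.

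The heart of the argument is to show $[Z,K(H_\gamma)]=0$ for a single well-chosen root $\gamma$. For any two roots $\mu,\nu$ I would establish the identity
\[
[K(H_\mu),Z_\nu]=(\epsilon_\mu,\epsilon_\nu)\,[K(H_\nu),W_\mu]=\tfrac12(\epsilon_\mu,\epsilon_\nu)\,[K(H_\nu),Z_\mu],
\]
where $Z_\nu:=Z_{\nu,\nu}$ and $W_\mu=\tfrac12Z_\mu$ by Proposition \ref{prop:two param}(\ref{prop:two param item 1}). To prove it, use $[K(H_\mu),K(H_\nu)]=0$ and Jacobi to rewrite $[K(H_\mu),[K(H_\nu),Q(H_\nu)]]$ as $[K(H_\nu),[K(H_\mu),Q(H_\nu)]]$, expand $[K(H_\mu),Q(H_\nu)]$ by the two-parameter analogue of Lemma \ref{lem:HH} (immediate from \eqref{defz} and Proposition \ref{prop:two param}(\ref{prop:two param item 1})), note that its Cartan part is annihilated by $K(H_\nu)$, and observe that the two surviving $S$-type sums both carry the scalar weight $\sum_{i\ne j}(\epsilon_\mu,\epsilon_{ij})(\epsilon_\nu,\epsilon_{ij})^2$ and hence cancel against the commutator of $K(H_\mu)$ with the correction term of $Z_\nu$. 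Applying this identity twice gives $[K(H_\mu),Z_\nu]=\tfrac14(\epsilon_\mu,\epsilon_\nu)^2[K(H_\mu),Z_\nu]$, so that $[K(H_\mu),Z_\nu]=0$ whenever $(\epsilon_\mu,\epsilon_\nu)^2\ne4$, i.e. whenever $\mu\ne\pm\nu$.

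Because $n\ge4$, there are more than the $2n$ roots $\pm(\epsilon_a-\epsilon_{a+1})$, so I can choose a root $\gamma$ (for instance $\epsilon_1-\epsilon_3$) with $\gamma\ne\pm(\epsilon_a-\epsilon_{a+1})$ for every $a$; then $(\epsilon_\gamma,\epsilon_{a,a+1})^2\in\{0,1\}$ for all $a$, whence $[K(H_\gamma),Z_{a,a+1}]=0$ termwise and $[Z,K(H_\gamma)]=0$. To finish, I would bootstrap exactly as in the proof of Theorem \ref{central ele any g}: since $\mfsl_n$ is irreducible under $\ad$ and $K$ is $\mfsl_n$-equivariant, every $K(X)$ is an iterated bracket $[Y_1,[\dots,[Y_l,K(H_\gamma)]\dots]]$ with $Y_i\in\mfsl_n$, so Jacobi together with $[Z,Y_i]=0$ and $[Z,K(H_\gamma)]=0$ gives $[Z,K(X)]=0$ for all $X$. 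The element $Z$ is fixed by the automorphism of Proposition \ref{Prop: auto} (it sends each $[K(H_a),Q(H_a)]$ to itself and fixes the $S$-terms, which lie in $\mfsl_n$), so applying that automorphism to $[Z,K(X)]=0$ yields $[Z,Q(X)]=0$. As $Z$ then commutes with all of $X,K(X),Q(X)$, it is central. The main obstacle is the middle step: carrying out the $P$- and $S$-term cancellations that produce the clean identity for $[K(H_\mu),Z_\nu]$, and recognizing that the residual two-parameter terms vanish only after the full cyclic sum $\sum_a Z_{a,a+1}$ is taken --- this is precisely why no single $Z_{ab}$ is central and where the hypothesis $n\ge4$ enters.
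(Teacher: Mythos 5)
Your argument is correct, and while it shares the paper's overall skeleton --- first $[Z,X]=0$ for all $X\in\mfsl_n$, then $[Z,K(H_\gamma)]=0$ for one well-chosen $\gamma$, then bootstrapping through the adjoint action, then the $Q$-side --- the decisive middle step is carried out by a genuinely different mechanism. The paper first uses Proposition \ref{prop:two param} \eqref{prop:two param item 3} to see that $[Z_{ab},K(h)]$ is independent of $(a,b)$ for diagonal $h$, and then makes the cross term disappear outright by choosing $a,a+1,c,d$ pairwise distinct, so that the mixed element $Z_{a\,a+1,\,cd}$ vanishes. You instead keep the cross term and derive the symmetric identity $[K(H_\mu),Z_\nu]=(\mu,\nu)[K(H_\nu),W_\mu]=\tfrac{(\mu,\nu)}{2}[K(H_\nu),Z_\mu]$; the derivation is sound (Jacobi, the expansion of $[K(H_\mu),Q(H_\nu)]$ via Proposition \ref{prop:two param} \eqref{prop:two param item 1}, and the observation that both residual $S$-sums equal $\tfrac{\la}{2}\sum_{i\neq j}(\mu,\epsilon_{ij})(\nu,\epsilon_{ij})^2\,S(K(E_{ij}),E_{ji})$, hence cancel), and iterating it gives $\bigl(1-\tfrac{1}{4}(\mu,\nu)^2\bigr)[K(H_\mu),Z_\nu]=0$, so the bracket vanishes whenever $\mu\neq\pm\nu$, not only when the index sets are disjoint. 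This buys termwise vanishing $[K(H_{\epsilon_1-\epsilon_3}),Z_{a,a+1}]=0$ for every $a$ without invoking part \eqref{prop:two param item 3}, at the price of using part \eqref{prop:two param item 1} for non-orthogonal pairs of roots --- which is legitimate, since that statement, like its one-parameter analogue Proposition \ref{prop:central}, holds for arbitrary pairs. Your two other deviations are economies rather than necessities: the cycle-Laplacian computation $\sum_a(\epsilon_{a,a+1},\epsilon_{cd})(\epsilon_{a,a+1},\epsilon_c+\epsilon_d)=\sum_a\bigl((\epsilon_{a,a+1},\epsilon_c)^2-(\epsilon_{a,a+1},\epsilon_d)^2\bigr)=2-2=0$ disposes of all root vectors at once where the paper checks only $E_{i,i+1}$ and $E_{i+1,i}$ and invokes generation; and the automorphism of Proposition \ref{Prop: auto} (which indeed fixes $Z$) replaces the paper's repetition of the argument on the $Q$-side. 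One small slip in your closing sentence: the hypothesis $n\ge 4$ enters through the existence of a root $\gamma\neq\pm\epsilon_{a,a+1}$ for all $a$ (exactly as you said earlier --- for $n=3$ no such $\gamma$ exists), not through the cyclic summation, whose role is to make $Z$ commute with $\mfsl_n$ itself.
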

\begin{proof}
We first show that $Z$ commutes with all the elementary matrices $E_{cd}$ in $\mathfrak{sl}_n$. We have
\begin{equation}
[Z_{ab}, E_{cd}]=2\left(\beta-\frac{\lambda}{2}\right)(\epsilon_{ab}, \epsilon_{cd})(\epsilon_c+\epsilon_d, \epsilon_{ab})E_{cd}, \label{ZE}
\end{equation} since
\begin{align*}
\big[ [K(H_{ab}), & Q(H_{ab})], E_{cd}\big]\\ 
 = {} & (\epsilon_{ab}, \epsilon_{cd})[K(E_{cd}), Q(H_{ab})]+(\epsilon_{ab}, \eps_{cd})[K(H_{ab}), Q(E_{cd})]\\
 = {} & (\eps_{ab}, \eps_{cd})
\left(P([E_{cd}, H_{ab}])+\frac{\lambda}{4}\sum_{1\leq i \neq j\leq n}S([E_{cd}, E_{ij}], [E_{ji}, H_{ab}])+\left(\beta-\frac{\lambda}{2}\right)(\epsilon_c+\epsilon_d, \eps_{ab})E_{cd}\right) \\
& +(\eps_{ab}, \eps_{cd})
\left(P([H_{ab}, E_{cd}])+\frac{\lambda}{4}\sum_{1\leq i \neq j\leq n}S([H_{ab}, E_{ij}], [E_{ji}, E_{cd}])
+\left(\beta-\frac{\lambda}{2}\right)(\eps_{ab}, \epsilon_c+\epsilon_d)E_{cd}\right) \\
& \qquad  \qquad  \qquad  \qquad \text{ by Lemma \ref{lem:KEab QHcd}}\\
 = {} & \frac{\lambda}{4}\sum_{1\leq i \neq j\leq n}\Big[ S([H_{ab}, E_{ij}], [E_{ji}, H_{ab}]), E_{cd}\Big]
+2\left(\beta-\frac{\lambda}{2}\right)(\eps_{ab}, \eps_{cd})(\epsilon_c+\epsilon_d, \eps_{ab})E_{cd}.
\end{align*}
Thus,
\[
[Z, E_{i, i+1}]=\sum_{a=1}^n [Z_{a, a+1}, E_{i, i+1}]=2\left(\beta-\frac{\lambda}{2}\right)\sum_{a=1}^n(\epsilon_{a,a+1}, \epsilon_{i,i+1})(\epsilon_i+\epsilon_{i+1}, \epsilon_{a,a+1})E_{i, i+1} = 0 \text{ for any $1\leq i < n$.}\]
Similarly, it can be shown that $[Z, E_{i+1, i}]=0$ for any $1\leq i < n$, thus $Z$ commutes with any element in $\mathfrak{sl}_n$.

In the following, we show that $Z$ commutes with $K(H_{cd})$ for some diagonal matrix $H_{cd}$ of $\mathfrak{sl}_n$. Since $[K(h), h']=0$ for any two diagonal matrices $h, h'$, it follows from Proposition \ref{prop:two param} \eqref{prop:two param item 3} that \begin{equation} [Z_{ab}, K(h)]=[Z_{cd}, K(h)] \text{ for any  diagonal matrix $h$.} \label{ZKh} \end{equation}

Consider four distinct integers $a,b,c,d$ in $\{ 1,2,\ldots, n \}$. Let's check that $[Z_{ab}, K(H_{cd})]=0$. 
On the one hand,
\begin{align*}
\big[ K(H_{cd}), &  [K(H_{ab}), Q(H_{ab})]\big] = \big[ K(H_{ab}), [K(H_{cd}), Q(H_{ab})]\big] \\
& = \frac{1}{4}\sum_{1\leq i ,j \leq n} \left[ K(H_{ab}), (\eps_{cd}, \eps_{ij})(\eps_{ab}, \eps_{ij})S(E_{ij}, E_{ji})\right] \text{ since $Z_{ab,cd}=0$ by Proposition \ref{prop:two param} \eqref{prop:two param item 1}}\\
& =\frac{1}{2}\sum_{1\leq i \neq j \leq n}(\eps_{cd}, \eps_{ij})(\eps_{ab}, \eps_{ij})^2
S(K(E_{ij}), E_{ji}).
\end{align*}
On the other hand, we have:
\begin{align*}
\left[ K(H_{cd}), \frac{1}{4}\sum_{1\leq i \neq j \leq n}  S([H_{ab}, E_{ij}], [E_{ji}, H_{ab}])\right] & = \frac{1}{4}\sum_{1\leq i \neq j \leq n} \left[ K(H_{cd}), (\eps_{ab}, \eps_{ij})^2S(E_{ij}, E_{ji})\right]\\
& = \frac{1}{2}\sum_{1\leq i \neq j \leq n}(\eps_{cd}, \eps_{ij})(\eps_{ab}, \eps_{ij})^2 S(K(E_{ij}), E_{ji}).
\end{align*}

Combining the above computations, we conclude that $[K(H_{cd}), Z_{ab}]=0$. This is true in particular when $b=a+1$ and $a,a+1,c,d$ are all distinct. Thus, using \eqref{ZKh}, we obtain: \[ [K(H_{cd}), Z]= n [K(H_{cd}), Z_{a,a+1}] =0. \] It follows from this and $[Z,X]=0 \; \forall \, X\in\mfsl_n$ that $[Z, K(X)]=0$ for all $X\in \mathfrak{sl}_n$.  

A similar argument shows that $[Z, Q(X)]=0$ for all $X\in \mathfrak{sl}_n$. $\mfD_{\lambda, \beta}(\mathfrak{sl}_n)$ is generated by $X, K(X), Q(X)$ for $X\in \mathfrak{sl}_n$, hence $Z$ must be a central element in $\mfD_{\lambda, \beta}(\mathfrak{sl}_n)$.
\end{proof}

\subsection{Connection with the Yangian of $\mfsl_n$}

In this subsection, we will prove that the elements $X,P(X)$ for all $X\in\mfsl_n$ satisfy the defining relations of the Yangian $Y(\mfsl_n)$ as given in \cite{Dr1} (with $J(X)$ replaced by $P(X)$). In order to prove this, it will be easier to use a simpler presentation of $Y(\mfsl_n)$ and the isomorphism given in \cite{Dr2} between the two presentations, which is recalled below.

\begin{theorem}\cite{Le}\label{thm:Yangpres}
The Yangian $Y(\mfsl_n)$ of $\mfsl_n$ is isomorphic to the $\C$-algebra generated by elements $X_{i,r}^{\pm}, H_{i,r}$ for $1\le i\le n-1$ and $r=0,1$ which satisfy the following relations for $1\le i,j\le n$: \begin{equation}
[H_{i,r},H_{j,s}] = 0, \;\; [H_{i,0},X_{j,s}^{\pm}] = \pm c_{ij} X_{j,s}^{\pm} \;\;  \forall\, r,s\in\{0,1\}, \label{YLe1}
\end{equation}
\begin{equation}
[X_{i,1}^{\pm},X_{j,0}^{\pm}] - [X_{i,0}^{\pm},X_{j,1}^{\pm}] = \pm\frac{\la c_{ij}}{2} S(X_{i,0}^{\pm},X_{j,0}^{\pm}), \;\; [H_{i,1},X_{j,0}^{\pm}] - [H_{i,0},X_{j,1}^{\pm}] = \pm\frac{\la c_{ij}}{2} S(H_{i,0},X_{j,0}^{\pm}), \label{YLe2}
\end{equation}
\begin{equation}
[X_{i,r}^+, X_{j,s}^-] = \delta_{ij} H_{i,r+s}\; \text{ for } r+s=0,1, \;\; \big[ [X_{i,1}^+,X_{i,1}^-],H_{i,1}\big]=0, \label{YLe3}
\end{equation}
\begin{equation}
ad(X_{i,0}^{\pm})^{1-c_{ij}}(X_{j,0}^{\pm}) = 0. \label{YLe4}
\end{equation}
\end{theorem}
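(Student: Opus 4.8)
The plan is to realize the algebra $\bar Y$ presented by \eqref{YLe1}--\eqref{YLe4} as isomorphic to the Yangian $Y(\mfsl_n)$ taken in Drinfeld's current presentation, whose generators $X_{i,r}^{\pm},H_{i,r}$ run over all $r\ge 0$ and obey the complete system of current relations. Since each of \eqref{YLe1}--\eqref{YLe4} is the restriction to degrees $\le 1$ of a relation already valid in $Y(\mfsl_n)$, sending each minimal generator to its namesake defines an algebra homomorphism $\Phi\colon \bar Y\to Y(\mfsl_n)$. This map is surjective because $Y(\mfsl_n)$ is generated by its degree-$\le 1$ elements: with the corrected Cartan element $\wt{H}_{i,1}=H_{i,1}-\frac{\la}{2}H_{i,0}^2$ one has the recursion $X_{j,r+1}^{\pm}=\pm c_{ij}^{-1}[\wt{H}_{i,1},X_{j,r}^{\pm}]$ for any $i$ with $c_{ij}\neq 0$, which exhibits every generator of $Y(\mfsl_n)$ as an iterated bracket of degree-$\le 1$ ones.

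For the reverse direction I would build the higher generators inside $\bar Y$ by the same recursion: define $X_{j,r+1}^{\pm}$ and $H_{j,r+1}$ for $r\ge 1$ by bracketing with $\wt{H}_{i,1}$, and then prove by induction on the total degree $r+s$ that the full list of current relations holds in $\bar Y$. Granting this, assigning each Drinfeld generator to the corresponding constructed element gives a homomorphism $Y(\mfsl_n)\to\bar Y$ inverting $\Phi$ on generators, so $\Phi$ is an isomorphism. Equivalently, one filters $\bar Y$ by $\deg X_{i,r}^{\pm}=\deg H_{i,r}=r$; the relations verified above make $\gr\bar Y$ a quotient of $\mfU(\mfsl_n[t])$, while the Yangian PBW theorem gives $\gr Y(\mfsl_n)\cong \mfU(\mfsl_n[t])$, and the composite $\mfU(\mfsl_n[t])\twoheadrightarrow\gr\bar Y\xrightarrow{\gr\Phi}\mfU(\mfsl_n[t])$ is the identity on the generating currents, forcing $\gr\Phi$, and hence $\Phi$, to be an isomorphism.

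The main obstacle---and the genuine content of the theorem---is the inductive verification in the previous step, and within it the consistency of the recursion: bracketing $X_{j,r}^{\pm}$ with $\wt{H}_{i,1}$ for two different neighbours $i$ of the node $j$ must yield the same element $X_{j,r+1}^{\pm}$, and the higher commutators $[X_{i,r}^+,X_{j,s}^-]$ and $[H_{i,r},H_{j,s}]$ must close up as prescribed. The decisive input is the single extra relation $[[X_{i,1}^+,X_{i,1}^-],H_{i,1}]=0$ in \eqref{YLe3}: it is invisible in the associated graded, where its leading term is the automatic Cartan identity $[H_i\otimes t^2,H_i\otimes t]=0$, yet it is precisely the lower-order constraint that rigidifies the degree-$1$ data and makes the induction close, so that dropping it would yield a strictly larger algebra. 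Everything else---the higher Serre relations and the remaining commutation relations---then propagates by routine applications of the Jacobi identity, with the cases $r,s\le 1$ serving as base cases.
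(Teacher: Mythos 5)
First, a point of comparison: the paper does not prove Theorem \ref{thm:Yangpres} at all --- it quotes it from Levendorski \cite{Le} (and only uses the statement, together with the \cite{Dr2} isomorphism, in the proof of Theorem \ref{PYang}). So your proposal must be judged on its own, and judged that way it is a correct \emph{strategy} --- indeed the standard one --- but not a proof. Everything you actually verify is the easy half: the relations \eqref{YLe1}--\eqref{YLe4} hold in $Y(\mfsl_n)$ in its current realization (including the second relation of \eqref{YLe3}, which becomes $[H_{i,2},H_{i,1}]=0$), so $\Phi\colon \bar{Y}\to Y(\mfsl_n)$ exists; and the recursion via $\wt{H}_{i,1}=H_{i,1}-\frac{\la}{2}H_{i,0}^2$ shows $\Phi$ is surjective. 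Injectivity, which is the whole theorem, rests entirely on the step you defer: defining $X_{j,r}^{\pm}, H_{j,r}$ for $r\ge 2$ inside $\bar{Y}$, proving the definition is independent of the chosen neighbour $i$, and establishing the complete system of current relations by induction. Declaring this ``routine applications of the Jacobi identity'' is not a substitute for carrying it out; that induction \emph{is} the content of \cite{Le}, and it is long and genuinely delicate. Your filtered variant has the same hole in a different place: to assert that $\gr\bar{Y}$ is a quotient of $\mfU(\mfsl_n[t])$ you need to know that the $\la=0$ specialization of \eqref{YLe1}--\eqref{YLe4} presents $\mfU(\mfsl_n[t])$ --- a degree-$\le 1$ presentation theorem for the current algebra which is of exactly the same nature as the statement being proved, and which does not follow from the relations you have checked.

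There is also a concrete factual error. You single out $\big[[X_{i,1}^+,X_{i,1}^-],H_{i,1}\big]=0$ as ``the decisive input,'' claiming that dropping it ``would yield a strictly larger algebra.'' In the range relevant here the opposite holds: as the remark immediately following the theorem in the paper records, this relation is needed only when $n=2$; for $n\ge 3$ (and the paper assumes $n\ge 4$, where it is simply disregarded) it is a consequence of the remaining relations \eqref{YLe1}, \eqref{YLe2}, the first part of \eqref{YLe3}, and \eqref{YLe4}. Building your narrative of why the induction closes around the alleged necessity of this relation is therefore misleading, and it signals that the induction was not actually attempted --- had it been, the question of where (if anywhere) this relation enters in rank $\ge 2$ would have been settled rather than asserted.
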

\begin{remark}
The most complicated relation is the second one in  \eqref{YLe3}. It turns out that it is needed only when $n=2$: since we are assuming that $n\ge 4$, we will disregard it.
\end{remark}

The isomorphism between this presentation of $Y(\mfsl_n)$ and the one given in \cite{Dr1} in terms of generators $X,J(X)$ for all $X\in\mfsl_n$ sends $X_{i,1}^{\pm}$ to $J(X_i^{\pm}) - \la \omega_i^{\pm}$: see \cite{Dr2}.

\begin{theorem}\label{PYang}
Set $X_{i,0}^{\pm} = X_i^{\pm}, H_{i,0} = H_i$, $X_{i,1}^{\pm} = P(X_i^{\pm}) - \la \omega_i^{\pm}$ and $H_{i,1} = P(H_i) - \la \nu_i$. These elements of  $\mfD_{\lambda, \beta}(\mathfrak{sl}_n)$ satisfy the defining relations of the Yangian $Y(\mfsl_n)$ as given in Theorem \ref{thm:Yangpres}.
\end{theorem}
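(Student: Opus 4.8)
The plan is to verify, one relation at a time, that the elements $X_{i,0}^\pm=X_i^\pm$, $H_{i,0}=H_i$, $X_{i,1}^\pm=P(X_i^\pm)-\la\omega_i^\pm$ and $H_{i,1}=P(H_i)-\la\nu_i$ satisfy each of \eqref{YLe1}--\eqref{YLe4}, disregarding the cubic relation in \eqref{YLe3} as the Remark permits since $n\ge 4$. The guiding principle is that $P$ is an $\ad(\mfsl_n)$-module morphism with $[P(X),X']=P([X,X'])$, exactly as $J$ is in Drinfeld's first presentation of $Y(\mfsl_n)$ from \cite{Dr1}, and that the substitution above is modelled on the isomorphism of \cite{Dr2,Le} sending $X_{i,1}^\pm\mapsto J(X_i^\pm)-\la\omega_i^\pm$ and $H_{i,1}\mapsto J(H_i)-\la\nu_i$. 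I therefore expect most relations to collapse to identities that already hold inside $\mfU(\mfsl_n)\subset\mfD_{\la,\beta}(\mfsl_n)$, the verification of the $P$-terms in $\mfD_{\la,\beta}(\mfsl_n)$ being word-for-word the verification of the $J$-terms in $Y(\mfsl_n)$.

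Concretely, \eqref{YLe4} involves only the degree-zero generators, which are genuine elements of $\mfsl_n$, so it holds automatically. For the Cartan relations \eqref{YLe1} other than $[H_{i,1},H_{j,1}]=0$, I would expand using $[P(X),X']=P([X,X'])$ together with the facts that $\omega_j^\pm$ is a weight vector of weight $\pm\al_j$ and that $\nu_j$ has weight $0$; this gives at once $[H_{i,0},X_{j,1}^\pm]=\pm c_{ij}X_{j,1}^\pm$ and $[H_{i,1},H_{j,0}]=0$. For \eqref{YLe2} and the first relation of \eqref{YLe3}, I would expand both sides: because $P$ is an $\ad$-morphism, the $P$-contributions cancel outright in \eqref{YLe2}, while in the first relation of \eqref{YLe3} they assemble into the $P$-part of $H_{i,r+s}$. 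What survives in each case is an identity purely among $\omega_i^\pm$ and $\nu_i$ in $\mfU(\mfsl_n)$ — for instance $[\omega_i^+,X_j^-]=\delta_{ij}\nu_i$ and $[X_i^+,\omega_j^+]-[\omega_i^+,X_j^+]=\tfrac{c_{ij}}{2}S(X_i^+,X_j^+)$, read off from \eqref{w1} — and these are exactly the $\mfU(\mfsl_n)$-identities that underlie the change of presentation in \cite{Dr2,Le}. Since they hold in $\mfU(\mfsl_n)$, they hold in $\mfD_{\la,\beta}(\mfsl_n)$.

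The one relation that genuinely escapes this mechanism, and which I expect to be the main obstacle, is $[H_{i,1},H_{j,1}]=0$: it is a bracket of two degree-one elements, neither of which lies in $\mfsl_n$, so the $\ad$-morphism property no longer reduces $[P(H_i),P(H_j)]$ to something inside $\mfU(\mfsl_n)$ (indeed the bracket is nonzero, mirroring $[J(H_i),J(H_j)]\ne 0$ in the Yangian). To handle it I would first invoke the first relation of \eqref{YLe3}, already proved, to write $H_{i,1}=[X_{i,1}^+,X_{i,0}^-]$, and then expand $[H_{i,1},H_{j,1}]$ by the Jacobi identity, feeding in \eqref{YLe1}, \eqref{YLe2} and \eqref{YLe3}. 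The delicate step is the appearance of $[H_{i,1},X_{j,1}^+]$, again a bracket of two degree-one elements; I would eliminate it either by a bootstrapping argument — substituting $X_{j,1}^+=\tfrac12\big([H_{j,1},X_{j,0}^+]-\la S(H_j,X_j^+)\big)$ so that $[H_{i,1},H_{j,1}]$ reappears in a self-referential identity that can be solved after checking that it is $\ad(\mfsl_n)$-invariant — or, more in the spirit of this section, by using $P(H_{ab})=[P(E_{ab}),E_{ba}]$ and reducing to the computations of the previous subsection, where Lemma \ref{lem:KEab QHcd} and Proposition \ref{prop:two param} already control the relevant brackets of $K$ and $Q$ against $\mfsl_n$. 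Either route closes this last case, and together with the reductions above completes the proof that $X,P(X)$ satisfy the defining relations of $Y(\mfsl_n)$.
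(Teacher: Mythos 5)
Your handling of the easy relations coincides with the paper's: \eqref{YLe4} is automatic, and for \eqref{YLe1} (apart from $[H_{i,1},H_{j,1}]=0$), \eqref{YLe2} and the first relation of \eqref{YLe3}, the $P$-terms cancel or reassemble by the $\ad$-morphism property, leaving exactly the $\mfU(\mfsl_n)$-identities among $\omega_i^{\pm},\nu_i$ that underlie the change of presentation of \cite{Dr2,Le}. You have also correctly isolated $[H_{i,1},H_{j,1}]=0$ as the crux. The gap is that neither of your two routes proves it, and this single relation is where essentially all of the paper's work lies: the paper first reduces it to $[P(H_i),P(H_j)]+\la^2[\nu_i,\nu_j]=0$ (using $[\nu_i,P(H_j)]=[\nu_j,P(H_i)]$), and then spends Lemma \ref{lem:nu}, Lemma \ref{PKPQ}, Lemma \ref{lem:P(H)} and four appendices of computation on that identity. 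Concerning your route (a): set $C=[H_{i,1},H_{j,1}]$. When $c_{ij}=0$ your substitution does produce the self-referential identity $C=\tfrac12\big[[C,X_{j,0}^+],X_{j,0}^-\big]$, but to "solve" it one needs $[C,X_{j,0}^+]=0$, i.e.\ precisely the $\ad(\mfsl_n)$-invariance you propose to check; and expanding $[X_{k,0}^{\pm},C]$ by Jacobi produces the brackets $[H_{i,1},X_{k,1}^{\pm}]$ (and, when $j=i\pm1$, also $[H_{j,1},X_{j,1}^+]$ and $[X_{j,1}^+,X_{j,1}^-]$). These are again brackets of two degree-one elements; in $Y(\mfsl_n)$ they involve the degree-two generators (e.g.\ $[X_{j,1}^+,X_{j,1}^-]=H_{j,2}$) and are not determined by the relations you have already verified. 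So the bootstrap never closes: it only recycles the unknowns, and in the adjacent case $j=i+1$ it does not even yield a closed self-referential identity. No purely formal manipulation of the verified relations can succeed here -- at $\la=0$ the abstract algebra defined by all the remaining relations maps onto $\mfU(\mfsl_n\ltimes\mcF)$, $\mcF$ the free Lie algebra on a copy of the adjoint representation, in which the corresponding bracket is nonzero -- so one must genuinely compute inside $\mfD_{\la,\beta}(\mfsl_n)$.

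Your route (b) points toward what the paper actually does, but it underestimates the required input in two ways. First, Lemma \ref{lem:KEab QHcd} and Proposition \ref{prop:two param} only control brackets of $K$ and $Q$ against $\mfsl_n$ and against each other in special configurations; to reach $[P(H_i),P(H_j)]$ one must express $P$ through $K,Q$ (via \eqref{rel:K(X)Q(h)}, e.g.\ $[K(X_i^+),Q(H_i)]=-2P(X_i^+)+\cdots$) and then control the mixed brackets $[K(E_{ab}),P(E_{cd})]$ and $[P(H_i),K(E_{i+1,i+2})]$. The latter is Lemma \ref{PKPQ}, whose proof itself relies on the centrality results of Theorem \ref{central ele sln} (to kill terms such as $[Z_{ik},K(E_{i+1,i+2})]$) and on the elements $W_{ab},\wt{W}_{ab}$; none of this follows from the two results you cite. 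Second, and more fundamentally, your sketch never addresses the right-hand side of the identity: one must prove the precise equality $[P(H_i),P(H_j)]=-\la^2[\nu_i,\nu_j]$, which requires a closed formula for $[\nu_i,\nu_j]$ in $\mfU(\mfsl_n)$. The paper obtains this (Lemma \ref{lem:nu}) by importing Drinfeld's cubic relation for $J$ from the first presentation of $Y(\mfsl_n)$, and then matches it against the explicit expansion of $[P(H_i),P(H_j)]$, separately in the cases $|i-j|>1$ and $j=i+1$. Without these two ingredients -- the $P$-versus-$K$ bracket computations and the target formula for $[\nu_i,\nu_j]$ -- the one relation that makes the theorem nontrivial remains unproved.
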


\begin{proof}
Most of the relations in Theorem \ref{thm:Yangpres} follow directly from the fact that $[P(X),X'] = P([X,X'])$ for all $X,X' \in\mfsl_n$ and the isomorphism given in the paragraph just before the theorem. The main difficulty is in showing that $[H_{i,1},H_{j,1}]=0$ for all $i,j\in\{ 1,2,\ldots,n-1\}$. 

The relation $[H_{i, 1}, H_{j, 1}]=0$ is equivalent to 
$[P(H_i), P(H_j)]+\la^2[\nu_i, \nu_j]=0$. 
Indeed, 
\[
[H_{i, 1}, H_{j, 1}]
=[P(H_i)-\lambda \nu_i, P(H_j)-\lambda \nu_j]
=[P(H_i), P(H_j)]+\lambda^2[\nu_i, \nu_j]
\]
since \begin{align*}
[\nu_i, P(H_j)] & = \left[ \frac{1}{4} \sum_{\beta>0} (\alpha_i, \beta)S(X_{\beta}^+, X_{\beta}^-)-\frac{H_i^2}{2}, P(H_j) \right]\\
& =\frac{1}{4} \sum_{\beta>0} (\alpha_i, \beta)S
 (P[X_{\beta}^+, H_j], X_{\beta}^-)
 +\frac{1}{4} \sum_{\beta>0} 
 (\alpha_i, \beta)S(X_{\beta}^+, P[X_{\beta}^-, H_j])\\
& =-\frac{1}{4} 
 \sum_{\beta>0} (\alpha_i, \beta)(\alpha_j, \beta)S
 (P(X_{\beta}^+), X_{\beta}^-)
 +\frac{1}{4} \sum_{\beta>0} 
 (\alpha_i, \beta) (\alpha_j, \beta)S(X_{\beta}^+, P(X_{\beta}^-))\\
& = \frac{1}{4} \sum_{\beta>0} 
 (\alpha_i, \beta) (\alpha_j, \beta)\Big(S(X_{\beta}^+, P(X_{\beta}^-))-
 S(P(X_{\beta}^+), X_{\beta}^-)
 \Big).
\end{align*}
By symmetry, one has $[\nu_i, P(H_j)]=[\nu_j, P(H_i)]$ as desired.

That the equality $[P(H_i), P(H_j)]+\la^2[\nu_i, \nu_j]=0$ holds is the content of Lemma \ref{lem:P(H)}, which in turn depends on Lemmas \ref{lem:nu} and \ref{PKPQ}; these can all be found below. \end{proof}

For any three elements $z_1,z_2,z_3$ of $\mfsl_n$, set \[
\{z_1, z_2, z_3\}=\frac{1}{24}\sum_{\sigma\in \mathfrak{S}_3} z_{\sigma(1)}z_{\sigma(2)}z_{\sigma(3)}. \text{ ($\mathfrak{S}_3$ is the symmetric group on 3 letters.)}
\]
\begin{lemma}\label{lem:nu}
If $1\leq i\neq j \leq n$, 
we have:
\begin{align}
[\nu_i, \nu_j]
= {} & -\left(\sum_{l=1}^n\{ E_{li}, E_{i, j+1}, E_{j+1, l}\}
-\sum_{k=1}^n\{ E_{ik}, E_{k, j+1}, E_{j+1, i}\}
-\sum_{l=1}^n\{ E_{l, i+1}, E_{i+1, j+1}, E_{j+1, l}\}
+\sum_{k=1}^n\{ E_{i+1, k}, E_{k, j+1}, E_{j+1, i+1}\}\right) \notag
\\
&+\left(
\sum_{l=1}^n\{ E_{li}, E_{i, j}, E_{j, l}\}
-\sum_{k=1}^n\{ E_{ik}, E_{k, j}, E_{j, i}\}
-\sum_{l=1}^n\{ E_{l, i+1}, E_{i+1, j}, E_{j, l}\}
+\sum_{k=1}^n\{ E_{i+1, k}, E_{k, j}, E_{j, i+1}\}
\right). \label{nuinuj}
\end{align}
\end{lemma}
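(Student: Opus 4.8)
The plan is to prove this identity by a direct but carefully organized computation inside $\mfU(\mfgl_n) \supset \mfU(\mfsl_n)$, exploiting the fact that $\nu_i$ depends linearly on the underlying weight. First I would rewrite $\nu_i$ in terms of elementary matrices. Since $(\alpha_i,\epsilon_a-\epsilon_b) = \delta_{ia}-\delta_{i+1,a}-\delta_{ib}+\delta_{i+1,b}$, the sum in \eqref{w1} involves only the root vectors $E_{ab}$ whose index set $\{a,b\}$ meets $\{i,i+1\}$. It is cleanest to introduce, for a weight $\mu=\sum_k\mu_k\epsilon_k$, the quadratic element $\nu(\mu)=\frac14\sum_{a<b}(\mu_a-\mu_b)\,S(E_{ab},E_{ba})-\frac12 H_\mu^2$ with $H_\mu=\sum_k\mu_k E_{kk}$, so that $\nu_i=\nu(\alpha_i)$ for $\mu_k=\delta_{ki}-\delta_{k,i+1}$. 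Because $\nu(\mu)$ is linear in $\mu$, the commutator $[\nu(\mu),\nu(\mu')]$ is bilinear and antisymmetric; I would compute it for general $\mu,\mu'$ and only substitute $\mu=\alpha_i$, $\mu'=\alpha_j$ at the very end.

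Next I would expand $[\nu(\mu),\nu(\mu')]$ into three pieces: the commutator of the two off-diagonal quadratic parts, the two cross terms pairing an off-diagonal part with an $H^2$ part, and $[H_\mu^2,H_{\mu'}^2]$, which vanishes since diagonal matrices commute. Each surviving commutator is evaluated with the single relation $[E_{ab},E_{cd}]=\delta_{bc}E_{ad}-\delta_{da}E_{cb}$, applied inside the anticommutators $S(\cdot,\cdot)$. The two terms of this relation are precisely what will give rise, after contraction, to the two families $\sum_l$ and $\sum_k$ appearing in \eqref{nuinuj} (a contraction on the left slot versus on the right slot). This expansion produces products of three elementary matrices, together with lower-degree correction terms each time two factors are transposed.

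The third step is the bookkeeping: collecting the cubic terms and reorganizing them into the fully symmetrized products $\{z_1,z_2,z_3\}$. Bringing a product $E_{ab}E_{cd}E_{ef}$ into the averaged form $\{E_{ab},E_{cd},E_{ef}\}$ generates commutator corrections of degree $\le 2$, and these must be shown to cancel against the cross terms with $H^2$ and against each other so that no term of degree $\le 2$ survives. Here the hypothesis $i\neq j$ is essential: it guarantees $\{i,i+1\}\neq\{j,j+1\}$ and kills the diagonal contributions (those forced by products of $\delta$'s) that would otherwise remain. After specializing $\mu_k=\delta_{ki}-\delta_{k,i+1}$ and $\mu'_k=\delta_{kj}-\delta_{k,j+1}$, the two index choices coming from $H_i=E_{ii}-E_{i+1,i+1}$ and the two from $H_j=E_{jj}-E_{j+1,j+1}$, combined with the left/right contractions, yield exactly the eight symmetrized triples of \eqref{nuinuj}; the split into two groups (one with index $j+1$, one with index $j$, carrying opposite overall signs) reflects the subtraction in $H_j$.

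The main obstacle will be the sheer volume of terms and, more substantively, verifying that all lower-degree corrections cancel so that the answer is a pure combination of degree-three symmetrized products. I would control this by carrying out the computation for general $\mu,\mu'$, where the cancellation of spurious terms is forced by the antisymmetry of the bracket and by a Jacobi-type identity for the correction terms, and by specializing to $\mu=\alpha_i$, $\mu'=\alpha_j$ only at the end. This reduces the final identity to matching the coefficients of the finitely many surviving triples, which is then routine.
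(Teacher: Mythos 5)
Your route (direct expansion in $\mfU(\mfgl_n)$) is genuinely different from the paper's, but as written it contains an outright error in its organizing device and a gap exactly where the content of the lemma lies. You define $\nu(\mu)=\frac14\sum_{a<b}(\mu_a-\mu_b)S(E_{ab},E_{ba})-\frac12 H_\mu^2$ and assert that $\nu(\mu)$ is linear in $\mu$, so that $[\nu(\mu),\nu(\mu')]$ is bilinear and antisymmetric. This is false: the term $-\frac12 H_\mu^2$ is quadratic in $\mu$, so $\nu(\mu)$ is not linear and the commutator is not bilinear (only the trivial antisymmetry $[a,b]=-[b,a]$ survives). Since your whole mechanism for controlling the lower-degree terms --- ``cancellation of spurious terms is forced by the antisymmetry of the bracket and by a Jacobi-type identity'' --- rests on this bilinearity, the mechanism collapses. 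Moreover, even granting an antisymmetric bilinear structure, antisymmetry cannot force the degree-$\le 2$ part to vanish: a nonzero antisymmetric bilinear map $(\mu,\mu')\mapsto\mfh$ is perfectly possible, and no concrete Jacobi-type identity is exhibited. What remains of the proposal is then precisely the statement to be proved, namely that all lower-order terms cancel and the cubic terms assemble into exactly the eight symmetrized triples with the stated signs; deferring that to ``routine bookkeeping'' asserts the lemma rather than proving it. (A correct structural input you could use instead: the principal anti-automorphism of $\mfU(\mfgl_n)$ acts by $(-1)^k$ on symmetrized $k$-fold products, so the commutator of two symmetrized quadratics lies in the span of symmetrized cubics and linear terms; but even then the linear term must be computed and shown to vanish, and the cubic coefficients must be matched.)

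The paper avoids all of this by not computing $[\nu_i,\nu_j]$ directly. It works in the Yangian $Y_\la(\mfsl_n)$: since $H_{i,1}=J(H_i)-\la\nu_i$ satisfies $[H_{i,1},H_{j,1}]=0$ and the cross terms $[\nu_i,J(H_j)]$ and $[\nu_j,J(H_i)]$ coincide, one gets $\la^2[\nu_i,\nu_j]=-[J(H_i),J(H_j)]$; then Drinfeld's relation
\begin{equation*}
\big[ J(x), J([y, z])\big] + \big[ J(z), J([x, y])\big] + \big[ J(y), J([z, x])\big]
=\la^2\sum_{a,b,c}\Big([x, x_a], \big[ [y, x_b], [z, x_c]\big]\Big)\{x^a, x^b, x^c\}
\end{equation*}
applied with $(x,y,z)=(H_i,X_j^+,X_j^-)$ expresses $[J(H_i),J(H_j)]$ as a contraction of structure constants against symmetrized triples $\{x^a,x^b,x^c\}$. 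The answer is thus guaranteed from the start to be a pure combination of symmetrized cubes, and the only remaining work (the paper's \S\ref{appendix1}) is evaluating that contraction in dual bases --- no reorganization of ordered monomials and no cancellation of lower-degree corrections ever arises. If you insist on a self-contained direct computation, you must actually carry out the expansion and verify the cancellations, in effect re-proving a special case of Drinfeld's relation by hand; otherwise the missing idea is exactly this reduction to the Yangian relation.
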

\begin{proof}
Let $(x , y)=\hbox{tr}(xy)$ be the inner product of $\mathfrak{sl}_n$ giving by the trace. Recall the following defining relation of the Yangian $Y_{\la}(\mathfrak{sl}_n)$ \cite{Dr1}: \[
\big[ J(x), J([y, z])\big] + \big[ J(z), [J([x, y])]\big] + \big[ J(y), J([z, x])\big]
=\lambda^2\sum_{a,b,c}\Big([x, x_a], \big[ [y, x_b], [z, x_c]\big]\Big)\{x^a, x^b, x^c\},
\]
where  $\{x_a\}, \{x^a\}$ are dual bases of $\mathfrak{sl}_n$ with respect to the inner product $(\cdot , \cdot)$. 
The isomorphism between the two presentations of the Yangian \cite{Dr2} gives the equality:
\[
\la^2[\nu_i, \nu_j]=-[J(H_i), J(H_j)]
=-\la^2\sum_{a,b,c}([H_i, x_a], [[X_j^+, x_b], [X_j^-, x_c]])\{x^a, x^b, x^c\}.
\]
Tedious but straightforward computations show that \begin{equation*}
-\sum_{a,b,c}\Big([H_i, x_a], \big[ [X_j^+, x_b], [X_j^-, x_c]\big]\Big)\{x^a, x^b, x^c\}
\end{equation*}
is the right hand side of \eqref{nuinuj}. Complete computations can be found in Section \S\ref{appendix1}.
\end{proof}

We will need one more lemma to treat the case $j=i+1$ below. The second identities below follows from the first one via the anti-automorphism of $\mfD_{\lambda, \beta}(\mathfrak{sl}_n)$ given by $X \mapsto X^t$, $K(X) \mapsto Q(X^t)$, $Q(X) \mapsto K(X^t)$ and $P(X) \mapsto P(X^t)$.
\begin{lemma}\label{PKPQ}
The following identities hold in $\mfD_{\lambda, \beta}(\mathfrak{sl}_n)$ for $1\le i \le n-2$: 
\begin{alignat*}{2}
 [P(H_i),K(E_{i+1,i+2})] = {} & - \frac{1}{2} [P(H_{i+1}),K(E_{i+1,i+2})] 
 +\left(\beta-\frac{\la}{2}\right) & K(E_{i+1, i+2}) + \frac{\la}{4} \Big
 ( S(K(E_{i+1,i}),E_{i,i+2})  
 + S(E_{i+1,i},K(E_{i,i+2})) 
\Big)\\
& & +\frac{\la}{8} \sum_{\stackrel{p=1}{p\neq i+1, i+2}}^n \Bigg(
S(K(E_{p, i+2}), E_{i+1, p})
+ S(K(E_{i+1, p}), E_{p, i+2})
\Bigg)
 \end{alignat*} 
and
\begin{alignat*}{2}
[P(H_i),Q(E_{i+2,i+1})] = {} & - \frac{1}{2} [P(H_{i+1}), Q(E_{i+2,i+1})] 
 -\left(\beta-\frac{\la}{2}\right) & Q(E_{i+2, i+1}) - \frac{\la}{4} \Big
 ( S(Q(E_{i, i+1}),E_{i+2, i})  
 + S(E_{i, i+1},Q(E_{i+2, i})) 
\Big)\\
& & -\frac{\la}{8} \sum_{\stackrel{p=1}{p\neq i+1, i+2}}^n\Bigg(
S(Q(E_{i+2, p}), E_{p, i+1})
+ S(Q(E_{p, i+1}), E_{i+2, p})
\Bigg).
 \end{alignat*} 
\end{lemma}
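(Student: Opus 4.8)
The plan is to reduce everything to the single ``mixed'' bracket of a root-vector $P$ against a current generator, $[P(E_\alpha),K(E_\gamma)]$, and to compute those by feeding the relation \eqref{rel:two param} and Lemma \ref{lem:KEab QHcd} into Jacobi expansions. First, the second identity follows from the first by the transpose anti-automorphism $X\mapsto X^t$, $K(X)\mapsto Q(X^t)$, $Q(X)\mapsto K(X^t)$, $P(X)\mapsto P(X^t)$ recorded just above, so I treat only the first. It is worth noting at the outset that $h:=H_i+\frac12 H_{i+1}$ satisfies $(h,\epsilon_{i+1}-\epsilon_{i+2})=0$, i.e.\ $[h,E_{i+1,i+2}]=0$; the asserted identity is exactly the evaluation of $[P(h),K(E_{i+1,i+2})]$, which is why the statement is organized as a relation between $[P(H_i),K(E_{i+1,i+2})]$ and $[P(H_{i+1}),K(E_{i+1,i+2})]$ rather than a closed form.

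Since $P$ is an $\ad(\mfsl_n)$-morphism, $P(H_i)=[P(E_{i,i+1}),E_{i+1,i}]$. The Jacobi identity together with the current-algebra relation $[X,K(Y)]=K([X,Y])$ for $X,Y\in\mfsl_n$ (coming from the homomorphism $\mfU(\mfsl_n[v])\to\mfD_{\lambda,\beta}(\mfsl_n)$) gives
\[
[P(H_i),K(E_{i+1,i+2})]=\big[[P(E_{i,i+1}),K(E_{i+1,i+2})],E_{i+1,i}\big],
\]
the remaining Jacobi term dropping out because $[E_{i+1,i},E_{i+1,i+2}]=0$; an identical reduction applies to $P(H_{i+1})$. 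This reduces the lemma to the mixed brackets $[P(E_{i,i+1}),K(E_{i+1,i+2})]$ and $[P(E_{i+1,i+2}),K(E_{i+1,i+2})]$.

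To evaluate a mixed bracket I realize the root vector as a commutator $E_\alpha=[E_{\alpha'},E_{\alpha''}]$ and use \eqref{rel:two param} to write $P(E_\alpha)=[K(E_{\alpha'}),Q(E_{\alpha''})]+(\text{explicit }\mfsl_n\text{- and }S\text{-terms})$. The crucial device is to choose the decomposition so that $[E_{\alpha'},E_{i+1,i+2}]=0$ (e.g.\ $E_{i,i+1}=[E_{i,i+2},E_{i+2,i+1}]$ and $E_{i+1,i+2}=[E_{i+1,i},E_{i,i+2}]$): then, in the Jacobi expansion of $[[K(E_{\alpha'}),Q(E_{\alpha''})],K(E_{i+1,i+2})]$, the dangerous higher-current term, namely the image of $[E_{\alpha'},E_{i+1,i+2}]\otimes v^2$, vanishes, leaving only $[K(E_{\alpha'}),[Q(E_{\alpha''}),K(E_{i+1,i+2})]]$. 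The inner bracket is again evaluated by \eqref{rel:two param}, except that the degenerate ``diagonal'' case $[K(E_\gamma),Q(E_{-\gamma})]$ appears; by \eqref{defw}--\eqref{defz} this reintroduces $W_{i+1,i+2}$ and $P(H_{i+1})$, which are controlled via Proposition \ref{prop:two param}, and the $P(H_{i+1})$-contribution is exactly what reassembles (with coefficient $-\frac12$) into the term $[P(H_{i+1}),K(E_{i+1,i+2})]$ on the right-hand side. Expanding the remaining outer bracket with $[K(X),Y]=K([X,Y])$ and the Leibniz rule on the $S$-terms, substituting back, and collecting: the stray $P$-of-root-vector terms cancel after solving the resulting linear relation for the mixed bracket (its coefficient is invertible because $(\epsilon_{i,i+1},\epsilon_{i+1,i+2})=-1\neq\pm2$), the $(\beta-\frac\lambda2)$-terms assemble into $(\beta-\frac\lambda2)K(E_{i+1,i+2})$, and the $\lambda$-weighted double sums $\sum_{k\neq l}$ reorganize into the displayed special summands together with the restricted sum $\sum_{p\neq i+1,i+2}$.

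I expect the main obstacle to be twofold. First, keeping the higher-current (``$v^2$'') contributions under control: this is precisely why the commutator decompositions of the root vectors must be chosen so that the offending $[K(\cdot),K(\cdot)]$ brackets vanish, and any careless decomposition reintroduces degree-$(0,2)$ elements for which no relation is available. Second, the purely organizational task of reshaping the $\mfsl_n$ structure-constant double sums $\frac\lambda4\sum_{k\neq l}S(\cdots)$ into the exact right-hand side, in particular correctly splitting off the indices $p=i+1,i+2$ so as to produce the special terms $S(K(E_{i+1,i}),E_{i,i+2})$ and $S(E_{i+1,i},K(E_{i,i+2}))$ with coefficient $\frac\lambda4$ and the rest with coefficient $\frac\lambda8$.
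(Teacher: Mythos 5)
Your proposal correctly reuses several of the paper's ingredients: the transpose anti-automorphism for the second identity, the device of decomposing $E_\alpha=[E_{\alpha'},E_{\alpha''}]$ with $[E_{\alpha'},E_{i+1,i+2}]=0$ so that the degree-$(0,2)$ bracket $[K(E_{\alpha'}),K(E_{i+1,i+2})]$ vanishes, and the recognition that the diagonal case $[K(E_\gamma),Q(E_{-\gamma})]$ reintroduces $W$'s and $P(H)$'s via \eqref{defw}. But there are two genuine gaps. The smaller one: the ``identical reduction'' for $P(H_{i+1})$ fails. Writing $P(H_{i+1})=[P(E_{i+1,i+2}),E_{i+2,i+1}]$, the discarded Jacobi term is $[P(E_{i+1,i+2}),[E_{i+2,i+1},K(E_{i+1,i+2})]]=-[P(E_{i+1,i+2}),K(H_{i+1})]$, which is not zero; the vanishing that saves the $P(H_i)$ case, namely $[E_{i+1,i},E_{i+1,i+2}]=0$, has no analogue here.

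The fatal gap is that your ``linear relation with invertible coefficient'' never materializes: restricted to the indices $\{i,i+1,i+2\}$, the scheme is circular. Set $A=[P(H_i),K(E_{i+1,i+2})]$ and $B=[P(H_{i+1}),K(E_{i+1,i+2})]$. Expanding $[P(E_{i,i+1}),K(E_{i+1,i+2})]$ with your decomposition $E_{i,i+1}=[E_{i,i+2},E_{i+2,i+1}]$, the diagonal inner bracket produces $-[K(E_{i,i+2}),P(H_{i+1})]$, and applying the outer $[\cdot\,,E_{i+1,i}]$ gives $A=-B+[P(E_{i+1,i}),K(E_{i,i+2})]+(\text{explicit and }W\text{-terms})$. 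The only admissible decomposition of $E_{i+1,i}$ inside these three indices is $[E_{i+1,i+2},E_{i+2,i}]$ (the opposite order creates the forbidden element $[K(E_{i+2,i}),K(E_{i,i+2})]\neq 0$), and its diagonal inner bracket produces $-[K(E_{i+1,i+2}),P(H_{i,i+2})]=A+B$, so $[P(E_{i+1,i}),K(E_{i,i+2})]=A+B+(\text{explicit and }W\text{-terms})$. Substituting back, $A$ and $B$ cancel identically from both sides --- the coefficient is $1=1$, not your $-1\neq\pm 2$ --- leaving only a consistency identity of the shape $0=-[K(E_{i+1,i+2}),W_{i,i+2}]+(\text{explicit})$; the combination $A+\tfrac12 B$ computed by the lemma is never determined. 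What breaks this circularity in the paper's proof is exactly what your plan omits: an auxiliary fourth index $k\notin\{i,i+1,i+2\}$ (this is where $n\ge 4$ enters). The paper rewrites $K(E_{i,i+2})=[E_{ik},K(E_{k,i+2})]$ and reduces to $[P(H_{ik}),K(E_{i+1,i+2})]$, a bracket on four distinct indices that is independently computable because $[W_{ik},K(E_{i+1,i+2})]=0$ --- a consequence of $[Z_{ik},K(H_{i+1,i+2})]=0$ established in the proof of Theorem \ref{central ele sln}; this supplies the extra equation from which $A+\tfrac12 B$ is actually solved. Note also that Proposition \ref{prop:two param}, which you invoke to ``control'' the $W$'s, only relates the various $W_{ab}$ to one another; the vanishing of their brackets against $K(E_{i+1,i+2})$ likewise requires four distinct indices.
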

\begin{proof}
We only show the relation of $ [P(H_i),K(E_{i+1,i+2})]$, the other one $[P(H_i),Q(E_{i+2,i+1})]$ follows from a similar argument. 
Set $R_{ab,cd} = P([E_{ab},E_{cd}]) - [K(E_{ab}),Q(E_{cd})]$ and $\wt{W}_{ab} = P(H_{ab}) - [K(E_{ab}),Q(E_{ba})] + W_{ab}$ for simplicity, both of which are in $\mfU\mfsl_n$.  
We have:
\begin{align*}
 [P(H_{i,i+2}),K(E_{i+1,i+2})] 
 & =  \big[ [K(E_{i,i+2}),Q(E_{i+2,i})] + \wt{W}_{i,i+2} - W_{i,i+2} ,K(E_{i+1,i+2}) \big] \\
& {} =  \big[ K(E_{i,i+2}),[Q(E_{i+2,i}),K(E_{i+1,i+2})]\big] + [ \wt{W}_{i,i+2} - W_{i,i+2} ,K(E_{i+1,i+2})]  \\
& {} =  -[K(E_{i,i+2}),P(E_{i+1,i}) - R_{i+1,i+2,i+2,i}] + [ \wt{W}_{i,i+2} - W_{i,i+2} ,K(E_{i+1,i+2})] \\
 & =  - \big[ [E_{i,k},K(E_{k,i+2})],P(E_{i+1,i}) \big] + [K(E_{i,i+2}),R_{i+1,i+2,i+2,i}]  + [ \wt{W}_{i,i+2} - W_{i,i+2} ,K(E_{i+1,i+2})], 
 \end{align*}
where $k, i, i+1, i+2$ are distinct.
 
 For the term $-\big[ [E_{i,k},K(E_{k,i+2})],P(E_{i+1,i}) \big]$, we use the following trick:
 \begin{align*}
 -\big[ [E_{i,k},K(E_{k,i+2})],P(E_{i+1,i}) \big]
 = {} &  [P(E_{i+1,k}),K(E_{k,i+2})]  
 -\big[E_{ik}, [K(E_{k, i+2}), P(E_{i+1, i})]\big]\\
= {}  & \big[ P(E_{i+1,k}),[E_{k,i+1},K(E_{i+1,i+2})] \big] -\big[E_{ik}, 
[K(E_{k, i+2}), P(E_{i+1, i})]\big] \\
 = {} &  [P(H_{i+1,k}),K(E_{i+1,i+2})] +\big[E_{k, i+1},  [P(E_{i+1, k}), K(E_{i+1, i+2})]\big]-\big[E_{ik}, [K(E_{k, i+2}), P(E_{i+1, i})]\big] \\
 = {} &  [P(H_{i+1,i}),K(E_{i+1,i+2})] + [P(H_{ik}),K(E_{i+1,i+2})] \\
 &+\big[E_{k, i+1},  [P(E_{i+1, k}), K(E_{i+1, i+2})]\big]-\big[E_{ik}, [K(E_{k, i+2}), P(E_{i+1, i})]\big].
\end{align*}
Therefore,
\begin{align*}
 [P(H_{i,i+2}),K(E_{i+1,i+2})] 
  = {} & [P(H_{i+1,i}),K(E_{i+1,i+2})] + [P(H_{ik}),K(E_{i+1,i+2})] +\big[E_{k, i+1},  [P(E_{i+1, k}), K(E_{i+1, i+2})]\big] \\  
 &  -\big[E_{ik}, [K(E_{k, i+2}), P(E_{i+1, i})]\big]  + [K(E_{i,i+2}),R_{i+1,i+2,i+2,i}]  + [ \wt{W}_{i,i+2} - W_{i,i+2} ,K(E_{i+1,i+2})].
 \end{align*}
We now compute:
\begin{align}
[P(H_{i+1,i+2}),K(E_{i+1,i+2})] 
 = {} &  [P(H_{i+1,i}),K(E_{i+1,i+2})] + [P(H_{i,i+2}),K(E_{i+1,i+2})] \notag \\
 = {} &  -2[P(H_{i}),K(E_{i+1,i+2})] 
 + [P(H_{ik}),K(E_{i+1,i+2})] -\big[E_{ik}, [K(E_{k, i+2}), P(E_{i+1, i})]\big] \notag \\
  & +\big[E_{k, i+1},  [P(E_{i+1, k}), K(E_{i+1, i+2})]\big] + [K(E_{i,i+2}),R_{i+1,i+2,i+2,i}]  + [ \wt{W}_{i,i+2} - W_{i,i+2} ,K(E_{i+1,i+2})]. \label{PHi1i2}
 \end{align}
To compute $[W_{i,i+2},K(E_{i+1,i+2})]$, we can proceed in the following way. By Proposition \ref{prop:two param}, $W_{i,i+2} = \frac{1}{2}Z_{ik} + \left( \beta - \frac{\la}{2} \right)H_{i+2,k}$, so we are reduced to finding $[Z_{ik},K(E_{i+1,i+2})]$. It is established in the proof of Theorem \ref{central ele sln} that $[Z_{ik},K(H_{i+1,i+2})]=0$ since $i,i+1,i+2,k$ are all distinct. It follows that $[Z_{ik},K(E_{i+1,i+2})]= 0$.

The rest of the proof follows from long, but direct computations which can be found in Section \S\ref{appendix2}.
\end{proof}

As explained earlier, to establish Theorem \ref{PYang}, it is enough to prove the last lemma of this section. 

\begin{lemma}\label{lem:P(H)}
For any $1\leq i\neq j\leq n$, we have:
\begin{equation*}
[P(H_i), P(H_j)]+\la^2[v_i, v_j]=0.
\end{equation*}
\end{lemma}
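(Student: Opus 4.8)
The reduction carried out in the proof of Theorem~\ref{PYang} leaves us with the identity $[P(H_i),P(H_j)]=-\la^2[\nu_i,\nu_j]$, and the plan is to compute the left-hand side as an explicit element and match it against the closed formula for $[\nu_i,\nu_j]$ already supplied by Lemma~\ref{lem:nu}. Two preliminary observations organize the work. First, both sides are antisymmetric and bilinear in their two Cartan arguments: writing $\nu_H=\frac14\sum_{\beta>0}(H,\beta)S(X_\beta^+,X_\beta^-)-\frac12 H^2$, each $S(X_\beta^+,X_\beta^-)$ commutes with $\mfh$, so the quadratic Cartan parts $-\frac12 H^2$ drop out of the bracket and $[\nu_H,\nu_{H'}]$ depends bilinearly on $(H,H')$; the same holds for $[P(H),P(H')]$ since $P$ is linear. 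Hence it suffices to verify the identity on the basis $\{H_i\}$, and by antisymmetry we may assume $i<j$. Second, the target is independent of $\beta$, so every $\left(\beta-\frac{\la}{2}\right)$-contribution generated along the way must ultimately cancel; this is a useful consistency check throughout.

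To evaluate the left-hand side I would convert each occurrence of $P$ of a Cartan or root element into $K$--$Q$ commutators, so that the mixed brackets of Lemma~\ref{PKPQ} become applicable. In the principal case $j=i+1$, first write $P(H_{i+1})=[P(E_{i+1,i+2}),E_{i+2,i+1}]$ using $[P(X),X']=P([X,X'])$, and then express $P(E_{i+1,i+2})=-\tfrac12[K(E_{i+1,i+2}),Q(H_{i+1})]+\tfrac{\la}{8}\sum_{1\le p\neq q\le n}S([E_{i+1,i+2},E_{pq}],[E_{qp},H_{i+1}])$ via the last identity of Lemma~\ref{lem:KEab QHcd}. Repeated application of the Jacobi identity then reduces $[P(H_i),P(H_{i+1})]$ to the commutators $[P(H_i),K(E_{i+1,i+2})]$ and $[P(H_i),Q(E_{i+2,i+1})]$ — which are exactly the content of Lemma~\ref{PKPQ} — together with brackets of $P(H_i)$ against elements of $\mfsl_n$ and against the $\mfU\mfsl_n$-correction terms; the former are handled by $[P(H_i),X]=P([H_i,X])$ and the latter by \eqref{m}. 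The $W$- and $Z$-contributions that surface are dealt with using $Z_{ab}=2W_{ab}$ from Proposition~\ref{prop:two param} and the vanishing $[Z_{ab},K(H_{cd})]=0$ for distinct indices established inside the proof of Theorem~\ref{central ele sln}.

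The non-adjacent case $j\geq i+2$ follows the identical scheme; the only modification is that one first records the analogues of Lemma~\ref{PKPQ} for $[P(H_i),K(E_{j,j+1})]$ and $[P(H_i),Q(E_{j+1,j})]$, which are in fact easier to derive because $[H_i,E_{j,j+1}]=0$ kills several of the terms. Once all contributions are assembled, the left-hand side is a sum of products of three elementary matrices; one symmetrizes these into the $\{\cdot,\cdot,\cdot\}$ notation and compares them term by term with the expression for $-\la^2[\nu_i,\nu_j]$ coming from Lemma~\ref{lem:nu}. I expect this final matching — tracking the many $S$-terms, checking that all $\beta$-dependent pieces cancel, and recognizing the surviving sum as precisely the right-hand side of \eqref{nuinuj} — to be the main obstacle. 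It is a long but essentially mechanical computation, so I would relegate its details to the appendices (Sections~\ref{appendix1} and~\ref{appendix2}), exactly as is done for the companion Lemmas~\ref{lem:nu} and~\ref{PKPQ} on which it rests.
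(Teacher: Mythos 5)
Your proposal is correct, and for the adjacent case $j=i+1$ it is essentially the paper's own argument: after one Jacobi rearrangement your decomposition closes up, since
$\big[[K(E_{i+1,i+2}),Q(H_{i+1})],E_{i+2,i+1}\big]=[K(H_{i+1}),Q(H_{i+1})]-2[K(E_{i+1,i+2}),Q(E_{i+2,i+1})]$ and $[K(H_{i+1}),Q(H_{i+1})]=2W_{i+1,i+2}+(\mfU\mfsl_n\hbox{-terms})$ by Proposition \ref{prop:two param}, so your route reproduces exactly the paper's starting point $P(H_{i+1})=[K(E_{i+1,i+2}),Q(E_{i+2,i+1})]-W_{i+1,i+2}+\wt{W}_{i+1,i+2}$, after which Lemma \ref{PKPQ}, the vanishing of the $W$-brackets, and the matching against Lemma \ref{lem:nu} proceed as you describe. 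The genuine divergence is the non-adjacent case $|i-j|>1$: the paper does not use non-adjacent analogues of Lemma \ref{PKPQ} there, but instead exploits $[X_i^+,X_j^+]=0$ to factor $[P(H_i),P(H_j)]=\Big[\big[[P(X_i^+),P(X_j^+)],X_j^-\big],X_i^-\Big]$, computes $[P(X_i^+),P(X_j^+)]$ from $[K(X_i^+),Q(H_i)]=-2P(X_i^+)+\cdots$ together with the commuting-case relations $[K(E_{ab}),P(E_{cd})]=-\frac{\la}{4}\big(S(K(E_{ad}),E_{cb})+S(K(E_{cb}),E_{ad})\big)$ and its $Q$-analogue, and only then brackets down by $X_j^-$ and $X_i^-$. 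Your uniform scheme --- first deriving $[P(H_i),K(E_{j,j+1})]$ and $[P(H_i),Q(E_{j+1,j})]$, which is indeed simpler since $[H_i,E_{j,j+1}]=0$, then repeating the adjacent-case mechanism --- also works, and buys a single template for both cases; the paper's factorization buys a shorter Case-1 computation, since the $W$-, $Z$- and $\left(\beta-\frac{\la}{2}\right)$-bookkeeping largely drops out before the final comparison with Lemma \ref{lem:nu}. In either organization the term-by-term matching with \eqref{nuinuj} remains the unavoidable bulk of the work, exactly as you anticipate.
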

\begin{proof}
To establish Lemma \ref{lem:P(H)}, we compute $[P(H_i), P(H_j)]$ explicitly and show that it equals the right-hand side of \eqref{nuinuj}. We consider two cases: $| i-j |>1$ and $| i-j |=1$. 

\noindent \textbf{Case 1:} $| i-j |>1$.

When $| i-j |>1$, we have that $i, i+1, j, j+1$ are all distinct. Therefore, 
$[X_i^+, X_j^+]=0$ and $[X_i^-, X_j^-]=0$. Moreover,
\begin{equation*}
[P(H_i), P(H_j)]
=\big[ P([X_i^+, X_i^-]), P([X_j^+, X_j^-]) \big]
=\Big[\big[ [P(X_i^+), P(X_j^+)], X_j^-\big],  X_i^-\Big].
\end{equation*}
To compute $[P(X_i^+), P(X_j^+)]$, we use
the equality:
\[
[K(X_i^+), Q(H_i)]=-2 P(X_i^+) +\frac{\lambda}{4} 
\sum_{\alpha\in\Delta} S([X_i^+, X_{\alpha}], [X_{-\alpha}, H_i]) \;\; \text{ (see \eqref{rel:K(X)Q(h)}).}
\]
Then:
\begin{align}
[-2 P(X_i^+), P(X_j^+)]
& = \big[ [K(X_i^+), Q(H_i)], P(X_j^+)\big]
-\frac{\lambda}{4} 
\sum_{\alpha\in\Delta} \big[ S([X_i^+, X_{\alpha}], [X_{-\alpha}, H_i]), P(X_j^+)\big] \notag \\
& = \big[ [K(X_i^+),  P(X_j^+)], Q(H_i)\big]
+\big[ K(X_i^+), [Q(H_i), P(X_j^+)]\big]
-\frac{\lambda}{4} 
\sum_{\alpha\in\Delta} \big[ S([X_i^+, X_{\alpha}], [X_{-\alpha}, H_i]), P(X_j^+)\big]. \label{PXiXj}
\end{align}
We now need the following relations in $D_{\lambda, \beta}(\mathfrak{sl}_n)$ when $[E_{ab}, E_{cd}]=0$:
\begin{align*}
&[K(E_{ab}), P(E_{cd})]=-\frac{\la}{4}\Big(S(K(E_{ad}), E_{cb})+S(K(E_{cb}), E_{ad})\Big), \\
&[Q(E_{ab}), P(E_{cd})]=\frac{\la}{4}\Big(S(Q(E_{ad}), E_{cb})+S(Q(E_{cb}), E_{ad})\Big).
\end{align*}
The given relations were obtained in step 2 of \S\ref{appendix2}; the second one follows from the first one using the automorphism in Proposition \ref{Prop: auto}.
Using these and the defining relations of $D_{\lambda, \beta}(\mathfrak{sl}_n)$, we can compute $[P(X_i^+), P(X_j^+)]$ explicitly. The result is provided by equation \eqref{PXiXj2} in Section \S\ref{appendix3}:
\begin{equation*}
[ P(X_i^+), P(X_j^+)]
=\frac{\la^2}{16}
S\Bigg(
 \sum_{k=1}^n S(E_{k, j+1}, E_{i, k}), E_{j, i+1}\Bigg)
- \frac{\la^2}{16}
S\Bigg(
\sum_{l=1}^n S(E_{jl}, E_{l, i+1}), E_{i, j+1} \Bigg).
\end{equation*}
The conclusion that Lemma \ref{lem:P(H)} is true when $|i-j|>1$ follows directly by applying $\big[ [\cdot, X_j^-],  X_i^-\big]$ to the previous formula. The detailed computation is in Section \S\ref{appendix3}.

\noindent \textbf{Case 2:} $j=i+1$

Since $n\ge 4$, we can choose $k \in \{  1,2,\ldots,n\}$ such that $i,i+1,i+2,k$ are all distinct. Set $S_{i} = [P(H_i),K(E_{i+1,i+2})] + \frac{1}{2} [P(H_{i+1}),K(E_{i+1,i+2})]$ and $\wt{S}_i = [P(H_i),Q(E_{i+2,i+1})] + \frac{1}{2} [P(H_{i+1}),Q(E_{i+2,i+1})]$.  
We have:
\begin{align}
[P(H_i),P(H_{i+1})] = {} & \big[ P(H_i),[K(E_{i+1,i+2}),Q(E_{i+2,i+1})] + \wt{W}_{i+1,i+2} - W_{i+1,i+2} \big] \notag \\
 = {} &  \big[ [P(H_i),K(E_{i+1,i+2})],Q(E_{i+2,i+1})\big]  + \big[ K(E_{i+1,i+2}),[P(H_i),Q(E_{i+2,i+1})]\big] + [P(H_i),\wt{W}_{i+1,i+2} - W_{i+1,i+2}]  \notag \\
= {} & - \frac{1}{2} \big[ [P(H_{i+1}),K(E_{i+1,i+2})],Q(E_{i+2,i+1})\big] + [S_i,Q(E_{i+2,i+1})]  \notag  \\
&   {} - \frac{1}{2}  \big[ K(E_{i+1,i+2}),[P(H_{i+1}),Q(E_{i+2,i+1})]\big] + [K(E_{i+1,i+2}),\wt{S}_i] + [P(H_i),\wt{W}_{i+1,i+2} - W_{i+1,i+2}] \notag  \\
 = {} &  - \frac{1}{2} \big[ P(H_{i+1}),[K(E_{i+1,i+2}),Q(E_{i+2,i+1})] \big]  + [S_i,Q(E_{i+2,i+1})]  + [K(E_{i+1,i+2}),\wt{S}_i] \notag \\
&  + [P(H_i),\wt{W}_{i+1,i+2} - W_{i+1,i+2}]  \notag \\
= {}  &  - \frac{1}{2} \big[ P(H_{i+1}), P(H_{i+1}) + W_{i+1,i+2} - \wt{W}_{i+1,i+2} \big] + [S_i,Q(E_{i+2,i+1})]  + [K(E_{i+1,i+2}),\wt{S}_i]   \notag \\
&   {} + [P(H_i),\wt{W}_{i+1,i+2} - W_{i+1,i+2}]  \notag  \\
 = {} &   [S_i,Q(E_{i+2,i+1})]  + [K(E_{i+1,i+2}),\wt{S}_i] + [P(H_i) + \frac{1}{2} P(H_{i+1}) ,\wt{W}_{i+1,i+2} - W_{i+1,i+2}]. \label{PHiPHi1}
 \end{align}
To compute $[P(H_i), W_{i+1,i+2}]$, we can use that 
$W_{i+1,i+2} = W_{k,i+2} + \left(\beta - \frac{\la}{2}\right) H_{i+1,k}$, 
which follows from Proposition \ref{prop:two param} (ii),
so we are reduced to computing $[P(H_i), W_{k,i+2}]$: \begin{equation} [P(H_i), W_{i+1,i+2}] = [P(H_i), W_{k,i+2}] = \big[ [K(E_{i,i+1}),Q(E_{i+1,i})\big] + \wt{W}_{i,i+1} - W_{i,i+1} , W_{k,i+2}] \label{PHW}  \end{equation} and $[K(E_{i,i+1}), W_{k,i+2}]= 0$, $[Q(E_{i+1,i}), W_{k,i+2}]= 0$ (as shown above in the proof of Lemma \ref{PKPQ}). Moreover, by Proposition \ref{prop:two param}, \[ [W_{i,i+1} , W_{k,i+2}] = [W_{k,i+2} + \left(\beta - \frac{\la}{2}\right) (H_{ik} + H_{i+1,i+2} ) , W_{k,i+2}] = 0 \] and it follows from \eqref{ZE} that $[ \wt{W}_{i,i+1},W_{k,i+2}]=0$. Therefore, by \eqref{PHW}, $[P(H_i), W_{i+1,i+2}] = 0$, and $[P(H_{i+1}), W_{i+1,i+2}]=0$ can be obtained similarly. 

Since $\wt{W}_{i+1,i+2} \in \mfU\mfsl_n$, $[P(H_i) + \frac{1}{2} P(H_{i+1}) ,\wt{W}_{i+1,i+2}]$ can be computed directly. Using the explicit formula of  $S_i, \wt{S}_i$ in Lemma \ref{PKPQ}, it is possible to determine $[S_i,Q(E_{i+2,i+1})]  + [K(E_{i+1,i+2}),\wt{S}_i]$ and hence the right-hand side of \eqref{PHiPHi1}. The conclusion that $[P(H_i), P(H_{i+1})]+\la^2[v_i, v_{i+1}]=0$ follows from a direct computation which can be found in Section \S\ref{appendix4}.
\end{proof}

\section{The center of the deformed double current algebra \texorpdfstring{$\mfD_{\lambda, \beta}(\mfsl_n)$}{D(sl)}}

In this section, we show that, when $n\lambda=\pm 4\left(\beta-\frac{\lambda}{2}\right)$, the center of $\mfD_{\lambda, \beta}(\mfsl_n)$ is very large. This is made precise in Theorem \ref{thm:center=polynomial rings} below.

If $X \ot u^s$ is an element of $\mfsl_n[u]$, then we denote its image in $\mfD_{\lambda, \beta}(\mfsl_n)$ under $\mfsl_n[u] \lra \mfD_{\lambda, \beta}(\mfsl_n)$ also by $X \ot u^s$ or by $X(u^s)$. The same applies to $\mfsl_n[v] \lra \mfD_{\lambda, \beta}(\mfsl_n)$. Set

\begin{equation}  Z_{ab,cd}(s)=[K(H_{ab}), H_{cd}(u^s)]-\frac{\lambda}{4}\sum_{1\leq i\neq j\leq n}
 \sum_{p+q=s-1}S\big( [H_{ab}, E_{ij}](u^p), [E_{ji}, H_{cd}](u^q)\big), \; Z_{ab}(s) = Z_{ab,ab}(s) \label{Zabcd} \end{equation} and \begin{equation}
 \wt{Z}_{ab,cd}(s)=[Q(H_{ab}), H_{cd}(v^s)]+\frac{\lambda}{4}\sum_{1\leq i\neq j\leq n}
 \sum_{p+q=s-1}S\big( [H_{ab}, E_{ij}](v^p), [E_{ji}, H_{cd}](v^q)\big), \; \wt{Z}_{ab}(s) = \wt{Z}_{ab,ab}(s). \label{wtZabcd}
 \end{equation}

\begin{theorem}\label{thm:center=polynomial rings}
Assume that $n\ge 4$ and $n\lambda=\pm 4\left(\beta-\frac{\lambda}{2}\right)$. The center of the deformed double current algebra $\mfD_{\lambda, \beta}(\mathfrak{sl}_n)$ contains two subrings isomorphic to the polynomial rings in infinitely many variables $\C[a_1,a_2, a_3, \ldots]$ and $\C[b_1,b_2, b_3, \ldots]$; the isomorphism sends $a_s$ (resp. $b_s$) to $Z(s)$ (resp. $\wt{Z}(s)$) where $Z(s)=Z_{12}(s)+Z_{23}(s)+\cdots+Z_{n1}(s)$ (resp. $\wt{Z}(s)=\wt{Z}_{12}(s)+\wt{Z}_{23}(s)+\cdots+\wt{Z}_{n1}(s)$).
\end{theorem}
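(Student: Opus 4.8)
The plan is to prove the two assertions of the theorem in turn: first that every $Z(s)$ and every $\wt{Z}(s)$ is central, and then that the two families are algebraically independent. The first simplification I would make is to exploit the automorphism $\Phi$ of Proposition \ref{Prop: auto}, which extends to $\mfD_{\lambda,\beta}(\mfsl_n)$ and fixes both $\lambda$ and $\beta$, acting on currents by $\Phi(X(u^s))=X(v^s)$ and $\Phi(K(X))=-Q(X)$. Comparing \eqref{Zabcd} and \eqref{wtZabcd} term by term gives $\Phi(Z_{ab,cd}(s))=-\wt{Z}_{ab,cd}(s)$, hence $\Phi(Z(s))=-\wt{Z}(s)$. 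Since an automorphism carries central elements to central elements and preserves the parameters, it suffices throughout to treat $Z(s)$; the statements for $\wt{Z}(s)$ then follow automatically under the same hypothesis on $(\lambda,\beta)$. Note also that $Z(1)=Z$ of Theorem \ref{central ele sln}, so the case $s=1$ is already known.

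To prove $[Z(s),-]=0$ on generators I would set up a hierarchy of commutation relations extending Lemma \ref{lem:KEab QHcd} to arbitrary $u$-degree. The key reduction is the current-algebra identity $H_{cd}(u^s)=[E_{cd}(u^{s-1}),Q(E_{dc})]$ (and $E_{cd}(u^s)=[E_{ce}(u^{s-1}),Q(E_{ed})]$ for a suitable third index $e$), which writes a degree-$s$ current element as a bracket of a degree-$(s-1)$ one with a degree-one generator $Q(\cdot)$. Commuting $K(E_{ab})$ or $K(H_{ab})$ past such a bracket and feeding in \eqref{rel:two param} together with the inductive hypothesis produces, for each $s$, formulas for $[K(E_{ab}),H_{cd}(u^s)]$ and $[K(H_{ab}),E_{cd}(u^s)]$ as a mixed-current term (the image of the appropriate bracket tensored with $u^sv$) plus a $\frac{\lambda}{4}$-correction supported in $\mfsl_n[u]$ plus a term proportional to $\big(\beta-\frac{\lambda}{2}\big)$. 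Equipped with these, I would follow the structure of the proof of Theorem \ref{central ele sln}: compute $[Z_{ab}(s),E_{cd}]$ by the Leibniz rule and sum cyclically to obtain $[Z(s),\mfsl_n]=0$ (the coefficient $\sum_a(\epsilon_{a,a+1},\epsilon_{cd})(\epsilon_c+\epsilon_d,\epsilon_{a,a+1})$ of \eqref{ZE} reappears and vanishes); establish the higher analog of $Z_{ab,cd}(s)=0$ for distinct indices and combine it with a degree-$s$ version of Lemma \ref{lem:HH} to get $[Z(s),K(H_{cd})]=0$; and finally bootstrap from $K(H_{cd})$ to all $K(X)$ using the commutativity with $\mfsl_n$ just proved and the analog of \eqref{ZKh}. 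Applying $\Phi$ then yields $[Z(s),Q(X)]=0$, so $Z(s)$ is central, and $\wt{Z}(s)$ with it.

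The one place the numerical hypothesis $n\lambda=\pm4\big(\beta-\frac{\lambda}{2}\big)$ is genuinely needed is in the vanishing of the accumulated corrections for $s\ge 2$. For $s=1$ these cancel identically — this is exactly Theorem \ref{central ele sln}, valid for all $(\lambda,\beta)$ — but for $s\ge 2$ the higher relations above feed additional $\big(\beta-\frac{\lambda}{2}\big)$- and $\lambda$-multiples into the cyclic sum $\sum_{a=1}^{n}$, and after telescoping the residue is a fixed nonzero current element multiplied by a quadratic scalar in the parameters, necessarily proportional to $n^2\lambda^2-16\big(\beta-\frac{\lambda}{2}\big)^2$, which vanishes precisely on the locus $n\lambda=\pm4\big(\beta-\frac{\lambda}{2}\big)$. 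Controlling this residue — disentangling which $\frac{\lambda}{4}$-sums telescope, and tracking the factor of $n$ produced by summing over the $n$ cyclic coroots $H_a$ — is the main technical obstacle, and I expect it to demand the most careful bookkeeping.

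For the polynomial-ring claim I would exploit the $\Z_{\ge0}\times\Z_{\ge0}$-bigrading of Lemma \ref{msPlem}, under which $Z(s)$ is homogeneous of bidegree $(s,1)$. Passing to the associated graded for the PBW (length) filtration, the mixed-current main term $[K(H_a),H_a(u^s)]$ drops in degree — the symbol of a commutator of two degree-one elements vanishes in degree two — so, for $\lambda\neq0$, the leading symbol $\sigma(Z(s))$ is carried by its degree-two correction $-\frac{\lambda}{4}\sum_a\sum_{i\neq j}\sum_{p+q=s-1}\overline{S\big([H_a,E_{ij}](u^p),[E_{ji},H_a](u^q)\big)}$, an element of $\Sym^2(\mfsl_n[u])$. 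Evaluating the root pairings, this equals $-\frac{\lambda}{4}\sum_{i\neq j}m_{ij}\sum_{p+q=s-1}\overline{E_{ij}(u^p)}\,\overline{E_{ji}(u^q)}$ with $m_{ij}=\sum_a(\epsilon_{a,a+1},\epsilon_{ij})^2>0$. Choosing a monomial order that singles out, for the pair $(1,2)$, the top term $E_{12}(u^{s-1})E_{21}$ with nonzero coefficient $-\frac{\lambda}{2}m_{12}$, one finds that the leading monomial of a product $\prod_l\sigma(Z(s_l))$ is $(\text{const})\,\prod_l E_{12}(u^{s_l-1})\cdot E_{21}^{\,k}$; distinct multisets $\{s_l\}$ yield distinct such monomials, so these products are linearly independent in $\Sym(\mfsl_n[u])$. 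Granting the PBW/filtration input (the associated graded being a quotient of $\mfU(\widehat{\mfsl_n[u,v]})$), this forces the $Z(s)$ to be algebraically independent in $\mfD_{\lambda,\beta}(\mfsl_n)$, and the $\wt{Z}(s)$ are handled identically via $\Phi$. This is the precise analog, for these current invariants, of the algebraic independence of power sums, in the spirit of Proposition 3.6 of \cite{G} cited in the introduction.
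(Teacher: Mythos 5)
Your overall plan for centrality follows the paper's architecture (higher commutation relations, cyclic sums, the parameter condition entering through $[Z(s),K(\cdot)]$ with obstruction proportional to $n^2\la^2-16(\beta-\tfrac{\la}{2})^2$), but its final step contains a fatal logical error. As you computed yourself, $\Phi(Z(s))=-\wt{Z}(s)$: the automorphism of Proposition \ref{Prop: auto} \emph{interchanges} the two families. Hence applying $\Phi$ to the relation $[Z(s),K(X)]=0$ that you establish directly yields $[\wt{Z}(s),Q(X)]=0$, a statement about the other family, and never the claimed $[Z(s),Q(X)]=0$; applying $\Phi^{-1}$ to the latter just sends it back to $[Z(s),K(X)]=0$. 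The automorphism can transport results between $\{Z(s)\}$ and $\{\wt{Z}(s)\}$, but it cannot supply the missing half of the centrality proof for either family. The commutation of $Z(s)$ with the $Q$-generators (equivalently with $\mfU(\mfsl_n[u])$) must be proved by a separate direct computation: this is precisely Lemma \ref{ZsXKX} of the paper, which holds for all $\la,\beta$ and rests on showing $[Z_{ab}(s),Q(H_{cd})]=0$ for distinct indices by comparing the two expansions \eqref{ZabGcd1} and \eqref{ZabGcd2}. Without this ingredient centrality is simply not established. You also underestimate the construction of the mixed elements "$X\ot u^sv$": your induction $H_{cd}(u^s)=[E_{cd}(u^{s-1}),Q(E_{dc})]$ generates commutators $[P_{s-1}(\cdot),Q(\cdot)]$ that are themselves unknown, so it is not self-contained; the existence and $\ad(\mfsl_n)$-equivariance of $P_s(X)$ is the content of the paper's Proposition \ref{Ps} and takes several pages (lowest-weight-vector argument, Serre-type relations, Weyl-group operators).

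The algebraic-independence argument is also broken, in the opposite direction. Under the filtration coming from the PBW theorem of \cite{G2} — the only one you actually have, with $\deg Q(X)=1$, $\deg X=\deg K(X)=0$ and $\gr\,\mfD_{\la,\beta}(\mfsl_n)\cong\mfU(\wh{\mfsl_n[u,v]})$ — the main term does \emph{not} drop in degree: $Z(s)$ has filtration degree $s$, the $\la$-corrections have degree $s-1$ and therefore \emph{die} in the symbol, and the symbol equals $\sum_a\ol{[K(H_a),H_a(u^s)]}=\sum_a[H_a\ot v,H_a\ot u^s]$, which is a nonzero multiple of the class $vu^{s-1}du$ in $\Omega^1(\C[u,v])/d\C[u,v]$. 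The non-vanishing of this commutator in the associated graded \emph{is} the cocycle of the universal central extension, so your premise that "the symbol of a commutator of two degree-one elements vanishes in degree two" is exactly what fails here; the $\Sym^2(\mfsl_n[u])$-valued symbol and the monomial-order argument built on it rest on a "length" filtration you never define and which is incompatible with this structure (note also that you need the full isomorphism $\gr\,\mfD\cong\mfU(\wh{\mfsl_n[u,v]})$, not merely a quotient as you grant yourself, since in a proper quotient the symbols could vanish). The correct route is simpler than what you attempt and is the paper's: the classes $vu^{s-1}du$, $s\ge 1$, are linearly independent in the center of $\wh{\mfsl_n[u,v]}$, hence algebraically independent in $\mfU(\wh{\mfsl_n[u,v]})$, which forces the $Z(s)$ to be algebraically independent — for all parameter values, removing in particular your spurious restriction $\la\neq 0$.
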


This theorem was inspired by an analogous result about rational Cherednik algebras, namely Proposition 3.6 in \cite{G} which states that the center of the rational Cherednik algebra when the parameter $t=0$ contains the subalgebra $\C[\h]^W\otimes \C[\h^*]^W$, $\h$ being here the reflection representation of the Weyl group $W$. In type $A$, the condition $t=0$ corresponds to the condition $n\lambda = - 4\left(\beta - \frac{\lambda}{2}\right)$ when the parameters $\la,\beta$ are related to the parameter $t,c$ of the rational Cherednik algebra as in \cite{G1,G2}, namely $\beta = \frac{t}{2} - \frac{c(n-2)}{4}$ and $\la=c$. These two conditions on the parameters are necessary for the construction of the Schur-Weyl functor in \textit{loc. cit.} If $M$ is a right module over a rational Cherednik algebra $\msH_{t,c}(S_l)$ of type $\mfgl_l$ ($S_l$ being the symmetric group on $l$ letters), then $M\ot_{\C[S_l]} (\C^n)^{\ot l}$ is a left module over $\mfD_{\lambda, \beta}(\mfsl_n)$. If $z$ is a central element of $\msH_{0,c}(S_l)$, then the  multiplication by $z$ on $M\ot_{\C[S_l]} (\C^n)^{\ot l}$ given by $m \ot \mbv \lra mz\ot \mbv$ produces an element $R_z$ of $\End_{\mfD_{\lambda, \beta}(\mfsl_n)}\left( M\ot_{\C[S_l]} (\C^n)^{\ot l} \right)$. Proposition 3.6 in \cite{G} states that the center of $\msH_{0,c}(S_l)$ contains $\C[x_1,\ldots, x_l]^{S_l} \ot_{\C} \C[y_1,\ldots, y_l]^{S_l}$.  If $z=x_1^s + x_2^s + \cdots + x_l^s$, then lengthy computations show that $R_z(m\ot\mbv) = Z(s)(m\ot\mbv)$. $Z(s)$ can also be viewed was an element of $\End_{\mfD_{\lambda, \beta}(\mfsl_n)}\left( M\ot_{\C[S_l]} (\C^n)^{\ot l}\right)$ because $Z(s)$ is central, so we can say that $R_z = Z(s)$ in $\End_{\mfD_{\lambda, \beta}(\mfsl_n)}\left( M\ot_{\C[S_l]} (\C^n)^{\ot l}\right)$. A similar remark applies to $y_1^s + y_2^s + \cdots + y_l^s$ and $\wt{Z}(s)$. 

\subsection{Higher commutation relations}

We will need the following proposition, which is also of independent interest.

\begin{proposition}\label{Ps}
For any $s\ge 0$ and any $X\in\mfsl_n$, there exists in $\mfD_{\lambda, \beta}(\mathfrak{sl}_n)$ an element $P_s(X)$ with the property that the assignment $X \mapsto P_s(X)$ is linear, $[P_s(X),X'] = P_s([X,X'])$ for any $X'\in\mfsl_n$, and such that, for any integers $1\leq a, b, c, d\leq n$ with $a\neq b$, $c \neq d$ and $(a, b)\neq (d, c)$, the following relation holds:
\begin{align}\label{rel:higher degree rel}
[K(E_{ab}), E_{cd}(u^s)] = {} & P_s([E_{ab}, E_{cd}]) + s\left(\beta-\frac{\lambda}{2}\right)(\delta_{bc}E_{ad}+\delta_{ad} E_{cb})(u^{s-1}) - \frac{(\epsilon_{ab}, \epsilon_{cd})\lambda}{4} \sum_{p+q=s-1}S(E_{ab}(u^p), E_{cd}(u^{q}))\notag\\
& + \frac{\lambda}{4}\sum_{1\leq i\neq j \leq n} \sum_{p+q=s-1}S\big( [E_{ab}, E_{ij}](u^p), [E_{ji}, E_{cd}](u^q)\big) .
\end{align}
\end{proposition}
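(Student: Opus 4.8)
The plan is to argue by induction on $s$, building the operator $P_s$ and verifying \eqref{rel:higher degree rel} in tandem. The two base cases are forced and easy: for $s=0$ the factor $s$ and the empty sums $\sum_{p+q=-1}$ kill every correction term, so \eqref{rel:higher degree rel} reads $[K(E_{ab}),E_{cd}]=P_0([E_{ab},E_{cd}])$, and since $K$ is the restriction of the homomorphism $\mfU(\mfsl_n[v])\to\mfD_{\lambda,\beta}(\mfsl_n)$ one has $[K(E_{ab}),E_{cd}]=K([E_{ab},E_{cd}])$; thus $P_0=K$ works and already satisfies the required linearity and $\ad$-equivariance $[K(X),X']=K([X,X'])$. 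For $s=1$ one takes $P_1=P$ and \eqref{rel:higher degree rel} is precisely the defining relation \eqref{rel:two param}.

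Before the inductive step I would record the ambient structure exactly as in Lemma \ref{msPlem}: $\mfD_{\lambda,\beta}(\mfsl_n)$ is $\Z_{\ge 0}\times\Z_{\ge 0}$-bigraded with $\mfsl_n$ in bidegree $(0,0)$, $Q(X)$ in $(1,0)$, $K(X)$ in $(0,1)$ and $\lambda,\beta$ in $(1,1)$, and each homogeneous piece is a finitely generated $\mfU(\mfsl_n)$-module, hence locally finite as an $\ad(\mfsl_n)$-module. A degree count shows that every term of \eqref{rel:higher degree rel} lies in bidegree $(s,1)$, so $P_s$ must be an $\ad(\mfsl_n)$-equivariant linear map from $\mfsl_n$ into the $(s,1)$-piece. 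As in the proof of Lemma \ref{msPlem}, such a map is pinned down by its value on the lowest weight vector $E_{n1}$ of weight $-\theta$, provided that value is annihilated by $\ad(\mfn^-)$ and carries weight $-\theta$; a Verma-module argument then factors it through the adjoint representation and produces $P_s$. I note that $[E_{ab},E_{cd}]$ in \eqref{rel:higher degree rel} is always either $0$ or a root vector (the Cartan case $b=c,\ a=d$ being excluded by $(a,b)\neq(d,c)$), so the relation only constrains $P_s$ on root vectors, its values on $\mfh$ being then forced through $H=[E_{pq},E_{qp}]$ by equivariance.

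For the verification I would first prove a $u$-graded analogue of identity \eqref{m}, computing $\big[\sum_{p+q=s-1}S([E_{ab},E_{ij}](u^p),[E_{ji},E_{cd}](u^q)),X_\gamma\big]$ for $X_\gamma\in\mfsl_n$ in terms of lower symmetric sums and explicit currents; this is the same $\mfsl_n$-module computation resting on $[\Omega,X\ot1+1\ot X]=0$, now carried out with the extra $u$-grading. With that in hand, the strategy is to establish \eqref{rel:higher degree rel} for one base configuration, say with $[E_{ab},E_{cd}]=E_{n1}$, and then transport it to all $(a,b,c,d)$ by applying $\ad(X_\gamma)$: the current and $P_s$ terms move by equivariance while the two symmetric sums are governed by the graded \eqref{m}. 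The base configuration itself I would reach by peeling off one degree via $E_{cd}(u^s)=\tfrac12[Q(H_{cd}),E_{cd}(u^{s-1})]$, applying the Jacobi identity, and feeding in the inductive hypothesis for $[K(E_{ab}),E_{cd}(u^{s-1})]$ together with Lemma \ref{lem:KEab QHcd} for $[K(E_{ab}),Q(H_{cd})]$.

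The main obstacle is that this reduction unavoidably produces cross-commutators that are not among the defining relations, namely $[Q(H_{cd}),P_{s-1}(Z)]$ from the inductive term and $[P(Z'),E_{cd}(u^{s-1})]$ from the $Q(H_{cd})$-term, i.e. brackets of a $P$-type generator with a higher current. These must be tamed as part of the same induction: one rewrites $P_{s-1}(Z)$ by its own instance of \eqref{rel:higher degree rel} as $[K(\cdot),E_\bullet(u^{s-1})]$ minus explicit current and symmetric-sum corrections, and then uses Jacobi to collapse the cross-commutators into commutators of $K$ and $Q$ with each other (relation \eqref{rel:two param}) and with currents, each landing on a strictly more special case already controlled at level $s$ or below. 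Organizing this bookkeeping so that every reduction genuinely simplifies, and so that all the symmetric $S$-sums match coefficient by coefficient, is where essentially all the labor lies, and it is precisely the kind of long but mechanical computation one would defer to an appendix.
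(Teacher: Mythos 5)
Your skeleton — induction on $s$, construction of $P_s$ from its value on the lowest weight vector $E_{n1}$, and verification of \eqref{rel:higher degree rel} by establishing base cases and moving them around by equivariance — matches the paper's proof in outline, and your use of the bigrading/local-finiteness Verma-module argument of Lemma \ref{msPlem} to produce $P_s$ is a legitimate (indeed slightly slicker) alternative to the paper's explicit check of the Serre relations $\mathrm{ad}(E_{i,i+1})^{1-(\eps_{n1},\al_i)}(P_s(E_{n1}))=0$. The peeling step $E_{cd}(u^s)=\tfrac12[Q(H_{cd}),E_{cd}(u^{s-1})]$ together with Lemma \ref{lem:KEab QHcd} and a graded version of \eqref{m} (the paper's \eqref{mpq}, \eqref{mu}) is also exactly how the paper gets off the ground.

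The genuine gap is the sentence ``then transport it to all $(a,b,c,d)$ by applying $\ad(X_\gamma)$.'' One base configuration does not suffice. The non-commuting configurations split into two families: $b=c$ (brackets $[K(E_{ab}),E_{bd}(u^s)]$, like your base case with $[E_{ab},E_{cd}]=E_{n1}$) and $a=d$ (brackets $[K(E_{ab}),E_{ca}(u^s)]$), and no sequence of operators $\ad(X_\gamma)$ or Weyl-group operators $s_i$ carries a relation of the first family to a relation of the second: applying $\ad(X_\gamma)$ to a known relation always yields a \emph{sum} of two brackets, and whenever an $a=d$ bracket appears, the companion term is either another unknown $a=d$ bracket (e.g.\ $\ad(E_{k1})$ applied to the commuting case $[K(E_{12}),E_{3k}(u^s)]$ produces $[K(E_{k2}),E_{3k}(u^s)]-[K(E_{12}),E_{31}(u^s)]$) or an excluded opposite-pair bracket $[K(E_{ab}),E_{ba}(u^s)]$ — where formula \eqref{rel:higher degree rel} genuinely fails, its defect being the element $W_{ab}(s)$ that feeds the central elements $Z(s)$. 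So equivariance only proves that all $a=d$ defects are equal to one another modulo known terms; it cannot show they vanish. This is precisely why the paper's proof contains a second, independent derivation (its Steps 2.2--2.4) of the single instance $[K(E_{n-1,1}),E_{n,n-1}(u^s)]$, which consumes the level-$(s-1)$ induction hypothesis, Lemma \ref{lem:higher KEab QHcd}, and the computation of $[Q(H_{n-1,2}),P_{s-1}(E_{n1})]$, before transport can finish that family. Your ``cross-commutator taming'' paragraph gestures at manipulations of this kind, but it is attached to the construction of the one base configuration; as written, your induction stalls with the entire $a=d$ family (and hence also the Cartan cases $[K(H),E_{cd}(u^s)]$ sharing an index, which reduce to it) unproven.
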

\begin{remark}
When $s=0$, $P_0(X) = K(X)$ and the right-hand side equals $K([E_{ab}, E_{cd}])$.
\end{remark}

The following identities are essential for the proof of the previous proposition, which is given below. They are generalizations of \eqref{m} and can be obtained in the same way. As for \eqref{m}, let $\gamma$ be either a root and $X_{\gamma}$ a corresponding root vector, or let $\gamma=0$ and $X_{\gamma}$ be interpreted as an element of $\mfh$.  Then, for any roots $\beta_1, \beta_2$, we have:
\begin{align}
 \sum_{\alpha\in\Delta} \Big[ S\big( [X_{\beta_1},X_{\alpha}(u^p)], & [X_{-\alpha}(u^q), X_{\beta_2}]\big) , X_\gamma\Big] \notag \\
= {} & \sum_{\alpha\in\Delta} S\Big(\big[ [X_{\beta_1},X_\gamma],X_{\alpha}(u^p)\big],[X_{-\alpha}(u^q),X_{\beta_2}]\Big)+ \sum_{\alpha\in\Delta} S\Big( [ X_{\beta_1},X_{\alpha}(u^p)],\big[ X_{-\alpha}(u^q),[X_{\beta_2},X_\gamma]\big]\Big)  \notag \\
-&(\gamma, \beta_2)S\big([X_{\beta_1}, X_{\gamma}](u^p), X_{\beta_2}(u^q)\big)-(\gamma, \beta_1)S\big(X_{\beta_1}(u^p), [ X_{\beta_2}, X_{\gamma}](u^q)\big), \label{mpq}
\end{align}
\begin{align}
\sum_{\al\in\Delta} \sum_{p+q = s-1} \Big[ S\big( [X_{\beta_1}, X_{\al}(u^p)], & [X_{-\al}(u^q), X_{\beta_2}]\big), X_{\gamma}(u)  \Big] \notag \\
 = {} & \sum_{\al\in\Delta} \sum_{p+q = s} \left( S\Big(\big[[X_{\beta_1},X_{\gamma}], X_{\al}(u^p)\big],[X_{-\al}(u^q), X_{\beta_2}]\Big) + S\Big([X_{\beta_1}, X_{\al}(u^p)],\big[X_{-\al}(u^q), [X_{\beta_2}, X_{\gamma}]\big]\Big)  \right) \notag \\
& - \sum_{p+q = s} \left( (\gamma, \beta_2) S\big([X_{\beta_1}, X_{\gamma}](u^p),X_{\beta_2}(u^q)\big) +  (\gamma, \beta_1)S\big( X_{\beta_1}(u^p), [X_{\beta_2}, X_{\gamma}](u^q) \big) \right) \notag \\
& - \sum_{\al\in\Delta} \left( S\Big(\big[[X_{\beta_1}, X_{\al}], X_{\gamma}\big],[X_{-\al}, X_{\beta_2}](u^s)\Big) + S\Big( [X_{\beta_1}, X_{\al}](u^s),\big[ [X_{-\al}, X_{\beta_2}], X_{\gamma} \big]\Big) \right).  \label{mu}
\end{align}

\begin{proof}[Proof of Proposition \ref{Ps}]
Let's start with relation \eqref{rel:higher degree rel} in the case when $a,b,c$ and $d$ are all distinct. Applying $[Q(H_{cd}), \cdot]$ to $[K(E_{ab}), E_{cd}(u^{s-1})]$ and using Lemma \ref{lem:KEab QHcd} along with \eqref{mu} allow us to obtain, by induction on $s$, the formula for $[K(E_{ab}), E_{cd}(u^s)]$. The case when $[E_{ab},E_{cd}]=0$ follows from this. (For instance, if $a,b,c$ are all distinct and $d$ is chosen different from $a,b$ and $c$, then we have $[K(E_{ab}), E_{cb}(u^s)] = \big[ [K(E_{ab}), E_{cd}(u^s)], E_{db}\big]$ and the right-hand side can be computed using \eqref{mpq}.)

\noindent \textbf{Step 1.1:} The proof of the existence of $P_s(X)$ is by induction on $s$, the case $s=1$ being given by the definition of $\mfD_{\lambda, \beta}(\mathfrak{sl}_n)$. Let $s\ge 2$ and assume that $P_{s-1}(X)$ satisfying \eqref{rel:higher degree rel} is known to exists for all $X\in\mfsl_n$ and with the property that the assignment $X \mapsto P_{s-1}(X)$ is linear and $[P_{s-1}(X),X'] = P_{s-1}([X,X'])$ for any $X'\in\mfsl_n$. Set \begin{align} P_s(E_{n1}) = {} & [K(E_{n,n-1}),E_{n-1,1}(u^s)] - s\left(\beta - \frac{\la}{2}\right) E_{n1}(u^{s-1}) - \frac{\la}{4} \sum_{p+q = s-1} S(E_{n,n-1}(u^p), E_{n-1,1}(u^q)) \notag \\
& - \frac{\la}{4} \sum_{1\le i\neq j \le n} \sum_{p+q = s-1} S\big([E_{n,n-1}, E_{ij}](u^p), [E_{ji}, E_{n-1,1}](u^q)\big). \label{PsEn1}
\end{align}

Consider the finite-dimensional $\mfU(\mfsl_n)$-submodule $V$ of $\mfD_{\la,\beta}(\mfsl_n)$ generated by $P_s(E_{n1})$ under the adjoint action of $\mfsl_n$ on $\mfD_{\la,\beta}(\mfsl_n)$. Let us see why $P_s(E_{n1})$ is a lowest weight vector.  $P_s(E_{n1})$ is a weight vector since it is a sum of weight vectors of the same weight $\eps_n - \eps_1$. Suppose that $e>f$. Then $[E_{ef},P_s(E_{n1})] = 0$: this can be checked quickly using \eqref{mpq} if $e$ and $f$ are both $\neq n-1$. What is less clear is that $[E_{ef},P_s(E_{n1})] = 0$ when $e=n-1$ or $f=n-1$. For instance, if $e=n-1$ and $f<n-1$, then \begin{align*}
[E_{n-1,f},P_s(E_{n1})] = {} &  - [K(E_{n,f}),E_{n-1,1}(u^s)] + \frac{\la}{4} \sum_{p+q = s-1} S(E_{nf}(u^p), E_{n-1,1}(u^q)) \\
& + \frac{\la}{4} \sum_{1\le i\neq j \le n} \sum_{p+q = s-1} S\big([E_{nf}, E_{ij}](u^p), [E_{ji}, E_{n-1,1}](u^q)\big) - \frac{\la}{4} (1+\delta_{f1}) \sum_{p+q = s-1} S(E_{nf}(u^p), E_{n-1,1}(u^q)) \\
= {} &  0.
\end{align*}
Here, we used the expression \eqref{rel:higher degree rel} for $[K(E_{nf}),E_{n-1,1}(u^s)]$ which was established previously since $[E_{nf}, E_{n-1,1}]=0$. Similarly, $[E_{ef},P_s(E_{n1})] = 0$ when $e=n$ and $f=n-1$, so $P_s(E_{n1})$ is indeed a lowest weight vector. 

\noindent \textbf{Step 1.2:} The next step is to check that $\mathrm{ad}(E_{i,i+1})^{1-(\eps_{n1},\al_i)}(P_s(E_{n1}))=0$ for $i=1,2,\ldots,n-1$. When $i\neq 1, n-1$, $(\eps_{n1},\al_i)=0$, so we have to verify that $[E_{i,i+1},P_s(E_{n1})]=0$ in those cases. This is quite clear if $i\neq n-2$ (and $i\neq 1, n-1$) using \eqref{mpq}. If $i=n-2$, we compute the following using again \eqref{mpq}: \begin{align*}
[E_{n-2,n-1},P_s(E_{n1})] = {} & [K(E_{n,n-1}),E_{n-2,1}(u^s)] - \frac{\la}{4} \sum_{p+q = s-1} S(E_{n,n-1}(u^p), E_{n-2,1}(u^q)) \\
& - \frac{\la}{4} \sum_{1\le i\neq j \le n} \sum_{p+q = s-1} S\big([E_{n,n-1}, E_{ij}](u^p), [E_{ji}, E_{n-2,1}](u^q)\big) + \frac{\la}{4} \sum_{p+q = s-1} S( E_{n,n-1}(u^p),  E_{n-2,1}(u^q))
\end{align*}
and the right-hand side vanishes as a consequence of relation \eqref{rel:higher degree rel} for $[K(E_{n,n-1}),E_{n-2,1}(u^s)]$, which was established earlier since $[E_{n,n-1},E_{n-2,1}]=0$. 

\noindent \textbf{Step 1.3:} We also want to check that $\mathrm{ad}(E_{12})^2(P_s(E_{n1}))=0$ and $\mathrm{ad}(E_{n-1,n})^2(P_s(E_{n1}))=0$. We give the details in the second case. We start with: 

\begin{align*}
[E_{n-1,n},P_s(E_{n1})] = {} & [K(H_{n-1,n}),E_{n-1,1}(u^s)] - s\left(\beta - \frac{\la}{2}\right) E_{n-1,1}(u^{s-1}) - \frac{\la}{4} \sum_{p+q = s-1} S(H_{n-1,n}(u^p), E_{n-1,1}(u^q)) \\
& - \frac{\la}{4} \sum_{1\le i\neq j \le n} \sum_{p+q = s-1} S\big( [H_{n-1,n}, E_{ij}](u^p), [E_{ji}, E_{n-1,1}](u^q)\big) + \frac{\la}{4} \sum_{p+q = s-1} S(H_{n-1,n}(u^p),  E_{n-1,1}(u^q)),
\end{align*}
which follows from \eqref{mpq}. Applying $\mathrm{ad}(E_{n-1,n})$ again yields:  \begin{align*}
\big[ E_{n-1,n},[E_{n-1,n},P_s(E_{n1})]\big] = {} & -2[K(E_{n-1,n}),E_{n-1,1}(u^s)] + \frac{\la}{2} \sum_{p+q = s-1} S(E_{n-1,n}(u^p), E_{n-1,1}(u^q)) \\
& + \frac{\la}{2} \sum_{1\le i\neq j \le n} \sum_{p+q = s-1} S\big([E_{n-1,n}, E_{ij}](u^p), [E_{ji}, E_{n-1,1}](u^q)\big) \\ 
& - \frac{\la}{2} \sum_{p+q = s-1} S(E_{n-1,n}(u^p), E_{n-1,1}(u^q)) - \frac{\la}{2} \sum_{p+q = s-1} S(E_{n-1,n}(u^p),  E_{n-1,1}(u^q)).
\end{align*}
The right-hand side vanishes due to \eqref{rel:higher degree rel}, which has been established already because $[E_{n-1,n},E_{n-1,1}]=0$.

\noindent \textbf{Step 1.4:} Once all this has been proved, we can conclude that the finite dimensional $\mfU(\mfsl_n)$-submodule $V$ of $\mfD_{\la,\beta}(\mfsl_n)$ generated by $P_s(E_{n1})$ is isomorphic to the adjoint representation of $\mfsl_n$ on itself. This implies that if $X \in \mfsl_n$, then $P_s(X)$ can be defined as the element in $V$ corresponding to $X$ under the isomorphism $V\iso \mfsl_n$ of $\mfsl_n$-modules which sends $P_s(E_{n1})$ to the lowest root vector $E_{n1}$.

\noindent \textbf{Step 2.1:} It remains to prove that $P_s(X)$ satisfies the formula \eqref{rel:higher degree rel} for all $X\in\mfsl_n$. By definition, this is true when $X=E_{n1}$, $E_{ab} = E_{n,n-1}$ and $E_{cd} =E_{n-1,1}$. Relation \eqref{rel:higher degree rel} has also already been established when $[E_{ab},E_{cd}]=0$. It remains to check that it holds when $b=c$ and $a,b,d$ are all distinct, and also when $a=d$ and $a,b,c$ are all distinct.  For the first case, we employ an idea that was used in section \ref{rankge3}, namely start with formula \eqref{PsEn1} for $P_s(E_{n1})$ and apply a sequence of operators $s_i$ which sends $E_{n1}$ to $\pm E_{ad}$, $E_{n,n-1}$ to $\pm E_{ab}$ and $E_{n-1,1}$ to $\pm E_{bd}$. 

To deal with the case $a=d$ and $a,b,c$ all distinct, we see that we only need to show that \eqref{rel:higher degree rel} holds in one of these cases since then we can apply the same argument involving the operators $s_i$ as in the previous paragraph.

\noindent \textbf{Step 2.2:}  By induction on $s$, using \eqref{rel:higher degree rel}, we can assume that the following identity holds in $\mfD_{\la,\beta}(\mfsl_n)$: \begin{align*}
[K(E_{n-1,1}), & E_{n,n-1}(u^{s-1})] + [K(E_{n,n-1}), E_{n-1,1}(u^{s-1})] =  2(s-1)\left(\beta - \frac{\la}{2}\right) E_{n1}(u^{s-2}) \\
& + \frac{\la}{2} \sum_{p+q = s-2} S(E_{n,n-1}(u^p), E_{n-1,1}(u^q))  +  \frac{\la}{2} \sum_{1 \le i\neq j \le n} \sum_{p+q = s-2}S\big( [E_{n,n-1},E_{ij}](u^p),[E_{ji},E_{n-1,1}](u^q)\big) 
\end{align*}

Let us apply $[Q(H_{12}), \cdot]$ to both sides of the previous equality to obtain, using \eqref{mu}: 
\begin{align*}
 \big[ [Q(H_{12}) & ,K(E_{n-1,1})] ,  E_{n,n-1}(u^{s-1})\big] + \big[ [Q(H_{12}),K(E_{n,n-1})],  E_{n-1,1}(u^{s-1})\big] - [K(E_{n,n-1}),  E_{n-1,1}(u^{s})]  \\
 = {} &  -2(s-1)\left(\beta - \frac{\la}{2}\right) E_{n1}(u^{s-1}) - \frac{\la}{2} \sum_{p+q = s-2}  S(E_{n,n-1}(u^p), E_{n-1,1}(u^{q+1}))  \\
 & -\frac{\la}{2}\sum_{1 \le i\neq j \le n} \sum_{p+q = s-1}
S\big( [E_{n,n-1},E_{ij}](u^p), [E_{ji},E_{n-1,1}](u^{q})\big) \\
&-\frac{\la}{2} \sum_{1 \le i\neq j \le n} \left( 
S\Big(\big[ H_{12}, [E_{n,n-1},E_{ij}]\big] ,[E_{ji},E_{n-1,1}](u^{s-1})\Big) + 
S\Big( [E_{n,n-1},E_{ij}](u^{s-1}), \big[ H_{12}, [E_{ji},E_{n-1,1}]\big]\Big) \right).
\end{align*}

We want to rewrite the left-hand side using the following consequence of Lemma \ref{lem:KEab QHcd}: \begin{equation*}
[K(E_{n-1,1}),Q(H_{12})] + [K(E_{n-1,1}),Q(H_{n-1,2})] = \frac{\la}{4} \sum_{1\le i\neq j \le n} \big( S([E_{n-1,1},E_{ij}],[E_{ji},H_{12} + H_{n-1,2}]) + 2\left(\beta - \frac{\la}{2}\right) E_{n-1,1}
\end{equation*}
Substituting this into the left-hand side of the previous expression and using $[Q(H_{12}),K(E_{n,n-1})]=\frac{\la}{2}\big(S(E_{n1}, E_{1,n-1}) - S(E_{n2}, E_{2,n-1})\big)$ yields: 

\begin{align}
- \big[ [Q(H_{n-1,2}) & ,K(E_{n-1,1})]  ,E_{n,n-1}(u^{s-1})\big]  -[K(E_{n,n-1}),E_{n-1,1}(u^{s})] \notag \\ 
 = {} & \frac{\la}{4} \sum_{1\le i\neq j \le n} \left[ S([E_{n-1,1},E_{ij}],[E_{ji},H_{12}]) + S([E_{n-1,1},E_{ij}],[E_{ji},H_{n-1,2}]) ,  E_{n,n-1}(u^{s-1})\right] \notag\\
&-\frac{\lambda}{2} \Big(S(E_{n1}, H_{1, n-1}(u^{s-1}))-S(E_{n2}, E_{21}(u^{s-1}))\Big) 
 -2s\left(\beta - \frac{\la}{2}\right) E_{n1}(u^{s-1})  \notag \\
 &- \frac{\la}{2} \sum_{p+q = s-2}  S(E_{n,n-1}(u^p), E_{n-1,1}(u^{q+1})) 
  -\frac{\la}{2}\sum_{1 \le i\neq j \le n} \sum_{p+q = s-1}
S\big( [E_{n,n-1},E_{ij}](u^p), [E_{ji},E_{n-1,1}](u^{q})\big) \notag\\
&-\frac{\la}{2} \sum_{1 \le i\neq j \le n} \left(
S\Big(\big[ H_{12}, [E_{n,n-1},E_{ij}]\big] ,[E_{ji},E_{n-1,1}](u^{s-1})\Big) +
S\Big([E_{n,n-1},E_{ij}](u^{s-1}), \big[ H_{12}, [E_{ji},E_{n-1,1}]\big]\Big) \right).
\label{KEn1}
\end{align} 
\noindent \textbf{Step 2.3:}  The left-hand side of the previous expression is equal to \begin{equation} -[K(E_{n,n-1}),E_{n-1,1}(u^{s})] - \big[Q(H_{n-1,2}),[K(E_{n-1,1}),E_{n,n-1}(u^{s-1})] \big] +  \big[ K(E_{n-1,1}),[Q(H_{n-1,2}),E_{n,n-1}(u^{s-1})] \big]. \label{KHE}  \end{equation} The last term is equal to $-[K(E_{n-1,1}),E_{n,n-1}(u^{s})]$, which is exactly (up to a sign) what we want to determine. The first term was determined earlier when we treated the case $b=c$ and $a,b,d$ all distinct, so \eqref{rel:higher degree rel} holds for it. By induction on $s$, we can assume that relation \eqref{rel:higher degree rel} holds for $[K(E_{n-1,1}),E_{n,n-1}(u^{s-1})]$, so the second term in \eqref{KHE} can be determined using \eqref{mu}: 
\begin{align}
\big[Q(H_{n-1,2}&),  [K(E_{n-1,1}),E_{n,n-1}(u^{s-1})] \big] \notag \\
= {} & -[Q(H_{n-1,2}),P_{s-1}(E_{n1})]
+\frac{\lambda}{4}S(E_{n-1,1}(u^{s-1}), E_{n,n-1})
-\frac{\lambda}{4}S(E_{n-1,1}, E_{n,n-1}(u^{s-1}))\notag\\
& {} + \frac{\lambda}{4}\sum_{i\neq j}\left( \sum_{p+q=s-1}
S\Big(\big[[H_{n-1,2}, E_{n-1,1}], E_{ij}\big](u^{p}), [E_{ji}, E_{n,n-1}](u^{q})\Big)
- S\Big(\big[ H_{n-1,2}, [E_{n-1,1}, E_{ij}]\big] , [E_{ji}, E_{n,n-1}](u^{s-1})\Big) \right) \notag\\
& {} + \frac{\lambda}{4}\sum_{i\neq j} \left(\sum_{p+q=s-1}
S\Big( [E_{n-1,1}, E_{ij}](u^{p}), \big[E_{ji}, [H_{n-1,2},  E_{n,n-1}]\big](u^{q})\Big)
- S\Big( [E_{n-1,1}, E_{ij}](u^{s-1}), \big[ H_{n-1,2}, [E_{ji}, E_{n,n-1}]\big]\Big) \right) \notag\\
 = {} & -[Q(H_{n-1,2}),P_{s-1}(E_{n1})]
+\frac{\lambda}{4}S(E_{n-1,1}(u^{s-1}), E_{n,n-1})
-\frac{\lambda}{4}S(E_{n-1,1}, E_{n,n-1}(u^{s-1}))\notag\\
& - \frac{\lambda}{4}\sum_{i\neq j} \left(
S\Big(\big[ H_{n-1,2}, [E_{n-1,1}, E_{ij}]\big] , [E_{ji}, E_{n,n-1}](u^{s-1})\Big) + 
S\Big( [E_{n-1,1}, E_{ij}](u^{s-1}), \big[ H_{n-1,2}, [E_{ji}, E_{n,n-1}]\big]\Big) \right). \label{QKE}
\end{align}

$[Q(H_{n-1,2}),P_{s-1}(E_{n1})]$ can be determined because $[H_{n-1,2}, E_{n1}] = 0$. Indeed, we can use Lemma \ref{lem:higher KEab QHcd} below (which we can assume holds by induction) with $a=n=d, \, b=1=c$ to obtain an expression for $P_{s-1}(E_{n1})$ and then use this expression, along with \eqref{mu}, to compute $[Q(H_{n-1,2}),P_{s-1}(E_{n1})]$:
\begin{align}
[Q(H_{n-1,2})  ,P_{s-1}(E_{n1})] 
 = {} & \frac{\lambda}{4} \Big(
S(E_{n, n-1}(u^{s-1}), E_{n-1, 1})
+S(E_{n, n-1}, E_{n-1, 1}(u^{s-1}))
-S(E_{n2}(u^{s-1}) , E_{21})
-S(E_{n2}, E_{21}(u^{s-1}) )\Big)
\notag \\
&+\frac{\lambda}{8}\sum_{1\leq i \neq j\leq n} \left( 
S\Big(\big[ H_{n-1,2}, [E_{n1}, E_{ij}]\big] , [E_{ji}, H_{1n}](u^{s-1})\Big) + 
S\Big([E_{n1}, E_{ij}](u^{s-1}), \big[ H_{n-1,2}, [E_{ji}, H_{1n}]\big] \Big) \right) \label{QPs}
\end{align}

\noindent \textbf{Step 2.4:} Substituting \eqref{QPs} into \eqref{QKE} and then substituting the resulting expression into \eqref{KHE}, we obtain an expression for the left-hand side of \eqref{KEn1} from which we can isolate $[K(E_{n-1,1}),E_{n,n-1}(u^{s})]$. Relation \eqref{rel:higher degree rel} has already been established for $[K(E_{n,n-1}), E_{n-1,1}(u^{s})]$;  using it and performing a few elementary simplifications, we obtain:
 \begin{align*}
[K(E_{n-1,1}) & ,E_{n,n-1}(u^{s})] = -P_{s}(E_{n1})+s\left(\beta - \frac{\la}{2}\right) E_{n1}(u^{s-1})\\
&+ \frac{\la}{4} \sum_{p+q = s-1}  S(E_{n,n-1}(u^p), E_{n-1,1}(u^{q}))
+\frac{\lambda}{4}\sum_{i\neq j} \sum_{p+q=s-1} S\big([E_{n, n-1}, E_{ij}](u^p), [E_{ji}, E_{n-1, 1}](u^q)\big)\\
&
+\frac{\lambda}{4} 
\Big(
-S(E_{n2}(u^{s-1}) , E_{21})
-S(E_{n2}, E_{21}(u^{s-1}) )\Big)
\notag\\
&+\frac{\lambda}{8}\sum_{1\leq i \neq j\leq n} \left(
S\Big(\big[ H_{n-1,2}, [E_{n1}, E_{ij}]\big] , [E_{ji}, H_{1n}](u^{s-1})\Big) + 
S\Big([E_{n1}, E_{ij}](u^{s-1}), \big[ H_{n-1,2}, [E_{ji}, H_{1n}]\big]\Big) \right) \notag\\
&+ \frac{\lambda}{4}\sum_{i\neq j} \left(
S\Big(\big[ H_{n-1,2}, [E_{n-1,1}, E_{ij}]\big] , [E_{ji}, E_{n,n-1}](u^{s-1})\Big) + 
S\Big( [E_{n-1,1}, E_{ij}](u^{s-1}), \big[ H_{n-1,2}, [E_{ji}, E_{n,n-1}]\big]\Big) \right) \\
&-\frac{\la}{4} \sum_{1\le i\neq j \le n}  \left[
S([E_{n-1,1},E_{ij}],[E_{ji},H_{12}+H_{n-1, 2}]) ,  E_{n,n-1}(u^{s-1}) \right] \notag\\
&+
\frac{\lambda}{2} \Big(S(E_{n1}, H_{1, n-1}(u^{s-1}))-S(E_{n2}, E_{21}(u^{s-1}))\Big)
\notag \\
&+\frac{\la}{2} \sum_{1 \le i\neq j \le n} \left(
S\Big( \big[ H_{12}, [E_{n,n-1},E_{ij}]\big] ,[E_{ji},E_{n-1,1}](u^{s-1})\Big) +
S\Big( [E_{n,n-1},E_{ij}](u^{s-1}), \big[ H_{12}, [E_{ji},E_{n-1,1}]\big] \Big) \right).\\
\end{align*}
It can be checked that the previous long relation simplifies to: 

\begin{align*}
[ K(E_{n-1,1}),E_{n,n-1}(u^{s}) ] = {} &  -P_s(E_{n1})  + s\left(\beta-\frac{\lambda}{2}\right) E_{n1}(u^{s-1})  + \frac{\lambda}{4}\sum_{p+q=s-1}S(E_{n-1,1}(u^p), E_{n,n-1}(u^{q}))\notag\\
& + \frac{\lambda}{4}\sum_{1\leq i\neq j \leq n} \sum_{p+q=s-1}S\big( [E_{n-1,1}, E_{ij}](u^p), [E_{ji}, E_{n,n-1}](u^q)\big).
\end{align*}

This shows that \eqref{rel:higher degree rel} holds also for at least one case with $a=d$ and $a,b,c$ all distinct. As mentioned earlier, this is enough to complete the proof.
\end{proof}

The identities that we proved in Section \ref{Sec: sl_n two par} still hold more generally in type $A$ and with only a slightly modified proof (using for instance \eqref{mpq} instead of \eqref{m}), which we omit in this section. The following lemma is parallel to Lemma \ref{lem:KEab QHcd}.
\begin{lemma}\label{lem:higher KEab QHcd}
For any $a\neq b$ and $c\neq d$, and for any $s\ge 0$, the following relations hold in $\mfD_{\lambda, \beta}(\mathfrak{sl}_n)$:
\[[K(E_{ab}), H_{cd}( u^s)]=P_s([E_{ab}, H_{cd}])+
\frac{\lambda}{4}\sum_{1\leq i \neq j\leq n}\sum_{p+q=s-1}S\big( [E_{ab}, E_{ij}](u^p), [E_{ji}, H_{cd}](u^q)\big) +
s\left(\beta-\frac{\lambda}{2}\right) (\epsilon_a+\epsilon_b, \eps_{cd})E_{ab}(u^{s-1})\] 
and
\[[K(H_{ab}), E_{cd}(u^s)]=P_s([H_{ab}, E_{cd}])+\frac{\lambda}{4}\sum_{1\leq i \neq j\leq n}\sum_{p+q=s-1}S\big( [H_{ab}, E_{ij}](u^p), [E_{ji}, E_{cd}](u^q)\big) +s\left(\beta-\frac{\lambda}{2}\right) (\eps_{ab}, \epsilon_c+\epsilon_d)E_{cd}(u^{s-1}).\]
\end{lemma}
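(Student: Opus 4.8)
The plan is to deduce both identities from the single relation \eqref{rel:higher degree rel} of Proposition \ref{Ps} — which is already available for every $s\ge 0$ — by exactly the device that turned \eqref{2rel} into Lemma \ref{lem:K(h)Q(x)}: I would manufacture a Cartan element in one of the two slots by applying a suitable inner derivation $\ad(E_{\bullet})$ to \eqref{rel:higher degree rel}, and I would control the resulting degree-$(s-1)$ symmetrized sums with the higher-degree invariance identity \eqref{mpq} in place of \eqref{m}. Because \eqref{rel:higher degree rel} holds for all $s$ at once, no separate induction on $s$ is needed; this is the "slightly modified" version of the proofs of Lemmas \ref{lem:K(h)Q(x)} and \ref{lem:KEab QHcd} referred to in the text.

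For the second identity I would write $H_{ab}=[E_{ab},E_{ba}]$, so that $K(H_{ab})=[K(E_{ab}),E_{ba}]$ since $K$ extends to a homomorphism of $\mfU(\mfsl_n[v])$. Applying $[\,\cdot\,,E_{ba}]$ to \eqref{rel:higher degree rel} and expanding by the Jacobi identity gives
\[
[K(H_{ab}),E_{cd}(u^s)]=\big[[K(E_{ab}),E_{cd}(u^s)],E_{ba}\big]-\big[K(E_{ab}),[E_{cd},E_{ba}](u^s)\big].
\]
On the right-hand side of \eqref{rel:higher degree rel} I would evaluate the bracket with $E_{ba}$ term by term: the $P_s$-term yields $P_s([H_{ab},E_{cd}])$ by the $\ad(\mfg)$-equivariance $[P_s(X),X']=P_s([X,X'])$ of Proposition \ref{Ps}; the $\delta$-term together with the $(\epsilon_{ab},\epsilon_{cd})$-term contribute the scalar multiple of $E_{cd}(u^{s-1})$; and the symmetrized sum is rewritten using \eqref{mpq}. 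The auxiliary bracket $[K(E_{ab}),[E_{cd},E_{ba}](u^s)]$ is itself an instance of \eqref{rel:higher degree rel}, because $[E_{cd},E_{ba}]$ is again an elementary matrix (or zero), so substituting it and collecting terms isolates $[K(H_{ab}),E_{cd}(u^s)]$ in the asserted form, with correction factor $(\epsilon_{ab},\epsilon_c+\epsilon_d)$.

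The first identity I would obtain dually. Writing $H_{cd}(u^s)=[E_{cd}(u^s),E_{dc}]$ and applying $[\,\cdot\,,E_{dc}]$, the Jacobi identity and $[K(E_{ab}),E_{dc}]=K([E_{ab},E_{dc}])$ give
\[
[K(E_{ab}),H_{cd}(u^s)]=\big[[K(E_{ab}),E_{cd}(u^s)],E_{dc}\big]-\big[K([E_{ab},E_{dc}]),E_{cd}(u^s)\big],
\]
and the same mechanism — \eqref{mpq} on the symmetrized sum and \eqref{rel:higher degree rel} on the auxiliary term — produces the correction factor $(\epsilon_a+\epsilon_b,\epsilon_{cd})$. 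In both identities I would first treat the index configurations for which the brackets employed are nonzero root vectors, and then extend to all $a\neq b$, $c\neq d$ by linearity in the Cartan argument (writing an $H$ as a sum of coroots that factor through elementary matrices), precisely as the general case of \eqref{rel:K(h)Q(x)} was deduced from its generic instance.

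The one genuinely delicate point, and the step I expect to be the main obstacle, is the coefficient bookkeeping: I must check that the $\ad(E_{\bullet})$-image of the degree-$(s-1)$ symmetrized sum, once rewritten via \eqref{mpq}, recombines with the $\delta$- and $(\epsilon,\epsilon)$-contributions to leave exactly $s\left(\beta-\frac{\lambda}{2}\right)(\epsilon_a+\epsilon_b,\epsilon_{cd})E_{ab}(u^{s-1})$ for the first identity (and the symmetric expression for the second), with no stray terms surviving the telescoping of the $\sum_{p+q=s-1}$ sums. This is routine but error-prone and is the higher-degree counterpart of the computation already carried out for Lemma \ref{lem:KEab QHcd}.
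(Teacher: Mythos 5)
Your proposal is correct and is essentially the proof the paper intends: the paper omits it, saying only that it is a ``slightly modified'' version of the proofs of Lemmas \ref{lem:K(h)Q(x)} and \ref{lem:KEab QHcd} with \eqref{mpq} replacing \eqref{m}, and your plan --- apply $\ad(E_{ba})$ (resp.\ $\ad(E_{dc})$) to relation \eqref{rel:higher degree rel}, evaluate the resulting correction term by another instance of \eqref{rel:higher degree rel}, control the symmetrized sums with \eqref{mpq} and the $P_s$-equivariance, and extend to the degenerate index configurations by linearity of the Cartan entry --- is exactly that adaptation. You were also right to prove both identities directly rather than deducing one from the other via the automorphism of Proposition \ref{Prop: auto} (as was done in the degree-one case, Lemma \ref{lem:KEab QHcd}), since that automorphism exchanges the $u$- and $v$-current subalgebras and therefore cannot relate the two identities here.
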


\subsection{Proof of Theorem \ref{thm:center=polynomial rings}}

The elements $Z_{ab}(s)$ and $\wt{Z}_{ab}(s)$ (see \eqref{Zabcd} and \eqref{wtZabcd}) can be expressed in a different way (compare with $Z_{ab}$ and $W_{ab}$ in \eqref{defz} and \eqref{defw}). Set \[ W_{ab}(s) = [K(E_{ab}), E_{ba}(u^s)]-P_s(H_{ab})-\frac{\lambda}{4}\sum_{1\leq i \neq j\leq n} \sum_{p+q = s-1}S\big( [E_{ab}, E_{ij}](u^p), [E_{ji}, E_{ba}](u^q)\big) -\frac{\lambda}{2}\sum_{p+q = s-1}S(E_{ab}(u^p), E_{ba}(u^q)).  \]
The next proposition is parallel to Proposition \ref{prop:two param} and its proof is analogous.
\begin{proposition}\label{prop: two param higher case}
\begin{romenum}
  \item \label{prop:two param item 1 s}
  For any $1\leq a \neq b \leq n$ and $1\leq c\neq b\leq n$, 
\[
  Z_{ab, cd}(s)=(\eps_{ab}, \epsilon_{cd})W_{ab}(s)+
   s\left(\beta-\frac{\lambda}{2}\right)(\epsilon_a+\epsilon_b, \eps_{cd})H_{ab}(u^{s-1})=(\eps_{ab}, \epsilon_{cd})W_{cd}(s)+
   s\left(\beta-\frac{\lambda}{2}\right)(\epsilon_c+\epsilon_d, \eps_{ab})H_{cd}(u^{s-1}).
  \]
In particular, we have $Z_{ab}(s)=2 W_{ab}(s)$, and when $a, b, c, d$ are distinct,
$Z_{ab, cd}(s)=0$.
\item \label{prop:two param item 2 s}
For $1\leq a\neq b \leq n$ and $1\leq c \neq d \leq n$,
\[
W_{ab}(s)-W_{cd}(s)=s\left(\beta - \frac{\la}{2}\right)(H_{ac}+H_{bd})(u^{s-1}).
\]
\item \label{prop:two param item 3 s}
For $1\leq a\neq b \leq n$ and $1\leq c \neq d \leq n$,
\[Z_{ab}(s)-Z_{cd}(s)=2s\left(\beta -\frac{\la}{2}\right) (H_{ac}+H_{bd})(u^{s-1}).\]
\end{romenum}
\end{proposition}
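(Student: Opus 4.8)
The plan is to mirror the proof of Proposition \ref{prop:central} (equivalently Proposition \ref{prop:two param}), upgrading every step by inserting the $u$-degree bookkeeping, replacing \eqref{m} with its graded counterpart \eqref{mpq}, and carefully tracking the new $s(\beta-\frac{\la}{2})$-terms that appear in Lemma \ref{lem:higher KEab QHcd}. To prove the second equality in \eqref{prop:two param item 1 s}, I would start from the formula for $[K(H_{ab}), E_{dc}(u^s)]$ supplied by Lemma \ref{lem:higher KEab QHcd} and apply $\mathrm{ad}(E_{cd})$ to both sides. Using $H_{cd}(u^s) = [E_{cd}, E_{dc}(u^s)]$ together with $[E_{cd}, K(H_{ab})] = -(\eps_{ab},\eps_{cd})K(E_{cd})$, the left-hand side becomes $[K(H_{ab}), H_{cd}(u^s)] - (\eps_{ab},\eps_{cd})[K(E_{cd}),E_{dc}(u^s)]$. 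On the right, the $P_s$-term yields $-(\eps_{ab},\eps_{cd})P_s(H_{cd})$, the $s(\beta-\frac{\la}{2})$-term yields $s(\beta-\frac{\la}{2})(\eps_{ab},\eps_c+\eps_d)H_{cd}(u^{s-1})$, and the double sum is processed by \eqref{mpq}, summed over $p+q=s-1$, taken with $X_{\beta_1}=H_{ab}$ (Cartan, so the $(\gamma,\beta_1)$-factor drops), $X_{\beta_2}=E_{dc}$ and $X_\gamma=E_{cd}$. The decisive point is the sign $[E_{cd}, S(\cdots)] = -[S(\cdots),E_{cd}]$: with it, the term of \eqref{mpq} involving $[E_{dc},E_{cd}] = -H_{cd}$ produces exactly $+\frac{\la}{4}\sum_{p+q=s-1}\sum_{i\neq j}S([H_{ab},E_{ij}](u^p),[E_{ji},H_{cd}](u^q))$, which cancels the correction term built into the definition of $Z_{ab,cd}(s)$. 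After this cancellation the surviving terms reassemble into $(\eps_{ab},\eps_{cd})W_{cd}(s)$ plus the $H_{cd}(u^{s-1})$-term, giving the desired equality.

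The first equality in \eqref{prop:two param item 1 s} is obtained symmetrically, by applying $\mathrm{ad}(E_{ba})$ to the formula for $[K(E_{ab}),H_{cd}(u^s)]$ (again from Lemma \ref{lem:higher KEab QHcd}) and invoking $[E_{ba},K(E_{ab})] = -K(H_{ab})$. Specializing $c=a,\,d=b$ then gives $Z_{ab}(s)=2W_{ab}(s)$, since $(\eps_a+\eps_b,\eps_{ab})=0$ kills the $H$-term; taking $a,b,c,d$ all distinct gives $Z_{ab,cd}(s)=0$, since both $(\eps_{ab},\eps_{cd})$ and $(\eps_a+\eps_b,\eps_{cd})$ vanish. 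For \eqref{prop:two param item 2 s} I would equate the two expressions for $Z_{ab,cd}(s)$ from \eqref{prop:two param item 1 s}: when $(\eps_{ab},\eps_{cd})\neq 0$ one divides out to read off $W_{ab}(s)-W_{cd}(s)$, evaluating the inner products $(\eps_a+\eps_b,\eps_{cd})$ and $(\eps_c+\eps_d,\eps_{ab})$ in the various overlap cases (e.g. $b=c$), and the orthogonal case where $a,b,c,d$ are all distinct is bridged by inserting an intermediate pair sharing an index with both, exactly as in Proposition \ref{prop:two param}\eqref{prop:two param item 2}. Finally \eqref{prop:two param item 3 s} is immediate: combine $Z_{ab}(s)=2W_{ab}(s)$ with \eqref{prop:two param item 2 s}.

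The hard part will be the bookkeeping in the first paragraph, not any conceptual difficulty: the step most prone to error is verifying that the image under $\mathrm{ad}(E_{cd})$ of the double-sum correction term, once run through \eqref{mpq}, reproduces the correction term in the definition of $Z_{ab,cd}(s)$ with the \emph{correct} sign, so that the two cancel rather than double (the sign $[E_{cd},S(\cdots)]=-[S(\cdots),E_{cd}]$ is exactly what makes this work, and is the same subtlety already present in Proposition \ref{prop:central}). The genuinely new feature relative to the degree-one, one-parameter setting is that the $s(\beta-\frac{\la}{2})$-terms now survive — they vanish when $\beta=\frac{\la}{2}$ — and it is precisely these that produce the $H_{ab}(u^{s-1})$ and $(H_{ac}+H_{bd})(u^{s-1})$ corrections in the statement; keeping their coefficients consistent across the two symmetric derivations is the one place where care is required.
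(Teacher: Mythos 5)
Your proposal is correct and is exactly the argument the paper intends: the paper omits the proof of this proposition, saying only that it is analogous to Propositions \ref{prop:central} and \ref{prop:two param}, and your computation --- applying $\mathrm{ad}$ of root vectors to the two relations of Lemma \ref{lem:higher KEab QHcd}, processing the double sums with \eqref{mpq}, and tracking the new $s\left(\beta-\frac{\la}{2}\right)$-terms --- is precisely that analogous argument written out, with the graded bookkeeping done correctly. The sign point you isolate (the $[E_{dc},E_{cd}]=-H_{cd}$ term reassembling the correction in the definition of $Z_{ab,cd}(s)$), the specializations giving $Z_{ab}(s)=2W_{ab}(s)$ and $Z_{ab,cd}(s)=0$, and the overlap-case-plus-bridging derivation of \eqref{prop:two param item 2 s} and \eqref{prop:two param item 3 s} all check out.
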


\begin{lemma}\label{ZsXKX}
For any $s\ge 1$ and any $\lambda, \beta \in\C$, the element $Z(s)$ commutes with the subalgebra $\mfU(\mfsl_n[u])$ of $\mfD_{\lambda, \beta}(\mfsl_n)$.
\end{lemma}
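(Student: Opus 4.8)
The plan is to reduce the statement to checking that $Z(s)$ commutes with a generating set of $\mfU(\mfsl_n[u])$, and then to carry out two computations—one for the degree-zero generators and one for the degree-one generators—each modelled on the proof of Theorem~\ref{central ele sln}. Since $\mfsl_n[u]$ is generated as a Lie algebra by its degree-zero part $\mfsl_n$ together with its degree-one part $\{Q(X)=X(u)\mid X\in\mfsl_n\}$, the subalgebra $\mfU(\mfsl_n[u])$ is generated as an associative algebra by $\mfsl_n$ and by all $Q(X)$. Because $\ad(Z(s))$ is a derivation of $\mfD_{\lambda,\beta}(\mfsl_n)$, it suffices to prove $[Z(s),X]=0$ and $[Z(s),Q(X)]=0$ for every $X\in\mfsl_n$.

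First I would treat the degree-zero generators by establishing the higher analogue of \eqref{ZE}, namely
\[
[Z_{ab}(s),E_{cd}]=2s\Big(\beta-\tfrac{\lambda}{2}\Big)(\eps_{ab},\eps_{cd})(\eps_c+\eps_d,\eps_{ab})\,E_{cd}(u^{s-1}).
\]
Writing $Z_{ab}(s)=[K(H_{ab}),H_{ab}(u^s)]-\tfrac{\lambda}{4}\sum_{i\neq j}\sum_{p+q=s-1}S([H_{ab},E_{ij}](u^p),[E_{ji},H_{ab}](u^q))$, the bracket of the first term with $E_{cd}$ is computed by the Jacobi identity and Lemma~\ref{lem:higher KEab QHcd}: the two $P_s$-contributions cancel, the two $s(\beta-\tfrac\lambda2)$-contributions add to the displayed right-hand side, and the residual $S(\cdot,\cdot)$-sum is cancelled by the bracket of the second term of $Z_{ab}(s)$ with $E_{cd}$, which is evaluated via \eqref{mpq} in the form valid when $X_{\beta_1}=X_{\beta_2}=H_{ab}\in\mfh$. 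Summing over the cycle $12,23,\dots,n1$ gives $[Z(s),E_{cd}]=2s(\beta-\tfrac\lambda2)E_{cd}(u^{s-1})\sum_{a=1}^n(\eps_{a,a+1},\eps_{cd})(\eps_c+\eps_d,\eps_{a,a+1})$, and the scalar vanishes: the symmetric operator $\sum_a(\eps_a-\eps_{a+1})\otimes(\eps_a-\eps_{a+1})$ is the Laplacian $L$ of the $n$-cycle, so the sum equals $(\eps_c-\eps_d,\,L(\eps_c+\eps_d))$, which is $0$ for $c\neq d$ by a one-line check using $L\eps_c=2\eps_c-\eps_{c-1}-\eps_{c+1}$. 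Hence $[Z(s),X]=0$ for all $X\in\mfsl_n$.

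For the degree-one generators I would first use Proposition~\ref{prop: two param higher case}(iii), $Z_{ab}(s)-Z_{cd}(s)=2s(\beta-\tfrac\lambda2)(H_{ac}+H_{bd})(u^{s-1})$, together with $[Q(h),(H_{ac}+H_{bd})(u^{s-1})]=0$ for any diagonal $h$ (diagonal currents commute), to conclude that $[Q(h),Z_{ab}(s)]$ is independent of $(a,b)$, so $[Q(H_{cd}),Z(s)]=n\,[Q(H_{cd}),Z_{a,a+1}(s)]$. I would then fix four distinct indices $a,b,c,d$ (possible since $n\ge4$) and prove $[Q(H_{cd}),Z_{ab}(s)]=0$ directly. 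The key input is that for distinct indices $[K(H_{ab}),Q(H_{cd})]=\tfrac{\lambda}{4}\sum_{i\neq j}(\eps_{ab},\eps_{ij})(\eps_{cd},\eps_{ij})S(E_{ij},E_{ji})\in\mfU\mfsl_n$ by Proposition~\ref{prop:two param}(i) (i.e. $Z_{ab,cd}=0$). Using $[Q(H_{cd}),H_{ab}(u^s)]=0$ one rewrites $[Q(H_{cd}),[K(H_{ab}),H_{ab}(u^s)]]=[[Q(H_{cd}),K(H_{ab})],H_{ab}(u^s)]$, which evaluates to $-\tfrac\lambda4\sum_{i\neq j}(\eps_{ab},\eps_{ij})^2(\eps_{cd},\eps_{ij})\big(S(E_{ij},E_{ji}(u^s))-S(E_{ij}(u^s),E_{ji})\big)$. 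On the other hand $[Q(H_{cd}),S(E_{ij}(u^p),E_{ji}(u^q))]=(\eps_{cd},\eps_{ij})\big(S(E_{ij}(u^{p+1}),E_{ji}(u^q))-S(E_{ij}(u^p),E_{ji}(u^{q+1}))\big)$, and summing over $p+q=s-1$ telescopes to $(\eps_{cd},\eps_{ij})\big(S(E_{ij}(u^s),E_{ji})-S(E_{ij},E_{ji}(u^s))\big)$; multiplying by $-\tfrac\lambda4(\eps_{ab},\eps_{ij})^2$ and summing over $i\neq j$ yields exactly the negative of the previous expression, so the two cancel and $[Q(H_{cd}),Z_{ab}(s)]=0$. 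Taking $b=a+1$ with $a,a+1,c,d$ distinct gives $[Q(H_{cd}),Z(s)]=0$. Finally, since every $Q(X)$ lies in the $\mfsl_n$-submodule generated by $Q(H_{cd})$ under the adjoint action (the adjoint representation being irreducible) and $[Z(s),\mfsl_n]=0$ by the previous step, the identity $[Z(s),Q([X',X])]=[X',[Z(s),Q(X)]]$ propagates $[Z(s),Q(X)]=0$ to all $X\in\mfsl_n$.

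The main obstacle is the degree-one computation: one must recognise the telescoping in the $u$-degrees and keep track of its two boundary terms, and match the two $S(\cdot,\cdot)$-expressions with their coefficients. I expect this to be the only genuinely delicate point, the rest being a routine transcription of the arguments for Theorem~\ref{central ele sln} into the current-algebra setting, with \eqref{mpq} and Lemma~\ref{lem:higher KEab QHcd} playing the roles that \eqref{m}, Lemma~\ref{lem:KEab QHcd}, and Lemma~\ref{lem:HH} played there.
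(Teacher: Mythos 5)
Your proposal is correct and follows essentially the same route as the paper's proof: reduce to the generators $X$ and $Q(H_{cd})$, establish the higher analogue of \eqref{ZE} via Lemma \ref{lem:higher KEab QHcd} and sum over the cycle, then use Proposition \ref{prop: two param higher case}(iii) plus the vanishing $Z_{ab,cd}=0$ for distinct indices and the telescoping in the $u$-degree to get $[Q(H_{cd}),Z_{ab}(s)]=0$. Your two small additions --- the cycle-Laplacian verification that the scalar $\sum_a(\eps_{a,a+1},\eps_{cd})(\eps_c+\eps_d,\eps_{a,a+1})$ vanishes, and the explicit adjoint-action propagation from $Q(H_{cd})$ to all $Q(X)$ --- are correct details that the paper leaves implicit.
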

\begin{proof}
It suffices to show that for any $X\in \mfsl_n$ the commutator $[Z(s), X]$ is zero and $[Z(s), Q(X')]=0$ for at least one non-zero element $X'\in \mfsl_n$ because $\{X, Q(X')\mid X\in \mfsl_n\}$ is a set of generators for the image of $\mfU(\mfsl_n[u]) \lra \mfD_{\lambda, \beta}(\mfsl_n)$.

Slightly modifying the proof of Theorem \ref{central ele sln} and using Lemma \ref{lem:higher KEab QHcd},  we have, for any
$a\neq b$ and $c\neq d$,
\[
  [Z_{ab}(s), E_{cd}]=2 s\left(\beta-\frac{\lambda}{2}\right)(\eps_{ab}, \eps_{cd})(\eps_{ab}, \epsilon_c+\epsilon_d) E_{cd}(u^{s-1}).
  \]
 Therefore,
 \[
 [Z(s), E_{i, i+1}]=2 s\left(\beta-\frac{\lambda}{2}\right)\sum_{a=1}^n(\epsilon_{a,a+1}, \epsilon_{i,i+1})(\epsilon_{a,a+1}, \epsilon_i+\epsilon_{i+1}) E_{i, i+1}(u^{s-1})=0,
 \]
and similarly $[Z(s), E_{i+1, i}]=0$. Thus, $Z(s)$ commutes with $\mfsl_n$. 

By our assumption that $n\geq 4$, there exist $1\leq c, d \leq n$, such that $a, b, c, d$ are distinct.
We claim that $[Z_{ab}(s), Q(H_{cd})]$ equals 0, so $X'$ can be taken to be $H_{cd}$.  Indeed, by Proposition \ref{prop: two param higher case} \eqref{prop:two param item 3 s}, we have $Z(s)-n Z_{ab}(s)\in \h\otimes_{\C} \C [u^{s-1}]$. Therefore, 
\[
[Z(s), Q(H_{cd})]=n[Z_{ab}(s), Q(H_{cd})]=0.
\]
We now show the claim that $[Z_{ab}(s), Q(H_{cd})]=0$ by direct calculations. On the one hand, 
\begin{align}
 \big[ [  K(H_{ab}), H_{ab}(u^s)], & Q(H_{cd})\big] = \big[ [K(H_{ab}), Q(H_{cd})], H_{ab}(u^s)\big] \notag \\
 = {} & \frac{\lambda}{4}\sum_{1\leq i\neq j\leq n }(\epsilon_{ab}, \epsilon_{ij})(\epsilon_{cd}, \epsilon_{ij})[S(E_{ij}, E_{ji}) , H_{ab}(u^s)] \text{ since $a, b, c, d$ are distinct, see Proposition \ref{prop:two param} \eqref{prop:two param item 1}.} \notag \\
 = {} & -\frac{\lambda}{2}\sum_{1\leq i\neq j\leq n }(\epsilon_{ab}, \epsilon_{ij})^2(\epsilon_{cd}, \epsilon_{ij})
S(E_{ij}(u^s), E_{ji}). \label{ZabGcd1}
\end{align}
On the other hand,
\begin{align}
\sum_{1\leq i\neq j\leq n} 
 \sum_{p+q=s-1} & \Big[ S\big( [H_{ab}, E_{ij}](u^p),  [E_{ji}, H_{ab}](u^q)\big), Q(H_{cd})\Big] \;  = \sum_{1\leq i\neq j\leq n} \sum_{p+q=s-1} (\epsilon_{ab}, \epsilon_{ij})^2 [S(E_{ij}(u^p), E_{ji}(u^q)), Q(H_{cd})] \notag \\
 = {} & \sum_{1\leq i\neq j\leq n}
 \sum_{p+q=s-1} (\epsilon_{ab}, \epsilon_{ij})^2(\epsilon_{cd}, \epsilon_{ij}) \Big(-S(E_{ij}(u^{p+1}), E_{ji}(u^q)) + S(E_{ij}(u^p), E_{ji}(u^{q+1}))\Big) \notag \\
 = {} &  -2\sum_{1\leq i\neq j\leq n} (\epsilon_{ab}, \epsilon_{ij})^2(\epsilon_{cd}, \epsilon_{ij}) S(E_{ij}(u^{s}), E_{ji}). \label{ZabGcd2}
\end{align}
By the definition of the element $Z_{ab}(s)$, we conclude from \eqref{ZabGcd1} and \eqref{ZabGcd2} that $[Z_{ab}(s), Q(H_{cd})]=0$.
\end{proof}
In the remainder of this section, we compute the commutator $[K(H_{cd}), Z(s)]$ and deduce that $[K(H_{cd}), Z(s)]=0$ if and only if $n\lambda=\pm 4\left(\beta-\frac{\lambda}{2}\right)$, which implies, in light of the previous lemma, that $Z(s)$ is in the center of $\mfD_{\lambda, \beta}(\mathfrak{sl}_n)$ (see Theorem \ref{thm:center=polynomial rings}).

\begin{lemma} \label{lem:K(Hcd) Zab s} Assume that $a,b,c,d$ are distinct integers. For $s\geq 2$, in $\mfD_{\lambda, \beta}(\mfsl_n)$, we have the relation:
\begin{multline*}
[K(H_{cd}), Z_{ab}(s)]  =-\binom{s}{2} n\lambda^2 H_{cd}(u^{s-2}) +\lambda s\left(\beta-\frac{\lambda}{2}\right)
\sum_{p+q=s-2} 
\Big(-S( E_{ac}(u^{p}), E_{ca}(u^q) )
+S( E_{ad}(u^{p}), E_{da}(u^q))\\
-S( E_{bc}(u^{p}), E_{cb}(u^q))
+S( E_{bd}(u^{p}), E_{db}(u^q))\Big).
\end{multline*}
\end{lemma}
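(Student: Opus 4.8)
The plan is to compute $[K(H_{cd}), Z_{ab}(s)]$ by splitting $Z_{ab}(s)$ into its two defining summands and treating each with the higher-degree relations already in hand. Write $Z_{ab}(s) = [K(H_{ab}), H_{ab}(u^s)] - B$, where, using $[H_{ab},E_{ij}]=(\epsilon_{ab},\epsilon_{ij})E_{ij}$, the sum term is $B = \tfrac{\lambda}{4}\sum_{i\neq j}(\epsilon_{ab},\epsilon_{ij})^2\sum_{p+q=s-1}S(E_{ij}(u^p),E_{ji}(u^q))$. For the first summand, since $K$ is an algebra homomorphism on $\mfU(\mfsl_n[v])$ we have $[K(H_{ab}),K(H_{cd})]=K([H_{ab},H_{cd}])=0$, so the Jacobi identity gives $[K(H_{cd}),[K(H_{ab}),H_{ab}(u^s)]]=[K(H_{ab}),[K(H_{cd}),H_{ab}(u^s)]]$. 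Because $a,b,c,d$ are distinct, Proposition \ref{prop: two param higher case}\eqref{prop:two param item 1 s} forces $Z_{cd,ab}(s)=0$, which by the definition \eqref{Zabcd} evaluates $[K(H_{cd}),H_{ab}(u^s)]$ to $B' := \tfrac{\lambda}{4}\sum_{i\neq j}(\epsilon_{cd},\epsilon_{ij})(\epsilon_{ab},\epsilon_{ij})\sum_{p+q=s-1}S(E_{ij}(u^p),E_{ji}(u^q))$. Thus the whole problem reduces to $[K(H_{cd}),Z_{ab}(s)]=[K(H_{ab}),B']-[K(H_{cd}),B]$, i.e. to commuting a single diagonal $K(H_{xy})$ through sums of the form $\sum_{p+q=s-1}S(E_{ij}(u^p),E_{ji}(u^q))$.

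For the second step I would expand $[K(H_{xy}),S(E_{ij}(u^p),E_{ji}(u^q))]$ by the Leibniz rule together with the higher-degree relation of Lemma \ref{lem:higher KEab QHcd} for $[K(H_{xy}),E_{ij}(u^p)]$. This splits each commutator into three contributions: a $P_p$-contribution carrying weight $(\epsilon_{xy},\epsilon_{ij})$; a nested structure-constant sum $\tfrac{\lambda}{4}\sum_{k\neq l}(\epsilon_{xy},\epsilon_{kl})\sum_{p'+q'=p-1}S(E_{kl}(u^{p'}),[E_{lk},E_{ij}](u^{q'}))$; and a $(\beta-\tfrac{\lambda}{2})$-contribution $p(\beta-\tfrac{\lambda}{2})(\epsilon_{xy},\epsilon_i+\epsilon_j)E_{ij}(u^{p-1})$. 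The first clean cancellation is among the $P_p$-terms: after symmetrizing under $i\leftrightarrow j,\ p\leftrightarrow q$ (exactly as in Lemma \ref{lem:C and K(H)}), the $P_p$-part of $[K(H_{ab}),B']$ equals $\tfrac{\lambda}{2}\sum_{i\neq j}(\epsilon_{cd},\epsilon_{ij})(\epsilon_{ab},\epsilon_{ij})^2\sum_{p+q=s-1}S(P_p(E_{ij}),E_{ji}(u^q))$, while that of $[K(H_{cd}),B]$ has the identical weight $(\epsilon_{ab},\epsilon_{ij})^2(\epsilon_{cd},\epsilon_{ij})$; since the two enter with opposite signs, every $P_p$-term cancels. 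This is precisely the higher-degree avatar of the cancellation that made Lemma \ref{lem:C and K(H)} work, and it guarantees that no genuinely degree-$s$ term survives. The $(\beta-\tfrac{\lambda}{2})$-contributions are then elementary: the degree-lowering term $p\,E_{ij}(u^{p-1})$ (with its partner $q\,E_{ji}(u^{q-1})$) telescopes under $\sum_{p+q=s-1}$ to $s\sum_{p+q=s-2}S(E_{ij}(u^p),E_{ji}(u^q))$, and since $(\epsilon_{xy},\epsilon_i+\epsilon_j)$ is nonzero only when $\{i,j\}$ meets $\{a,b,c,d\}$, collecting these against the weights $(\epsilon_{cd},\epsilon_{ij})$ and $(\epsilon_{ab},\epsilon_{ij})^2$ produces exactly the four families $-S(E_{ac}(u^p),E_{ca}(u^q))$, $+S(E_{ad}(u^p),E_{da}(u^q))$, $-S(E_{bc}(u^p),E_{cb}(u^q))$, $+S(E_{bd}(u^p),E_{db}(u^q))$, giving the $\lambda s(\beta-\tfrac{\lambda}{2})$ line of the statement with the correct factor $s$.

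The remaining, and genuinely hard, part is the pure-$\lambda^2$ contribution coming from the nested structure-constant sums. Here I would expand $[E_{lk},E_{ij}]=\delta_{ki}E_{lj}-\delta_{jl}E_{ik}$, combine the $\tfrac{\lambda}{4}$ from $B$ (resp. $B'$) with the $\tfrac{\lambda}{4}$ from Lemma \ref{lem:higher KEab QHcd}, and carry out the sums over $k,l$ and over the two independent degree-compositions $p+q=s-1$, $p'+q'=p-1$. The nested count $\sum_{p+q=s-1}\#\{p'+q'=p-1\}=\sum_{p=1}^{s-1}p=\binom{s}{2}$ is what produces the binomial coefficient, while contracting a Kronecker delta over a free index gives $\sum_{l}1=n$; the weights $(\epsilon_{cd},\cdot)$ force the surviving diagonal piece to be proportional to $H_{cd}(u^{s-2})$. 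I expect the main obstacle to be showing that all off-diagonal structure-constant terms cancel in pairs (again via the $i\leftrightarrow j,\ k\leftrightarrow l$ symmetry and the opposite signs of the two contributions), leaving only this diagonal ``trace anomaly'' with the right multiplicity and constant $-\binom{s}{2}n\lambda^2$. It is cleanest to organize this bookkeeping by induction on $s$, the recursion $f(s)=f(s-1)+(s-1)\,n\lambda^2$ reproducing $\binom{s}{2}n\lambda^2$; the base case $s=1$ is vacuous and $s\ge 2$ is needed for $H_{cd}(u^{s-2})$ to appear.
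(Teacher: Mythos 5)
Your first two paragraphs reproduce the paper's argument faithfully: the Jacobi identity with $[K(H_{ab}),K(H_{cd})]=0$, the evaluation of $[K(H_{cd}),H_{ab}(u^s)]$ via Proposition \ref{prop: two param higher case} \eqref{prop:two param item 1 s}, the expansion of both pieces by Lemma \ref{lem:higher KEab QHcd}, the cancellation of the $P_p$-terms (both carry the weight $(\epsilon_{ab},\epsilon_{ij})^2(\epsilon_{cd},\epsilon_{ij})$ and enter with opposite signs), and the reindexing of the $\left(\beta-\frac{\lambda}{2}\right)$-terms into the four families $\mp S(E_{\bullet}(u^p),E_{\bullet}(u^q))$ with the factor $s$. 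This is exactly Steps 1 and 3 of the paper's proof, i.e.\ the derivation of the expressions \eqref{E} and \eqref{F} and the evaluation of \eqref{F}.

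The gap is your treatment of the pure $\lambda^2$ contribution, which is the heart of the lemma. The mechanism you propose --- that ``all off-diagonal structure-constant terms cancel in pairs'' via the $i\leftrightarrow j$, $k\leftrightarrow l$ symmetry and opposite signs, leaving a diagonal remainder --- cannot be right as stated: the summands are cubic expressions $S\big(S(E_{kl}(u^e),[E_{lk},E_{ij}](u^f)),E_{ji}(u^q)\big)$ at three independent degrees, and if the terms carrying the two weight patterns $(\epsilon_{cd},\epsilon_{ij})(\epsilon_{ab},\epsilon_{ij})(\epsilon_{ab},\epsilon_{kl})$ and $(\epsilon_{ab},\epsilon_{ij})^2(\epsilon_{cd},\epsilon_{kl})$ matched off in pairs, the whole of \eqref{E} would vanish rather than equal $-\binom{s}{2}n\lambda^2H_{cd}(u^{s-2})$. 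What the paper actually does (Step 2 of its proof) is split \eqref{E} into three identical copies, re-index one by $\alpha\leftrightarrow-\beta$ and another by $\wt{\alpha}=\beta-\alpha$ (using $[X_{-\beta},X_{\beta-\wt{\alpha}}]=-kX_{-\wt{\alpha}}$), and then collapse the combination with the identity $S\big(S(A,B),C\big)-S\big(S(A,C),B\big)=\big[A,[B,C]\big]$, which converts the cubic sums into genuine Lie-algebra elements of degree $u^{s-2}$; only after this collapse do the count $\#\{(e,f,q)\in\N^3 \mid e+f+q=s-2\}=\binom{s}{2}$ and the trace identity $\sum\big((\epsilon_{ab},\epsilon_{ij})(\epsilon_{cd},\epsilon_{ij})(\epsilon_{ab},\epsilon_{kl})-(\epsilon_{ab},\epsilon_{ij})^2(\epsilon_{cd},\epsilon_{kl})\big)\big([E_{kl},E_{ji}],[E_{lk},E_{ij}]\big)H_{kl}=12nH_{cd}$ produce the stated coefficient. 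Your fallback, induction on $s$ with the recursion $f(s)=f(s-1)+(s-1)n\lambda^2$, also has no content as written: you exhibit no degree-raising operation relating $Z_{ab}(s-1)$ to $Z_{ab}(s)$ whose commutator with $K(H_{cd})$ has controlled error, so the recursion is read off from the expected answer rather than derived. Until this step is carried out, the term $-\binom{s}{2}n\lambda^2H_{cd}(u^{s-2})$ remains unproved.
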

\begin{proof} \textbf{Step 1:} On the one hand,
\begin{align*}
\big[ K(H_{cd}), & [K(H_{ab}), H_{ab}(u^s)]\big]
=\big[ K(H_{ab}), [K(H_{cd}), H_{ab}(u^s)]\big]\\
 = {} & \frac{\lambda}{4}\sum_{1\leq i \neq j\leq n} \sum_{p+q=s-1} \Big[ K(H_{ab}), 
S\big( [H_{cd}, E_{ij}](u^p), [E_{ji}, H_{ab}](u^q)\big) \Big]
\,\ \text{by Proposition \ref{prop: two param higher case} \eqref{prop:two param item 1 s};}\\
 = {}  & \frac{\lambda}{4}\sum_{1\leq i \neq j\leq n} \sum_{p+q=s-1} (\epsilon_{cd}, \epsilon_{ij}) (\epsilon_{ab}, \epsilon_{ij}) \Big[ K(H_{ab}), S(E_{ij}(u^p), E_{ji}(u^q))\Big]\\
 = {} & \frac{\lambda}{4}\sum_{1\leq i \neq j\leq n}
\sum_{p+q=s-1} (\epsilon_{cd}, \epsilon_{ij}) (\epsilon_{ab}, \epsilon_{ij}) 
\Big( S\big( [K(H_{ab}), E_{ij}(u^p)], E_{ji}(u^q)\big) + S\big( E_{ij}(u^p), [K(H_{ab}), E_{ji}(u^q)]\big) \Big) \\
 = {}  & \frac{\lambda}{2}\sum_{1\leq i \neq j\leq n}
\sum_{p+q=s-1} (\epsilon_{cd}, \epsilon_{ij}) (\epsilon_{ab}, \epsilon_{ij}) 
S\Bigg(
(\epsilon_{ab}, \epsilon_{ij})P_p(E_{ij})+\frac{\lambda}{4}\sum_{k\neq l}\sum_{e+f=p-1}S\big( [H_{ab}, E_{kl}](u^e), [E_{lk}, E_{ij}](u^f)\big) \\
&  {} + p\left(\beta-\frac{\lambda}{2}\right)(\eps_{ab}, \epsilon_i+\epsilon_j) E_{ij}(u^{p-1})
, E_{ji}(u^q)\Bigg)  \;\; \text{by Lemma \ref{lem:higher KEab QHcd};} \\
 = {} & \frac{\lambda}{2}\sum_{1\leq i \neq j\leq n}
\sum_{p+q=s-1} (\epsilon_{cd}, \epsilon_{ij}) (\epsilon_{ab}, \epsilon_{ij})^2
S(P_p(E_{ij}), E_{ji}(u^q))\\
&+\frac{\lambda^2}{8}\sum_{1\leq i \neq j\leq n}\sum_{p+q=s-1} \sum_{k\neq l}\sum_{e+f=p-1}
(\epsilon_{cd}, \epsilon_{ij}) (\epsilon_{ab}, \epsilon_{ij})  (\epsilon_{ab}, \epsilon_{kl}) 
S\Big(S\big(E_{kl}(u^e), [E_{lk}, E_{ij}](u^f)\big), E_{ji}(u^q)\Big)\\
&+\frac{\lambda}{2}\sum_{1\leq i \neq j\leq n}
\sum_{p+q=s-1} p\left(\beta-\frac{\lambda}{2}\right) (\eps_{ab}, \epsilon_i+\epsilon_j)(\epsilon_{cd}, \epsilon_{ij}) (\epsilon_{ab}, \epsilon_{ij}) S\big( E_{ij}(u^{p-1}), E_{ji}(u^q)\big).
\end{align*}
On the other hand,
\begin{align*}
\Bigg[ K&(H_{cd}),  \frac{\lambda}{4}\sum_{1\leq i \neq j\leq n}
\sum_{p+q=s-1} (\epsilon_{ab}, \epsilon_{ij})^2S(E_{ij}(u^p), E_{ji}(u^q))\Bigg]\\
 = {} & \frac{\lambda}{4}\sum_{1\leq i \neq j\leq n}
\sum_{p+q=s-1} (\epsilon_{ab}, \epsilon_{ij})^2S\big( [K(H_{cd}), E_{ij}(u^p)], E_{ji}(u^q)\big) +\frac{\lambda}{4}\sum_{1\leq i \neq j\leq n}
\sum_{p+q=s-1} (\epsilon_{ab}, \epsilon_{ij})^2S\big( E_{ij}(u^p), [K(H_{cd}), E_{ji}(u^q)]\big) \\
 = {}  &  \frac{\lambda}{2}\sum_{1\leq i \neq j\leq n}
\sum_{p+q=s-1} (\epsilon_{ab}, \epsilon_{ij})^2S\Bigg(
(\epsilon_{cd}, \epsilon_{ij})P_p(E_{ij})+
\frac{\lambda}{4}\sum_{1 \le k\neq l \le n}\sum_{e+f=p-1}S\big([H_{cd}, E_{kl}](u^e), [E_{lk}, E_{ij}](u^f)\big)\\
&  {} +p\left(\beta-\frac{\lambda}{2}\right)(\epsilon_{cd}, \epsilon_{i}+\epsilon_{j})E_{ij}(u^{p-1})
, E_{ji}(u^q)\Bigg) \,\ \text{by Lemma \ref{lem:higher KEab QHcd};}\\
 = {} & \frac{\lambda}{2}\sum_{1\leq i \neq j\leq n}
\sum_{p+q=s-1} (\epsilon_{ab}, \epsilon_{ij})^2(\epsilon_{cd}, \epsilon_{ij})
S(P_p(E_{ij}), E_{ji}(u^q))\\
&+\frac{\lambda^2}{8}\sum_{1\leq i \neq j\leq n}
\sum_{p+q=s-1} \sum_{1\le k\neq l \le n}\sum_{e+f=p-1}
(\epsilon_{ab}, \epsilon_{ij})^2 (\epsilon_{cd}, \epsilon_{kl})S\Big( S\big( E_{kl}(u^e), [E_{lk}, E_{ij}](u^f)\big) ,
E_{ji}(u^q)\Big)\\
&+\frac{\lambda}{2}\sum_{1\leq i \neq j\leq n}
\sum_{p+q=s-1} p\left(\beta-\frac{\lambda}{2}\right)(\epsilon_{cd}, \epsilon_{i}+\epsilon_{j})(\epsilon_{ab}, \epsilon_{ij})^2S(E_{ij}(u^{p-1})
, E_{ji}(u^q)).
\end{align*}
Substituting the above computations into the formula \eqref{Zabcd} for $Z_{ab}(s)$ and cancelling out the obvious terms, we get:

\begin{align}
[K(H_{cd}), &  Z_{ab}(s)] \notag\\
 = {} & \frac{\lambda^2}{8}\sum_{1\leq i \neq j, k\neq l \leq n}\sum_{e+f+q=s-2}
\big(
(\epsilon_{cd}, \epsilon_{ij}) (\epsilon_{ab}, \epsilon_{ij})  (\epsilon_{ab}, \epsilon_{kl}) 
-
(\epsilon_{ab}, \epsilon_{ij})^2 (\epsilon_{cd}, \epsilon_{kl})\big)
S\Big( S\big( E_{kl}(u^e), [E_{lk}, E_{ij}](u^f)\big) , E_{ji}(u^q)\Big) 
 \label{E}\\
&+\frac{\lambda}{2}\sum_{1\leq i \neq j\leq n}
\sum_{p+q=s-1} p\left(\beta-\frac{\lambda}{2}\right) (\epsilon_{ab}, \epsilon_{ij}) \big( (\eps_{ab}, \epsilon_i+\epsilon_j)(\epsilon_{cd}, \epsilon_{ij}) - (\epsilon_{cd}, \epsilon_{i}+\epsilon_{j})(\epsilon_{ab}, \epsilon_{ij}) \big) S( E_{ij}(u^{p-1})
, E_{ji}(u^q)).  \label{F}
\end{align}
\noindent \textbf{Step 2:} Let's simplify the expression \eqref{E}. It can be written in a more general form as follows:
\begin{equation*}
\eqref{E}  = \frac{\lambda^2}{8}\sum_{e+f+q=s-2}\sum_{\alpha, \beta\in \Delta}\big((\gamma, \alpha)(\gamma', \alpha)(\gamma, \beta)-(\gamma, \alpha)^2(\gamma', \beta)\big)
 S\Big( S\big( X_{\beta}(u^e), [X_{-\beta}, X_{\alpha}](u^f)\big) ,
X_{-\alpha}(u^q)\Big),
\end{equation*} where $\gamma, \gamma'\in \h^*$, and $\Delta$ is the root system of a Lie algebra $\g$ with root vectors $\{X_{\alpha}\mid \alpha\in \Delta\}$.
Let us simplify \eqref{E} using the following two observations.

Observation 1: Interchanging $\al \leftrightarrow -\beta$ yields:
\begin{align*}
\sum_{\alpha, \beta\in \Delta}\big((\gamma, \alpha)(\gamma', \alpha)(\gamma, \beta) & -(\gamma, \alpha)^2(\gamma', \beta)\big)
 S\Big( S\big( X_{\beta}(u^e), [X_{-\beta}, X_{\alpha}](u^f)\big) , X_{-\alpha}(u^q) \Big)\\
& =\sum_{\alpha, \beta\in \Delta}
\big((\gamma, \beta)(\gamma', \beta)(\gamma, \alpha)-(\gamma, \beta)^2(\gamma', \alpha)\big)
 S\Big( S \big( X_{-\alpha}(u^e), [X_{-\beta}, X_{\alpha}](u^f)\big) , X_{\beta}(u^q)\Big).
\end{align*}

Observation 2: Note that if $\alpha-\beta$ is not a root of the Lie algebra $\g$, then $[X_{-\beta}, X_{\alpha}]$ is automatically zero. Therefore, 
\begin{align*}
\sum_{\alpha, \beta\in \Delta} & \big((\gamma, \alpha)(\gamma', \alpha)(\gamma, \beta)  -(\gamma, \alpha)^2(\gamma', \beta)\big)
 S\Big(S\big( X_{\beta}(u^e), [X_{-\beta}, X_{\alpha}](u^f)\big) ,
X_{-\alpha}(u^q)\Big)\\
 = {}  &  \sum_{ \{\alpha, \beta\in \Delta \, | \, \alpha-\beta\in \Delta \}}\big((\gamma, \alpha)(\gamma', \alpha)(\gamma, \beta)-(\gamma, \alpha)^2(\gamma', \beta)\big)
 S\Big(S\big( X_{\beta}(u^e), [X_{-\beta}, X_{\alpha}](u^f)\big) ,
X_{-\alpha}(u^q)\Big)\\
 = {} & \sum_{ \{ \al,\wt{\alpha}, \beta\in \Delta \, | \, \beta-\alpha = \wt{\alpha}\}}
\big((\gamma, \beta-\wt{\alpha})
(\gamma', \beta-\wt{\alpha})(\gamma, \beta)-(\gamma, \beta-\wt{\alpha})^2(\gamma', \beta)\big)
 S\Big( S\big( X_{\beta}(u^e), [X_{-\beta}, X_{\beta-\wt{\alpha}}](u^f)\big) ,
X_{\wt{\alpha}-\beta}(u^q)\Big)\\
 = {} & -\sum_{\wt{\alpha}, \beta\in \Delta}
\big((\gamma, \beta)(\gamma', \beta)(\gamma, \wt{\alpha})-(\gamma, \beta)^2(\gamma', \wt{\alpha})\big)
 S\Big(S( X_{\beta}(u^e), X_{-\wt{\alpha}}(u^f),
 [X_{-\beta}, X_{\wt{\alpha}}](u^q)\Big)\\
& -\sum_{\wt{\alpha}, \beta\in \Delta}
\big((\gamma, \wt{\alpha})(\gamma', \wt{\alpha})(\gamma, \beta)-(\gamma, \wt{\alpha})^2(\gamma', \beta)\big)
 S\Big(S(X_{\beta}(u^e), X_{-\wt{\alpha}}(u^f)),
 [X_{-\beta}, X_{\wt{\alpha}}](u^q)\Big).
\end{align*}
For the above equality, we used the following computations.
Assume $ [X_{-\beta}, X_{\wt{\alpha}}]=k X_{\wt{\alpha}-\beta}$ and $\wt{\alpha} = \beta - \al$. Then
\[
 (X_{\wt{\alpha}},[X_{-\beta}, X_{\beta-\wt{\alpha}}])
 =([X_{\wt{\alpha}}, X_{-\beta}], X_{\beta-\wt{\alpha}})
 =-k( X_{\wt{\alpha}-\beta}, X_{\beta-\wt{\alpha}})
 =-k,
\]
which gives us $[X_{-\beta}, X_{\beta-\wt{\alpha}}]=-kX_{-\wt{\alpha}}$.

After splitting \eqref{E} into three identical expressions, then switching $\al\leftrightarrow -\beta$ in the second one and applying the two observations above to the third expression, we obtain: 

\begin{align*}
\frac{8}{\lambda^2}\cdot \eqref{E} = {} & \frac{1}{3}\sum_{e+f+q=s-2}\sum_{\alpha, \beta\in \Delta}\Big((\gamma, \alpha)(\gamma', \alpha)(\gamma, \beta)-(\gamma, \alpha)^2(\gamma', \beta)\Big)
 S\Big( S\big( X_{\beta}(u^e), [X_{-\beta}, X_{\alpha}](u^f)\big),
X_{-\alpha}(u^q)\Big)\\
&+\frac{1}{3}\sum_{e+f+q=s-2}\sum_{\alpha, \beta\in \Delta}
\Big((\gamma, \beta)(\gamma', \beta)(\gamma, \alpha)-(\gamma, \beta)^2(\gamma', \alpha)\Big)
 S\Big( S\big( X_{-\alpha}(u^e), [X_{-\beta}, X_{\alpha}](u^f)\big) ,
X_{\beta}(u^q)\Big) \\
&-\frac{1}{3}\sum_{e+f+q=s-2}\sum_{\alpha, \beta\in \Delta}
\Big((\gamma, \beta)(\gamma', \beta)(\gamma, \alpha)-(\gamma, \beta)^2(\gamma', \alpha)\Big)
 S\Big( S\big( X_{\beta}(u^e), X_{-\alpha}(u^f)\big) ,
 [X_{-\beta}, X_{\alpha}](u^q)\Big) \\
&-\frac{1}{3}\sum_{e+f+q=s-2}\sum_{\alpha, \beta\in \Delta}
\Big((\gamma, \alpha)(\gamma', \alpha)(\gamma, \beta)-(\gamma, \alpha)^2(\gamma', \beta)\Big)
 S\Big( S\big( X_{\beta}(u^e), X_{-\alpha}(u^f)\big) ,
 [X_{-\beta}, X_{\alpha}](u^q)\Big) \\
 = {}  & \frac{1}{3}\binom{s}{2}\sum_{\alpha, \beta\in \Delta}\Big((\gamma, \alpha)(\gamma', \alpha)(\gamma, \beta)-(\gamma, \alpha)^2(\gamma', \beta)\Big)
\Big[ X_{\beta}, \big[ [X_{-\beta}, X_{\alpha}], X_{-\alpha}\big] \Big] (u^{s-2})\\
&  {}  +\frac{1}{3}\binom{s}{2}\sum_{\alpha, \beta\in \Delta}
\Big((\gamma, \beta)(\gamma', \beta)(\gamma, \alpha)-(\gamma, \beta)^2(\gamma', \alpha)\Big)
\Big[ X_{-\alpha}, \big[ [X_{-\beta}, X_{\alpha}], X_{\beta}\big] \Big] (u^{s-2})\\
 = {} & -\frac{2}{3}\binom{s}{2}\sum_{\alpha, \beta\in \Delta}
\Big((\gamma, \alpha)(\gamma', \alpha)(\gamma, \beta)-(\gamma, \alpha)^2(\gamma', \beta)\Big)
\big([X_{\beta},  X_{-\alpha}] , [X_{-\beta}, X_{\alpha}]\big)H_{\beta}(u^{s-2})
\end{align*}
where the second equality uses the fact
\[
S\big( S(A, B), C\big)-S\big( S(A, C), B\big) = \big[ A, [B, C] \big] ,
\] and $\binom{s}{2}$ appears as the cardinality of the set $\{(e, f, q)\in \N^{3} \mid  e+f+q=s-2\}$. 

Now specialize $\gamma$ to $\epsilon_{ab}$, $\gamma'$ to $\epsilon_{cd}$, $\alpha$ to $\epsilon_{ij}$ and $\beta$ to $\epsilon_{kl}$; by direct computations, we obtain:
\[
\sum_{1\leq i\neq j, k\neq l\leq n}
\Big((\epsilon_{ab}, \epsilon_{ij})(\epsilon_{cd}, \epsilon_{ij})(\epsilon_{ab}, \epsilon_{kl})-(\epsilon_{ab}, \epsilon_{ij})^2(\epsilon_{cd}, \epsilon_{kl})\Big)
\big([E_{kl},  E_{ji}] , [E_{lk}, E_{ij}]\big) H_{kl}=   12n H_{cd}.
\]
Thus,
\begin{equation} 
\eqref{E}=-\frac{\lambda^2}{8}\frac{2}{3}\binom{s}{2} 12 n H_{cd}(u^{s-2})
=-\binom{s}{2} n\lambda^2 H_{cd}(u^{s-2}). \label{G}
\end{equation}
\noindent \textbf{Step 3:} We move on to computing the term \eqref{F}.
Under the assumption that $a,b,c,d$ are all distinct and $i\neq j$, we can compute the constants which appear in \eqref{F} by using the basic rule $(\epsilon_a,\epsilon_i) = \delta_{ai}$: \begin{align*} (\eps_{ab}, \epsilon_i+\epsilon_j) (\epsilon_{cd}, \epsilon_{ij}) (\epsilon_{ab}, \epsilon_{ij}) & {} = -\delta_{ai}\delta_{cj}+\delta_{ai}\delta_{dj} -\delta_{aj}\delta_{ci}+\delta_{aj}\delta_{di} -\delta_{bi}\delta_{cj}+\delta_{bi}\delta_{dj} -\delta_{bj}\delta_{ci}+\delta_{bj}\delta_{di}  \\
& = -(\epsilon_{cd}, \epsilon_{i}+\epsilon_{j})(\epsilon_{ab}, \epsilon_{ij})^2. \end{align*}
Substituting this into the expression $\eqref{F}$, we obtain:
\begin{align*}
\eqref{F} = {} &
\lambda \sum_{1\leq i \neq j\leq n}
\sum_{p+q=s-1} p\left(\beta-\frac{\lambda}{2}\right)
\Big(
-\delta_{ai}\delta_{cj}+\delta_{ai}\delta_{dj}
-\delta_{aj}\delta_{ci}+\delta_{aj}\delta_{di} \notag \\
&  - \delta_{bi}\delta_{cj}+\delta_{bi}\delta_{dj}
-\delta_{bj}\delta_{ci}+\delta_{bj}\delta_{di}
\Big)
S\big( E_{ij}(u^{p-1})
, E_{ji}(u^q)\big) \notag \\  
 = {}  & \lambda 
\sum_{p+q=s-1} p\left(\beta-\frac{\lambda}{2}\right)
\Big(-S\big( E_{ac}(u^{p-1}), E_{ca}(u^q)\big)
+S\big( E_{ad}(u^{p-1}), E_{da}(u^q)\big) -S\big( E_{ca}(u^{p-1}), E_{ac}(u^q)\big)  \notag \\
&   {}  + S\big( E_{da}(u^{p-1}), E_{ad}(u^q)\big) -S\big( E_{bc}(u^{p-1}), E_{cb}(u^q)\big)
+S\big( E_{bd}(u^{p-1}), E_{db}(u^q)\big)  \notag \\
& {} -S\big( E_{cb}(u^{p-1}), E_{bc}(u^q)\big) +S\big( E_{db}(u^{p-1}), E_{bd}(u^q)\big) \Big) \notag 
\end{align*}
\begin{align}
 = {}  &  \lambda 
\sum_{p+q=s-2} (p+1)\left(\beta-\frac{\lambda}{2}\right)
\Big(-S\big( E_{ac}(u^p), E_{ca}(u^q)\big)
+S\big( E_{ad}(u^p), E_{da}(u^q)\big)  \notag \\
&  {}  -S\big( E_{bc}(u^p), E_{cb}(u^q)\big)
+S\big( E_{bd}(u^p), E_{db}(u^q)\big)\Big) \notag \\
&  {} + \lambda\sum_{p+q=s-2} (q+1)\left(\beta-\frac{\lambda}{2}\right)
\Big(-S\big( E_{ac}(u^q), E_{ca}(u^p)\big)
+S\big( E_{ad}(u^q), E_{da}(u^p)\big)   \notag \\
&  {} -S\big( E_{bc}(u^q), E_{cb}(u^p)\big) +S\big( E_{bd}(u^q), E_{db}(u^p)\big) \Big) \notag  \\
 = {} & \lambda s\left(\beta-\frac{\lambda}{2}\right)
\sum_{p+q=s-2} 
\Big(-S\big( E_{ac}(u^p), E_{ca}(u^q)\big)
+S\big( E_{ad}(u^p), E_{da}(u^q)\big)\notag \\
 &  -S\big( E_{bc}(u^p), E_{cb}(u^q)\big)
+S\big( E_{bd}(u^p), E_{db}(u^q)\big)\Big). \label{H}
\end{align}
\noindent \textbf{Step 4:} We can combine the results of steps 2 and 3 to complete the proof of the Lemma \ref{lem:K(Hcd) Zab s}:
\begin{align*}
[K(H_{cd}), Z_{ab}(s)] & {} = \eqref{E}+\eqref{F} \\
& {} = -\binom{s}{2} n\lambda^2 H_{cd}(u^{s-2}) +\lambda s\left(\beta-\frac{\lambda}{2}\right)
\sum_{p+q=s-2} 
\Big(-S\big( E_{ac}(u^p), E_{ca}(u^q)\big)
+S\big( E_{ad}(u^p), E_{da}(u^q)\big)\\
& \qquad \qquad \qquad \qquad  -S\big( E_{bc}(u^p), E_{cb}(u^q)\big)
+S\big( E_{bd}(u^p), E_{db}(u^q)\big)\Big) \text{ by \eqref{G} and \eqref{H}}.
\end{align*}
\end{proof}

The main theorem of this section relies on the following proposition. 
\begin{proposition}
Assume $n\geq 4$, $s\in \N$ and $s\geq 2$. 
For any $X\in \mathfrak{sl}_n$, in the algebra $\mfD_{\lambda, \beta}(\mfsl_n)$, we have 
\[
[K(X), Z(s)]=\left( 
16\left(\beta-\frac{\lambda}{2}\right)^2- n^2\lambda^2 \right)\cdot  \binom{s}{2} \cdot X(u^{s-2}).
\]
\end{proposition}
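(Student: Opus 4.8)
The plan is to exploit the $\mathrm{ad}(\mfsl_n)$-equivariance of the map $\Phi\colon X\mapsto [K(X),Z(s)]$ to reduce the statement to a single Cartan element, and then to compute that one commutator by rewriting $Z(s)$ through the single summand $Z_{12}(s)$ via Proposition \ref{prop: two param higher case}\eqref{prop:two param item 3 s}. Since $Z(s)$ commutes with $\mfsl_n$ by Lemma \ref{ZsXKX} and $[Y,K(X)]=K([Y,X])$ for $Y\in\mfsl_n$, the Jacobi identity gives $[Y,\Phi(X)]=\Phi([Y,X])$, so $\Phi$ is a morphism of $\mfsl_n$-modules from the adjoint representation into $\mfD_{\lambda,\beta}(\mfsl_n)$; the target assignment $\iota\colon X\mapsto X(u^{s-2})$ is likewise equivariant. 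Because the adjoint representation is irreducible, the submodule generated by any nonzero Cartan element $H_{cd}$ is all of $\mfsl_n$; hence, setting $c_0=\binom{s}{2}\big(16(\beta-\tfrac{\la}{2})^2-n^2\la^2\big)$, it suffices to prove $\Phi(H_{cd})=c_0\,H_{cd}(u^{s-2})$ for one pair $c\neq d$, since then $\Phi-c_0\,\iota$ is an equivariant map whose kernel is a submodule containing $H_{cd}\neq 0$, hence all of $\mfsl_n$. As $n\ge 4$, I may and will choose $c,d\notin\{1,2\}$ with $c\neq d$.

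Next I rewrite $Z(s)$ relative to $Z_{12}(s)$. Applying Proposition \ref{prop: two param higher case}\eqref{prop:two param item 3 s} to each summand of $Z(s)=\sum_{a=1}^{n}Z_{a,a+1}(s)$ and summing, using the cyclic identities $\sum_a H_{a1}=I-nE_{11}$ and $\sum_a H_{a+1,2}=I-nE_{22}$, yields
\[
Z(s)=nZ_{12}(s)+2s\Big(\beta-\tfrac{\la}{2}\Big)\big(2I-n(E_{11}+E_{22})\big)(u^{s-1})=nZ_{12}(s)-2ns\Big(\beta-\tfrac{\la}{2}\Big)\,\wt{D}(u^{s-1}),
\]
where $\wt{D}:=E_{11}+E_{22}-\tfrac{2}{n}I=\tfrac1n\sum_{j=1}^{n}(H_{1j}+H_{2j})\in\mfh$. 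For the first contribution, Lemma \ref{lem:K(Hcd) Zab s} applies verbatim since $1,2,c,d$ are distinct, producing the diagonal term $-\binom{s}{2}n^2\la^2\,H_{cd}(u^{s-2})$ together with a quadratic correction equal to $n\la s(\beta-\tfrac{\la}{2})\sum_{p+q=s-2}\big(-S(E_{1c}(u^p),E_{c1}(u^q))+S(E_{1d}(u^p),E_{d1}(u^q))-S(E_{2c}(u^p),E_{c2}(u^q))+S(E_{2d}(u^p),E_{d2}(u^q))\big)$. For the second, I expand $[K(H_{cd}),\wt{D}(u^{s-1})]$ through the terms $[K(H_{cd}),H_{ef}(u^{s-1})]$, using the definition \eqref{Zabcd} of $Z_{cd,ef}(s-1)$ and Proposition \ref{prop: two param higher case}\eqref{prop:two param item 1 s}. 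After summing over $j$ the $W_{cd}(s-1)$-terms cancel because $\sum_j(\eps_{cd},\eps_{1j})=\sum_j(\eps_{cd},\eps_{2j})=0$, leaving the diagonal part $-\tfrac{4(s-1)}{n}(\beta-\tfrac{\la}{2})H_{cd}(u^{s-2})$ and a quadratic part $\tfrac{\la}{2}\sum_{p+q=s-2}\big(-S(E_{1c}(u^p),E_{c1}(u^q))+S(E_{1d}(u^p),E_{d1}(u^q))-S(E_{2c}(u^p),E_{c2}(u^q))+S(E_{2d}(u^p),E_{d2}(u^q))\big)$, the very same four sums.

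Finally I assemble. Multiplying the second contribution by the outer factor $-2ns(\beta-\tfrac{\la}{2})$ turns its quadratic part into exactly the negative of the quadratic correction coming from the first contribution, so the two cancel; this cancellation is the crux, and it is the reason the answer is a pure current rather than a quadratic expression. The surviving diagonal terms are $-\binom{s}{2}n^2\la^2\,H_{cd}(u^{s-2})$ and $8s(s-1)(\beta-\tfrac{\la}{2})^2H_{cd}(u^{s-2})=16\binom{s}{2}(\beta-\tfrac{\la}{2})^2H_{cd}(u^{s-2})$, whose sum is $c_0\,H_{cd}(u^{s-2})$; this is consistent with Theorem \ref{thm:center=polynomial rings}, since the scalar vanishes exactly when $n\la=\pm4(\beta-\tfrac{\la}{2})$. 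The main obstacle I anticipate is the bookkeeping of the quadratic terms in the second contribution: one must push the structure constants $(\eps_{cd},\eps_{ij})$ through identities such as $\sum_j(\eps_{1j},\eps_{ii'})=n(\delta_{1i}-\delta_{1i'})$ to verify that precisely the four terms $S(E_{1c},E_{c1}),\ldots,S(E_{2d},E_{d2})$ survive (all others cancelling), and that they match in sign and magnitude the correction furnished by Lemma \ref{lem:K(Hcd) Zab s}. Everything else is a direct application of the already-established lemmas together with the equivariance reduction.
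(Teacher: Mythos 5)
Your proposal is correct and follows essentially the same route as the paper's proof: reduce to a single Cartan element $H_{cd}$ with $c,d\notin\{1,2\}$ via $\mathrm{ad}(\mfsl_n)$-equivariance (using Lemma \ref{ZsXKX}), decompose $Z(s)=nZ_{12}(s)+2s\left(\beta-\frac{\la}{2}\right)\sum_a(H_{a1}+H_{a2})(u^{s-1})$, apply Lemma \ref{lem:K(Hcd) Zab s} to the first piece and the definition \eqref{Zabcd} together with Proposition \ref{prop: two param higher case}\eqref{prop:two param item 1 s} to the second, and observe that the quadratic $S(E_{1c},E_{c1})$-type corrections cancel while the diagonal terms assemble to $\left(16\left(\beta-\frac{\la}{2}\right)^2-n^2\la^2\right)\binom{s}{2}H_{cd}(u^{s-2})$. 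The only differences are cosmetic: you perform the equivariance reduction at the outset rather than at the end, and you package the Cartan correction as $\wt{D}=E_{11}+E_{22}-\frac{2}{n}I$ instead of keeping the sum $\sum_a(H_{a1}+H_{a2})$.
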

As a consequence, we conclude that
\begin{equation*}
[K(X), Z(s)]=0 
\Longleftrightarrow  
n^2\lambda^2 =16\left(\beta-\frac{\lambda}{2}\right)^2
\Longleftrightarrow  
n\lambda=4\left(\beta-\frac{\lambda}{2}\right) \,\ \text{ or } \,\ n\lambda=-4\left(\beta-\frac{\lambda}{2}\right),
\end{equation*}
which completes the proof that $Z(s)$ is in the center of $\mfD_{\lambda, \beta}(\mathfrak{sl}_n)$ (see Theorem \ref{thm:center=polynomial rings}).
\begin{proof}
By Proposition \ref{prop: two param higher case}, we have:
\begin{equation}
Z(s)=\sum_{a=1}^n Z_{a, a+1}(s) =n Z_{12}(s)+2s\left( \beta-\frac{\lambda}{2}\right) \sum_{a=1}^n(H_{a1}+H_{a, 2})(u^{s-1}). \label{Zssum}
\end{equation}
By the same proposition, we also have:
\begin{align*}
\sum_{a=3}^n (Z_{34, a1}(s-1)+Z_{34, a2}(s-1))   {} = {}  & \sum_{a=3}^n 
(\epsilon_3-\epsilon_4, \epsilon_a-\epsilon_1)W_{34}(s-1)
+(s-1) \left(\beta-\frac{\lambda}{2}\right)(\epsilon_3+\epsilon_4, \epsilon_a-\epsilon_1) H_{34}(u^{s-2})\\
&  {} +\sum_{a=3}^n 
(\epsilon_3-\epsilon_4, \epsilon_a-\epsilon_2)W_{34}(s-1)
+(s-1) \left(\beta-\frac{\lambda}{2}\right)(\epsilon_3+\epsilon_4, \epsilon_a-\epsilon_2) H_{34}(u^{s-2}) \\
 = {} &4(s-1)\left(\beta-\frac{\lambda}{2}\right) H_{34}(u^{s-2}),
\end{align*}
which is equivalent to the identity
\begin{align}
\left[ K(H_{34}), \sum_{a=1}^n(H_{a1}+H_{a2})(u^{s-1}) \right]
 = {} & \frac{\lambda}{4}\sum_{i\neq j}\sum_{p+q=s-2} S\left( [H_{34}, E_{ij}](u^p), \left[ E_{ji}, \sum_{a=1}^n(H_{a1}+H_{a2})\right] (u^q)\right) \notag \\
&+4\left(\beta-\frac{\lambda}{2}\right) (s-1) H_{34}(u^{s-2}). \label{KH34H}
\end{align}
Substituting \eqref{Zssum} into the commutator $[K(H_{34}), Z(s)]$, we obtain:
\begin{align*}
[K(H_{34}), Z(s)]
 {} = {} &  n[K(H_{34}), Z_{12}(s)]+
2s\left(\beta-\frac{\lambda}{2}\right) 
\frac{\lambda}{4}\sum_{i\neq j}\sum_{p+q=s-2} S\left([H_{34}, E_{ij}](u^p), \left[ E_{ji}, \sum_{a=1}^n(H_{a1}+H_{a2})\right] (u^q)\right)  \\
&+8s\left(\beta-\frac{\lambda}{2}\right)^2 (s-1) H_{34}(u^{s-2})  \text{ by } \eqref{KH34H}
\\
 = {}  & n[K(H_{34}), Z_{12}(s)]+
s\left(\beta-\frac{\lambda}{2}\right) 
\frac{\lambda}{2}\sum_{i\neq j}\sum_{p+q=s-2} (\epsilon_3-\epsilon_4, \eps_{ij})
\left( 2\sum_{a=1}^n \epsilon_a-n\epsilon_1-n\epsilon_2, \eps_{ij}\right)
S(E_{ij}(u^p), E_{ji} (u^q))  \\
&  {} +8s\left(\beta-\frac{\lambda}{2}\right)^2 (s-1) H_{34}(u^{s-2})  \\
 = {} &  n[K(H_{34}), Z_{12}(s)] +8s\left(\beta-\frac{\lambda}{2}\right)^2 (s-1) H_{34}(u^{s-2})   \\
& - sn\left(\beta-\frac{\lambda}{2}\right) 
\frac{\lambda}{2}\sum_{i\neq j}\sum_{p+q=s-2} 
(\delta_{3i}-\delta_{3j}-\delta_{4i}+\delta_{4j})
(\delta_{1i}-\delta_{1j}+\delta_{2i}-\delta_{2j})
S(E_{ij}(u^p), E_{ji} (u^q))  \\
 = {} &  -\binom{s}{2} n^2\lambda^2 H_{34}(u^{s-2})
+8s\left(\beta-\frac{\lambda}{2}\right)^2 (s-1) H_{34}(u^{s-2})
\\
&+n\lambda s\left(\beta-\frac{\lambda}{2}\right)
\sum_{p+q=s-2} 
\big(-S\Big( E_{13}(u^p), E_{31}(u^q)\big)
+S\big( E_{14}(u^p), E_{41}(u^q)\big) \\ 
& -S\big( E_{23}(u^p), E_{32}(u^q)\big)
+S\big( E_{24}(u^p), E_{42}(u^q)\big)\Big) \text{ by Lemma \ref{lem:K(Hcd) Zab s}} \\
&-sn\left(\beta-\frac{\lambda}{2}\right) 
\frac{\lambda}{2}\sum_{p+q=s-2} 
\Big(-S(E_{31}(u^p), E_{13} (u^q))
-S(E_{32}(u^p), E_{23} (u^q))\\
&  -S(E_{13}(u^p), E_{31} (u^q))
-S(E_{23}(u^p), E_{32} (u^q)) +S(E_{41}(u^p), E_{14} (u^q))
+S(E_{42}(u^p), E_{24} (u^q)) \\
& 
+S(E_{14}(u^p), E_{41} (u^q))
+S(E_{24}(u^p), E_{42} (u^q))
\Big)\\
 = {} & \left( 16\left( \beta-\frac{\lambda}{2}\right)^2- n^2\lambda^2 \right) \binom{s}{2} H_{34}(u^{s-2}).
\end{align*}
We have shown that $Z(s)$ commutes with $X$ for any $X\in \mfsl_n$ (see Lemma \ref{ZsXKX}). 
Using $[K(X), Y]=K([X, Y])$, it follows that, for any $X\in \mfsl_n$,
\[
[K(X), Z(s)]=\left(  16\left( \beta-\frac{\lambda}{2}\right)^2- n^2\lambda^2 \right) \binom{s}{2} X(u^{s-2}).
\]
\end{proof}

\begin{proof}[Proof of Theorem \ref{thm:center=polynomial rings}]
In the preceding pages, we have shown that the elements $Z(s)$ are all in the center of $\mfD_{\lambda, \beta}(\mfsl_n)$.  The proof that $\wt{Z}(s)$ is central is similar. To complete the proof of Theorem \ref{thm:center=polynomial rings}, we have to see why they generate a subalgebra isomorphic to a polynomial ring in infinitely many variables. This is a consequence of the PBW Theorem for $\mfD_{\lambda, \beta}(\mfsl_n)$ established in \cite{G2} which states that the associated graded ring of $\mfD_{\lambda, \beta}(\mfsl_n)$ is isomorphic to the enveloping algebra of $\wh{\mfsl_n[u,v]}$, the universal central extension of $\mfsl_n[u,v]$. The center of $\wh{\mfsl_n[u,v]}$ is known to be isomorphic to $\Omega^1(\C[u,v])/d\C[u,v]$ (see \cite{KaLo}) and, as vector spaces, $\wh{\mfsl_n[u,v]} \cong \mfsl_n[u,v] \oplus \Omega^1(\C[u,v])/d\C[u,v]$. The associated graded ring $\mathrm{gr}(\mfD_{\lambda, \beta}(\mfsl_n))$ is obtained from the filtration $F_{\bullet}$ on $\mfD_{\lambda, \beta}(\mfsl_n)$ that assigns degree $1$ to $Q(X)$ and degree $0$ to $X$ and $K(X)$ for all $X\in\mfsl_n$. (Under the isomorphism of Theorem \ref{thm:main theorem}, the corresponding filtration on $\msD(\mfg)$ is given by assigning degree $r$ to $\msX_{i,r}^{\pm}$ and $\msH_{i,r}$ for $r=0,1$.) The central element $Z(s)$ has filtration degree $s$, so $\ol{Z(s)}$ is an element of $F_s/F_{s-1}$ which, under the isomorphism between the center of $\wh{\mfsl_n[u,v]}$ and $\Omega^1(\C[u,v])/d\C[u,v]$, corresponds to the central element given by $2nvu^{s-1}du$. The elements $vu^{s-1}du$ for $s\ge 1$ are all linearly independent in $\Omega^1(\C[u,v])/d\C[u,v]$ and are all algebraically independent in $\mfU(\wh{\mfsl_n[u,v]}) \cong \mathrm{gr}_{F_{\bullet}}(\mfD_{\lambda, \beta}(\mfsl_n))$.  Therefore, the elements $Z(s)$ for $s\ge 1$ must be algebraically independent in $\mfD_{\lambda, \beta}(\mfsl_n)$. \end{proof}

\appendix\section{}
\subsection{Computations for the proof of Lemma \ref{lem:nu}}
\label{appendix1}
Here is an alternate version of Lemma \ref{lem:nu}. 
\begin{lemma}\label{lem:nu2}
Set $S(x, y, z)=S(S(x, y), z)
+S(S(x, z), y)+S(S(y, z), x).$ 
Assuming that $i\neq j$, 
we have:
\begin{align*}
[\nu_i, \nu_j]
 {} = & -\frac{1}{48}\sum_{k=1}^n  \Big(S( E_{ki}, E_{i, j+1}, E_{j+1, k})
- S(E_{ik}, E_{k, j+1}, E_{j+1, i})
- S( E_{k, i+1}, E_{i+1, j+1}, E_{j+1, k}) \\
&  + S( E_{i+1, k}, E_{k, j+1}, E_{j+1, i+1}) - S( E_{ki}, E_{ij}, E_{jk})
+S(E_{ik}, E_{kj}, E_{ji}) \\
& +  S( E_{k, i+1}, E_{i+1, j}, E_{jk})- S( E_{i+1, k}, E_{kj}, E_{j, i+1})
\Big).
\end{align*}
Note that in the case $j=i+1$, the last two terms cancel, so
\begin{align*}
[\nu_i, \nu_{i+1}]
 = {} & -\frac{1}{48}
\sum_{k=1}^n \Big(
S(E_{ki}, E_{i, i+2}, E_{i+2, k})
- S(E_{ik}, E_{k, i+2}, E_{i+2, i})
- S(E_{k, i+1}, E_{i+1,i+2}, E_{i+2, k})
\\
& + S( E_{i+1, k}, E_{k, i+2}, E_{i+2, i+1})
- S( E_{ki}, E_{i, i+1}, E_{i+1, k})
+ S( E_{ik}, E_{k, i+1}, E_{i+1, i}).
\Big)\end{align*}
\end{lemma}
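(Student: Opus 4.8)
The plan is to prove Lemma~\ref{lem:nu2} by directly evaluating the totally symmetric cubic sum that already appears in the proof of Lemma~\ref{lem:nu}, and then to observe that Lemma~\ref{lem:nu2} is merely the rewriting of \eqref{nuinuj} in the notation $S(x,y,z)$. First I would start from the identity recorded in that proof, namely
\[
[\nu_i,\nu_j] = -\sum_{a,b,c}\bigl([H_i,x_a],[[X_j^+,x_b],[X_j^-,x_c]]\bigr)\{x^a,x^b,x^c\},
\]
where $\{x_a\},\{x^a\}$ are dual bases of $\mfsl_n$ for the trace form $(x,y)=\mathrm{tr}(xy)$. Thus the whole problem reduces to computing this sum explicitly in type $A$.

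For the computation I would work in $\mfgl_n$ with the basis of elementary matrices $\{E_{kl}\}$, whose dual basis is $\{E_{lk}\}$. Passing from $\mfsl_n$ to $\mfgl_n$ changes nothing, since the identity matrix $I$ satisfies $[H_i,I]=[X_j^{\pm},I]=0$ and hence never contributes to any of the three sums. The key simplification is to contract the index $a$ using invariance, $([H_i,x_a],Y)=-(x_a,[H_i,Y])$, the completeness relation $\sum_a(x_a,Z)x^a=Z$, and the multilinearity and symmetry of $\{\cdot,\cdot,\cdot\}$; this gives
\[
[\nu_i,\nu_j] = \sum_{b,c}\bigl\{\,[H_i,[[X_j^+,x_b],[X_j^-,x_c]]],\,x^b,\,x^c\,\bigr\}.
\]
Then I would substitute $X_j^+=E_{j,j+1}$, $X_j^-=E_{j+1,j}$, $H_i=E_{ii}-E_{i+1,i+1}$ and expand every commutator into Kronecker deltas. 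The brackets $[X_j^+,E_{mp}]$ and $[X_j^-,E_{qr}]$ are nonzero only when $m=j+1$ or $p=j$, and $q=j$ or $r=j+1$; imposing these constraints, together with the further matching forced by the inner commutator, collapses the double basis sum to a single sum over one free index $k$, producing exactly the eight families of triple products $S(E_{ki},E_{i,j+1},E_{j+1,k})$, $S(E_{ik},E_{k,j+1},E_{j+1,i})$, and so on, with the signs displayed in Lemma~\ref{lem:nu2}.

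Finally I would convert notation: a direct check shows that each of the six orderings of $x,y,z$ occurs twice in $S(x,y,z)=S(S(x,y),z)+S(S(x,z),y)+S(S(y,z),x)$, so $S(x,y,z)=48\{x,y,z\}$. This accounts for the factor $\tfrac{1}{48}$ in the prefactor and gives exact agreement with \eqref{nuinuj} after relabelling the dummy index; in particular Lemma~\ref{lem:nu2} and Lemma~\ref{lem:nu} are literally the same identity. The special case $j=i+1$ then follows by inspection: setting $j=i+1$ turns the last two families into $S(E_{k,i+1},E_{i+1,i+1},E_{i+1,k})$ and $S(E_{i+1,k},E_{k,i+1},E_{i+1,i+1})$, which are $S$ applied to the same three matrices and hence equal by total symmetry, so they cancel.

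I expect the only real obstacle to be the delta bookkeeping in the middle step: tracking signs, handling the diagonal (Cartan) contributions coming from $[H_i,\cdot]$ and from the inner commutators, and respecting the implicit total symmetrization before collecting everything into the eight stated families. There is no conceptual difficulty here, only the risk of computational error; treating separately the four cases arising from the two ``or'' constraints, and performing the $j=i+1$ degeneration only at the very end, should keep the computation under control.
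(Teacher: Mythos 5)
Your proposal is correct and follows essentially the same route as the paper's own proof: both start from the identity $[\nu_i,\nu_j]=-\sum_{a,b,c}\big([H_i,x_a],\big[[X_j^+,x_b],[X_j^-,x_c]\big]\big)\{x^a,x^b,x^c\}$ coming from Drinfeld's cubic Yangian relation (as recorded in the proof of Lemma \ref{lem:nu}), expand it over elementary matrices, and convert $\{x,y,z\}$ into $S(x,y,z)$ via $S(x,y,z)=48\{x,y,z\}$, with the $j=i+1$ case handled by the total symmetry of $S$. Your two bookkeeping shortcuts are sound and slightly cleaner than the paper's execution --- passing to $\mfgl_n$ (legitimate because the correction term $\tfrac{1}{n}I\otimes I$ to the split Casimir is annihilated by $\mathrm{ad}(H_i)$ and $\mathrm{ad}(X_j^{\pm})$) and contracting the index $a$ first by invariance and completeness, whereas the paper works with the explicit $\mfsl_n$ dual bases $\{E_{kl},H_m\}$, $\{E_{lk},H_m^*\}$ and verifies directly that the Cartan contributions cancel before carrying out the full triple-sum delta expansion.
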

\begin{proof}
We know that
\[ -[\nu_i, \nu_j]=\sum_{a,b,c} \Big([H_i, x_a], \big[ [X_j^+, x_b], [X_j^-, x_c]\big]\Big)\{x^a, x^b, x^c\}. \] As dual bases of $\mfsl_n$, we choose $\{ E_{ij}, H_k \, | \, 1\le i\neq j \le n, \, 1\le k \le n-1  \}$ and $\{ E_{ji}, H_k^* \, | \, 1\le i\neq j \le n,  \, 1\le k \le n-1 \}$ where $H_k^*= \sum_{i=1}^k E_{ii} - \frac{k}{n} \sum_{i=1}^n E_{ii}$. 
We now compute the right-hand side as follows: 
\begin{align*}
\Big([H_i, x_a], \big[ [X_j^+, x_b], [X_j^-, x_c]\big]\Big)\{x^a, x^b, x^c\} 
 = {} &
\sum_{k\neq l} \sum_{p\neq q}\sum_{s\neq t}
\Big( [H_i, E_{kl}], \big[ [E_{j, j+1}, E_{pq}], [E_{j+1, j}, E_{st}]\big]\Big)
\{E_{lk}, E_{qp}, E_{ts}\}\\
  & +\sum_{k\neq l} \sum_{m}\sum_{s\neq t}
\Big([H_i, E_{kl}], \big[[E_{j, j+1}, H_m^*], [E_{j+1, j}, E_{st}]\big]\Big)
\{E_{lk}, H_m, E_{ts}\}\\
 & +\sum_{k\neq l} \sum_{p\neq q}\sum_{m}
\Big([H_i, E_{kl}], \big[[E_{j, j+1}, E_{pq}], [E_{j+1, j}, H_m^*]\big]\Big)
\{E_{lk}, E_{qp}, H_{m}\}\\
 = {} &
\sum_{k\neq l} \sum_{p\neq q}\sum_{s\neq t}
\Big( [H_i, E_{kl}], \big[ [E_{j, j+1}, E_{pq}], [E_{j+1, j}, E_{st}]\big]\Big)
\{E_{lk}, E_{qp}, E_{ts}\}\\
 & + \sum_{k\neq l} \sum_{m}
\Big([H_i, E_{kl}], \big[[E_{j, j+1}, H_m^*], [E_{j+1, j}, E_{lk}]\big]\Big)
\{E_{lk}, H_m, E_{kl}\}\\
 &  +\sum_{k\neq l} \sum_{m}
\Big([H_i, E_{kl}], \big[[E_{j, j+1}, E_{lk}], [E_{j+1, j}, H_m^*]\big]\Big)
\{E_{lk}, E_{kl}, H_{m}\}\\
 = {} & \sum_{k\neq l} \sum_{p\neq q}\sum_{s\neq t}
\Big( [H_i, E_{kl}], \big[ [E_{j, j+1}, E_{pq}], [E_{j+1, j}, E_{st}]\big]\Big)
\{E_{lk}, E_{qp}, E_{ts}\}.
\end{align*}
Now under the assumption that $i\neq j$, we obtain:
\begin{align*}
\sum_{k\neq l} \sum_{p\neq q}\sum_{s\neq t} &
\Big( [H_i, E_{kl}], \big[ [E_{j, j+1}, E_{pq}], [E_{j+1, j}, E_{st}]\big]\Big)\{E_{lk}, E_{qp}, E_{ts}\}\\
 = {} & \sum_{k\neq l} \sum_{p\neq q}\sum_{s\neq t} 
((\epsilon_i-\epsilon_{i+1}, \epsilon_k-\epsilon_l)E_{kl} , [
\delta_{j+1, p} E_{jq}-\delta_{jq} E_{p, j+1}, 
\delta_{js} E_{j+1, t}-\delta_{j+1, t} E_{sj}]
)\{E_{lk}, E_{qp}, E_{ts}\}\\
 = {} &  \sum_{k\neq l} \sum_{p\neq q}\sum_{s\neq t} 
\Bigg((\delta_{ik}-\delta_{il}-\delta_{i+1, k}+\delta_{i+1, l})
E_{kl} , \Big(
\delta_{j+1, p} \delta_{js}[E_{jq},  E_{j+1, t}]
-\delta_{j+1, p} \delta_{j+1, t}[E_{jq},E_{sj}] \\
&  -\delta_{jq} \delta_{js}[E_{p, j+1}, E_{j+1, t}]
+\delta_{jq}\delta_{j+1, t}[E_{p, j+1}, E_{sj}]
\Big)\Bigg)\{E_{lk}, E_{qp}, E_{ts}\}\\
\end{align*}
\begin{align*}
 = {} & \sum_{k\neq l} \sum_{p\neq q}\sum_{s\neq t} 
\Bigg((\delta_{ik}-\delta_{il}-\delta_{i+1, k}+\delta_{i+1, l})
E_{kl} , \Big(
\delta_{j+1, p} \delta_{js}
(\delta_{q, j+1} E_{jt}-\delta_{tj} E_{j+1, q})
-\delta_{j+1, p} \delta_{j+1, t}
(\delta_{qs} E_{jj}-E_{sq}) \\
& -\delta_{jq} \delta_{js}
(E_{pt}-\delta_{pt} E_{j+1, j+1})
+\delta_{jq}\delta_{j+1, t}
( \delta_{j+1, s} E_{pj}-\delta_{pj} E_{s, j+1})
\Big)\Bigg)\{E_{lk}, E_{qp}, E_{ts}\}\\
 = {} & \sum_{k\neq l} \sum_{p\neq q}\sum_{s\neq t} 
\Bigg((\delta_{ik}-\delta_{il}-\delta_{i+1, k}+\delta_{i+1, l})
E_{kl} , \delta_{j+1, p} \delta_{js}\delta_{q, j+1} E_{jt}
-\delta_{j+1, p} \delta_{js}\delta_{tj} E_{j+1, q}
+\delta_{j+1, p} \delta_{j+1, t}E_{sq} \\
&  -\delta_{jq} \delta_{js} E_{pt}
+\delta_{jq}\delta_{j+1, t}\delta_{j+1, s} E_{pj}
-\delta_{jq}\delta_{j+1, t}\delta_{pj} E_{s, j+1}
\Bigg)\{E_{lk}, E_{qp}, E_{ts}\}\\
 = {} & \sum_{k\neq l} \sum_{p\neq q}\sum_{s\neq t} 
\Bigg((\delta_{ik}-\delta_{il}-\delta_{i+1, k}+\delta_{i+1, l})
E_{kl} , \delta_{j+1, p} \delta_{j+1, t}E_{sq}
-\delta_{jq} \delta_{js}E_{pt}
\Bigg)\{E_{lk}, E_{qp}, E_{ts}\}\\
 = {} & \sum_{k\neq l} \sum_{p\neq q}\sum_{s\neq t} 
\Bigg((\delta_{ik}-\delta_{il}-\delta_{i+1, k}+\delta_{i+1, l}) \delta_{j+1, p} \delta_{j+1, t}\delta_{ls}\delta_{kq}
-(\delta_{ik}-\delta_{il}-\delta_{i+1, k}+\delta_{i+1, l})
\delta_{jq} \delta_{js}\delta_{lp}\delta_{kt}
\Bigg)\{E_{lk}, E_{qp}, E_{ts}\} \\
 = {} &\sum_{\stackrel{l=1}{l\neq i, j+1}}^n \{ E_{li}, E_{i, j+1}, E_{j+1, l}\}
-\sum_{\stackrel{k=1}{k\neq i,  j+1}}^n \{ E_{ik}, E_{k, j+1}, E_{j+1, i}\}
-\sum_{\stackrel{l=1}{l\neq i+1, j+1}}^n\{ E_{l, i+1}, E_{i+1, j+1}, E_{j+1, l}\}
+\sum_{\stackrel{k=1}{k\neq i+1,  j+1}}^n\{ E_{i+1, k}, E_{k, j+1}, E_{j+1, i+1}\}
\\
&-\sum_{\stackrel{l=1}{l\neq i, j}}^n \{ E_{li}, E_{ij}, E_{jl}\}
+\sum_{\stackrel{k=1}{k\neq i, j}}^n\{ E_{ik}, E_{kj}, E_{ji}\}
+\sum_{\stackrel{l=1}{l\neq i+1, j}}^n\{ E_{l, i+1}, E_{i+1, j}, E_{jl}\}
-\sum_{\stackrel{k=1}{k\neq i+1, j}}^n \{ E_{i+1, k}, E_{kj}, E_{j, i+1}\}\\
 = {} & \sum_{l=1}^n\{ E_{li}, E_{i, j+1}, E_{j+1, l}\}
-\sum_{k=1}^n \{ E_{ik}, E_{k, j+1}, E_{j+1, i}\}
-\sum_{l=1}^n\{ E_{l, i+1}, E_{i+1, j+1}, E_{j+1, l}\}
+\sum_{k=1}^n\{ E_{i+1, k}, E_{k, j+1}, E_{j+1, i+1}\}
\\
&-\sum_{l=1}^n\{ E_{li}, E_{ij}, E_{jl}\}
+\sum_{k=1}^n\{ E_{ik}, E_{kj}, E_{ji}\}
+\sum_{l=1}^n\{ E_{l, i+1}, E_{i+1, j}, E_{jl}\}
-\sum_{k=1}^n\{ E_{i+1, k}, E_{kj}, E_{j, i+1}\}.
\end{align*}
Denote by $S(x, y, z)$ the sum $S(S(x, y), z) + (S(x, z), y)+S(S(y, z), x)$. We then have the equality:
\[
\frac{1}{48}S(z_1, z_2, z_3)
=
\frac{1}{24}\sum_{\sigma\in \mathfrak{S}_3}z_{\sigma(1)}z_{\sigma(2)}z_{\sigma(3)}
=\{z_1, z_2, z_3\}.
\]
Therefore,
\begin{align*}
[\nu_i, \nu_j]
 {} = & -\frac{1}{48}\sum_{k=1}^n  \Big(S( E_{ki}, E_{i, j+1}, E_{j+1, k})
- S(E_{ik}, E_{k, j+1}, E_{j+1, i})
- S( E_{k, i+1}, E_{i+1, j+1}, E_{j+1, k}) \\
&  + S( E_{i+1, k}, E_{k, j+1}, E_{j+1, i+1}) - S( E_{ki}, E_{ij}, E_{jk})
+S(E_{ik}, E_{kj}, E_{ji}) \\
& +  S( E_{k, i+1}, E_{i+1, j}, E_{jk})- S( E_{i+1, k}, E_{kj}, E_{j, i+1})
\Big).
\end{align*}
\end{proof}

\subsection{Computations for Lemma \ref{PKPQ}}
\label{appendix2}
To continue the computations for $[P(H_{i+1,i+2}),K(E_{i+1,i+2})]$ started in \eqref{PHi1i2}, let us determine $ [P(H_{ik}),K(E_{i+1,i+2})]$ (in Step 1), $\big[E_{ik}, [K(E_{k, i+2}), P(E_{i+1, i})]\big]$, $\big[E_{k, i+1},  [P(E_{i+1, k}), K(E_{i+1, i+2})]\big]$ (both in Step 2), $[K(E_{i,i+2}),R_{i+1,i+2,i+2,i}] $ (in Step 3) and $[ \wt{W}_{i,i+2} - W_{i,i+2} ,K(E_{i+1,i+2})]$ (in Step 4).

\noindent \textbf{Step 1:} To obtain the formula for $[P(H_{ik}),K(E_{i+1,i+2})]$, recall from the proof of Theorem \ref{central ele sln} that $[W_{ik} ,K(E_{i+1,i+2})]=0$ because $i,k,i+1,i+2$ are all distinct.
\begin{align*}
[P(H_{ik}),K(E_{i+1,i+2})] = {} & [ \wt{W}_{ik} + [K(E_{ik}),Q(E_{ki})] - W_{ik} ,K(E_{i+1,i+2})] \\
= {} & [ \wt{W}_{ik},K(E_{i+1,i+2})] + \big[K(E_{ik}),[Q(E_{ki}),K(E_{i+1,i+2})]\big] \\
= {} &  [ \wt{W}_{ik},K(E_{i+1,i+2})] + \frac{\la}{2} [K(E_{ik}), S(E_{i+1,i},E_{k,i+2}) ] \\
 = {} & \frac{\la}{4} \big( S(K(E_{i+1,i}),E_{i,i+2}) - S(K(E_{i+1,k}),E_{k,i+2}) + S(E_{i+1,i},K(E_{i,i+2})) - S(E_{i+1,k},K(E_{k,i+2})) \big). 
\end{align*}
\noindent \textbf{Step 2:}  
Applying $[-, E_{ik}]$ to the conclusion of \textbf{Step 1} and reordering the indices, we have
\begin{equation*}
[K(E_{k, i+2}), P(E_{i+1, i})] = - \frac{\la}{4}\big( S(K(E_{ki}), E_{i+1, i+2})+ S(K(E_{i+1, i+2}), E_{ki} \big)
\end{equation*}
so 
\begin{equation*}
\big[E_{ik}, [K(E_{k, i+2}), P(E_{i+1, i})]\big] = - \frac{\la}{4}\big(
 S(K(H_{ik}), E_{i+1, i+2})+ S(K(E_{i+1, i+2}), H_{ik} )\big).
\end{equation*}
Applying $[-, E_{i+1, k}]$ to the conclusion of \textbf{Step 1}, we have
\begin{equation*}
[P(E_{i+1, k}), K(E_{i+1, i+2})]\big]= \frac{\la}{4} \big(S(K(E_{i+1, k}), E_{i+1, i+2})+ S(K(E_{i+1, i+2}), E_{i+1, k} ) \big)
\end{equation*}
so 
\begin{align*}
\big[E_{k, i+1},  [P(E_{i+1, k}), K(E_{i+1, i+2})]\big]  = {} & \frac{\la}{4} \Big(
  S(K(H_{k, i+1}), E_{i+1, i+2})+ S(K(E_{i+1, k}), E_{k, i+2} ) \\
  & +S(K(E_{k, i+2}), E_{i+1, k})+ S(K(E_{i+1, i+2}), H_{k, i+1} )
\Big).
\end{align*}

\noindent \textbf{Step 3:} We now compute  $ [K(E_{i,i+2}),R_{i+1,i+2,i+2,i}]$.
Recall that:
\[
R_{i+1,i+2,i+2,i}
=-\left(\beta-\frac{\la}{2}\right) E_{i+1, i}
-\frac{\la}{4} \left(
\sum_{q=1}^n S(E_{i+1, q}, E_{qi})
-2 S(E_{i+1, i}, E_{i+2, i+2})
\right).
\]
Thus, 
\begin{align}
 [K(E_{i,i+2}), & R_{i+1,i+2,i+2,i}] \notag \\
 = {} & \Bigg[\left(\beta-\frac{\la}{2}\right) E_{i+1, i}
+\frac{\la}{4} \left(
\sum_{q=1}^n S(E_{i+1, q}, E_{qi})
-2 S(E_{i+1, i}, E_{i+2, i+2})
\right), K(E_{i,i+2})\Bigg] \notag \\
= {} & \left(\beta-\frac{\la}{2}\right) K(E_{i+1, i+2})
+\frac{\la}{4} \Bigg(
\sum_{q=1}^n S(K(E_{q, i+2}), E_{i+1, q})
+S(K(E_{i+1, i+2}), E_{ii})
-S(K(E_{ii}), E_{i+1, i+2}) \notag \\
&-2\Big(S(K(E_{i+1, i+2}), E_{i+2, i+2})
-S(K(E_{i, i+2}), E_{i+1, i})
\Big)
\Bigg). \label{KER}
\end{align}

\noindent \textbf{Step 4:}  We now compute $[W_{i,i+2} ,K(E_{i+1,i+2})]$. By Proposition \ref{prop:two param}, we have
\[W_{i, i+2} = \frac{1}{2}Z_{i, i+2}=\frac{1}{2}Z_{ik}+
\left( \beta - \frac{\la}{2} \right)H_{i+2,k}.\]
Therefore,
\begin{equation}
[W_{i,i+2} ,K(E_{i+1,i+2})]
 =  \left[ \frac{1}{2}Z_{ik}+
\left( \beta - \frac{\la}{2} \right)H_{i+2,k}, K(E_{i+1,i+2})\right]
= -\left( \beta - \frac{\la}{2} \right)K(E_{i+1,i+2}). \label{WKE}
\end{equation}
Let us turn our attention to  $[ \wt{W}_{i,i+2}, K(E_{i+1,i+2})]$:
\begin{align}
[ \wt{W}_{i,i+2}, K(E_{i+1,i+2})] = {} &  -\frac{\la}{4}\left[\sum_{p=1}^n
S(E_{ip}, E_{p, i})+\sum_{p=1}^n
S(E_{i+2, p}, E_{p, i+2})
-2 S(E_{ii}, E_{i+2, i+2}), 
K(E_{i+1,i+2})\right] \notag \\
 = {} & -\frac{\la}{4}\Bigg(
S(K(E_{i, i+2}), E_{i+1, i})-S(K(E_{i+1, i}), E_{i, i+2})
+
S(K(E_{i+2, i+2}), E_{i+1, i+2})-S(K(E_{i+1, i+2}), E_{i+2, i+2}) \notag \\
& -\sum_{p=1}^n S(K(E_{i+1, p}), E_{p, i+2})
+2 S(K(E_{i+1, i+2}), E_{ii})\Bigg). \label{wtWKE}
\end{align}

\noindent \textbf{Step 5:}  Let us now simplify $[K(E_{i,i+2}),R_{i+1,i+2,i+2,i}]  + [ \wt{W}_{i,i+2} - W_{i,i+2} ,K(E_{i+1,i+2})]$ using \eqref{KER},\eqref{WKE} and \eqref{wtWKE}:

\begin{align*}
 [K& (E_{i,i+2})  ,R_{i+1,i+2,i+2,i}]  + [ \wt{W}_{i,i+2} - W_{i,i+2} ,K(E_{i+1,i+2})] \\
 = {} & \left(\beta-\frac{\la}{2}\right) K(E_{i+1, i+2})
+\frac{\la}{4} \Bigg(
\sum_{q=1}^{n} S(K(E_{q, i+2}), E_{i+1, q})
+S(K(E_{i+1, i+2}), E_{ii})
-S(K(E_{i, i}), E_{i+1, i+2})\\
&-2\Big(
S(K(E_{i+1, i+2}), E_{i+2, i+2})
-S(K(E_{i, i+2}), E_{i+1, i})
\Big)
\Bigg)\\
&-\frac{\la}{4}\Bigg(
S(K(E_{i, i+2}), E_{i+1, i})-S(K(E_{i+1, i}), E_{i, i+2})
+
S(K(E_{i+2, i+2}), E_{i+1, i+2})-S(K(E_{i+1, i+2}), E_{i+2, i+2})\\
&-\sum_{p=1}^n S(K(E_{i+1, p}), E_{p, i+2})
+2 S(K(E_{i+1, i+2}), E_{ii})\Bigg)
+\left( \beta - \frac{\la}{2} \right)K(E_{i+1,i+2})\\
 = {} & 2\left(\beta-\frac{\la}{2}\right) K(E_{i+1, i+2})
+\frac{\la}{4} \Bigg(
\sum_{q=1}^n S(K(E_{q, i+2}), E_{i+1, q})
+\sum_{p=1}^n S(K(E_{i+1, p}), E_{p, i+2})
-S(K(E_{i+1, i+2}), E_{ii})\\
& -S(K(E_{i, i}), E_{i+1, i+2})
-S(K(E_{i+1, i+2}), E_{i+2, i+2})
+S(K(E_{i, i+2}), E_{i+1, i})
+S(K(E_{i+1, i}), E_{i, i+2})
-S(K(E_{i+2, i+2}), E_{i+1, i+2})
\Bigg)
\end{align*}

\noindent \textbf{Step 6:} The expressions above implies that the right-hand side of \eqref{PHi1i2} can be expanded and then simplified in the following way:
\begin{align*}
[P(H_{i+1,i+2}),&K(E_{i+1,i+2})] 
= -2[P(H_{i}),K(E_{i+1,i+2})] + 2\left(\beta-\frac{\la}{2}\right) K(E_{i+1, i+2})
\\
 &+ \frac{\la}{4} \big( S(K(E_{i+1,i}),E_{i,i+2}) - S(K(E_{i+1,k}),E_{k,i+2}) + S(E_{i+1,i},K(E_{i,i+2})) - S(E_{i+1,k},K(E_{k,i+2})) \big)
 \\
&+\frac{\lambda}{4}
\Big(
  S(K(H_{i, i+1}), E_{i+1, i+2})+ S(K(E_{i+1, k}), E_{k, i+2} )
 +S(K(E_{k, i+2}), E_{i+1, k})+ S(K(E_{i+1, i+2}), H_{i, i+1} )
\Big)\\
&+\frac{\la}{4} \Bigg(
\sum_{q=1}^n S(K(E_{q, i+2}), E_{i+1, q})
+\sum_{p=1}^n S(K(E_{i+1, p}), E_{p, i+2})
-S(K(E_{i+1, i+2}), E_{ii})
-S(K(E_{i, i}), E_{i+1, i+2})\\
& -S(K(E_{i+1, i+2}), E_{i+2, i+2})
+S(K(E_{i, i+2}), E_{i+1, i})
+S(K(E_{i+1, i}), E_{i, i+2})
-S(K(E_{i+2, i+2}), E_{i+1, i+2})
\Bigg)\\
 = {} & -2[P(H_{i}),K(E_{i+1,i+2})] + 2\left(\beta-\frac{\la}{2}\right) K(E_{i+1, i+2})
\\
 &+ \frac{\la}{4} \Big
 ( S(K(E_{i+1,i}),E_{i,i+2})  + S(E_{i+1,i},K(E_{i,i+2})) 
+S(K(H_{i, i+1}), E_{i+1, i+2})
 + S(K(E_{i+1, i+2}), H_{i, i+1} )
\Big)\\
&+\frac{\la}{4} \left(
\sum_{q=1}^n S(K(E_{q, i+2}), E_{i+1, q})
+\sum_{p=1}^n S(K(E_{i+1, p}), E_{p, i+2})
-S(K(E_{i+1, i+2}), E_{ii})
-S(K(E_{ii}), E_{i+1, i+2}) \right.\\
& \left.
-S(K(E_{i+1, i+2}), E_{i+2, i+2})
+S(K(E_{i, i+2}), E_{i+1, i})
+S(K(E_{i+1, i}), E_{i, i+2})
-S(K(E_{i+2, i+2}), E_{i+1, i+2})
\right) \\
 = {} & -2[P(H_{i}),K(E_{i+1,i+2})] + 2\left(\beta-\frac{\la}{2}\right) K(E_{i+1, i+2})
\\
&+ \frac{\la}{4} \Big
 ( 2S(K(E_{i+1,i}),E_{i,i+2})  
 + 2S(E_{i+1,i},K(E_{i,i+2})) 
+S(K(-E_{i+1, i+1}-E_{i+2, i+2}), E_{i+1, i+2})
\\
&+ S(K(E_{i+1, i+2}), -E_{i+1, i+1}-E_{i+2, i+2})
\Big) +\frac{\la}{4} \left(\sum_{q=1}^n S(K(E_{q, i+2}), E_{i+1, q})
+\sum_{p=1}^n S(K(E_{i+1, p}), E_{p, i+2})
\right)
\end{align*}
\begin{align*}
 = {} & -2[P(H_{i}),K(E_{i+1,i+2})] + 2\left(\beta-\frac{\la}{2}\right) K(E_{i+1, i+2})
\\
&+ \frac{\la}{2} \Big
 ( S(K(E_{i+1,i}),E_{i,i+2})  
 + S(E_{i+1,i},K(E_{i,i+2})) 
\Big)
+\frac{\la}{4} \sum_{\stackrel{p=1}{p\neq i+1, i+2}}^n \Big(
S(K(E_{p, i+2}), E_{i+1, p})
+ S(K(E_{i+1, p}), E_{p, i+2})
\Big)\\
\end{align*}
Lemma \ref{PKPQ} follows directly from this last expression.

\subsection{Detailed proof of Lemma \ref{lem:P(H)}, Case 1}
\label{appendix3}
\noindent \textbf{Step 1:} The first term on the right-hand side of \eqref{PXiXj} is: 
\begin{align*}
\big[ [K(X_i^+), P(X_j^+)], Q(H_i)\big] = {} & -\frac{\la}{4}[ S(K(E_{i, j+1}), E_{j, i+1}) + S(K(E_{j, i+1}), E_{i, j+1}), Q(H_i)]\\
 = {} & -\frac{\la}{4}\Big(
S([ K(E_{i, j+1}), Q(H_i)] ,  E_{j, i+1})- S(K(E_{i, j+1}), Q(E_{j, i+1})) \\
& +S([ K(E_{j, i+1}), Q(H_i)], E_{i, j+1})-S(K(E_{j, i+1}), Q(E_{i, j+1}))\Big)
\end{align*}
The second term on the right-hand side of \eqref{PXiXj} is:
\begin{align*}
\big[ K(X_i^+), &[Q(H_i), P(X_j^+)] \big] \\
 = {} & \Big[ K(X_i^+), \big[ [Q(X_i^+), P(X_j^+)], X_i^- \big] \Big] \\
 = {} &  \frac{\la}{4}\Big[ K(X_i^+), 
[S(Q(E_{i, j+1}), E_{j, i+1})+ S(Q(E_{j, i+1}), E_{i, j+1}) , E_{i+1, i}\big] \Big] \\
 = {} &  \frac{\la}{4}[K(X_i^+), 
-S(Q(E_{i+1, j+1}),  E_{j, i+1})
+S(Q(E_{i, j+1}), E_{ji})
+S(Q(E_{ji}), E_{i, j+1})
-S(Q(E_{j, i+1}), E_{i+1, j+1})]\\
 = {} &  \frac{\la}{4}
\Big(
-S\big( [K(X_i^+), Q(E_{i+1, j+1})],  E_{j, i+1}\big)
+S\big( [K(X_i^+), Q(E_{i, j+1})], E_{j, i}\big)
-S(Q(E_{i, j+1}),  K(E_{j, i+1}))\\
&
+S\big( [K(X_i^+), Q(E_{j, i})], E_{i, j+1}\big)
-S\big([K(X_i^+), Q(E_{j, i+1})], E_{i+1, j+1}\big)
-S(Q(E_{j, i+1}), K(E_{i, j+1})
\Big)
\end{align*}
\noindent \textbf{Step 2:} The sum of the first and the second terms of \eqref{PXiXj} is:
\begin{align}
 \big[ [ K & (X_i^+),  P(X_j^+)], Q(H_i)]
+[K(X_i^+), [Q(H_i), P(X_j^+)]\big] \notag \\
= {} & 
\frac{-\la}{4}\Big(
S([ K(E_{i, j+1}), Q(H_i)] ,  E_{j, i+1}) - S(K(E_{i, j+1}), Q(E_{j, i+1}))
+ S([ K(E_{j, i+1}), Q(H_i)], E_{i, j+1}) - S(K(E_{j, i+1}), Q(E_{i, j+1}))\Big) \notag \\
&+\frac{\la}{4}
\Bigg(
-S([K(X_i^+), Q(E_{i+1, j+1})],  E_{j, i+1})
+S([K(X_i^+), Q(E_{i, j+1})], E_{ji})
-S(Q(E_{i, j+1}),  K(E_{j, i+1})) \notag \\
&
+S([K(X_i^+), Q(E_{ji})], E_{i, j+1})
-S([K(X_i^+), Q(E_{j, i+1})], E_{i+1, j+1})]
-S(Q(E_{j, i+1}), K(E_{i, j+1})
\Bigg) \notag \\
= {} & \frac{\la}{4}\Big(
-S([ K(E_{i, j+1}), Q(H_i)] ,  E_{j, i+1})
-S([K(X_i^+), Q(E_{i+1, j+1})],  E_{j, i+1})\Big) \notag \\
&+\frac{\la}{4}\Big(
-S([ K(E_{j, i+1}), Q(H_i)], E_{i, j+1})
+S([K(X_i^+), Q(E_{ji})], E_{i, j+1})
\Big) \notag \\
&+\frac{\la}{4}
\Bigg(
+S([K(X_i^+), Q(E_{i, j+1})], E_{ji})
-S([K(X_i^+), Q(E_{j, i+1})], E_{i+1, j+1})
\Bigg). \label{KPQKQP}
\end{align}
We need the following relations
(the first and the third equation follows from Lemma \ref{lem:KEab QHcd}, the rest are the defining relations):
\begin{align*}
[ K(E_{i, j+1}), Q(H_i)]&
=-P(E_{i, j+1})+\left(\beta-\frac{\la}{2}\right) E_{i, j+1} +\frac{\la}{4}
\left( \sum_{k=1}^n S(E_{k, j+1}, E_{ik})
-2 S(E_{i, j+1}, E_{ii})+2S(E_{i, i+1}, E_{i+1, j+1})
\right)\\
[K(X_i^+), Q(E_{i+1, j+1})]&
=P(E_{i, j+1})+\left(\beta-\frac{\la}{2}\right) E_{i, j+1} +\frac{\la}{4}
\left( 
\sum_{l=1}^n S(E_{il}, E_{l, j+1})
-2S(E_{i, j+1}, E_{i+1, i+1})
\right)
\end{align*}
\begin{align*}
[ K(E_{j, i+1}), Q(H_i)]&
=-P(E_{j, i+1})-\left(\beta-\frac{\la}{2}\right) E_{j, i+1} +\frac{\la}{4}
\left(
-\sum_{l=1}^n S(E_{jl}, E_{l, i+1}) +2S(E_{j, i+1}, E_{i+1, i+1})-2S(E_{ji}, E_{i, i+1})
\right) \\
[K(X_i^+), Q(E_{j, i})]&
=-P(E_{j, i+1})+\left(\beta-\frac{\la}{2}\right) E_{j, i+1}
+\frac{\la}{4}\left(
-2S(E_{ii}, E_{j, i+1})+\sum_{k=1}^n S(E_{k, i+1}, E_{jk})
\right)
\\
[K(X_i^+), Q(E_{i, j+1})]&
=-\frac{\la}{2} S(E_{i, j+1}, E_{i, i+1}), \;\; 
[K(X_i^+), Q(E_{j, i+1})]
=-\frac{\la}{2} S(E_{i, i+1}, E_{j, i+1})
\end{align*}
Using these and \eqref{KPQKQP}, we get a formula for the sum of the first and the second term of \eqref{PXiXj}: 
\begin{align}
 [[K( & X_i^+),  P(X_j^+)], Q(H_i)]
  +[K(X_i^+), [Q(H_i), P(X_j^+)]] \notag \\
 = {} & -\frac{\la^2}{16}
S\left( \sum_{k=1}^n S(E_{k, j+1}, E_{ik})
-2 S(E_{i, j+1}, E_{ii})+2S(E_{i, i+1}, E_{i+1, j+1})
+ \sum_{l=1}^n S(E_{il}, E_{l, j+1})
-2S(E_{i, j+1}, E_{i+1, i+1}) ,  E_{j, i+1}\right) \notag \\
&+\frac{\la^2}{16}
S\left( \sum_{l=1}^n S(E_{jl}, E_{l, i+1}) -2S(E_{j, i+1}, E_{i+1, i+1}) + 2S(E_{ji}, E_{i, i+1})
-2S(E_{ii}, E_{j, i+1})+\sum_{k=1}^n S(E_{k, i+1}, E_{jk}), E_{i, j+1} \right) \notag \\
& - \frac{\la^2}{8}
\Bigg(
+S(S(E_{i, j+1}, E_{i, i+1}), E_{ji})
-S(S(E_{i, i+1}, E_{j, i+1})
, E_{i+1, j+1})
\Bigg) \notag \\
 = {} & -\frac{\la^2}{8}
S\Bigg(
 \sum_{k} S(E_{k, j+1}, E_{ik})
- S(E_{i, j+1}, E_{ii})+S(E_{i, i+1}, E_{i+1, j+1})
-S(E_{i, j+1}, E_{i+1, i+1})
 ,  E_{j, i+1}\Bigg) \notag \\
&-\frac{\la^2}{8}
S\Bigg(
-\sum_{l} S(E_{jl}, E_{l, i+1}) +S(E_{j, i+1}, E_{i+1, i+1})-S(E_{ji}, E_{i, i+1})+S(E_{ii}, E_{j, i+1})
, E_{i, j+1}
\Bigg) \notag \\
&-\frac{\la^2}{8}
\Bigg(
+S( S(E_{i, j+1}, E_{i, i+1}), E_{ji})
-S( S(E_{i, i+1}, E_{j, i+1})
, E_{i+1, j+1})
\Bigg) \notag \\
 = {} & 
-\frac{\la^2}{8}
S\Bigg(
 \sum_{k} S(E_{k, j+1}, E_{ik}), E_{j, i+1}\Bigg)
+ \frac{\la^2}{8}
S\Bigg(
\sum_{l} S(E_{jl}, E_{l, i+1}), E_{i, j+1} \Bigg). \label{KPQKQP2}
\end{align}
where the last equality follows from the equality $S(S(A, B), C)-S(S(A, C), B)=[A, [B, C]]$.

\noindent \textbf{Step 3:} The third term  of \eqref{PXiXj} is, up to a scalar: 
\begin{equation*}
\sum_{\alpha\in\Delta} [S([X_i^+, X_{\alpha}], [X_{-\alpha}, H_i]), P(X_j^+)] = -2[ S(E_{i, i+1}, H_i),  P(E_{j, j+1})] = 0.
\end{equation*}
We can now deduce from \eqref{PXiXj} and \eqref{KPQKQP2} the following formula for $[P(X_i^+), P(X_j^+)]$: 
\begin{equation} 
[P(X_i^+), P(X_j^+)] = \frac{\la^2}{16} S\Bigg(  \sum_{k} S(E_{k, j+1}, E_{ik}), E_{j, i+1}\Bigg)
- \frac{\la^2}{16} S\Bigg( \sum_{l} S(E_{jl}, E_{l, i+1}), E_{i, j+1} \Bigg). \label{PXiXj2} 
\end{equation}
\noindent \textbf{Step 4:} We will also use the following two formulas:
\begin{align}
 \sum_{k=1}^n \Big[\big[S( S(E_{k, j+1}, E_{i, k}), E_{j, i+1}),  X_j^-\big],  X_i^-\Big] = {} & \sum_{k=1}^n \big[ S(S(E_{kj}, E_{ik}), E_{j, i+1}), E_{i+1, i}\big]
 -\sum_{k=1}^n \big[ S(S(E_{k, j+1}, E_{ik}), E_{j+1, i+1}), E_{i+1, i}\big] \notag \\
 = {} & \sum_{k=1}^n S(S(E_{kj}, E_{ik}), E_{ji})
 -\sum_{k=1}^n S(S(E_{kj}, E_{i+1, k}), E_{j, i+1}) \notag   \\
 {} & +\sum_{k=1}^n S(S(E_{k, j+1}, E_{i+1, k}), E_{j+1, i+1})
 -\sum_{k=1}^n S(S(E_{k, j+1}, E_{ik}), E_{j+1, i}). \label{EEEXX1}
\end{align}
Exchanging $i\leftrightarrow j$ and using $[X_i^-,X_j^-]=0$, we obtain:

\begin{align}
\sum_{l=1}^n \Big[ \big[ S(
 S(E_{jl}, E_{l, i+1}), E_{i, j+1}), X_j^- \big], X_i^- \big] = {} & \sum_{l=1}^n 
S(S(E_{j+1, l}, E_{l, i+1}), E_{i+1, j+1})
-\sum_{l=1}^n
S(S(E_{j+1, l}, E_{li}), E_{i, j+1}) \notag \\
& +\sum_{l=1}^n
S(S(E_{jl}, E_{li}), E_{ij})
-\sum_{l=1}^n
S(S(E_{jl}, E_{l, i+1}), E_{i+1, j}).  \label{EEEXX2}
\end{align}
The right-hand side of \eqref{PXiXj2} can be substituted into the right-hand side of $[P(H_i), P(H_j)] = \Big[\big[ [P(X_i^+), P(X_j^+)], X_j^-\big] ,  X_i^-\Big]$ and then formulas \eqref{EEEXX1} and \eqref{EEEXX2} can be applied to finally obtain:
\begin{align*}
[P & (H_i), P(H_j)] = \frac{\la^2}{16}
\sum_{k=1}^n \Big[\big[ S(S(E_{k, j+1}, E_{ik}), E_{j, i+1}), X_j^-\big] ,  X_i^-\Big]
- \frac{\la^2}{16} \sum_{l=1}^n \Big[\big[ S(S(E_{jl}, E_{l, i+1}), E_{i, j+1} ), X_j^-\big] ,  X_i^-\Big]
\\
 = {} & \frac{\la^2}{16} \sum_{k=1}^n
\big( S(S(E_{kj}, E_{ik}), E_{ji})
 - S(S(E_{kj}, E_{i+1, k}), E_{j, i+1})
 + S(S(E_{k, j+1}, E_{i+1, k}), E_{j+1, i+1})
 - S(S(E_{k, j+1}, E_{ik}), E_{j+1, i})
\big)\\
&- \frac{\la^2}{16} \sum_{l=1}^n  \big(
S(S(E_{j+1, l}, E_{l, i+1}), E_{i+1, j+1})
-S(S(E_{j+1, l}, E_{li}), E_{i, j+1})
+S(S(E_{jl}, E_{li}), E_{ij})
-S(S(E_{jl}, E_{l, i+1}), E_{i+1, j})\big).
\end{align*}
Comparing this with the formula for $[\nu_i,\nu_j]$ given in Lemma \ref{lem:nu2} completes the proof of Lemma \ref{lem:P(H)} when $|i-j|>1$. For this, one needs the observation that for any three elements $x,y,z$ of $\mfU\mfsl_n$: \[ \{ x,y,z \} = \frac{1}{24} \big( xzy+yzx+ yxz+zxy + \frac{1}{2}(yxz + [x,y]z + xzy + x[y,z] + yzx + [z,y]x + zxy + z[y,x]) \big). \]

\subsection{Remainder of the proof of Lemma \ref{lem:P(H)}, Case 2}
\label{appendix4}
Here are more details to show that the right-hand side of \eqref{PHiPHi1} equals $-\la^2[v_i, v_{i+1}]$.

\noindent \textbf{Step 1:} From Lemma \ref{PKPQ}, we know:
\begin{align*}
S_i = {} &
 \left(\beta-\frac{\la}{2}\right) K(E_{i+1, i+2})
+ \frac{\la}{4} \Big
 ( S(K(E_{i+1,i}),E_{i,i+2})  
 + S(E_{i+1,i},K(E_{i,i+2})) 
\Big)\\
&+\frac{\la}{8} \sum_{\stackrel{p=1}{p\neq i+1, i+2}}^n\Bigg(
S(K(E_{p, i+2}), E_{i+1, p})
+ S(K(E_{i+1, p}), E_{p, i+2})
\Bigg)
 \end{align*} 
and
\begin{align*}
\wt{S}_i= {} &
 -\left(\beta-\frac{\la}{2}\right) Q(E_{i+2, i+1})
- \frac{\la}{4} \Big
 ( S(Q(E_{i, i+1}),E_{i+2, i})  
 + S(E_{i, i+1},Q(E_{i+2, i})) 
\Big)\\
&-\frac{\la}{8} \sum_{\stackrel{p=1}{p\neq i+1, i+2}}^n\Bigg(
S(Q(E_{i+2, p}), E_{p, i+1})
+ S(Q(E_{p, i+1}), E_{i+2, p})
\Bigg).
 \end{align*} 
 Therefore,
  \begin{align*}
  [S_i,Q(E_{i+2,i+1})] + {} & [K(E_{i+1,i+2}),\wt{S}_i] \notag \\
 = {} & 
 \frac{\la}{4} \Big[\big
 ( S(K(E_{i+1,i}),E_{i,i+2})  
 + S(E_{i+1,i},K(E_{i,i+2})) 
\big), Q(E_{i+2,i+1})\Big] \notag \\
&+\frac{\la}{8} \sum_{\stackrel{p=1}{p\neq i+1, i+2}}^n \Big[\big(
S(K(E_{p, i+2}), E_{i+1, p})
+ S(K(E_{i+1, p}), E_{p, i+2})
\big), Q(E_{i+2,i+1})\Big] \notag \\
&
- \frac{\la}{4} \Big[K(E_{i+1,i+2}), \big
 ( S(Q(E_{i, i+1}),E_{i+2, i})  
 + S(E_{i, i+1},Q(E_{i+2, i})) 
\big)\Big] \notag \\
&-\frac{\la}{8} \sum_{\stackrel{p=1}{p\neq i+1, i+2}}^n \Big[K(E_{i+1,i+2}), \big(
S(Q(E_{i+2, p}), E_{p, i+1})
+ S(Q(E_{p, i+1}), E_{i+2, p})
\big)\Big] \notag \\
\end{align*}
\begin{align}
 = {} & 
 \frac{\la}{4} \Big( S\big( [K(E_{i+1,i}),Q(E_{i+2,i+1})], E_{i,i+2}\big)  
 + S\big( [K(E_{i,i+2}),Q(E_{i+2,i+1})], E_{i+1,i}\big) 
\Big) \notag \\
&+\frac{\la}{8} \sum_{\stackrel{p=1}{p\neq i+1, i+2}}^n \Big(
S\big( [K(E_{p, i+2}), Q(E_{i+2,i+1})], E_{i+1, p}\big)
+ S\big( [K(E_{i+1, p}), Q(E_{i+2,i+1})], E_{p, i+2}\big)
\Big) \notag \\
&
- \frac{\la}{4}\Big( 
S\big( [K(E_{i+1,i+2}), Q(E_{i, i+1})],E_{i+2, i}\big)  
 + S\big( [K(E_{i+1,i+2}), Q(E_{i+2, i})], E_{i, i+1}\big) 
\Big) \notag \\
&-\frac{\la}{8} \sum_{\stackrel{p=1}{p\neq i+1, i+2}}^n \Big(
S\big( [K(E_{i+1,i+2}), Q(E_{i+2, p})], E_{p, i+1}\big)
+ S\big( [K(E_{i+1,i+2}), Q(E_{p, i+1})], E_{i+2, p}\big)
\Big). \label{SQKS}
 \end{align}
 
\noindent \textbf{Step 2:} Using the following relations of the deformed double current algebra of $\mathfrak{sl}_n$, 
 \begin{align*}
&[K(E_{ab}), Q(E_{bd})]
=P(E_{ad})+\left(\beta-\frac{\la}{2}\right) E_{ad}+\frac{\la}{4}
\sum_{t=1}^n S(E_{at}, E_{td})-\frac{\la}{2}S(E_{ad}, E_{bb})
\\
&[K(E_{ab}), Q(E_{ca})]
=-P(E_{cb})+\left(\beta-\frac{\la}{2}\right) E_{cb}+\frac{\la}{4}
\sum_{t=1}^n S(E_{ct}, E_{tb})-\frac{\la}{2}S(E_{aa}, E_{cb}),
\end{align*}
we get the following equalities:
\begin{align*}
&[K(E_{i+1, i}), Q(E_{i+2, i+1})]
=-P(E_{i+2, i})+\left(\beta-\frac{\la}{2}\right)E_{i+2, i}+\frac{\la}{4}
\sum_{t=1}^n S(E_{ti}, E_{i+2, t})-\frac{\la}{2}S(E_{i+1, i+1}, E_{i+2, i})\\
&[K(E_{i,i+2}),Q(E_{i+2,i+1})]
=P(E_{i, i+1})+\left(\beta-\frac{\la}{2}\right)E_{i, i+1}+\frac{\la}{4}
\sum_{t=1}^n S(E_{it}, E_{t, i+1})-\frac{\la}{2}S(E_{i, i+1}, E_{i+2, i+2})
\\
&[K(E_{p, i+2}), Q(E_{i+2,i+1})]
=P(E_{p, i+1})+\left(\beta-\frac{\la}{2}\right)E_{p, i+1}+\frac{\la}{4}
\sum_{t=1}^n S(E_{pt}, E_{t, i+1})-\frac{\la}{2}S(E_{p, i+1}, E_{i+2, i+2})\\
&[K(E_{i+1, p}), Q(E_{i+2,i+1})]
=-P(E_{i+2,p})+\left(\beta-\frac{\la}{2}\right)E_{i+2,p}+\frac{\la}{4}
\sum_{t=1}^n S(E_{i+2,t}, E_{tp})-\frac{\la}{2}S(E_{i+1, i+1}, E_{i+2,p})\\
&[K(E_{i+1,i+2}), Q(E_{i, i+1})]
=-P(E_{i, i+2})+\left(\beta-\frac{\la}{2}\right)E_{i, i+2}+\frac{\la}{4}
\sum_{t=1}^n S(E_{it}, E_{t, i+2})-\frac{\la}{2}S(E_{i+1, i+1}, E_{i, i+2})\\
&[K(E_{i+1,i+2}), Q(E_{i+2, i})]
=P(E_{i+1, i})+\left(\beta-\frac{\la}{2}\right)E_{i+1,i}+\frac{\la}{4}
\sum_{t=1}^n S(E_{i+1, t}, E_{ti})-\frac{\la}{2}S(E_{i+1, i}, E_{i+2, i+2})\\
&[K(E_{i+1,i+2}), Q(E_{i+2, p})]
=P(E_{i+1, p})+\left(\beta-\frac{\la}{2}\right)E_{i+1, p}+\frac{\la}{4}
\sum_{t=1}^n S(E_{i+1, t}, E_{tp})-\frac{\la}{2}S(E_{i+1, p}, E_{i+2, i+2}) \\
&[K(E_{i+1,i+2}), Q(E_{p, i+1})]
=-P(E_{p, i+2})+\left(\beta-\frac{\la}{2}\right)E_{p, i+2}+\frac{\la}{4}
\sum_{t=1}^n S(E_{pt}, E_{t, i+2})-\frac{\la}{2}S(E_{i+1, i+1}, E_{p, i+2}).
\end{align*}

\noindent \textbf{Step 3:} Substituting the above equalities into \eqref{SQKS}, we obtain a long expression. The sum of the terms that involve $P(X)$ in that long expression is:
\begin{align*}
 \frac{\la}{4} \Big
 ( - & S(P(E_{i+2, i}), E_{i,i+2})  
 +S(P(E_{i,i+2}),E_{i+2, i})  
 + S(P(E_{i,i+1}), E_{i+1,i}) 
  - S(P(E_{i+1,i}), E_{i, i+1}) 
\Big) \notag \\
&+\frac{\la}{8} \sum_{\stackrel{p=1}{p\neq i+1, i+2}}^n \Big(
S(P(E_{p, i+1}), E_{i+1, p})
-S(P(E_{i+1, p}), E_{p, i+1})
- S(P(E_{i+2, p}), E_{p, i+2})
+ S(P(E_{p,i+2}), E_{i+2, p})
\Big) \notag 
\end{align*}
\begin{align}
 = {} & 
 \frac{\la}{4} \Big
 ( -S(P(E_{i+2, i}), E_{i,i+2})  
 +S(P(E_{i,i+2}),E_{i+2, i})  
 + S(P(E_{i,i+1}), E_{i+1,i}) 
  - S(P(E_{i+1,i}), E_{i, i+1}) 
\Big) \notag \\
&+\frac{\la}{8} \sum_{p=1}^n\Big(
S(P(E_{p, i+1}), E_{i+1, p})
-S(P(E_{i+1, p}), E_{p, i+1})
- S(P(E_{i+2, p}), E_{p, i+2})
+ S(P(E_{p,i+2}), E_{i+2, p})
\Big). \label{SQKSP}
\end{align}
\noindent \textbf{Step 4:}  In the long expression for \eqref{SQKS}, the sum of the terms involving $\left(\beta-\frac{\la}{2}\right) E_{ab}$ for some $a,b$ is zero. The sum of the terms involving $\sum_{t=1}^n S(E_{at},E_{tb})$ for various $a,b$ is: 
\begin{align}
 \frac{\la^2}{16}\sum_{t=1}^n  \Big( S\big( S(E_{ti}, E_{i+2, t}), & E_{i,i+2}\big)  
 + S\big( S(E_{it}, E_{t, i+1}), E_{i+1,i}\big) \Big) \notag \\
+\frac{\la^2}{32} \sum_{\stackrel{p=1}{p\neq i+1, i+2}}^n & \sum_{t=1}^n \Big( 
S\big( S(E_{pt}, E_{t, i+1}), E_{i+1, p}\big)
+ S\big( S(E_{i+2,t}, E_{tp}), E_{p, i+2}\big)
\Big) \notag \\
- \frac{\la^2}{16}\sum_{t=1}^n \Big( 
S\big( S(E_{it}, E_{t, i+2}), & E_{i+2,i}\big)  
 + S\big( S(E_{i+1, t}, E_{ti}), E_{i, i+1}\big) 
\Big) \notag \\
-\frac{\la^2}{32} \sum_{\stackrel{p=1}{p\neq i+1, i+2}}^n & \sum_{t=1}^n  \Big( 
S\big( S(E_{i+1, t}, E_{tp}), E_{p, i+1}\big)
+ S\big(S(E_{pt}, E_{t, i+2}), E_{i+2, p}\big)
\Big)  \notag \\
 = {} & \frac{\la^2}{16} \sum_{t=1}^n \Big
 ( S\big( S(E_{ti}, E_{i+2, t}), E_{i,i+2}\big)  
 + S\big( S(E_{it}, E_{t, i+1}), E_{i+1,i}\big) 
\Big) \notag \\
&+\frac{\la^2}{32} \sum_{p=1}^n \sum_{t=1}^n \Big(
S\big( S(E_{pt}, E_{t, i+1}), E_{i+1, p}\big)
+ S\big( S(E_{i+2,t}, E_{tp}), E_{p, i+2}\big)
\Big) \notag \\
&-\frac{\la^2}{32} (\sum_{t=1}^n \Big(
S\big( S(E_{i+1,t}, E_{t, i+1}), E_{i+1, i+1}\big)
+ S\big( S(E_{i+2,t}, E_{t, i+1}), E_{i+1, i+2}\big)
\Big) \notag \\
&-\frac{\la^2}{32} \sum_{t=1}^n \Big(
S\big( S(E_{i+2,t}, E_{t, i+1}), E_{i+1, i+2}\big)
+ S\big( S(E_{i+2,t}, E_{t, i+2}), E_{i+2, i+2}\big)
\Big) \notag \\
&
- \frac{\la^2}{16} \sum_{t=1}^n \Big( 
S\big( S(E_{it}, E_{t, i+2}),E_{i+2, i}\big)  
 + S\big( S(E_{i+1, t}, E_{ti}), E_{i, i+1}\big) 
\Big) \notag \\
&-\frac{\la^2}{32}  \sum_{p=1}^n \sum_{t=1}^n \Big(
S\big( S(E_{i+1, p}, E_{pt}), E_{t, i+1}\big)
+ S\big( S(E_{tp}, E_{p, i+2}), E_{i+2, t}\big)
\Big) \notag \\
&+\frac{\la^2}{32} \sum_{t=1}^n \Big(
S\big( S(E_{i+1, t}, E_{t, i+1}), E_{i+1, i+1}\big)
+ S\big( S(E_{i+1, t}, E_{t, i+2}), E_{i+2, i+1}\big)
\Big) \notag \\
&+\frac{\la^2}{32} \sum_{t=1}^n\Big(
S\big( S(E_{i+1, t}, E_{t, i+2}), E_{i+2, i+1}\big)
+ S\big( S(E_{i+2, t}, E_{t, i+2}), E_{i+2, i+2}\big)
\Big)  \notag \\
 = {} &
 \frac{\la^2}{16} \sum_{t=1}^n \Big
 ( S\big( S(E_{ti}, E_{i+2, t}), E_{i,i+2}\big)  
 + S\big( S(E_{it}, E_{t, i+1}), E_{i+1,i}\big) 
 - S\big( S(E_{i+2,t}, E_{t, i+1}), E_{i+1, i+2}\big)
\Big) \notag \\
&
- \frac{\la^2}{16} \sum_{t=1}^n\Big( 
S\big( S(E_{it}, E_{t, i+2}),E_{i+2, i}\big)  
 + S\big( S(E_{i+1, t}, E_{ti}), E_{i, i+1}\big) 
 - S\big( S(E_{i+1, t}, E_{t, i+2}), E_{i+2, i+1}\big)
\Big). \label{SQKSEE}
 \end{align}
\noindent \textbf{Step 5:} In the long expression for \eqref{SQKS}, the sum of the terms involving $S(E_{aa},E_{cb})$ for various $a,b,c$ is the following:
\begin{align*}
- \frac{\la^2}{8} \Big
 ( S\big( S(E_{i+1, i+1}, E_{i+2, i}), E_{i,i+2}\big)  
 & + S\big( S(E_{i, i+1}, E_{i+2, i+2}), E_{i+1,i}\big) 
\Big) \notag \\
-\frac{\la^2}{16} \sum_{\stackrel{p=1}{p\neq i+1, i+2}}^n & \Big(
S\big( S(E_{p, i+1}, E_{i+2, i+2}), E_{i+1, p}\big)
+ S\big( S(E_{i+1, i+1}, E_{i+2,p}), E_{p, i+2}\big)
\Big) \notag 
\end{align*}
\begin{align}
+ \frac{\la^2}{8}\Big( 
S\big( S(E_{i+1, i+1}, E_{i, i+2}),E_{i+2, i}\big)  
 & + S\big( S(E_{i+1, i}, E_{i+2, i+2}), E_{i, i+1}\big) 
\Big) \notag \\
+\frac{\la^2}{16} \sum_{\stackrel{p=1}{p\neq i+1, i+2}}^n & \Big(
S\big( S(E_{i+1, p}, E_{i+2, i+2}), E_{p, i+1}\big)
+ S\big( S(E_{i+1, i+1}, E_{p, i+2}), E_{i+2, p}\big)
\Big) \notag \\
 = {} &
- \frac{\la^2}{8} \Big(
 \big[ E_{i+1, i+1}, [E_{i+2, i}, E_{i,i+2}]\big]
 + \big[ E_{i+2, i+2}, [E_{i, i+1}, E_{i+1, i}]\big]
\Big) \notag \\
&-\frac{\la^2}{16}  \sum_{\stackrel{p=1}{p\neq i+1, i+2}}^n \Big(
\big[ E_{i+2, i+2}, [E_{p, i+1}, E_{i+1, p}]\big]
+
\big[ E_{i+1, i+1}, [E_{i+2,p},  E_{p, i+2}]\big]
\Big) \notag \\
 = {} & 0. \label{SEaaEcb}
 \end{align}

\noindent \textbf{Step 6:} As a consequence, from \eqref{SQKS}, \eqref{SQKSP}, \eqref{SQKSEE} and \eqref{SEaaEcb}, we obtain:
  \begin{align}
  [S_i,Q(E_{i+2,i+1})] & + [K(E_{i+1,i+2}),\wt{S}_i] \notag \\
 = {} &
 \frac{\la}{4} \Big
 ( -S(P(E_{i+2, i}), E_{i,i+2})  
 +S(P(E_{i,i+2}),E_{i+2, i})  
 + S(P(E_{i,i+1}), E_{i+1,i}) 
  - S(P(E_{i+1,i}), E_{i, i+1}) 
\Big) \notag \\
&+\frac{\la}{8} \sum_{p=1}^n\Big(
S(P(E_{p, i+1}), E_{i+1, p})
-S(P(E_{i+1, p}), E_{p, i+1})
- S(P(E_{i+2, p}), E_{p, i+2})
+ S(P(E_{p,i+2}), E_{i+2, p})
\Big) \notag \\
&+ \frac{\la^2}{16} \sum_{t=1}^n \Big
 ( S\big( S(E_{ti}, E_{i+2, t}), E_{i,i+2}\big)  
 + S\big( S(E_{it}, E_{t, i+1}), E_{i+1,i}\big)
 - S\big( S(E_{i+2,t}, E_{t, i+1}), E_{i+1, i+2}\big) \Big) \notag \\
& - \frac{\la^2}{16}(\sum_{t=1}^n \Big( 
S \big( S(E_{it}, E_{t, i+2}),E_{i+2, i}\big)  
 + S\big( S(E_{i+1, t}, E_{ti}), E_{i, i+1}\big) 
 - S\big( S(E_{i+1, t}, E_{t, i+2}), E_{i+2, i+1}\big)
\Big.) \label{SQKwtS}
  \end{align}

\noindent \textbf{Step 7:} To continue with our computation of the right-hand side of \eqref{PHiPHi1}, we now determine $[P(H_i)+\frac{1}{2}P(H_{i+1}), \wt{W}_{i+1,i+2}]$:

Recall that
\[
\wt{W}_{i+1,i+2}=
-\frac{\la}{4}\left(
\sum_{p=1}^n S(E_{i+1, p}, E_{p, i+1})+
\sum_{p=1}^n S(E_{i+2, p}, E_{p, i+2})
-2 S(E_{i+1, i+1}, E_{i+2, i+2})
\right). 
\]
Therefore, 
\begin{align}
[P(H_i)+ & \frac{1}{2}P(H_{i+1}), \wt{W}_{i+1,i+2}]  \notag \\
= {} & -\frac{\la}{4} \sum_{p=1}^n 
[P(H_i)+\frac{1}{2}P(H_{i+1}), S(E_{i+1, p}, E_{p, i+1})+
S(E_{i+2, p}, E_{p, i+2})]  \notag \\
 = {} & -\frac{\la}{4} 
\Bigg(
-S(P(E_{i+1, i}), E_{i, i+1})+S( P(E_{i, i+1}), E_{i+1, i})
-\sum_{p=1}^n S(P(E_{i+1, p}), E_{p, i+1})
+\sum_{p=1}^n S( P(E_{p, i+1}), E_{i+1, p}) \notag \\
&-S(P(E_{i+2, i}), E_{i, i+2})+S( P(E_{i, i+2}), E_{i+2, i})
+S(P(E_{i+2, i+1}), E_{i+1, i+2})-S( P(E_{i+1, i+2}), E_{i+2, i+1}) \notag \\
&+
\frac{1}{2}\sum_{p=1}^n \Big( S(P(E_{i+1, p}), E_{p, i+1})
- S(P(E_{p, i+1}), E_{i+1, p})
- S(P(E_{i+2, p}), E_{p, i+2})
+ S(P(E_{p, i+2}), E_{i+2, p}) \Big) \notag \\
&+ S(P(E_{i+1, i+2}), E_{i+2, i+1})
- S(P(E_{i+2, i+1}), E_{i+1, i+2})
\Bigg) \notag \\
 = {} & -\frac{\la}{4} 
\Big(
-S(P(E_{i+1, i}), E_{i, i+1})+S( P(E_{i, i+1}), E_{i+1, i})
-S(P(E_{i+2, i}), E_{i, i+2})+S( P(E_{i, i+2}), E_{i+2, i})\Big)
 \notag \\
&
-\frac{\la}{8} \sum_{p=1}^n \Big(
- S(P(E_{i+1, p}), E_{p, i+1})
+S( P(E_{p, i+1}), E_{i+1, p})
- S(P(E_{i+2, p}), E_{p, i+2})
+ S(P(E_{p, i+2}), E_{i+2, p})\Big). \label{PPW} 
\end{align}

\noindent \textbf{Step 8:}  We see from \eqref{PPW} that we can cancel $[P(H_i)+\frac{1}{2}P(H_{i+1}), \wt{W}_{i+1,i+2}]$ with the first two lines of \eqref{SQKwtS}. As a conclusion, going back to \eqref{PHiPHi1} and using \eqref{SQKwtS}, we finally obtain:
  \begin{align*}
  [P(H_i), P(H_{i+1})] = {} &[S_i,Q(E_{i+2,i+1})]  + [K(E_{i+1,i+2}),\wt{S}_i]+[P(H_i)+\frac{1}{2}P(H_{i+1}), \wt{W}_{i+1,i+2}-W_{i+1,i+2}]\\
 = {} & \frac{\la^2}{16} \sum_{t=1}^n \Big
 ( S\big( S(E_{ti}, E_{i+2, t}), E_{i,i+2}\big)  
 + S\big( S(E_{it}, E_{t, i+1}), E_{i+1,i}\big)
 - S\big( S(E_{i+2,t}, E_{t, i+1}), E_{i+1, i+2}\big) \Big)\\
& - \frac{\la^2}{16}(\sum_{t=1}^n \Big( 
S \big( S(E_{it}, E_{t, i+2}),E_{i+2, i}\big)  
 + S\big( S(E_{i+1, t}, E_{ti}), E_{i, i+1}\big) 
 - S\big( S(E_{i+1, t}, E_{t, i+2}), E_{i+2, i+1}\big).
\Big)
\end{align*}

Comparing this with the formula for $[\nu_i,\nu_j]$ given in Lemma \ref{lem:nu2} completes the proof of Lemma \ref{lem:P(H)} when $j=i+1$. 

\newcommand{\arxiv}[1]
{\texttt{\href{http://arxiv.org/abs/#1}{arXiv:#1}}}
\newcommand{\doi}[1]
{\texttt{\href{http://dx.doi.org/#1}{doi:#1}}}
\renewcommand{\MR}[1]
{\href{http://www.ams.org/mathscinet-getitem?mr=#1}{MR#1}}

\end{document}